\title[Vertex algebras from Hull--Strominger]{Vertex algebras from the Hull--Strominger system}
\author[L. \'Alvarez-C\'onsul]{Luis \'Alvarez-C\'onsul}
\address{Instituto de Ciencias Matem\'aticas (CSIC-UAM-UC3M-UCM)\\ Nicol\'as Cabrera 13--15, Cantoblanco\\ 28049 Madrid, Spain}
  \email{l.alvarez-consul@icmat.es}
\author[A. De Arriba de La Hera]{Andoni De Arriba de La Hera}
\address{Instituto de Ciencias Matem\'aticas (CSIC-UAM-UC3M-UCM)\\ Nicol\'as Cabrera 13--15, Cantoblanco\\ 28049 Madrid, Spain}
  \email{andoni.dearriba@icmat.es}
  \author[M. Garcia-Fernandez]{Mario Garcia-Fernandez}
\address{Instituto de Ciencias Matem\'aticas (CSIC-UAM-UC3M-UCM)\\ Nicol\'as Cabrera 13--15, Cantoblanco\\ 28049 Madrid, Spain}
\email{mario.garcia@icmat.es}
\thanks{
Partially supported by the Spanish Ministry of Science and Innovation, through the `Severo Ochoa Programme for Centres of Excellence in R\&D' (CEX2019-000904-S). The first author is partially supported by MICINN under grants PID2019-109339GB-C31 and PID2022-141387NB-C21. The second author is partially supported by MICINN under grant BES-2017-080578. The second and third authors are partially supported by MICINN under grants PID2019-109339GA-C32, PID2022-141387NB-C22 and CNS2022-135784.
}
\theoremstyle{plain}
\newtheorem{theorem}{Theorem}[section]
\newtheorem{lemma}[theorem]{Lemma}
\newtheorem{proposition}[theorem]{Proposition}
\newtheorem*{theorem*}{Theorem}
\theoremstyle{definition}
\newtheorem{definition}[theorem]{Definition}
\newtheorem{definition-theorem}[theorem]{Definition-Theorem}
\newtheorem{example}[theorem]{Example}
\newtheorem*{acknowledgements}{Acknowledgements}
\theoremstyle{remark}
\newtheorem{remark}[theorem]{Remark}
\numberwithin{equation}{section} \setcounter{tocdepth}{1}
\newcommand{\tr}{\operatorname{tr}}
\newcommand{\End}{\operatorname{End}}
\newcommand{\ad}{\operatorname{ad}}
\newcommand{\du}{d}
\newcommand{\dbar}{\bar{\partial}}
\newcommand{\CC}{{\mathbb C}}
\newcommand{\RR}{{\mathbb R}}
\newcommand{\ZZ}{{\mathbb Z}}
\newcommand{\ch}{\operatorname{ch}}
\renewcommand{\(}{\left(}
\renewcommand{\)}{\right)}
\newcommand{\Ker}{\operatorname{ker}}
\newcommand{\hra}{\hookrightarrow}
\newcommand{\lhra}{\lhook\joinrel\longrightarrow}
\newcommand{\lto}{\longrightarrow}
\newcommand{\cL}{\mathcal{L}}
\newcommand{\cR}{\mathcal{R}}
\newcommand{\cH}{\mathcal{H}} 
\newcommand{\U}{\operatorname{U}}
\newcommand{\SU}{\operatorname{SU}}
\newcommand{\C}{{\mathbb{C}}}
\newcommand{\R}{{\mathbb{R}}}
\newcommand{\Q}{{\mathbb{Q}}}
\newcommand{\Z}{{\mathbb{Z}}}
\newcommand{\N}{{\mathbb{N}}}
\newcommand{\la}{\langle}
\newcommand{\ra}{\rangle}
\begin{document}

\begin{abstract}
Motivated by the programme on mirror symmetry for non-K\"ahler manifolds, we construct representations of the $N=2$ superconformal vertex algebra associated to solutions of the Hull--Strominger system. The construction is via embeddings of the $N=2$ superconformal vertex algebra in the chiral de Rham complex of a string Courant algebroid. Our results require that the connection $\nabla$, one of the unknowns of the system, is Hermitian--Yang--Mills. Our main theorem proves that any solution of the Hull--Strominger system satisfying this condition has an associated $N=2$ embedding. 
\end{abstract}

\maketitle

\setlength{\parskip}{5pt}
\setlength{\parindent}{0pt}


\section{Introduction}\label{sec:intro}

Vertex algebras provide a fruitful bridge between mathematics and physics. On the physical side, they yield a rigorous definition of the chiral part of a $2$-dimensional conformal field theory, and have become important tools in string theory. 
On the mathematical side, they were formulated by Borcherds to prove the Monstrous Moonshine Conjecture~\cite{Borcherds2} and, since then, have played an important role in many areas, such as the representation theory of Kac--Moody algebras. 
In particular, vertex algebras have had a strong impact on geometry, by the constructions of the chiral de Rham complex~\cite{MSV} and the chiral differential operators 
\cite{BD,GSV}, their intimate connections with elliptic genera and mirror symmetry \cite{BorLib}, and also by their applications to the geometric Langlands correspondence~\cite{BD-Langlands} and gauge theory \cite{Joyce}.

The chiral de Rham complex $\Omega_M^{\ch}$ is a sheaf of vertex algebras on any smooth algebraic, complex or differentiable manifold $M$, introduced by Malikov--Schechtmann--Vaintrob~\cite{MSV}, that is closely related to a supersymmetric $\sigma$-model with target $M$~\cite{Kapustin,Witten02}. 
Its original construction was carried out locally, gluing the so-called $bc$-$\beta\gamma$ system over coordinate patches, by showing that the transformation properties of this system are compatible with appropriate coordinate changes.
%
In subsequent work, Bressler~\cite{Bressler} and Heluani~\cite{Heluani09} obtained a more general coordinate-free construction of a sheaf of vertex algebras that can be attached functorially to any Courant algebroid. 
The chiral de Rham complex $\Omega_M^{\ch}$ of \cite{MSV} corresponds to the standard Courant algebroid.

Since the pioneering work \cite{MSV} for Calabi--Yau manifolds, it has been a fundamental problem to construct embeddings of various superconformal vertex algebras in the vertex algebra of global sections of $\Omega_M^{\ch}$ for manifolds $M$ with special holonomy (see \cite{HeluaniRev} for a recent review). 
These embeddings resemble similar structures found in physics for non-linear $\sigma$-models.

The problem studied in this paper consists of finding appropriate geometric conditions under which the vertex algebra of global sections of the chiral de Rham complex $\Omega^{\ch}_E$ associated to a Courant algebroid $E$ over a differentiable manifold admits an embedding of the $N=2$ superconformal vertex algebra. 
The Courant algebroids that we consider are the so-called string Courant algebroids~\cite{grt2}, and the geometric conditions are inspired by the Killing spinor equations in heterotic supergravity. 


Embeddings of the $N=2$ superconformal algebra in $\Omega^{\ch}_M$ have been constructed for (generalized) K\"ahler Calabi--Yau manifolds \cite{BZHS,EHKZ,Heluani09,GCYHeluani,StructuresHeluani,LinshawSong}, motivated by two-dimensional nonlinear sigma models with $(2,2)$ supersymmetry \cite{GHR}. However, until now nothing was known about potential embeddings in the chiral de Rham complex $\Omega^{\ch}_E$ for the more general $(0,2)$-models \cite{Witten02}. Results in this direction, for the $N=1$ superconformal algebra, can be found in the physics literature in recent work of de la Ossa, Fiset and Galdeano \cite{OssaF,Galdeano}. 
In the present work we give a precise answer to this problem, providing, in full generality, embedded $N=2$ superconformal algebras in the chiral de Rham complex of a string Courant algebroid over a complex manifold \cite{grt2,grt3}, carrying solutions to the Killing spinor equations \cite{GF3,grt}. In this setup, these equations are essentially equivalent to the Hull--Strominger system,  which describes supersymmetric compactifications in heterotic supergravity \cite{HullTurin,Strom}. 

One motivation for our embeddings is the construction of a new elliptic genus on complex non-K\"ahler manifolds, that we plan to address in future work. 
The Killing spinor equations that we consider avoid well-known no-go theorems that apply to similar generalized geometries with torsion on compact manifolds that prevented previous constructions of new elliptic genera (see~\cite[Remark 13]{StructuresHeluani}). It is therefore reasonable to expect that our results yield a new definition of elliptic genus, upon BRST reduction, on compact complex non-K\"ahler manifolds carrying solutions to the Killing spinor equations (see~\cite{Israel} for recent progress in physics). 

Our constructions are also motivated by a conjectural extension of mirror symmetry to complex non-K\"ahler geometry.
So far, the mathematical study of mirror symmetry was essentially bound to K\"ahler manifolds, in relation to type IIA and type IIB string theories \cite{COGP,KontsevichHMS,SYZ}. Motivated by physical developments in \cite{AGM,GMS}, several authors have pursued various strategies to extend this picture, based on type II string theories, to the realm of non-K\"ahler Calabi--Yau manifolds \cite{AldiHel,Bedulli-Vannini,LTY,Popovici,Ward}. However, there is another type of string theory, called heterotic, that is getting more attention from complex geometers 
since 20 years ago \cite{CoPiYau,FIUV,Fino,FuYau,grst,PPZ}, and a version of mirror symmetry for this theory, called $(0,2)$ mirror symmetry, is expected to exist by the mathematical physics community \cite{ABS,AAGa,Borisov02,Donagi,McOrist,MelPle,MelSeShe}.

A general way to approach mirror symmetry, as formulated geometrically by Borisov \cite{Borisov}, is via vertex algebras. 
As proved by \cite{AAGa,Borisov02}, this is actually well suited for understanding certain aspects of $(0,2)$ mirror symmetry \cite{Witten02}. The basic idea is to construct representations of the $N=2$ superconformal vertex algebra, associated to mirror spaces, and to relate them via an automorphism, called the mirror involution. An important step to find such structures is the construction of embeddings of the $N=2$  superconformal vertex algebra in the chiral de Rham complex, as mentioned earlier. 
Our main results and methods to obtain these embeddings greatly expand those developed in the homogeneous setup in \cite{AAGa}, where first examples of $(0,2)$ mirror symmetry on compact complex non-K\"ahler manifolds were found. 

\subsection*{Outline and main results}

Let $(M,J)$ be a complex manifold of dimension $n$. We assume that the canonical bundle $K_M$ of $J$ is topologically trivial. An $\SU(n)$-structure on $(M,J)$ is a pair $(\Psi,\omega)$ consisting of a Hermitian form $\omega$, with metric $g = \omega(J,)$, and a smooth section $\Psi$ of the canonical bundle satisfying  
\begin{equation*}\label{eq:SUn}
(-1)^{\frac{n(n-1)}{2}}i^n\Psi \wedge \overline{\Psi} =  \frac{\omega^n}{n!}.
\end{equation*}
Let $K$ be a compact Lie group with Lie algebra $\mathfrak{k}$. Fix a non-degenerate bi-invariant symmetric bilinear form $\langle,\rangle: \mathfrak{k} \otimes \mathfrak{k} \to \mathbb{R}$. Let $P$ be a principal $K$-bundle over $M$ such that
\begin{equation*}\label{eq:BC22}
p_1(P) = 0 \in H^4_{dR}(M,\RR),
\end{equation*}
where $p_1$ is the characteristic class associated to $\langle,\rangle$ via Chern--Weil theory. More explicitly, given a principal connection $A$ on $P$, $p_1(P)$ is represented by the four-form $\langle F_A \wedge F_A \rangle \in \Omega^4$. 

In this setup, we say that a triple $(\Psi,\omega,A)$ is a solution of the \emph{twisted Hull--Strominger system} if the following equations hold:
\begin{equation}\label{eq:twistedStromintro}
\begin{split}
	F_A^{0,2} = 0, \qquad F_A \wedge \omega^{n-1} & = 0,\\
	d \Psi - \theta_\omega \wedge \Psi & = 0,\\
	d \theta_\omega & = 0,\\
	dd^c \omega + \left\langle F_A \wedge F_A\right\rangle  & = 0.
	\end{split}
\end{equation}
The previous equations were introduced in \cite{grst} as a natural generalization of the Hull--Strominger system \cite{HullTurin,Strom}, motivated by generalized geometry \cite{Hit1,GuThesis}. In fact, when the Lee form $\theta_\omega = J d^*\omega$ is exact, the second and third equations imply the existence of a holomorphic volume form $\Omega$ on $(M,J)$, 
and one recovers the Hull--Strominger system with the \emph{Hermitian-Yang--Mills ansatz} for the connection $\nabla$ when $n=3$ (see \cite{GF4} for a lengthy discussion about this condition).

The relationship with generalized geometry, which we explain in detail in Section \ref{sec:THSFterm}, comes from the observation that a solution of \eqref{eq:twistedStromintro} has an associated smooth Courant algebroid $E$ of string type \cite{GF3,grt}, endowed with a 
generalized metric 
$$
V_+ \subset E
$$
solving the \emph{Killing spinor equations} \cite{GF3,grt} (see Proposition \ref{prop:KSEqevendimCA}). Furthermore, upon complexification, we obtain a pair of isotropic involutive subbundles $\ell$ and $\overline{\ell}$ such that
$$
\ell \oplus \overline{\ell} = V_+ \otimes \CC.
$$
Our first technical results, in Lemma \ref{lem:holconstdetatlas}, Lemma~\ref{lem:frameprop} and Proposition \ref{prop:DtermTHS}, show that one can construct an adapted holomorphic atlas and special frames $\{\epsilon_j\}$ of $\ell$ and $\{\overline{\epsilon}_j\}$ of $\overline{\ell}$ such that
$$
\frac{1}{2}\sum_{j=1}^n\left[\overline{\epsilon}_j,\epsilon_j\right] = \varepsilon_{\ell}  - \varepsilon
$$
for an infinitesimal symmetry $\varepsilon \in \Omega^0(E\otimes \CC)$ of $E \otimes \CC$, that is, satisfying
$$
[\varepsilon,] = 0.
$$
Here, $\varepsilon_{\ell}$ denotes the projection of $\varepsilon$ onto $\ell$ associated to the above direct-sum decomposition. Using this, and a complicated sequence of vertex algebra calculations contained in the appendices, we obtain our main result (see Theorem \ref{teo:mainTCA}). We denote $e^j = \Pi \epsilon_j$ and $e_j = \Pi \overline{\epsilon}_j$, where $\Pi \colon E \otimes \CC \to \Pi (E \otimes \CC)$ is the parity-reversing operator.

\begin{theorem}\label{teo:mainTCAintro}
Let $(\Psi,\omega,A)$ be a solution of the twisted Hull--Strominger system \eqref{eq:twistedStromintro}. Consider the associated string Courant algebroid $E$ and the chiral de Rham complex $\Omega^{\ch}_{E \otimes \C}$. Then the following expressions define global sections of $\Omega^{\ch}_{E \otimes \C}$:
\begin{equation}\label{eq:JHglocalintro}
\begin{split}
J & := \frac{i}{2}\sum_{j=1}^n:e^je_j: + Si\Pi \theta_\omega,\\
H & := \frac{1}{2}\sum_{j=1}^n\left(:e_j\left(Se^j\right):+:e^j\left(Se_j\right):\right) + \frac{1}{4}\sum_{j,k=1}^n \Big{(}:e_j:e^k[e^j,e_k]:: \\
& + :e^j:e_k[e_j,e^k]:: - :e_j:e_k[e^j,e^k]:: - :e^j:e^k [e_j,e_k]::\Big{)} - T\Pi(\theta_\omega)_\ell.
\end{split}
\end{equation}
Furthermore, the sections $J$ and $H$ induce an embedding of the $N=2$ superconformal vertex algebra with central charge $c = 3n$ into the space of global sections of $\Omega^{\ch}_{E\otimes\C}$.
\end{theorem}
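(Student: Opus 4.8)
The plan is to realize $J$ and $H$ as the two generating superfields of the $N=2$ superconformal vertex algebra inside the SUSY vertex algebra of global sections of $\Omega^{\ch}_{E\otimes\C}$, with $H$ playing the role of the Neveu--Schwarz superconformal vector (whose components give the stress tensor $L$ and a supercurrent) and $J$ the even $U(1)$-current superfield (carrying the remaining supercurrent); together these four fields are the $N=2$ generators. I would organize the argument in three stages: (i) show that the formulas \eqref{eq:JHglocalintro} descend to genuine \emph{global} sections; (ii) compute the relevant $\Lambda$-brackets and match them to the $N=2$ relations; and (iii) deduce injectivity of the resulting homomorphism.

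For stage (i), the subtlety is that $J$ and $H$ are expressed through the local frames $\{\epsilon_j\}$ of $\ell$ and $\{\bar\epsilon_j\}$ of $\bar\ell$ and through the Lee form $\theta_\omega$, none of which is globally canonical. Here I would invoke the adapted holomorphic atlas and the special frames constructed in Lemma~\ref{lem:holconstdetatlas}, Lemma~\ref{lem:frameprop} and Proposition~\ref{prop:DtermTHS}. Under a change of special frame the $bc$--$\beta\gamma$ generators of $\Omega^{\ch}_{E\otimes\C}$ transform with anomalous derivative terms; the key point is that the correction terms $S\,i\Pi\theta_\omega$ in $J$ and $-\,T\Pi(\theta_\omega)_\ell$ in $H$, combined with the normalization $\tfrac12\sum_j[\bar\epsilon_j,\epsilon_j]=\varepsilon_\ell-\varepsilon$ with $\varepsilon$ an infinitesimal symmetry satisfying $[\varepsilon,-]=0$, are tuned precisely so that these anomalies cancel. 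Verifying that the two local expressions agree on overlaps then reduces to a bookkeeping computation with the transition and cocycle data recorded in those results.

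Stage (ii) is the technical heart, and I expect it to be the main obstacle. Using the explicit $\Lambda$-bracket formulas for the chiral de Rham complex of a Courant algebroid (following Bressler and Heluani), in which the Courant bracket, the pairing and the anchor of $E\otimes\C$ enter as structure constants, I would compute $[J_\Lambda J]$, $[J_\Lambda H]$ and $[H_\Lambda H]$. The goal is to show that $[H_\Lambda H]$ reproduces the Neveu--Schwarz relation (fixing $L$ and its central term), that $[J_\Lambda J]$ gives the abelian current relation, and that $[J_\Lambda H]$ expresses that $J$ is primary of the correct weight; together these are equivalent to the full $N=2$ relations. The difficulty is that the quartic normally ordered terms $:e_j:e^k[e^j,e_k]::$ and their companions produce, through the non-associativity of the Courant bracket, contributions built from the curvature $F_A$, the torsion of $\omega$ and $\theta_\omega$, and the algebra closes only if all of these recombine correctly. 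This is exactly where the twisted Hull--Strominger system \eqref{eq:twistedStromintro} is used: involutivity of $\ell$ and $\bar\ell$ (from $F_A^{0,2}=0$) keeps the brackets inside the correct subbundles, the Hermitian--Yang--Mills and conformally balanced conditions control the linear terms, and the heterotic Bianchi identity $dd^c\omega+\langle F_A\wedge F_A\rangle=0$ cancels the residual anomaly in the central term, pinning the central charge to $c=3n$ (consistently with $n$ chiral multiplets). Since the computation is long, I would stratify it by homogeneity degree in the generators and defer it to the appendices.

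For stage (iii), once \eqref{eq:JHglocalintro} is shown to define a homomorphism from the universal $N=2$ superconformal vertex algebra at $c=3n$, injectivity follows because the images of the generators are nonzero and distinguished by the conformal weight and $U(1)$-charge gradings, so the kernel---an ideal of the universal algebra---meets the generating fields trivially; this yields the claimed embedding.
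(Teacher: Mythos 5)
Your outline captures the broad architecture (adapted frames, gluing, $\Lambda$-bracket computations deferred to appendices), but stage (ii) as you describe it would not close, and the mechanism you propose for the cancellations is not the one that actually works. After the direct computation (Proposition \ref{teo:mainresult}), the bracket $[H_\Lambda J]$ does \emph{not} reduce to $(2\lambda+2T+\chi S)J$ by "recombination" of curvature and torsion terms: it retains the residual obstruction $\tfrac{i}{4}TS\mathcal{D}R+\tfrac{i}{4}\lambda\big(:F^{ij}:e_je_i::-:F_{ij}:e^je^i::\big)$, and nothing in the twisted Hull--Strominger equations kills these algebraically on the nose. The paper needs two ideas absent from your plan: first, the identification of $F^{ij}$ with the torsion bivector, $F^{ij}=(\sigma_\omega)_{\overline{ij}}$ (Lemma \ref{lem:dcomega}), combined with $\sigma_\omega=-(g^{-1}\otimes g^{-1})\rho_B^{2,0}$ (Proposition \ref{prop:rho20sigma}) and Bismut--Ricci flatness $\rho_B=0$ from the conformally balanced/holonomy equations (Lemma \ref{lem:holonomia}) --- so the vanishing of $F$ is a genuinely geometric statement about the $(2,0)$ Bismut Ricci form, not a bookkeeping cancellation; second, the Jacobi-identity trick of Lemma \ref{lem:locura}, which yields $3T^2R=T\big(:F^{ij}:e_je_i::-:F_{ij}:e^je^i::\big)$ and hence disposes of the $R$-term \emph{only after} $F=0$ --- there is no independent computation showing $R$ contributes trivially. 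You also misattribute the central term: the Bianchi identity $dd^c\omega+\langle F_A\wedge F_A\rangle=0$ enters only in making $E_{P,H,A}$ a Courant algebroid at all; the value $c=3(n+2\langle\varepsilon,\varepsilon\rangle)=3n$ comes from the D-term analysis (Proposition \ref{prop:DtermTHS}), where the Hermitian--Yang--Mills condition forces $\varepsilon=-\theta_\omega$ to be a closed one-form with $[\varepsilon,\cdot]=0$, isotropic for the string pairing. Finally, the paper never computes $[H_\Lambda H]$: by Remark \ref{rem:tech} the Neveu--Schwarz relation for $H$ is automatic, via the Jacobi identity, once the two relations involving $J$ hold --- a substantial simplification your plan forgoes.

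Two smaller corrections. In stage (i), the gluing does not work by an anomaly cancellation between frame-change terms and the $\theta_\omega$ corrections: $Si\Pi\theta_\omega$ and $-T\Pi(\theta_\omega)_\ell$ are globally defined on their own, and the frame-dependent piece $\sum_j:e^je_j:$ is \emph{strictly} invariant because the adapted atlas of Lemma \ref{lem:holconstdetatlas} has constant Jacobian determinant, so the anomaly equals $T(\log\det A)=\tr(:A^{-1}(TA):)=0$ by Lemma \ref{lem:matrixid}; moreover the globality of $H$ is deduced from that of $J$ through $[J_\Lambda J]=-(H+\lambda\chi n)$ (Lemma \ref{lem:globalsec}) rather than verified by hand, which would be much harder. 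In stage (iii), your injectivity argument is invalid as stated: a proper ideal of the universal $N=2$ superconformal vertex algebra (e.g.\ one generated by singular vectors) can intersect the span of the generating fields trivially, so "the kernel meets the generators trivially" does not force the kernel to vanish. The paper, following the standard convention in this literature, takes the embedding to be the morphism from the universal enveloping SUSY vertex algebra determined by the relations of Remark \ref{rem:tech}, and supplies no weight/charge grading argument.
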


In particular, Theorem \ref{teo:mainTCAintro} proves that, on an arbitrary Calabi-Yau threefold, any solution of the Hull--Strominger system \cite{HullTurin,Strom} with connection $\nabla$ being Hermitian-Yang--Mills has an associated embedding of the $N=2$ superconformal vertex algebra. There is actually only a handful of these examples, contained in \cite{AGF1,CoPiYau,FeiYau,FIUV,GF4,GFGM,GM,OU,OUVi}. We shall discuss concrete non-K\"ahler examples in Section \ref{sec:geoexam}. Of particular interest is the existence of these embeddings in the algebraic case, as considered in \cite{AGF1,CoPiYau}, which we believe is an important step in the programme on $(0,2)$-mirror symmetry via vertex algebras \cite{AAGa,Borisov02}.

The proof of Theorem \ref{teo:mainTCAintro} is outlined in Section \ref{Resultados1}, while the technically hard calculations which lead to the proof are contained in Appendix \ref{app:MRP}. Our formulae for the generators of supersymmetry \eqref{eq:JHglocalintro} is inspired by work on generalized Calabi--Yau manifolds by Heluani and Zabzine \cite{StructuresHeluani}. Appendix \ref{app:MRP} actually provides alternative and more direct proofs of some of their results. 

Using a similar method of proof, in Theorem \ref{teo:mainTCAcHE} we construct a different embedding which allows for more general topologies. In this case, we consider the \emph{coupled Hermitian-Einstein equation} (see Definition \ref{def:cHE})
$$
F_ \mathbf{G} \wedge \omega^{n-1} = 0 
$$
on a holomorphic string algebroid over a complex manifold $(M,J)$ endowed with a holomorphic volume form $\Omega$. These equations have been studied very recently by the third author jointly with Gonzalez Molina \cite{GFGM0}, inspired by \cite{OssaLarforsSvanes,GaJoSt}. They admit solutions on manifolds which are not balanced, and also bundles with $\Lambda_\omega F_A \neq 0$ \cite{GM}. For instance, the central charge of this class of embeddings is given by 
$$
c=3n -\frac{3}{2} \langle \Lambda_\omega F_A ,\Lambda_\omega F_A \rangle
\in \R.
$$
For solutions of the Hull--Strominger system, both Theorem \ref{teo:mainTCAintro} and Theorem \ref{teo:mainTCAcHE} apply and give the same embedding.

In Section \ref{sec:geoexam} we study several examples where our main results apply. In Section \ref{sec:exdomain} we study a local situation in complex dimension $2$, and relate our embeddings with the \emph{heterotic string solitons} constructed by Callan, Harvey, and Strominger \cite{CHS}. In Section \ref{sec:surfaces} we consider compact solutions of the twisted Hull--Strominger system \eqref{eq:twistedStromintro} and their associated embeddings on complex surfaces, building on the classification in \cite{grt}. Section \ref{subsubsec:Iwasawa} and Section \ref{sec:Picard} contain examples in non-K\"ahler Calabi--Yau threefolds where Theorem \ref{teo:mainTCAintro} applies. In particular, in Section \ref{sec:Picard} we discuss the class of non-K\"ahler solutions of the Hull--Strominger constructed by the third author in \cite{GF4}, inspired by the seminal work by Fu and Yau \cite{FuYau}. The underlying geometry is given by a $\mathbb{T}^2$-fibration over a K\"ahler $K3$ surface, considered originally in the physics literature \cite{DRS,GoPro}. Interestingly, these solutions are compatible with T-duality, and we speculate that they actually provide examples of $(0,2)$-mirror symmetry, following the methods in \cite{AAGa}. We also expect a relation between our $N=2$ embeddings in this case (see Proposition \ref{prop:GPchiral}) and the new supersymmetric index studied in \cite{Israel}.  

\begin{acknowledgements}
The authors wish to thank Xenia de la Ossa, Mateo Galdeano, Pedram Hekmati, Reimundo Heluani, Joshua Jordan, Jock McOrist, Sebastien Picard, Carl Tipler and Alessandro Tomasiello for useful discussions. The second author is grateful to the Instituto de Matem\'atica Pura e Aplicada (IMPA) for the hospitality. 
\end{acknowledgements} 

\section{Killing spinors, special coordinates and the Bismut Ricci form}
\label{sec:KSeq}

\subsection{The Killing spinor equations}

In this section we review some background on the Killing spinor equations on a spin manifold endowed with a principal bundle. This will help us to introduce the various geometric structures considered in this paper, and will serve as motivation for the twisted Hull--Strominger system in Section \ref{sec:THSFterm}.

Let $M$ be an $m$-dimensional smooth spin manifold. Given a vector bundle $V$ over $M$, we denote by $\Omega^k(V)$ the space of $V$-valued differential $k$-forms on $M$. Given a Riemannian metric $g$ on $M$, we denote by $\nabla^g$ its Levi-Civita connection. Given a three-form $H \in \Omega^3$, we define metric connections on $T = TM$ with skew-symmetric torsion, compatible with $g$, by
\begin{equation}\label{eq:nabla++1/3}
\nabla^+ = \nabla^g+\frac12g^{-1}H, \qquad \nabla^{\frac13} = \nabla^g+\frac16g^{-1}H.
\end{equation}
We will abuse notation and denote the associated spin connections with the same symbol. Let $K$ be a compact Lie group, and consider a principal $K$-bundle $P \to M$. Given a principal connection $A$ on $P$, we denote its curvature by $F_A \in \Omega^2(\ad P)$. 

\begin{definition}\label{defn:KSEqSM}
We say that a tuple $(g,H,A,\eta,\varphi)$, where $g$, $H$, $A$ are as above, $\eta$ is a spinor for $(T,g)$, and $\varphi \in \Omega^1$, is a solution to the \textit{Killing spinor equations} if
\begin{subequations}
\begin{align}
F_A \cdot \eta & = 0,\label{eq:GauCourT}\\
\nabla^+\eta & = 0,\label{eq:gravCourT}\\
\left(\slashed{\nabla}^{\frac13} - \tfrac{1}{2}\varphi \cdot \right) \eta & = 0\label{eq:dilCourT},
\end{align}
\end{subequations}
where $\cdot$ denotes Clifford multiplication and $\slashed{\nabla}^{\frac13}$ is the Dirac operator associated to the spin connection $\nabla^{\frac13}$. We will call $\varphi$ the \emph{dilaton one-form}. 
\end{definition}

\begin{remark}
When $\varphi$ is exact, the equations in Definition \ref{defn:KSEqSM} are equivalent in low dimensions to the Killing spinor equations in the internal manifold of a compactification of the ten-dimensional heterotic supergravity (see, e.g., \cite{Strom}).
In this case, the equations are known as the \textit{gaugino}, \emph{gravitino}, and \textit{dilatino equations}, respectively, and a potential for $\varphi$ is known as the \emph{dilaton field}.  
\end{remark}

In this work, we are interested in solutions of the Killing spinor equations on an even-dimensional oriented spin manifold, that is for $m = 2n$. In this case, a complex spinor bundle decomposes as
$$
S = S^+ \oplus S^-,
$$ 
where $S^\pm$ is the $\pm 1$-eigenspace with respect to the Clifford multiplication by the volume form. We will say that a section of $S^+$ has positive chirality. A pure spinor $\eta \in \Omega^0(S^+)$ determines, by definition, an almost complex structure $J$ compatible with the orientation, with holomorphic tangent bundle
\begin{equation}\label{eq:T01J}
T^{1,0}_J = \{V \in T \otimes \C \; | \; v \cdot \eta = 0 \}.
\end{equation}
The Killing spinor equations for pure spinors $\eta \in \Omega^0(S^+)$ admit a natural characterization in terms of $\SU(n)$-structures, as follows. Recall that an $\SU(n)$-structure on $M$ is given by a pair $(\Psi,\omega)$, where $\Psi$ is a locally decomposable complex $n$-form, with associated almost complex structure $J$, and $\omega$ is a real $(1,1)$-form satisfying $\|\Psi\|_\omega  = 1$. Here, the pointwise norm $\|\Psi\|_\omega$ is defined by
\begin{equation*}\label{eq:norm}
(-1)^{\frac{n(n-1)}{2}}i^n\Psi \wedge \overline{\Psi} = \|\Psi\|_\omega^2 \frac{\omega^n}{n!}.
\end{equation*}
By \cite[Ch. IV, Proposition 9.16]{MicLaw}, a pure spinor $\eta \in \Omega^0(S^+)$ determines a $\SU(n)$-structure $(\Psi,\omega)$ on $M$, with almost complex structure \eqref{eq:T01J}. We will denote by $\theta_\omega = Jd^*\omega \in \Omega^1$ the Lee form of the almost Hermitian structure $(J,\omega)$. Alternatively, $\theta_\omega$ is defined by the structure equation
$$
d \omega^{n-1} = \theta_\omega \wedge \omega^{n-1}.
$$ 
We denote by $N_J$ the Nijenhuis tensor of $J$. The following result was originally proved by Hull and Strominger \cite{HullTurin,Strom} in the case that $\varphi$ is exact. Here, we give a sketch of the proof following \cite{grt}.

\begin{proposition}\label{prop:KSEqevendim}
Let $M$ be a $2n$-dimensional spin manifold endowed with a principal $K$-bundle $P$. A solution of the Killing spinor equations $(g,H,A,\eta,\varphi)$ in Definition \ref{defn:KSEqSM} with $\eta \in \Omega^0(S^+)$ pure, induces an $\SU(n)$-structure $(\Psi,\omega)$ on $M$ satisfying
\begin{equation}\label{eq:KSEqon2ndimman}
\begin{array}{rlrl}
F_A \wedge \omega^{n-1} & = 0, & F_A^{0,2} & = 0,\\ 
\nabla^+ \Psi & = 0, &  H+d^c\omega &= 0,\\ 
\theta_\omega +\varphi & = 0, & N_J & = 0.
\end{array}
\end{equation}
Conversely, a triple $(\Psi,\omega,A)$, 
such that 
\begin{equation}\label{eq:KSEqon2ndimmanconv}
\begin{array}{rlrl}
F_A \wedge \omega^{n-1} & = 0, & F_A^{0,2} & = 0,\\ 
\nabla^B \Psi & = 0, & N_J & = 0,
\end{array}
\end{equation}
where $\nabla^B = \nabla^g - \frac12g^{-1}d^c\omega$ is the Bismut connection of $\omega$, induces a solution of the Killing spinor equations with $g = \omega(,J)$, $H = - d^c\omega$, and $\varphi = -  \theta_\omega$.
\end{proposition}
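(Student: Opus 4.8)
The plan is to translate each of the three Killing spinor equations into a tensorial condition on the induced $\SU(n)$-structure, using the standard pure-spinor dictionary. Concretely, I would work in a local unitary coframe and identify the positive spinor bundle $S^+$ with the fermionic Fock space $\Lambda^\bullet (T^{1,0}_J)^\ast$, in which the pure spinor $\eta$ is the vacuum annihilated by $T^{1,0}_J$, as in \eqref{eq:T01J}. Under this identification, $(\Psi,\omega)$ are recovered as spinor bilinears of $\eta$, and Clifford multiplication by a differential form becomes a combination of exterior multiplication and contraction on the Fock space; this correspondence is precisely what converts spinorial identities into identities among $\Psi$, $\omega$ and their derivatives.

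First I would analyse the gaugino equation $F_A\cdot\eta=0$: a standard computation shows that the Clifford action of a real $2$-form on the pure spinor $\eta$ vanishes exactly when that form is primitive of type $(1,1)$, so the equation is equivalent to $F_A^{0,2}=0$ together with $\Lambda_\omega F_A=0$, the latter being the same as $F_A\wedge\omega^{n-1}=0$. Next, the gravitino equation $\nabla^+\eta=0$ says that $\nabla^+$ has holonomy contained in $\SU(n)$; since $\Psi$ and $\omega$ are $\nabla^+$-natural bilinears of $\eta$, this yields $\nabla^+\Psi=0$ (and $\nabla^+\omega=0$, $\nabla^+J=0$). Because $\nabla^+$ is a metric connection with skew torsion $H$ preserving $J$, the Friedrich--Ivanov characterisation of the characteristic connection applies: $N_J$ is forced to be a totally skew $3$-form and $H$ is pinned down in terms of $d\omega$ and $N_J$, with the $(3,0)+(0,3)$-component of $H$ equal to the Nijenhuis contribution.

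The core of the argument is then the dilatino equation. I would use the standard relation between the two Dirac operators, $\slashed{\nabla}^{\frac13}=\slashed{\nabla}^+-\tfrac12 H\cdot$, where $H\cdot$ denotes Clifford multiplication by the torsion $3$-form; since $\nabla^+\eta=0$ gives $\slashed{\nabla}^+\eta=0$, we obtain $\slashed{\nabla}^{\frac13}\eta=-\tfrac12 H\cdot\eta$, and the dilatino equation $(\slashed{\nabla}^{\frac13}-\tfrac12\varphi\cdot)\eta=0$ collapses to the single spinorial identity $(H+\varphi)\cdot\eta=0$. Decomposing $H\cdot\eta$ by type in the Fock space, the $(3,0)+(0,3)$-part, which is exactly the Nijenhuis piece of the characteristic torsion, produces a spinor component that cannot be matched by the $1$-form $\varphi$, forcing $N_J=0$; the remaining part of $H$ then reduces to $-d^c\omega$, whose action on $\eta$ isolates the Lee form, yielding simultaneously $H+d^c\omega=0$ and $\theta_\omega+\varphi=0$. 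This completes the forward direction.

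For the converse I would reverse these steps. Given $(\Psi,\omega,A)$ satisfying \eqref{eq:KSEqon2ndimmanconv}, set $g=\omega(\cdot,J\cdot)$, $H=-d^c\omega$, $\varphi=-\theta_\omega$ and let $\eta$ be the unit pure spinor determined by $(\Psi,\omega)$. Since $N_J=0$, the Bismut connection $\nabla^B=\nabla^g-\tfrac12 g^{-1}d^c\omega$ coincides with $\nabla^+$, so $\nabla^B\Psi=0$ gives $\nabla^+\Psi=0$ and hence $\nabla^+\eta=0$, the gravitino equation; the Hermitian--Yang--Mills conditions give the gaugino equation by the Clifford computation read backwards; and the Dirac-operator relation together with $N_J=0$ recovers the dilatino equation with $\varphi=-\theta_\omega$. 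I expect the \emph{main obstacle} to be the careful bookkeeping in the pure-spinor dictionary and the sign conventions in the dilatino step: one must verify that the Clifford action of the skew torsion on $\eta$ separates cleanly into a Lee-form part and a Nijenhuis part, so that the single equation $(H+\varphi)\cdot\eta=0$ splits precisely into $\theta_\omega+\varphi=0$ and $N_J=0$.
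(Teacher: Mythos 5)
Your proposal is correct and takes essentially the same route as the paper's proof: the paper likewise works in the Fock model $S_+ \cong \Lambda^{0,\mathrm{even}}T^*_{0,1}$ with $\eta = 1$, uses the gravitino equation together with Gauduchon's torsion formula (the content of your Friedrich--Ivanov step) to get $\nabla^+\Psi = 0$ and $H^{3,0+0,3} = g(N_J,\cdot)$, $H^{2,1+1,2} = -(d^c\omega)^{2,1+1,2}$, collapses the dilatino equation to a single Clifford identity on the vacuum whose $(0,3)$- and $(0,1)$-components force $N_J = 0$ and $\varphi = -\theta_\omega$, identifies the gaugino equation with the two Hermitian--Yang--Mills conditions, and reverses these steps for the converse via $\nabla^B\eta = 0$ (justified through Gauduchon's formula for the induced connection on the canonical bundle). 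The only discrepancy is the sign in your dilatino reduction, $(H+\varphi)\cdot\eta = 0$ versus the paper's $(H-\varphi)\cdot 1 = 0$, which is a Clifford/torsion convention issue you flagged yourself and does not affect the stated conclusions.
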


\begin{proof}
Let $(\Psi,\omega)$ be the $\SU(n)$-structure on $M$ determined by $\eta$. Then condition \eqref{eq:gravCourT} implies that
$$
\nabla^+ \omega = 0, \qquad \nabla^+ \Psi = 0, \qquad \nabla^+J = 0,
$$
and, by \cite[Eq. (2.5.2)]{Gau}, we have
$$
H^{3,0 + 0,3} = g(N_J,), \qquad H^{2,1 + 1,2} = - (d^c\omega)^{2,1 + 1,2}.
$$ 
We denote $T^{0,1} = T^{0,1}_J$ and $T^*_{0,1} = (T^{0,1})^*$. Consider the model for the complex spinor bundle
$$
S_+ \cong \Lambda^{0,\text{even}}T^*_{0,1},
$$
with Clifford multiplication given by
$$
\xi \cdot \sigma = \sqrt{2}(\iota_{g^{-1} \xi^{1,0}}\sigma + \xi^{0,1} \wedge \sigma).
$$
In this model, the spinor $\eta$ can be identified with $\lambda \in \C^*$. Using that $\nabla^+ \eta = 0$, equation \eqref{eq:dilCourT} is equivalent to
$$
\left(H - \varphi \right) \cdot 1 = 0.
$$ 
Arguing now as in \cite[Th. 5.1]{grt}, we obtain that $H^{0,3} = 0$, and therefore $N_J = 0$, and also that $\varphi = - \theta_\omega$. Similarly, \eqref{eq:GauCourT} is equivalent to the first two conditions in \eqref{eq:KSEqon2ndimman}. 

Conversely, given a solution of \eqref{eq:KSEqon2ndimmanconv}, by \cite[Eq. (3.2.3)]{Gau} the pure spinor 
$$
\eta = 1 \in \Omega^0(\Lambda^{0,\text{even}}T^*_{0,1})
$$
satisfies $\nabla^B \eta = 0$. Similarly as above, the first two conditions in \eqref{eq:KSEqon2ndimman} imply $F_A \cdot \eta = 0$. Finally, setting $H = - d^c\omega$, one has $\slashed{\nabla}^{+\frac13} \eta  = \tfrac{1}{2} H \cdot \eta$, and the proof follows.
\end{proof}

\subsection{Closed dilaton and special coordinates}\label{sec:FDterm}

In this work, we are mainly concerned with solutions of the Killing spinor equations with closed dilaton one-form $\varphi$, that is, satisfying $d\varphi = 0$. In the setup of Proposition \ref{prop:KSEqevendim}, this is equivalent to the Lee form $\theta_\omega$ of the $\SU(n)$-structure being closed or, equivalently, the Hermitian form $\omega$ being locally conformally balanced. The goal of this section is to construct a special holomorphic atlas associated to solutions of \eqref{eq:KSEqon2ndimmanconv} satisfying $d\theta_\omega = 0$. To start, we recall a more amenable characterization of the $\SU(n)$-holonomy condition for the Bismut connection, following \cite{grst,grst2}. 

\begin{lemma}\label{lem:holonomia}
Let 	$\left(\Psi,\omega\right)$ be an $\SU(n)$-structure on $M$ with $N_J = 0$ and $d \theta_\omega =0$. Then
$$
\nabla^B \Psi = 0 \qquad \textrm{ if and only if } \qquad d\Psi - \theta_\omega \wedge \Psi = 0,
$$
where $\nabla^B = \nabla^g-\frac12 g^{-1}d^c\omega$ denotes the Bismut connection of $\left(\Psi,\omega\right)$.
\end{lemma}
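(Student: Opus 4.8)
The plan is to prove Lemma~\ref{lem:holonomia} by relating the condition $\nabla^B\Psi = 0$ to a first-order differential equation on $\Psi$ using the explicit form of the Bismut connection. The key observation is that since $\nabla^B$ is a metric connection compatible with $J$ (as $N_J=0$ ensures the Bismut connection is the canonical Hermitian connection with skew-symmetric torsion), it preserves the canonical bundle $K_M$ and acts on sections of $\Lambda^{n,0}T^*M$. Because $\Psi$ is a section of a complex line bundle $K_M$, the condition $\nabla^B\Psi = 0$ is equivalent to the statement that $\nabla^B$ acts on $\Psi$ by a specific $1$-form: namely $\nabla^B\Psi = \alpha\otimes\Psi$ for some $\alpha\in\Omega^1(M,\C)$, and the holonomy-reduction condition forces $\alpha=0$. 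The strategy is thus to compute $\alpha$ in terms of known geometric quantities, $\theta_\omega$ and the torsion $-d^c\omega$, and show it vanishes precisely when $d\Psi = \theta_\omega\wedge\Psi$.

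First I would express the exterior derivative $d\Psi$ in terms of $\nabla^B\Psi$ and the torsion. Since $\nabla^B = \nabla^g - \frac12 g^{-1}d^c\omega$ has torsion $T = -d^c\omega$, there is a standard formula relating $d$ (which is the antisymmetrization of $\nabla^g$) to the antisymmetrization of $\nabla^B$ plus torsion correction terms. Applied to the $(n,0)$-form $\Psi$, antisymmetrizing $\nabla^B\Psi = \alpha\otimes\Psi$ gives $d\Psi$ on the nose up to the contribution of $T$ contracted with $\Psi$. The main computational point is to identify the torsion contribution: contracting the $(2,1)+(1,2)$ part of $-d^c\omega$ against the $(n,0)$-form $\Psi$ produces a term proportional to $\theta_\omega\wedge\Psi$, using the structure equation $d\omega^{n-1} = \theta_\omega\wedge\omega^{n-1}$ and the $\SU(n)$ normalization relating $\Psi$ and $\omega^n$. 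I would carry this out by writing everything in a local unitary coframe adapted to $J$ so that $\Psi$ is a multiple of the top holomorphic form and the trace operations become transparent.

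The forward implication then reads off as follows: if $\nabla^B\Psi=0$, then $\alpha=0$, and the derived identity collapses to $d\Psi = \theta_\omega\wedge\Psi$, i.e. $d\Psi - \theta_\omega\wedge\Psi = 0$. For the converse, I would use that $\nabla^B\Psi = \alpha\otimes\Psi$ for \emph{some} $\alpha$ purely because $\nabla^B$ preserves $K_M$; then the hypothesis $d\Psi - \theta_\omega\wedge\Psi = 0$ together with the torsion formula forces the antisymmetrization of $\alpha\otimes\Psi$ to vanish, which gives $\alpha^{0,1}=0$ directly. To conclude $\alpha=0$ entirely I would exploit the reality/normalization constraint: differentiating the $\SU(n)$ normalization $(-1)^{n(n-1)/2}i^n\Psi\wedge\overline\Psi = \omega^n/n!$ and using $\nabla^B\omega=0$ shows that $\alpha + \overline\alpha$ is controlled by $d\log\|\Psi\|_\omega$, which vanishes since $\|\Psi\|_\omega\equiv 1$; combined with $\alpha^{0,1}=0$ this yields $\alpha=0$, hence $\nabla^B\Psi=0$.

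The hard part will be the precise bookkeeping of the torsion contraction in the first step — pinning down the exact constant and verifying that the $(2,1)+(1,2)$ torsion pairs with $\Psi$ to give exactly $\theta_\omega\wedge\Psi$ and not some other multiple of the Lee form, which is where the hypotheses $N_J=0$ (so the $(3,0)+(0,3)$ torsion vanishes and no spurious Nijenhuis terms appear) and $d\theta_\omega=0$ (ensuring the Lee form genuinely governs the non-balanced part and that the conformal factor is locally integrable) both enter essentially. I expect the cleanest route is to invoke the Gauduchon-type formula $\nabla^B_X\Psi = -\tfrac12(\theta_\omega(X) + \text{(torsion term)})\Psi$ from \cite{Gau} rather than recompute it, reducing the lemma to matching that expression against $d\Psi = \theta_\omega\wedge\Psi$ through the antisymmetrization identity.
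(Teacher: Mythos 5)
Your proof is correct, and it takes a genuinely different route from the paper's for half of the statement. The paper proves only the `only if' direction directly --- via Gauduchon's formula $\nabla^C\Psi = \nabla^B\Psi + i d^*\omega\otimes\Psi$ of \cite{Gau}, whose $(0,1)$-part combined with $\nabla^B\Psi = 0$ gives $\dbar\Psi = \theta_\omega^{0,1}\wedge\Psi$ and hence $d\Psi = \theta_\omega\wedge\Psi$, the $(1,0)$ contributions dying by type --- and outsources the `if' direction entirely to \cite[Lemma 2.2]{grst}. Your argument for the `if' direction is instead self-contained, and this is its real added value: since $N_J=0$ makes $\nabla^B$ a Hermitian connection, it preserves $K_M$, so $\nabla^B\Psi = \alpha\otimes\Psi$ for some complex one-form $\alpha$ ($\Psi$ being nowhere vanishing); the hypothesis $d\Psi = \theta_\omega\wedge\Psi$ kills only $\alpha^{0,1}$, because $\alpha^{1,0}\wedge\Psi = 0$ identically, and your decisive extra step --- differentiating $(-1)^{n(n-1)/2}i^n\Psi\wedge\overline{\Psi} = \omega^n/n!$ with $\nabla^B\omega = 0$ to get $\alpha + \overline{\alpha} = 0$, whence $\alpha^{0,1}=0$ forces $\alpha = 0$ by type --- is correct and does not appear in the paper. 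Two remarks. First, the torsion-contraction bookkeeping you flag as the hard step is best bypassed exactly as you suggest in your closing paragraph: once $\nabla^B\Psi = \alpha\otimes\Psi$ is in hand, Gauduchon's formula yields $\dbar\Psi = (\alpha^{0,1} + \theta_\omega^{0,1})\wedge\Psi$ in one line, and both implications drop out without pinning down the contraction constant in a coframe. Second, your parenthetical claim that $d\theta_\omega = 0$ enters `essentially' is not borne out by your own argument: nothing in it (nor in Gauduchon's formula, valid on any Hermitian manifold) uses closedness of the Lee form; in the paper that hypothesis is consumed only through the citation to \cite[Lemma 2.2]{grst}, where it serves to produce local potentials $\theta_\omega = d\phi_U$ and, in particular, the integrability of $J$ --- which your setting, like the lemma's statement, already assumes. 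So your route buys a self-contained proof of both directions and in fact shows the equivalence under $N_J=0$ alone, while the paper's route buys brevity by citation.
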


\begin{proof}
The `if part' was proved in \cite[Lemma 2.2]{grst}, and requires the condition $d \theta_\omega =0$. For the `only if part', we use Gauduchon's formula \cite[Equation (2.7.6)]{Gau}, relating the connections induced on the canonical bundle by the Chern connection $\nabla^C$ and the Bismut connection $\nabla^B$, namely,
\begin{equation*}
\nabla^C\Psi = \nabla^B\Psi + id^*\omega\otimes\Psi.
\end{equation*}
Taking the $(0,1)$-part of this expression combined with $\nabla^B\Psi = 0$, it follows that
$$
\dbar \Psi = \theta_\omega^{0,1} \wedge \Psi,
$$
which implies $d\Psi = \theta_\omega \wedge \Psi$, as claimed.
\end{proof}

We are ready to prove the main result of this section. It generalizes the existence of special holomorphic coordinates on a complex manifold equipped with a holomorphic volume form, i.e., a holomorphic atlas whose transition maps have holomorphic Jacobian with determinant equal to one.

\begin{lemma}\label{lem:holconstdetatlas}
Let $M$ be a $2n$-dimensional smooth manifold endowed with an $\SU(n)$-structure $(\Psi,\omega)$ satisfying
\begin{equation}\label{eq:confbal}
d\Psi-\theta_\omega\wedge\Psi = 0, \qquad d\theta_\omega = 0.
\end{equation}
Then $M$ admits a unique maximal holomorphic atlas such that for any coordinate open patch $U \subset M$ in this atlas there exists a smooth function $\phi_U \in C^\infty(U)$ such that 
\begin{equation}\label{eq:Psinormalized}
\left.\Psi\right|_U = e^{\phi_U} dz_1 \wedge \cdots \wedge dz_n, \qquad \theta_\omega |_U = d\phi_U.
\end{equation}
Consequently, the holomorphic Jacobian of any change of coordinates in this atlas has constant determinant.
\end{lemma}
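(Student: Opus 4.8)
The plan is to build the atlas from the local data provided by the $\SU(n)$-structure and the hypotheses \eqref{eq:confbal}, and then to check that the resulting coordinate changes have constant holomorphic Jacobian determinant. First I would observe that since $\Psi$ is a nowhere-vanishing locally decomposable $(n,0)$-form with $N_J=0$, the almost complex structure $J$ is integrable (by the Newlander--Nirenberg theorem), so $(M,J)$ is genuinely a complex manifold and carries a holomorphic atlas. On any sufficiently small holomorphic coordinate patch $U$ with coordinates $(z_1,\dots,z_n)$, the form $\Psi$ is of type $(n,0)$ and can be written as $\Psi|_U = f_U\, dz_1\wedge\cdots\wedge dz_n$ for some smooth complex-valued nowhere-vanishing function $f_U$. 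The point is then to choose the coordinates so that $f_U = e^{\phi_U}$ with $\theta_\omega|_U = d\phi_U$; this is where the two equations in \eqref{eq:confbal} enter.

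The key computation is to extract a first-order PDE for $f_U$ from $d\Psi-\theta_\omega\wedge\Psi=0$. Since $\Psi|_U=f_U\,dz_1\wedge\cdots\wedge dz_n$ is of type $(n,0)$, one has $\partial\Psi=0$ for dimensional reasons, so $d\Psi=\dbar\Psi = \dbar f_U\wedge dz_1\wedge\cdots\wedge dz_n$. Comparing with $\theta_\omega\wedge\Psi = \theta_\omega^{0,1}\wedge\Psi = f_U\,\theta_\omega^{0,1}\wedge dz_1\wedge\cdots\wedge dz_n$, the structure equation forces
\[
\dbar \log f_U = \theta_\omega^{0,1}
\]
on $U$. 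Taking the real-exact form $\theta_\omega = d\phi$ locally (possible because $d\theta_\omega=0$, by the Poincar\'e lemma, after shrinking $U$), I would compare $\dbar\log f_U$ with $\dbar\phi = \theta_\omega^{0,1}$ and conclude that $\log f_U - \phi$ is $\dbar$-closed, i.e.\ holomorphic, say $\log f_U = \phi + h$ with $h$ holomorphic on $U$. Since $\phi$ is only determined up to an additive real constant and $h$ is holomorphic, I can absorb $h$ into a new holomorphic coordinate change: replacing $(z_1,\dots,z_n)$ by new holomorphic coordinates $(w_1,\dots,w_n)$ whose Jacobian absorbs the factor $e^{h}$ (for instance $w_1 = \int e^{h}\,dz_1$ with the other coordinates unchanged) rescales $f_U$ precisely by $e^{-h}$, yielding $\Psi|_U = e^{\phi_U}\,dw_1\wedge\cdots\wedge dw_n$ with $\theta_\omega|_U=d\phi_U$ after adjusting the constant. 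This produces charts of the required normalized form, and I would declare the atlas to consist of exactly those holomorphic charts in which \eqref{eq:Psinormalized} holds.

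With the atlas in hand, constant determinant of the Jacobian is a direct consequence of the normalization. On an overlap $U\cap U'$ with normalized coordinates $(z_j)$ and $(z'_j)$, comparing the two expressions $e^{\phi_U}dz_1\wedge\cdots\wedge dz_n = \Psi = e^{\phi_{U'}}dz'_1\wedge\cdots\wedge dz'_n$ gives
\[
\det\!\left(\frac{\partial z'_k}{\partial z_j}\right) = e^{\phi_U - \phi_{U'}}.
\]
The left-hand side is holomorphic (the transition map is biholomorphic), while the right-hand side has modulus determined by $\phi_U-\phi_{U'}$; but both $\phi_U$ and $\phi_{U'}$ are local potentials for the \emph{same} closed form $\theta_\omega$, so $\phi_U-\phi_{U'}$ is locally constant on $U\cap U'$, hence the Jacobian determinant is a (nonzero) constant there. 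Maximality and uniqueness follow by taking the union of all holomorphic charts satisfying \eqref{eq:Psinormalized}: the normalization condition is stable under the allowed (constant-determinant) coordinate changes, so the collection is an atlas and is maximal by construction.

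The main obstacle I anticipate is the global bookkeeping of the potentials $\phi_U$ and the precise argument that the holomorphic reparametrization absorbing $h$ can always be carried out while keeping the charts compatible. Solving $\dbar\log f_U=\theta_\omega^{0,1}$ and integrating $\theta_\omega=d\phi$ are only local statements (the latter using $d\theta_\omega=0$ and the Poincar\'e lemma), so some care is needed to ensure the locally defined normalized charts genuinely assemble into a well-defined maximal atlas rather than merely existing patchwise; the constant-determinant overlap computation is what guarantees this coherence, and verifying that the normalized charts form a bona fide atlas (rather than checking each chart in isolation) is the step I would treat most carefully.
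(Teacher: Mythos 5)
Your overall skeleton matches the paper's proof --- a local potential $\phi_U$ for $\theta_\omega$ obtained from $d\theta_\omega=0$, a normalization of $\Psi$ in adapted holomorphic coordinates, the atlas defined as the collection of all charts in which \eqref{eq:Psinormalized} holds, and the overlap computation $\det(d\psi)=e^{\phi_U-\phi_{U'}}$ with $\phi_U-\phi_{U'}$ locally constant --- but there is one genuine gap at the very first step. You assert integrability of $J$ ``since $\Psi$ is a nowhere-vanishing locally decomposable $(n,0)$-form with $N_J=0$'', yet $N_J=0$ is not a hypothesis: the lemma assumes only an $\SU(n)$-structure satisfying \eqref{eq:confbal}, and integrability is the first thing the paper \emph{proves} (indeed, the text later cites this lemma as the source of $N_J=0$ for the twisted Hull--Strominger system). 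As written, your argument is circular: the identity $d\Psi=\dbar\Psi$ and the claim that $\partial\Psi=0$ ``for dimensional reasons'' already invoke the decomposition $d=\partial+\dbar$, which is valid only once $J$ is integrable; for a general almost complex structure $d\Psi$ has in addition an $(n-1,2)$-component, controlled precisely by $N_J$. The fix is one line and is supplied by the hypothesis itself: $\theta_\omega\wedge\Psi=\theta_\omega^{0,1}\wedge\Psi$ is of pure type $(n,1)$, so $d\Psi=\theta_\omega\wedge\Psi$ forces the $(n-1,2)$-part of $d\Psi$ to vanish, which for a nowhere-vanishing locally decomposable $\Psi$ is equivalent to $N_J=0$; alternatively, argue as the paper does, noting that $\Omega_U=e^{-\phi_U}\Psi|_U$ is closed, which implies integrability directly. (Relatedly, you should fix the patches to be contractible, as the paper does, so that the branch $\log f_U$ and the potential $\phi$ are single-valued.)

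Once integrability is secured, the rest of your argument is correct and diverges from the paper only in the normalization step, in an interesting way. The paper applies Moser's trick to the closed local holomorphic volume form $\Omega_U=e^{-\phi_U}\Psi|_U$ to reach $\Omega_{U'}=dz_1\wedge\cdots\wedge dz_n$; you instead derive $\dbar\log f_U=\theta_\omega^{0,1}$, write $\log f_U=\phi+h$ with $h$ holomorphic, and absorb $e^h$ via the explicit holomorphic primitive $w_1=\int e^{h}\,dz_1$ on a polydisc, using $dw_1\wedge dz_2\wedge\cdots\wedge dz_n=e^{h}\,dz_1\wedge\cdots\wedge dz_n$. This is more elementary and entirely valid; both routes yield the same maximal atlas, and your overlap and constant-determinant computations coincide with the paper's.
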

\begin{proof}
We first prove that the almost complex structure $J$ determined by $\Psi$ is integrable. Let $p \in M$ and $U \subset M$ be a contractible open set with $p \in U$. By hypothesis, there exists a function $\phi_U \in C^\infty(U)$ satisfying $\theta_{\omega}|_U = d\phi_U$. By the first equation in \eqref{eq:confbal}, the local $(n,0)$-form
\begin{equation}
\label{eq:omegapsi}
\Omega_U = e^{-\phi_U}\left.\Psi\right|_U
\end{equation}
is closed, and hence $M$ admits holomorphic coordinates around $p$. 

Next, given a contractible holomorphic coordinate patch $U \subset M$ around $p \in M$, we can repeat the above argument to define a local holomorphic $(n,0)$-form \eqref{eq:omegapsi}, with $\theta_{\omega}|_U = d\phi_U$. By Moser's trick, we can construct new holomorphic coordinates $(z_1,\ldots,z_n)$ in a contractible open subset $U'\subset U$ around $p$ such that
$$
\Omega_{U'} = dz_1 \wedge \cdots \wedge dz_n.
$$
Notice that \eqref{eq:omegapsi} still holds in the new coordinates, for a suitable local potential $\phi_{U'}$ for $\theta_{\omega}|_{U'} = d\phi_{U'}$. Consequently, \eqref{eq:Psinormalized} is satisfied in these coordinates.

Consider now the unique maximal holomorphic atlas formed by all the contractible holomorphic coordinate patches $U \subset M$ such that \eqref{eq:Psinormalized} is satisfied. Let $U$ and $U'$ be two coordinate patches in this atlas, with coordinates $(z_1,\ldots,z_n)$ and $(z_1',\ldots,z_n')$, respectively. Then on $U \cap U'$ we have
$$
d\left(\phi_U - \phi_{U'}\right) = d\phi_U- d\phi_{U'} = \left.\theta_\omega\right|_U-\left. \theta_\omega\right|_{U'} = 0,
$$
and therefore $c := \phi_U-\phi_{U'} \in \C$ in $U\cap U'$. Denoting by $(z_1',\ldots,z_n') = \psi(z_1,\ldots,z_n)$ the corresponding change of coordinates, we obtain
$$
\det\left(d\psi\right)dz_1 \wedge \cdots \wedge dz_n = \Omega_{U'}|_{U\cap U'} = e^{-\phi_{U'}}\left.\Psi\right|_{U\cap U'} = e^c\Omega_{U}|_{U\cap U'} = e^cdz_1 \wedge \cdots \wedge dz_n
$$
on $U\cap U'$, which implies $\det\left(d\psi\right)=e^c$, as claimed.
\end{proof}

\subsection{Bismut Ricci form and torsion bivector}\label{sec:sigmaomega}

The aim of this section is to study some new aspects of the Bismut Ricci form of a Hermitian structure on a complex manifold. In particular, we will derive a new formula for its $(2,0)$-component, and relate it to a bivector field canonically associated to the torsion of the Hermitian structure. This interesting quantity plays a distinguished role in our main results (Theorem \ref{teo:mainTCA} and Theorem \ref{teo:mainTCAcHE}).

Let $(M,J)$ be a complex $n$-dimensional manifold with Hermitian metric $g$ and corresponding Hermitian form $\omega = g(J\cdot,\cdot)$. We denote $T^{0,1} = T^{0,1}_J$ and $T^*_{0,1} = (T^{0,1})^*$. Recall that the Bismut Ricci form $\rho_B$ of $g$ is defined as $i := \sqrt{-1}$ times the curvature of the connection induced by the Bismut connection $\nabla^B = \nabla^g - \frac12g^{-1}d^c\omega$ on the anti-canonical bundle 
$$
K^{-1}_M = \Lambda^n T^{1,0}.
$$ 
More explicitly, for a choice of $g$-orthonormal basis $e_1, \ldots, e_{2n}$ of $T$ at a point, one has
\begin{equation}\label{eq:secondRicci}
\rho_B(V,W) = \frac{1}{2} \sum_{j=1}^{2n} g(R_{\nabla^B}(V,W)Je_j,e_j),
\end{equation}
where $R_{\nabla^B}$ is the curvature tensor of $\nabla^B$ on the tangent bundle. By the proof of \cite[Proposition 5.6]{GFGM0}, we have the following formulae, valid on any Hermitian manifold:
\begin{equation}\label{eq:rhoBdecomp}
\begin{split}
    \rho_B^{1,1}(\cdot,J\cdot) & = \operatorname{Rc} - \frac{1}{4}H^2 - \frac{1}{4} dH (\cdot,J\cdot,e_j,Je_j) + \tfrac{1}{2}L_{\theta_\omega^\sharp} g,\\
    \rho_B^{2,0+0,2}(\cdot,J\cdot) & = -\frac{1}{2}( d^*H - d \theta_\omega +  i_{\theta_\omega^\sharp}H),
\end{split}
\end{equation}
where $\operatorname{Rc}$ is the Riemannian Ricci tensor, $H = -d^c\omega$ and we denote
$$
H^2 =\sum_{i,j=1}^{2n} H(e_j,e_k,\cdot)H(e_j,e_k,\cdot).
$$
In this section we are specifically interested in $\rho_B^{2,0}$, the $(2,0)$ component of the Bismut Ricci form, for which we derive next a formula in holomorphic coordinates following \cite[Proposition 2.2]{Jordan}. We note that the original result in Jordan's PhD Thesis \cite{Jordan} is stated for pluriclosed metrics, but the proof, which we sketch here for the convenience of the reader, is valid for arbitrary Hermitian structures.

\begin{proposition}[\cite{Jordan}]\label{prop:rho20}
Given local holomorphic coordinates around a point, one has
\begin{equation*}\label{eq:rho20}
(\rho_B^{2,0})_{jk} = - i g^{p\overline{l}}\nabla^C_p(\partial \omega)_{jk \overline{l}},
\end{equation*}
where $\nabla^C$ denotes the Chern connection of the Hermitian metric $g$.
\end{proposition}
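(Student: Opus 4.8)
The plan is to reduce the statement to a computation of the $(2,0)$-part of the curvature of the Bismut connection on the anti-canonical bundle $K_M^{-1}$, and then to rewrite this part in holomorphic coordinates. First I would exploit that the Chern--Ricci form $\rho_C$, being the curvature of a connection on the holomorphic line bundle $K_M^{-1}$ compatible with its holomorphic structure, is of type $(1,1)$; hence $\rho_B^{2,0}=(\rho_B-\rho_C)^{2,0}$ and it suffices to control the difference $\rho_B-\rho_C$. Using Gauduchon's formula $\nabla^C\Psi=\nabla^B\Psi+id^*\omega\otimes\Psi$ (already invoked in the proof of Lemma~\ref{lem:holonomia}), which says that the connections induced by $\nabla^C$ and $\nabla^B$ on $K_M$ differ by the global $1$-form $id^*\omega$, I would take curvatures, pass to $K_M^{-1}$, and recall that $\rho_B,\rho_C$ are $i$ times these curvatures, obtaining $\rho_B-\rho_C=-dd^*\omega$. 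Projecting onto type $(2,0)$ then gives
\begin{equation*}
\rho_B^{2,0} = -\partial\big((d^*\omega)^{1,0}\big) = -\partial(\bar\partial^*\omega).
\end{equation*}
This is consistent with the real avatar in the second line of \eqref{eq:rhoBdecomp}.

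Next I would write $(d^*\omega)^{1,0}=\bar\partial^*\omega$ in holomorphic coordinates. By the standard torsion-corrected Hermitian identity relating $\bar\partial^*\omega$ to the contraction $\Lambda\partial\omega$ (equivalently, to the $(1,0)$-part of the Lee form $\theta_\omega=Jd^*\omega$), one has, with our conventions, $(d^*\omega)^{1,0}_k=i\,g^{p\bar l}(\partial\omega)_{pk\bar l}$, where $(\partial\omega)_{pk\bar l}=i(\partial_p g_{k\bar l}-\partial_k g_{p\bar l})$. Substituting into the previous display and antisymmetrising, the components of $\rho_B^{2,0}$ become
\begin{equation*}
(\rho_B^{2,0})_{jk} = -\partial_j(d^*\omega)^{1,0}_k + \partial_k(d^*\omega)^{1,0}_j = -i\,\partial_j\big(g^{p\bar l}(\partial\omega)_{pk\bar l}\big) + i\,\partial_k\big(g^{p\bar l}(\partial\omega)_{pj\bar l}\big).
\end{equation*}

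It then remains to recognise the right-hand side as the covariant expression $-i\,g^{p\bar l}\nabla^C_p(\partial\omega)_{jk\bar l}$. The cleanest way I would do this is to fix an arbitrary point $P$ and choose Chern normal holomorphic coordinates at $P$, in which $\partial_p g_{j\bar k}$, all Christoffel symbols $\Gamma^m_{pj}=g^{m\bar q}\partial_p g_{j\bar q}$, and the derivatives of the inverse metric vanish at $P$. At $P$ the covariant derivative $\nabla^C_p$ reduces to $\partial_p$ and the inverse-metric factors may be pulled through the derivatives, so the two terms above collapse, via the contracted Bianchi-type identity $g^{p\bar l}\partial_p(\partial\omega)_{jk\bar l}=g^{p\bar l}\partial_j(\partial\omega)_{pk\bar l}-g^{p\bar l}\partial_k(\partial\omega)_{pj\bar l}$ coming from $\partial(\partial\omega)=0$, precisely to $-i\,g^{p\bar l}\partial_p(\partial\omega)_{jk\bar l}=-i\,g^{p\bar l}\nabla^C_p(\partial\omega)_{jk\bar l}$ at $P$. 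Since $P$ is arbitrary and both sides are components of globally defined tensors, the identity holds in any holomorphic coordinates.

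The main obstacle is bookkeeping rather than conceptual: one must pin down the various sign and normalisation conventions (the precise form of Gauduchon's identity, the normalisation in $\bar\partial^*\omega=\mp i\Lambda\partial\omega$, and the Chern Christoffel symbols) so that all constants match, and one must carry out correctly the passage from ordinary to covariant derivatives. The latter is where the input $\partial(\partial\omega)=0$ is essential; choosing Chern normal coordinates at $P$ is what makes this passage transparent and avoids tracking the Christoffel correction terms explicitly.
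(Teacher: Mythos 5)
Your first steps take a genuinely different route from the paper, and a sound one: the paper works on the tangent bundle, expressing $(R_{\nabla^B})_{qjk\overline{l}}$ through $\nabla^C T$ plus torsion-quadratic terms, rewriting $\partial(\partial\omega)=0$ covariantly (which produces matching torsion-quadratic terms), combining the two into $(R_{\nabla^B})_{qjk\overline{l}}=\nabla^C_kT_{qj\overline{l}}$, and tracing via \eqref{eq:secondRicci}; you instead work on the line bundle $K_M^{-1}$, where $\rho_C$ being $(1,1)$ plus Gauduchon's formula immediately gives $\rho_B-\rho_C=-dd^*\omega$ and hence $\rho_B^{2,0}=-\partial\big((d^*\omega)^{1,0}\big)$, reducing the proposition to a first-order identity for the $(1,0)$-form $\alpha_k=g^{p\overline{l}}(\partial\omega)_{pk\overline{l}}$. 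Up to the normalisation constants you rightly flag, this reduction is correct and more economical than the tangent-bundle computation.

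However, your final step contains a genuine gap: the coordinates you invoke do not exist. For a non-K\"ahler metric one cannot find holomorphic coordinates at $P$ in which all $\partial_pg_{j\overline{k}}(P)$ vanish, because the antisymmetric part $T_{pj\overline{k}}=\partial_pg_{j\overline{k}}-\partial_jg_{p\overline{k}}=-i(\partial\omega)_{pj\overline{k}}$ is a tensor and is therefore an obstruction; the best a quadratic change of coordinates achieves is to kill the symmetric part, leaving $\partial_pg_{j\overline{k}}(P)=\tfrac12T_{pj\overline{k}}(P)$ and $\Gamma^s_{pj}(P)=\tfrac12T^s_{pj}(P)\neq 0$. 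So at $P$ the operator $\nabla^C_p$ does \emph{not} reduce to $\partial_p$, and $g^{p\overline{l}}$ cannot be pulled through derivatives; each manoeuvre produces an error quadratic in the torsion (and in the only interesting case, $\partial\omega\neq 0$, these errors are genuinely present). Your conclusion nevertheless survives, because the errors cancel: writing $A=\partial\omega$ and using the exact cyclic identity $\partial_pA_{jk\overline{l}}=\partial_jA_{pk\overline{l}}-\partial_kA_{pj\overline{l}}$ (which follows from $\partial^2\omega=0$ with ordinary derivatives, no special coordinates needed), a direct computation gives
\[
\partial_j\big(g^{p\overline{l}}A_{pk\overline{l}}\big)-\partial_k\big(g^{p\overline{l}}A_{pj\overline{l}}\big)-g^{p\overline{l}}\nabla^C_pA_{jk\overline{l}}
= i\,g^{p\overline{l}}g^{s\overline{q}}\left(T_{pj\overline{q}}T_{sk\overline{l}}-T_{pk\overline{q}}T_{sj\overline{l}}\right),
\]
and the right-hand side vanishes identically since the two terms are exchanged under the relabelling of dummy indices $(p,\overline{l})\leftrightarrow(s,\overline{q})$. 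This cancellation of the $T\ast T$ terms is precisely the non-trivial content of the step -- it is what the paper achieves by combining its two covariant identities -- and it must be verified rather than hidden in nonexistent normal coordinates. With this replacement, and after pinning down the sign in $(d^*\omega)^{1,0}_k=i\,g^{p\overline{l}}(\partial\omega)_{pk\overline{l}}$, your argument becomes a complete and independent proof.
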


\begin{proof}
Denote $T = - i\partial \omega$. Then, as a consequence of the Bianchi identities, one has
$$
(R_{\nabla^B})_{qjk\overline{l}} = \nabla^C_qT_{kj \overline{l}} - \nabla^C_jT_{kq \overline{l}} + T_{qj}^p T_{kp \overline{l}} - T_{qk}^p T_{jp \overline{l}} + T_{jk}^p T_{qp \overline{l}}.
$$
The condition $\partial T = - \sqrt{-1}\partial^2 \omega = 0$ can be written in terms of the Chern connection as
$$
\nabla^C_qT_{jk \overline{l}} - \nabla^C_jT_{qk \overline{l}} + \nabla^C_kT_{qj \overline{l}} +  T_{qj}^p T_{pk \overline{l}} +  T_{jk}^p T_{pq \overline{l}} - T_{qk}^p T_{pj \overline{l}} = 0.
$$
Combining now the two previous identities, we obtain 
$$
(R_{\nabla^B})_{qjk\overline{l}} = \nabla^C_k T_{qj\overline{l}},
$$
and the statement follows from \eqref{eq:secondRicci} by taking trace in the previous expression with respect to the hermitian metric.
\end{proof}

We define next a bivector field canonically associated to the torsion of the Bismut connection via the Schouten bracket. Firstly, given $v \in \Omega^0(T^{0,1})$, we have a bivector field
\begin{equation*}
\left(g^{-1}\otimes g^{-1}\right)\left(\iota_{v}i\partial\omega\right) \in \Omega^0\left(\Lambda^2T^{0,1}\right)
\end{equation*}
of type $(0,2)$. 
Then, for $w \in \Omega^0(T^{1,0})$, we can take the Schouten bracket and then project onto its $(0,2)$-component:
\begin{equation}\label{eq:02Sch}
\left[w,\left(g^{-1}\otimes g^{-1}\right)\left(\iota_{v}i\partial\omega\right)\right]^{0,2}\in\Omega^0(\Lambda^2T^{0,1}).
\end{equation}
Given local holomorphic coordinates around a point, we define
\begin{equation}
\label{eq:sigmaomega}
\sigma_\omega := \sum_{k=1}^n\left[g^{-1}d\overline{z}_k,\left(g^{-1}\otimes g^{-1}\right)\left(\iota_{\overline{\partial}_k}i\partial\omega\right)\right]^{0,2},
\end{equation}
where $\overline{\partial}_k = \frac{\partial}{\partial\overline{z}_k}$.

\begin{lemma}\label{lem:sigmaomega}
The expression \eqref{eq:sigmaomega} is independent of the choice of local holomorphic coordinates. In consequence, it defines a bivector field of type $(0,2)$, that is,
$$
\sigma_\omega \in \Omega^0\left(\Lambda^2T^{0,1}\right).
$$
Furthermore, the components of $\sigma_\omega$ are given by
\begin{equation}\label{eq:sigmaomegaexp}
\left(\sigma_\omega\right)_{\overline{pq}} = i\sum_{k=1}^ng^{-1}d\overline{z}_k\left(\partial\omega\left(g^{-1}d\overline{z}_p,g^{-1}d\overline{z}_q,\overline{\partial}_k\right)\right).
\end{equation}
\end{lemma}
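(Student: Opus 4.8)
The plan is to prove the explicit formula \eqref{eq:sigmaomegaexp} by a direct computation in local holomorphic coordinates, and then to read off the coordinate‑independence of $\sigma_\omega$ directly from the resulting tensorial expression. The starting point is the classical fact that, for a vector field $w$ and a bivector field $\pi$, the Schouten--Nijenhuis bracket coincides with the Lie derivative, $[w,\pi]=\mathcal{L}_w\pi$. Writing $X_k:=g^{-1}d\overline{z}_k=g^{p\overline{k}}\partial_p$ (a field of type $(1,0)$) and $\pi_k:=(g^{-1}\otimes g^{-1})(\iota_{\overline{\partial}_k}i\partial\omega)$, I would therefore reduce the problem to computing the $(0,2)$-part of $\sum_k\mathcal{L}_{X_k}\pi_k$.

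For the main computation, fixing local holomorphic coordinates, one has $\iota_{\overline{\partial}_k}i\partial\omega=\tfrac12(i\partial\omega)_{jl\overline{k}}\,dz^j\wedge dz^l$, and raising the two holomorphic indices with $g^{-1}dz^j=g^{j\overline{a}}\overline{\partial}_a$ gives $\pi_k=\tfrac12\,\pi_k^{\overline{a}\overline{b}}\,\overline{\partial}_a\wedge\overline{\partial}_b$ with $\pi_k^{\overline{a}\overline{b}}=(i\partial\omega)_{jl\overline{k}}g^{j\overline{a}}g^{l\overline{b}}$. The Leibniz rule for the Lie derivative then yields
\[
\mathcal{L}_{X_k}\pi_k=\tfrac12 X_k\big(\pi_k^{\overline{a}\overline{b}}\big)\,\overline{\partial}_a\wedge\overline{\partial}_b+\pi_k^{\overline{a}\overline{b}}\,[X_k,\overline{\partial}_a]\wedge\overline{\partial}_b.
\]
The decisive observation, which I expect to be the crux of the argument, is that $[X_k,\overline{\partial}_a]=-\overline{\partial}_a(g^{p\overline{k}})\partial_p$ is of type $(1,0)$, so every term in the second sum is of type $(1,1)$ and is annihilated by the projection onto $\Lambda^2 T^{0,1}$. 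Hence only the component‑derivative term survives, and summing over $k$ gives $\sigma_\omega=\tfrac12\sum_k X_k\big((i\partial\omega)_{jl\overline{k}}g^{j\overline{a}}g^{l\overline{b}}\big)\,\overline{\partial}_a\wedge\overline{\partial}_b$. Since $\partial\omega(g^{-1}d\overline{z}_p,g^{-1}d\overline{z}_q,\overline{\partial}_k)=g^{j\overline{p}}g^{l\overline{q}}(\partial\omega)_{jl\overline{k}}$, this is exactly \eqref{eq:sigmaomegaexp}.

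Finally, coordinate‑independence follows from the formula itself, so that establishing \eqref{eq:sigmaomegaexp} simultaneously proves that \eqref{eq:sigmaomega} is well defined. Here I would use that $\sum_k g^{-1}d\overline{z}_k\otimes\overline{\partial}_k=\sum_{k,p}g^{p\overline{k}}\partial_p\otimes\overline{\partial}_k$ is precisely the $(T^{1,0}\otimes T^{0,1})$-part of $g^{-1}$, hence globally defined, together with the two relations $\sum_k\tfrac{\partial\overline{z}'_m}{\partial\overline{z}_k}X_k=g^{-1}d\overline{z}'_m$ and $g^{-1}d\overline{z}_p=\sum_a\tfrac{\partial\overline{z}_p}{\partial\overline{z}'_a}g^{-1}d\overline{z}'_a$. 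Because $X_k$ is of type $(1,0)$, it annihilates the anti‑holomorphic Jacobian factors $\tfrac{\partial\overline{z}_p}{\partial\overline{z}'_a}$, which therefore pass freely through the derivative; substituting these relations into the right‑hand side of \eqref{eq:sigmaomegaexp} shows that $(\sigma_\omega)_{\overline{pq}}=\sum_{a,b}\tfrac{\partial\overline{z}_p}{\partial\overline{z}'_a}\tfrac{\partial\overline{z}_q}{\partial\overline{z}'_b}(\sigma_\omega)'_{\overline{ab}}$. This is exactly the transformation law for the components of a section of $\Lambda^2 T^{0,1}$, so $\sigma_\omega$ is independent of the chosen coordinates and defines a global bivector field of type $(0,2)$, as claimed.
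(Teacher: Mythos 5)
Your proposal is correct and is essentially the paper's own argument: the two facts you isolate --- that the $(0,2)$-projection annihilates the correction terms $\pi_k^{\overline{a}\overline{b}}\,[X_k,\overline{\partial}_a]\wedge\overline{\partial}_b$, which are of type $(1,1)$, and that the type-$(1,0)$ fields $X_k=g^{-1}d\overline{z}_k$ annihilate the anti-holomorphic Jacobian factors --- are precisely the two tensoriality properties (in the $w$-slot for arbitrary smooth functions, and in the $v$-slot for functions $f$ with $\partial f=0$) on which the paper's proof of coordinate-independence rests. The only difference is the order of the steps: you first derive the component formula \eqref{eq:sigmaomegaexp} via $[w,\pi]=\mathcal{L}_w\pi$ and then read off invariance from the resulting transformation law, whereas the paper establishes the two tensoriality properties abstractly, concludes invariance by a change of holomorphic coordinates, and obtains \eqref{eq:sigmaomegaexp} by unwinding the definition --- a presentational rather than mathematical difference.
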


\begin{proof}
Observe that the expression \eqref{eq:02Sch} is tensorial in $w$, by definition of the Schouten bracket. Furthermore, if $f$ is a local function such that $\partial f = 0$, then
$$
\left[w,\left(g^{-1}\otimes g^{-1}\right)\left(\iota_{f v}i\partial\omega\right)\right]^{0,2} = f \left[w,\left(g^{-1}\otimes g^{-1}\right)\left(\iota_{v}i\partial\omega\right)\right]^{0,2}.
$$
The first part follows now by change of holomorphic coordinates. The second part follows by definition, or alternatively by the local expression
\begin{equation}\label{eq:sigmaomegaexplocal}
\sigma_\omega = \sum_{1 \leq p<q \leq n} g^{s\overline{k}}\partial_s\left(g^{m\overline{p}}g^{l\overline{q}}\left(i\partial\omega\right)_{ml\overline{k}}\right)\overline{\partial}_p\wedge\overline{\partial}_q,
\end{equation}
where $\partial_k := \frac{\partial}{\partial z_k}$ and we use the Einstein summation convention for repeated indices.
\end{proof}


Applying now Proposition \ref{prop:rho20}, in our next result we establish the relation between the torsion bivector field and the Bismut Ricci form.

\begin{proposition}\label{prop:rho20sigma}
For an arbitrary hermitian metric, one has
\begin{equation*}\label{eq:rho20sigma}
\sigma_\omega=-(g^{-1}\otimes g^{-1})\rho_B^{2,0}.
\end{equation*}
\end{proposition}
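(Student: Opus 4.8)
The strategy is to compare both sides componentwise in local holomorphic coordinates around a point, since both $\sigma_\omega$ and $(g^{-1}\otimes g^{-1})\rho_B^{2,0}$ are coordinate-independent tensors of type $(0,2)$ by Lemma \ref{lem:sigmaomega} and Proposition \ref{prop:rho20}. First I would write out the components of the right-hand side using Proposition \ref{prop:rho20}: raising the two holomorphic indices of $\rho_B^{2,0}$ with $g^{-1}$ gives
\[
\bigl((g^{-1}\otimes g^{-1})\rho_B^{2,0}\bigr)_{\overline{pq}} = g^{\overline{p}j}g^{\overline{q}k}(\rho_B^{2,0})_{jk} = -i\,g^{\overline{p}j}g^{\overline{q}k}g^{r\overline{l}}\nabla^C_r(\partial\omega)_{jk\overline{l}}.
\]
In parallel, I would expand the explicit component formula \eqref{eq:sigmaomegaexp} for $\sigma_\omega$, unwinding the action of $g^{-1}d\overline{z}_k$ and the contraction $\iota_{\overline{\partial}_k}$ so that everything is expressed in terms of $(\partial\omega)$ and the metric. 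The goal is to show the two resulting expressions agree up to the claimed sign.

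The heart of the computation is reconciling the two different packagings of the torsion. On the $\rho_B$ side the derivative is a Chern-covariant derivative $\nabla^C_r$ contracted against the metric, whereas in definition \eqref{eq:sigmaomega} the derivative enters through a Schouten bracket of a $(1,0)$-vector with a $(0,2)$-bivector. The cleanest route is probably to start from the local coordinate expression \eqref{eq:sigmaomegaexplocal}, in which the Schouten bracket has already been evaluated to an ordinary partial derivative $\partial_s$ acting on $g^{m\overline{p}}g^{l\overline{q}}(i\partial\omega)_{ml\overline{k}}$ and then contracted with $g^{s\overline{k}}$. I would then convert that ordinary derivative into the Chern-covariant derivative by adding and subtracting the Chern connection terms; the Chern connection acts trivially on the purely antiholomorphic indices carried by $g^{s\overline{k}}$ and on the inverse-metric factors in a controlled way, since $\nabla^C g = 0$. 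This metric-compatibility is what lets the $\partial_s g^{m\overline{p}}$ terms be absorbed, leaving $g^{s\overline{k}}g^{m\overline{p}}g^{l\overline{q}}\nabla^C_s(i\partial\omega)_{ml\overline{k}}$, which after relabeling is exactly the raised form of the expression in Proposition \ref{prop:rho20}.

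The main obstacle I anticipate is bookkeeping: correctly tracking the index antisymmetrization inherent in the bivector (the sum over $p<q$ versus a free pair of antiholomorphic indices) and ensuring the Christoffel-symbol corrections generated when passing from $\partial_s$ to $\nabla^C_s$ cancel exactly, rather than leaving a residual torsion term. Concretely, one must verify that the Chern connection coefficients contracted into $(\partial\omega)_{ml\overline{k}}$ reorganize into the covariant derivative of $(\partial\omega)$ itself with no leftover quadratic-in-torsion pieces; this works precisely because $\nabla^C$ is metric-compatible and of type $(1,0)$, so it commutes past the $g^{-1}$ factors and acts only on the single holomorphic slot $m$ (or $l$). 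Once the covariant derivatives are matched and the factor of $i$ from $T=-i\partial\omega$ and the sign conventions in \eqref{eq:rhoBdecomp} are tracked carefully, the identity $\sigma_\omega = -(g^{-1}\otimes g^{-1})\rho_B^{2,0}$ follows directly, and I would close the proof by appealing to the coordinate-independence already established so that the pointwise identity globalizes.
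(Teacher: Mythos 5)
Your proposal is correct and takes essentially the same route as the paper's proof: the paper likewise works from the local expression \eqref{eq:sigmaomegaexplocal} and converts the partial derivative into the Chern covariant derivative via the identity $\partial_k\bigl(g^{i\overline{p}}g^{j\overline{q}}T_{ij\overline{l}}\bigr)=g^{i\overline{p}}g^{j\overline{q}}\nabla^C_kT_{ij\overline{l}}$, which holds exactly for the reasons you cite (metric compatibility of $\nabla^C$ together with $\Gamma^{\overline{s}}_{k\overline{l}}=0$, so the antiholomorphic slots are untouched), before invoking Proposition \ref{prop:rho20}. The index bookkeeping you flag is precisely the content of the paper's ``straightforward calculation'', and it closes as you anticipate with no residual torsion terms.
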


\begin{proof}
Denote by $\Gamma_{ij}^s$ the Christoffel symbols of the Chern connection $\nabla^C$ in holomorphic coordinates, given by
$$
\Gamma_{ij}^s = g^{s \overline{l}}\partial_i g_{j\overline{l}}.
$$
Then, using that (see the proof of Proposition \ref{prop:rho20})
$$
T_{ij\overline{k}} = \partial_i g_{j \overline{k}} - \partial_j g_{i \overline{k}}
$$
we have
\begin{align*}
\nabla^C_kT_{ij \overline{l}} & = \partial_kT_{ij \overline{l}} - T_{sj \overline{l}}\Gamma^s_{ki} - T_{is \overline{l}}\Gamma^s_{kj} -  T_{ij \overline{s}}\Gamma^{\overline{s}}_{k\overline{l}}\\
& = \partial_k\partial_i g_{j \overline{l}} - \partial_k\partial_j g_{i \overline{l}} - (\partial_s g_{j\overline{l}} - \partial_j g_{s\overline{l}})g^{s \overline{t}}\partial_k g_{i\overline{t}} - (\partial_i g_{s\overline{l}} - \partial_s g_{i\overline{l}})g^{s \overline{t}}\partial_k g_{j\overline{t}},
\end{align*}
where in the second equality we have used that $\Gamma^{\overline{s}}_{k\overline{l}} = 0$, by the properties of the Chern connection. On the other hand, a straighforward calculation shows that
$$
\partial_k(g^{i\overline{p}}g^{j\overline{q}} T_{ij \overline{l}}) = g^{i\overline{p}}g^{j\overline{q}} \nabla^C_kT_{ij \overline{l}},
$$
and hence Proposition \ref{prop:rho20} combined with \eqref{eq:sigmaomegaexplocal} now gives
\[
(\sigma_\omega)_{\overline{pq}} = - g^{s \overline{l}}g^{j\overline{p}}g^{k\overline{q}} \nabla^C_sT_{jk \overline{l}} = - g^{j\overline{p}}g^{k\overline{q}}(\rho_B^{2,0})_{jk}.
\qedhere
\]
\end{proof}

To finish this section, we provide a simple example of Hermitian metric with non-vanishing torsion bivector field, and hence with $\rho_B^{2,0} \neq 0$, in complex dimension two.

\begin{example}\label{ex:sigmanonzero}
Let $X \subset \C^2$ be a complex domain and consider the Hermitian form
$$
\omega = \frac{i}{2}\left(u dz_1\wedge d\overline{z}_1 + v dz_2\wedge d\overline{z}_2\right),
$$
for a pair of smooth positive real functions $u,v \in C^\infty(X)$. Notice that
\begin{equation*}
g^{-1}d\overline{z}_1 = 2 u^{-1}\partial_1, \qquad g^{-1}d\overline{z}_2 = 2 v^{-1}\partial_2.
\end{equation*}
Then, by direct calculation using Lemma \ref{lem:sigmaomega}, we have
\begin{align*}
\left(\sigma_\omega\right)_{\overline{12}} & = ig^{-1}d\overline{z}_1\left(4 (uv)^{-1}\partial u \wedge \omega_0 \left(\partial_1,\partial_2,\overline{\partial}_1\right)\right) + ig^{-1}d\overline{z}_2\left(4 (uv)^{-1}\partial v \wedge \omega_0 \left(\partial_1,\partial_2,\overline{\partial}_2\right)\right)\\
& = 4u^{-1}\partial_1\left(v^{-1}\partial_2 \log u \right) - 4v^{-1}\partial_2\left(u^{-1}\partial_1 \log v \right)\\
& =4 \frac{\partial_1\partial_2 \log u - \partial_2\partial_1 \log v}{uv}
\end{align*}
and therefore
$$
\sigma_\omega = 4 \frac{\partial_1\partial_2\log (u/v)}{uv}\overline{\partial}_1\wedge\overline{\partial}_2.
$$
Since $\log(u/v)$ is real, for $\sigma_\omega$ to vanish we must have
$$
\log(u/v) = \overline{z}_1 p(z_2) + \overline{z}_2 q(z_1) + t(z_1) + s(z_2) + \text{c.c.}
$$
with $p,q,t,s$ holomorphic functions in one variable, and therefore the torsion bivector field is non-zero for generic choices of $u$ and $v$. 
\end{example}

\section{Canonical metrics on string algebroids}\label{sec:tHSString}

\subsection{Background on string algebroids}

We start by recalling the necessary background material on Courant algebroids of string type, following \cite{grt2}.

\begin{definition}\label{defn:CA}
A \emph{Courant algebroid} $(E,\left\langle\cdot,\cdot\right\rangle,\left[\cdot,\cdot\right],\pi)$ over a smooth manifold $M$ consists of a vector bundle $E \to M$ endowed with a non-degenerate symmetric bilinear form $\left\langle\cdot,\cdot\right\rangle$, a (Dorfman) bracket $\left[\cdot,\cdot\right]$ on $\Omega^0(E)$, and a bundle map $\pi \colon E \to T$, called \emph{anchor map}, such that the following axioms are satisfied, for all $a,b,c \in \Omega^0(E)$ and $f \in C^\infty(M)$:
\begin{enumerate}
\item[(1)] $\left[a,\left[b,c\right]\right] = \left[\left[a,b\right],c\right]+\left[b,\left[a,c\right]\right]$,
\item[(2)] $\pi\left[a,b\right] = \left[\pi(a),\pi(b)\right]$,
\item[(3)] $\left[a,fb\right] = f\left[a,b\right] + \pi(a)(f)b$,
\item[(4)] $\pi(a)\left\langle b,c\right\rangle = \left\langle\left[a,b\right],c\right\rangle + \left\langle b,\left[a,c\right]\right\rangle$,
\item[(5)] $\left[a,b\right]+\left[b,a\right] = \mathcal{D}\left\langle a,b\right\rangle$,
\end{enumerate}
Here, the map $\mathcal{D}\colon C^\infty(M)\to\Omega^0(E)$ denotes the composition of the exterior differential $d \colon C^\infty(M) \to \Omega^1$, the dual map $\pi^* \colon T^* \to E^*$ and the isomorphism $E^* \cong E$ provided by $\left\langle\cdot,\cdot\right\rangle$. In particular, one has
\begin{equation*}
\left\langle\mathcal{D}f,a\right\rangle = \pi(a)(f),
\end{equation*}
\end{definition}
so the above identity (4) can be reformulated as
\[
\langle a,\mathcal{D}\langle b,c\rangle\rangle 
=\langle[a,b],c\rangle + \langle b,[a,c]\rangle.
\]

We will denote a Courant algebroid $(E,\left\langle\cdot,\cdot\right\rangle,\left[\cdot,\cdot\right],\pi)$ simply by $E$. Using the isomorphism $\left\langle\cdot,\cdot\right\rangle \colon E \to E^*$, we obtain a complex of vector bundles
\begin{equation}\label{eq:Couseqaux}
T^* \overset{\pi^*}{\longrightarrow} E \overset{\pi}{\longrightarrow} T.
\end{equation}
We will say that $E$ is \emph{transitive} if the anchor map $\pi$ in \eqref{eq:Couseqaux} is surjective. Given a transitive Courant algebroid $E$ over $M$, there is an associated Lie algebroid 
$$
A_E := E/(\Ker \pi)^\perp.
$$ 
Furthermore, the subbundle
$$
\ad_E := \Ker \pi/(\Ker \pi)^\perp \subset A_E
$$
inherits the structure of a bundle of quadratic Lie algebras. Therefore, the bundle $E$ fits into a double extension of vector bundles
\begin{equation}\label{eq:Coustr}\begin{gathered}
0 \longrightarrow T^* \overset{\pi^*}{\longrightarrow} E \longrightarrow A_E \longrightarrow 0,\\
0 \longrightarrow \ad_E \overset{\pi^*}{\longrightarrow} A_E \overset{\pi}{\longrightarrow} T \longrightarrow 0.
\end{gathered}\end{equation}
A classification of transitive Courant algebroids has been obtained in \cite[Proposition A.6]{grt2}, in the special case that $A_E$ is isomorphic to the Atiyah algebroid of a smooth principal bundle. A Courant algebroid of this form is said  to be of \emph{string type}. To give a more precise definition, we briefly discuss the basic example which we will need.

\begin{example}\label{def:E0}
Let $K$ be a real Lie group with Lie algebra $\mathfrak{k}$. We assume that $\mathfrak{k}$ is endowed with a non-degenerate bi-invariant symmetric bilinear form
$$
\left\langle\cdot,\cdot\right\rangle: \mathfrak{k} \otimes \mathfrak{k} \longrightarrow \mathbb{R}.
$$
Let $p \colon P \to M$ be a smooth principal $K$-bundle over $M$. Consider the Atiyah Lie algebroid $A_{P} := TP/K$. The smooth bundle of Lie algebras 
$
\ad P:= \Ker dp \subset A_{P}
$
fits into the short exact sequence of Lie algebroids
$$
0 \longrightarrow \ad P \to A_{P} \longrightarrow T \to 0.
$$
We construct next a transitive Courant algebroid such that the second sequence in \eqref{eq:Coustr} is canonically isomorphic to the exact sequence of Lie algebroids above. Assume that
$$
p_1(P) = 0 \in H^4_{dR}(M,\mathbb{R}),
$$
where $p_1(P)$ denotes \emph{first Pontryagin class} of $P$ associated $\left\langle\cdot,\cdot\right\rangle$ via Chern--Weil theory. Then, given a choice of principal connection $A$ on $P$, there exists a smooth real three-form $H \in \Omega^{3}$ such that
\begin{equation}\label{eq:Bianchireal}
dH - \langle F_A \wedge F_A \rangle = 0.
\end{equation}
Given such a pair $(H,A)$, we define a Courant algebroid $E_{P,H,A}$ with underlying vector bundle
\begin{equation*}
T \oplus\ad P\oplus T^*,
\end{equation*}
non-degenerate symmetric bilinear form
\begin{equation}\label{eq:pairingE0}
\langle X + r + \xi , X + r + \xi \rangle_0  = \xi(X) + \langle r,r \rangle,
\end{equation}
bracket given by
\begin{equation}\label{eq:bracketE0}
	\begin{split}
	[X+ r + \xi,Y + t + \eta]_0   = {} & [X,Y] - F_A(X,Y) + d^A_X t - d^A_Y r - [r,t]\\
	& {} + L_X \eta - \iota_Y d \xi + \iota_Y\iota_X H\\
	& {} + 2\langle d^A r, t \rangle + 2\langle \iota_X F_A, t \rangle - 2\langle \iota_Y F_A, r \rangle,
	\end{split}
\end{equation}
and anchor map $\pi_0(X + r + \xi) = X$. It is not difficult to see that $E_{P,H,A}$ defines a smooth transitive Courant algebroid over $M$, as defined above.
\end{example}

Transitive Courant algebroids as in Example \ref{def:E0} fit into the category of \email{string algebroids} \cite{grt2}, which motivates the following definition.

\begin{definition}\label{d:CAstring} 
A Courant algebroid $E$ over $M$ is of \emph{string type} if it is isomorphic to a Courant algebroid $E_{P,H,A}$ as in Example \ref{def:E0}, for some triple $(P,H,A)$ satisfying \eqref{eq:Bianchireal}. In this case, we will refer to $E$ simply as a \emph{string algebroid}.
\end{definition}

The notion of isomorphism which we use here is the standard one for Courant algebroids, given by (base-preserving) smooth orthogonal bundle morphisms which preserve the bracket and the anchor map (cf. \cite[Definition 2.3]{grt2}).

\subsection{Killing spinors on string algebroids}

In this section we recall the definition of the Killing spinor equations on string algebroids, following \cite{GF3,grt}. Roughly speaking, this is an enhancement of the conditions in Definition \ref{defn:KSEqSM}, where the equations are supplemented by the \emph{Bianchi identity} \eqref{eq:Bianchireal}, relating the three-form $H$ with the curvature of the connection $F_A$.

Let $M$ be a smooth oriented spin manifold endowed with a string algebroid $E$. The Killing spinor equations on $E$ are defined in terms of a pair $(V_+,\operatorname{div})$, as given in the following definition.

\begin{definition}\label{def:GmetricE}
\hfill

\begin{enumerate}
\item A (Riemannian) \emph{generalized metric} on $E$ is an orthogonal decomposition $E = V_+ \oplus V_-$, so that the restriction of $\langle \cdot,\cdot\rangle$ to $V_+$ is positive definite and that $\pi_{|V_+}:V_{+}\rightarrow T$ is an isomorphism.
	
\item A \emph{divergence operator} on $E$ is a map $\operatorname{div} \colon \Omega^0(E) \to C^\infty(M)$ satisfying
$$
\operatorname{div}(fa) = f \operatorname{div}(a) + \pi(a)(f)
$$
for any $f \in C^\infty(M)$.

\end{enumerate}
	
\end{definition}

A generalized metric $V_+ \subset E$ is equivalent to a Riemannian metric $g$ on $M$ and an isotropic splitting $s \colon T \to E$ (see \cite[Proposition 3.4]{GF1}). In particular, it induces an isomorphism $E \cong E_{P,H,A}$ for a uniquely determined principal bundle $P$, three-form $H \in \Omega^3$, and principal connection $A$ on $P$ satisfying \eqref{eq:Bianchireal} (see Example \ref{def:E0}). Furthermore, via this identification we have
\begin{equation}\label{eq:Vpm}
V_+ = \{X + g(X)\,\mid\, X \in T\}, \quad V_- = \{X - g(X) + r\,\mid\, X \in T, r \in \ad P\}.
\end{equation}
We will use the following notation for the induced orthogonal projections: 
$$
\pi_\pm\colon  E \longrightarrow V_\pm \colon\, a \longmapsto a_\pm.
$$
Note that the generalized metric has an associated \emph{Riemannian divergence} defined by
\begin{equation}\label{eq:div0}
\operatorname{div}_0(X + r + \xi) = \frac{L_X \mu_g}{\mu_g},
\end{equation}
where $\mu_g$ is the volume element of $g$. Following \cite{GFStreets}, we introduce next a compatibility condition for pairs $(V_+,\operatorname{div})$. Recall that the space of divergence operators on $E$ is affine, modelled on the space of sections of $E^* \cong E$.

\begin{definition}\label{def:compatibleE}
A pair $(V_+,\operatorname{div})$ is \emph{compatible} if $\la \varepsilon, \cdot \ra:= \operatorname{div}_0 - \operatorname{div}$ is an infinitesimal isometry of $V_+$, that is,
$$
[\varepsilon,V_+] \subset V_+.
$$ 
Furthermore, we say that $(V_+,\operatorname{div})$ is \emph{closed} if $\varepsilon \in \Omega^1 \subset \Omega^0(E)$.	
\end{definition}

Given a compatible pair $(V_+,\operatorname{div})$, we have that $\varepsilon = X  + r + \varphi$ in the splitting determined by $V_+$ and the infinitesimal isometry condition reads (cf. \cite[Lemma 2.50]{GFStreets})
\begin{equation}\label{eq:infisometry}
L_Xg = 0, \qquad d^A r = - \iota_X F_A, \qquad d\varphi = i_XH - 2 \langle F_A,r \rangle.
\end{equation}
Thus, the condition of being closed corresponds to $X = r = 0$ and $d \varphi = 0$.

To introduce the Killing spinor equations on $E$, let us fix a pair $(V_+,\operatorname{div})$ as above. The fixed spin structure on $M$, combined with the isometry
\begin{equation}\label{eq:isommetry}
\sigma_+ \colon (T,g) \longrightarrow V_+ \colon\, X \longmapsto X + g(X),
\end{equation} 
determines a complex spinor bundle $S$ for $V_+$ (upon a choice of irreducible representation of the complex Clifford algebra $Cl(n,\CC)$). Associated to the pair $(V_+,\operatorname{div})$ there are canonical first order differential operators \cite{GF3,grt} (see also \cite{CSW})
\begin{equation}\label{eq:LCspinpuregeom}
\begin{split}
	D^{S}_- & \colon \Omega^0(S) \to \Omega^0(V_-^* \otimes S), \qquad \slashed D^+ \colon \Omega^0(S) \to \Omega^0(S_+).
\end{split}
\end{equation}
The operator $D^{S}_-$ corresponds to the unique lift to $S$ of the metric-preserving operator 
$$
D_{a_-} b_ + = [a_-,b_+]_+,
$$
acting on sections $a_- \in \Omega^0(V_-)$ and $b_+ \in \Omega^0(V_+)$. The Dirac-type operator $\slashed D^+$ is more difficult to construct, as it involves torsion-free generalized connections. In the situation of our interest, both operators can be described explicitly in terms of the affine metric connections with totally skew-symmetric torsion in \eqref{eq:nabla++1/3}.

\begin{lemma}[\cite{grt}]\label{lem:DpmexpE}
Let $(V_+,\operatorname{div})$ be a pair given by a generalized metric and a divergence operator on a string algebroid $E$. Denote $\operatorname{div}_0 - \operatorname{div} = \la \varepsilon, \cdot \ra $, and set $\varphi_+ = g(\pi \varepsilon_+, \cdot) \in T^*$. We identify $S$ with a spinor bundle for $(T,g)$, via \eqref{eq:isommetry}. Then, for any spinor $\eta \in \Omega^0(S)$ one has
\begin{align*}
D_{a_-}^{S} \eta & = \nabla^+_X \eta + \langle F_A,r \rangle \cdot \eta,\\
\slashed D^+ \eta & = \slashed \nabla^{\tfrac{1}{3}} \eta - \tfrac{1}{2}\varphi_+ \cdot \eta,
\end{align*}
where $a_- = X - g(X) + r$.
\end{lemma}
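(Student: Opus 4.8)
The two identities are logically independent, so the plan is to compute each of the operators in \eqref{eq:LCspinpuregeom} separately by specializing its general definition to the string algebroid $E \cong E_{P,H,A}$ determined by the generalized metric $V_+$, using the explicit bracket \eqref{eq:bracketE0}, the description \eqref{eq:Vpm} of $V_\pm$, and the identification $\sigma_+$ of \eqref{eq:isommetry}. In both cases the goal is to turn an expression that is intrinsic to $E$ into one written in terms of the affine connections \eqref{eq:nabla++1/3}, so the bulk of the work is to make the orthogonal projections $\pi_\pm$ and the spinor lift explicit.

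For the first identity I would start from $D_{a_-}b_+ = [a_-,b_+]_+$ and unwind $\pi_+$. Writing $a_- = X - g(X) + r$ and $b_+ = Y + g(Y)$, an evaluation of \eqref{eq:bracketE0} with vanishing $\ad P$-entry in $b_+$ produces a section of $E$ whose $\ad P$-component lies entirely in $V_-$, while its orthogonal projection onto $V_+$ records, through $\sigma_+$, the vector field $Z = \tfrac12\bigl([X,Y] + g^{-1}\zeta\bigr)$, where $\zeta$ collects the one-forms $L_X(g(Y))$, $\iota_Y\, d(g(X))$, $\iota_Y\iota_X H$ and $-2\la \iota_Y F_A, r\ra$. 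A Koszul-type computation shows that the first three terms reassemble into $g(\nabla^g_X Y,\cdot) + \tfrac12 H(X,Y,\cdot)$, that is, into $\nabla^+_X Y$ as in \eqref{eq:nabla++1/3}; the last term defines an $\mathfrak{so}(T)$-valued correction $Y \mapsto -g^{-1}\la \iota_Y F_A, r\ra$, whose skew-symmetry is forced by the metric-compatibility axiom (4) in Definition \ref{defn:CA}. Lifting this metric connection to the spinor bundle $S$, the skew-symmetric correction acts by Clifford multiplication by the associated two-form, which is $\la F_A, r\ra$ up to the conventional constant, yielding $D^S_{a_-}\eta = \nabla^+_X\eta + \la F_A, r\ra\cdot\eta$.

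For the second identity I would recall from \cite{GF3,grt} (see also \cite{CSW}) that $\slashed D^+$ is the Dirac operator of a torsion-free generalized connection compatible with the pair $(V_+,\operatorname{div})$; the point is that, although such connections are not unique, the associated Dirac-type operator is, so the undetermined part drops out under the Clifford contraction. The plan is to choose a convenient representative, read off its underlying connection on $(T,g)$ via \eqref{eq:isommetry}, and take the Clifford trace. The torsion enters the generalized Dirac operator only through its $\tfrac13$-rescaled totally antisymmetric part, so the contraction reproduces $\slashed\nabla^{\frac13}$ rather than the Dirac operator of $\nabla^+$. The divergence contributes solely through $\la \varepsilon,\cdot\ra = \operatorname{div}_0 - \operatorname{div}$: writing $\varepsilon = X + r + \varphi$ as in \eqref{eq:infisometry} and using \eqref{eq:div0}, its $V_+$-part gives the one-form $\varphi_+ = g(\pi\varepsilon_+,\cdot)$, which appears as the dilaton correction $-\tfrac12\varphi_+\cdot\eta$.

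The main obstacle is the second identity. Unlike $D^S_-$, the operator $\slashed D^+$ is defined through the non-canonical choice of a torsion-free generalized connection, so the delicate part is to verify that exactly the combination of torsion and divergence data claimed in the statement survives the Clifford contraction, and to pin down the numerical constants — the $\tfrac13$-weighting of the torsion entering $\slashed\nabla^{\frac13}$ and the $\tfrac12$ in front of $\varphi_+$. By comparison, the metric-compatibility check and the spinor lift for the first identity are routine, the only care being the sign and factor conventions in \eqref{eq:bracketE0} and in the Clifford action of two-forms.
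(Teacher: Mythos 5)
Note first that the paper contains no proof of this lemma: it is stated with an attribution to \cite{grt}, and the only piece the paper re-derives internally is the bracket identity \eqref{eq:clavesuma} (again cited from \cite{grt}, Eq.\ (5.10)) in the proof of Lemma \ref{lem:braketsum0}. So your attempt has to be measured against the argument in the cited sources \cite{GF3,grt,CSW}. Your treatment of the first identity is correct and is exactly that argument: expanding \eqref{eq:bracketE0} on $a_-=X-g(X)+r$ and $b_+=Y+g(Y)$, projecting onto $V_+$ via $W=\tfrac12\bigl(Z+g^{-1}\zeta\bigr)$, recognizing $\nabla^+_XY$ through the Koszul-type identity, and isolating the skew correction $Y\mapsto -g^{-1}\la \iota_YF_A,r\ra$ reproduces \eqref{eq:clavesuma}; the lift of this metric connection to $S$ then gives $D^S_{a_-}\eta=\nabla^+_X\eta+\la F_A,r\ra\cdot\eta$, with the normalization of the Clifford action of two-forms fixed by the conventions inherited from \cite{grt} (your hedge ``up to the conventional constant'' is the right place to be careful, but it is a convention check, not a gap).

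For the second identity, by contrast, your proposal is a program rather than a proof: the two assertions that carry all the content are stated, not derived. You assert (i) that $\slashed D^+$ is independent of the choice of torsion-free compatible generalized connection, and (ii) that ``the torsion enters the generalized Dirac operator only through its $\tfrac13$-rescaled totally antisymmetric part'' — but (ii) \emph{is} the claim to be proven, and quoting it as known short-circuits the lemma. Since the paper takes the canonicity of $\slashed D^+$ in \eqref{eq:LCspinpuregeom} as part of its definition (imported from \cite{GF3,grt}), you may legitimately assume (i); what you must then actually do is exhibit one concrete torsion-free representative and compute its Clifford trace. In \cite{GF3,grt} this is done by taking, on the pure $V_+$-block, the lift via \eqref{eq:isommetry} of a metric connection with totally skew torsion proportional to $H$, letting the generalized torsion-free condition fix the proportionality constant — this is where $\nabla^{\frac13}$ of \eqref{eq:nabla++1/3} emerges, rather than $\nabla^+$ — and adding a Weyl-type trace term, normalized against \eqref{eq:div0}, to realize the prescribed divergence; contracting the latter produces the coefficient $-\tfrac12$ in front of $\varphi_+$. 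You correctly identify this as the main obstacle, but identifying it is not the same as overcoming it: without the explicit representative and the contraction computation, the constants $\tfrac13$ and $\tfrac12$ — which are the entire point of the statement — remain unverified.
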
 

We are ready to introduce the equations of our interest. 

\begin{definition}\label{def:killingE}
Let $M$ be a smooth oriented spin manifold endowed with a string algebroid $E$. A triple $(V_+,\operatorname{div},\eta)$, given by a generalized metric $V_+$, a divergence operator $\operatorname{div}$, and a spinor $\eta \in \Omega^0(S)$, is a solution of the \emph{Killing spinor equations}, if
	\begin{equation}\label{eq:killingE}
		\begin{split}
			D^{S}_- \eta &= 0,\\
			\slashed D^+ \eta &= 0.
		\end{split}
	\end{equation}
\end{definition}

We finish this section establishing the relation with the equations in Definition \ref{defn:KSEqSM}.

\begin{lemma}\label{lem:KSEqSMCA}
Let $M$ be a smooth oriented spin manifold. A solution $(V_+,\operatorname{div},\eta)$ of the Killing spinor equations \eqref{eq:killingE} on a string algebroid $E$ determines a tuple $(g,H,A,\eta,\varphi)$ as in Definition \ref{defn:KSEqSM}, solving the equations
\begin{equation}\label{eq:KSEqBi}
\begin{split}
	F_A \cdot \eta & = 0,\\
	\nabla^+\eta & = 0,\\
	\left(\slashed{\nabla}^{+\frac13} - \tfrac{1}{2}\varphi \cdot \right) \eta & = 0,\\
	dH - \langle F_A \wedge F_A \rangle & = 0.
\end{split}
\end{equation}
Conversely, any solution of \eqref{eq:KSEqBi} determines a string algebroid as in Example \ref{def:E0}, endowed with a solution of the Killing spinor equations \eqref{eq:killingE}.
\end{lemma}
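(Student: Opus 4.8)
The plan is to prove the two directions of Lemma~\ref{lem:KSEqSMCA} by carefully translating the abstract operators $D^S_-$ and $\slashed D^+$ on the string algebroid into the concrete spinorial operators of Definition~\ref{defn:KSEqSM}, relying on the explicit formulae already supplied by Lemma~\ref{lem:DpmexpE}. First I would fix the geometric data: a solution $(V_+,\operatorname{div},\eta)$ determines, via the correspondence between generalized metrics and pairs $(g,s)$ recalled after Definition~\ref{def:GmetricE}, an isomorphism $E \cong E_{P,H,A}$ for a uniquely determined principal bundle $P$, three-form $H \in \Omega^3$, and connection $A$ satisfying the Bianchi identity \eqref{eq:Bianchireal}. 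This identification immediately yields the last equation in \eqref{eq:KSEqBi}, $dH - \langle F_A \wedge F_A\rangle = 0$, for free, since it is built into the very definition of the string algebroid $E_{P,H,A}$ in Example~\ref{def:E0}. Setting $\operatorname{div}_0 - \operatorname{div} = \langle \varepsilon, \cdot\rangle$ and $\varphi = \varphi_+ = g(\pi\varepsilon_+,\cdot)$ fixes the dilaton one-form.

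Next I would unpack the two Killing spinor equations \eqref{eq:killingE} using Lemma~\ref{lem:DpmexpE}. The equation $D^S_- \eta = 0$ reads, for every $a_- = X - g(X) + r \in \Omega^0(V_-)$, as $\nabla^+_X \eta + \langle F_A, r\rangle \cdot \eta = 0$. The strategy here is to exploit the freedom in choosing $a_-$: by \eqref{eq:Vpm} the elements $X - g(X) + r$ range over all pairs $(X,r) \in T \oplus \ad P$ independently. Taking $X = 0$ and letting $r$ vary forces the purely algebraic (zeroth-order) condition $\langle F_A, r\rangle \cdot \eta = 0$ for all $r$, which is exactly the gaugino equation $F_A \cdot \eta = 0$ of \eqref{eq:GauCourT}; once this vanishes, letting $r = 0$ and $X$ vary recovers the gravitino equation $\nabla^+\eta = 0$ of \eqref{eq:gravCourT}. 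Then $\slashed D^+ \eta = 0$, by the second formula in Lemma~\ref{lem:DpmexpE}, is literally $\bigl(\slashed\nabla^{\frac13} - \tfrac12\varphi\cdot\bigr)\eta = 0$, i.e.\ the dilatino equation \eqref{eq:dilCourT}. Thus all four equations of \eqref{eq:KSEqBi} are established, completing the forward direction.

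For the converse, I would reverse this bookkeeping. Given a solution $(g,H,A,\eta,\varphi)$ of \eqref{eq:KSEqBi}, the Bianchi identity guarantees that the triple $(P,H,A)$ defines a string algebroid $E = E_{P,H,A}$ as in Example~\ref{def:E0}. I would then define the generalized metric $V_+$ by \eqref{eq:Vpm} (equivalently via the splitting $\sigma_+$ of \eqref{eq:isommetry}), and define a divergence operator $\operatorname{div}$ by prescribing $\varepsilon$ through $\varphi_+ = \varphi$, i.e.\ choosing $\varepsilon$ so that $g(\pi\varepsilon_+,\cdot) = \varphi$; taking $\varepsilon \in \Omega^1$ (the closed/purely-one-form choice) is the natural normalization. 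With these data, the formulae of Lemma~\ref{lem:DpmexpE} run backwards: the gaugino and gravitino equations together give $D^S_-\eta = 0$, and the dilatino equation gives $\slashed D^+\eta = 0$, so $(V_+,\operatorname{div},\eta)$ solves \eqref{eq:killingE}.

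The main obstacle I anticipate is not in the spinorial equations themselves---these are handled cleanly by Lemma~\ref{lem:DpmexpE}---but in verifying the compatibility and well-definedness of the divergence data $\varepsilon$ in the converse direction, and in checking that the decomposition of $a_- \in V_-$ into its $T$- and $\ad P$-components is genuinely independent, so that the single equation $\nabla^+_X\eta + \langle F_A, r\rangle\cdot\eta = 0$ really does split into the two separate conditions $F_A\cdot\eta = 0$ and $\nabla^+\eta = 0$. One must confirm that $\langle F_A, r\rangle \cdot \eta$ ranging over all $r \in \ad P$ spans enough Clifford-module directions to conclude $F_A \cdot \eta = 0$ in the sense of the gaugino equation, rather than merely a trace-type contraction; this uses the non-degeneracy of $\langle\cdot,\cdot\rangle$ on $\mathfrak{k}$ together with the explicit Clifford action. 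Since this is precisely the content already treated in the cited references \cite{GF3,grt}, I expect the verification to be routine but would flag it as the point requiring genuine care.
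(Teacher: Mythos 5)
Your proposal is correct and takes essentially the same route as the paper: both directions reduce to the correspondence between generalized metrics and triples $(g,H,A)$ with $E\cong E_{P,H,A}$, followed by Lemma~\ref{lem:DpmexpE}, with your splitting of $D^S_-\eta=0$ into the gaugino and gravitino equations (via independence of $X$ and $r$ and non-degeneracy of $\langle\cdot,\cdot\rangle$) being exactly the implicit content of the paper's appeal to that lemma. The one detail you leave implicit in the converse is the normalization: since $\xi_+=\tfrac12\sigma_+(g^{-1}\xi)$ for $\xi\in\Omega^1$, the condition $g(\pi\varepsilon_+,\cdot)=\varphi$ forces $\varepsilon=2\varphi$, which is why the paper sets $\operatorname{div}=\operatorname{div}_0-\langle 2\varphi,\cdot\rangle$.
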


\begin{proof}
Given a solution $(V_+,\operatorname{div},\eta)$ of \eqref{eq:killingE}, the generalized metric $V_+$ determines a triple $(g,H,A)$ satisfying \eqref{eq:Bianchireal} and an isomorphism $E \cong E_{P,H,A}$ (see Example \ref{def:E0}). Set $\varphi_+ = g(\pi \varepsilon_+, \cdot) \in T^*$, where $\operatorname{div}_0 - \operatorname{div} = \la \varepsilon, \cdot \ra $. Then, by Lemma \ref{lem:DpmexpE}, $(g,H,A,\eta,\varphi_+)$ solves \eqref{eq:KSEqBi}. Conversely, given $(g,H,A,\eta,\varphi)$ solving \eqref{eq:KSEqBi}, we can construct a string algebroid $E_{P,H,A}$ as in Example \ref{def:E0}, endowed with the generalized metric \eqref{eq:Vpm}. Set
$$
\operatorname{div} = \operatorname{div}_0 - \la 2\varphi, \cdot \rangle,
$$
for $\operatorname{div}_0$ the Riemannian divergence \eqref{eq:div0}. Then $\varphi_+ := g(\pi (2\varphi)_+, \cdot) = \varphi$, and the result follows from Lemma \ref{lem:DpmexpE}.
\end{proof}

\begin{remark}
One can be more precise about which tuples $(g,H,A,\eta,\varphi)$ can occur as solutions of the Killing spinor equations on a fixed string algebroid. For instance, given $E_0 = E_{P,H_0,A_0}$ constructed as in Example \ref{def:E0}, a solution of \eqref{eq:killingE} on $E_0$ determines $(g,H,A,\eta,\varphi)$ and an isomorphism $E_0 \cong E_{P,H,A}$. Applying \cite[Proposition A.6]{grt2} (see also the proof of \cite[Proposition 3.4]{GF1}), one has
$$
H = H_0 + 2 \langle a \wedge F_{A_0} \rangle + \langle a \wedge d^{A_0}a \rangle + \tfrac{1}{3} \langle a \wedge [a \wedge a] \rangle + db,
$$
for some $b \in \Omega^2$, where $a = A_0 - A \in \Omega^1(\ad P)$.
\end{remark}

\subsection{Canonical metrics and the F-term condition}\label{sec:THSFterm}

Let $M$ be a $2n$-dimensional spin manifold endowed with a principal $K$-bundle $P$. We assume that the Lie algebra $\mathfrak{k}$ is endowed with a non-degenerate bi-invariant symmetric bilinear form
$$
\left\langle\cdot,\cdot\right\rangle: \mathfrak{k} \otimes \mathfrak{k} \lto \mathbb{R}.
$$
Assume that (see Example \ref{def:E0})
$$
p_1(P) = 0 \in H^4_{dR}(M,\mathbb{R}).
$$

\begin{definition}\label{def:tHS}
We say that a triple $(\Psi,\omega,A)$, where $(\Psi,\omega)$ is an $\SU(n)$-structure on $M$ and $A$ is connection on $P$, satisfies the \emph{twisted Hull--Strominger system} on $(M,P)$ if
\begin{equation}\label{eq:tHS}
\begin{split}
	F_A^{0,2} = 0, \qquad F_A \wedge \omega^{n-1} & = 0,\\
	d \Psi - \theta_\omega \wedge \Psi & = 0,\\
	d \theta_\omega & = 0,\\
	dd^c \omega + \left\langle F_A \wedge F_A\right\rangle  & = 0.
	\end{split}
\end{equation}
\end{definition}

Notice that, by the proof of Lemma \ref{lem:holconstdetatlas}, the equations in the second and third line imply that the almost complex structure determined by $\Psi$ is integrable, that is, $N_J = 0$. An important motivation for the study of \eqref{eq:tHS} comes from the \emph{Hull--Strominger system} in heterotic string theory \cite{HullTurin,Strom}. Indeed, when the Lee form of a given solution of \eqref{eq:tHS} is exact, i.e., $\theta_\omega = d\phi$, there exists a global holomorphic volume on $(M,J)$,
$$
\Omega = e^{-\phi} \Psi,
$$
and $(\omega,A)$ solves the following system of equations
\begin{equation}\label{eq:HS}
\begin{split}
	F_A^{0,2} = 0, \qquad F_A \wedge \omega^{n-1} & = 0,\\
	d(\|\Omega\|_\omega \omega^{n-1}) & = 0,\\
	dd^c \omega + \left\langle F_A \wedge F_A\right\rangle  & = 0.
	\end{split}
\end{equation}
In complex dimension $3$ and for suitably chosen product bundle $P = P_g \times_M P'$, connection $A = \nabla \times A'$, and pairing $\langle ,\rangle$, one recovers the Hull--Strominger system in \cite{HullTurin,Strom}. Here, $P_g$ denotes the $\operatorname{Spin}(6)$-bundle of $(T,g)$ determined by the fixed spin structure. Notice that the equations \eqref{eq:HS} impose that $\nabla$ is Hermitian-Yang--Mills. This condition will be utterly important for our applications to vertex algebras in our main results (Theorem \ref{teo:mainTCA} and Theorem \ref{teo:mainTCAcHE}). In the sequel, we will refer to the abstract system \eqref{eq:HS} as the Hull--Strominger system.

Building on Proposition \ref{prop:KSEqevendim}, we establish next the relation between the twisted Hull--Strominger system \eqref{eq:tHS} and the Killing spinor equations on a string algebroid in Definition \ref{def:killingE}.

\begin{proposition}\label{prop:KSEqevendimCA}
Let $E = E_{P,H_0,A_0}$ be a string algebroid over $M$, as in Example \ref{def:E0}. Then the solutions $(V_+,\operatorname{div},\eta)$ of the Killing spinor equations \eqref{eq:killingE} on $E$, with $\operatorname{div}$ closed (see Definition \ref{def:compatibleE}) and $\eta \in \Omega^0(S^+)$ pure, are in one-to-one correspondence with tuples $(\Psi,\omega,A,b)$, where $(\Psi,\omega,A)$ is a solution of the twisted Hull--Strominger system \eqref{eq:tHS} on $(M,P)$ and $b \in \Omega^2$ satisfies
\begin{equation}\label{eq:HCS}
H = H_0 + 2 \langle a \wedge F_{A_0} \rangle + \langle a \wedge d^{A_0}a \rangle + \tfrac{1}{3} \langle a \wedge [a \wedge a] \rangle + db,
\end{equation}
where $a = A_0 - A \in \Omega^1(\ad P)$.
\end{proposition}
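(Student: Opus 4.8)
The plan is to stack the three translation results already at our disposal --- the passage between Killing spinors on a string algebroid and spinorial equations on the base (Lemma \ref{lem:KSEqSMCA}), the $\SU(n)$-structure reformulation (Proposition \ref{prop:KSEqevendim}), and the Bismut-holonomy characterization (Lemma \ref{lem:holonomia}) --- while carefully tracking the two-form $b$ that rigidifies the string-algebroid isomorphism class. Essentially all analytic content is packaged in those statements; the work here is to chain them correctly and to verify that the resulting assignment is a genuine \emph{bijection}, not merely a correspondence up to isomorphism.

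For the forward direction I would start with a solution $(V_+,\operatorname{div},\eta)$ on $E=E_{P,H_0,A_0}$ with $\operatorname{div}$ closed and $\eta$ pure. By Lemma \ref{lem:KSEqSMCA}, the generalized metric $V_+$ determines a triple $(g,H,A)$ satisfying the Bianchi identity \eqref{eq:Bianchireal} together with an isomorphism $E\cong E_{P,H,A}$, and the tuple $(g,H,A,\eta,\varphi_+)$ solves \eqref{eq:KSEqBi}, where $\langle\varepsilon,\cdot\rangle=\operatorname{div}_0-\operatorname{div}$ and $\varphi_+=g(\pi\varepsilon_+,\cdot)$. Closedness means $\varepsilon\in\Omega^1$ with $d\varepsilon=0$ (see \eqref{eq:infisometry}), which forces $d\varphi_+=0$. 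Applying Proposition \ref{prop:KSEqevendim} to $(g,H,A,\eta,\varphi_+)$ yields an $\SU(n)$-structure $(\Psi,\omega)$ satisfying \eqref{eq:KSEqon2ndimman}: in particular $N_J=0$, $\theta_\omega=-\varphi_+$, $H=-d^c\omega$, $F_A^{0,2}=0$, $F_A\wedge\omega^{n-1}=0$, and $\nabla^+\Psi=0$. Now $d\varphi_+=0$ gives $d\theta_\omega=0$; the identity $H=-d^c\omega$ shows $\nabla^+=\nabla^g-\tfrac12 g^{-1}d^c\omega=\nabla^B$, so $\nabla^B\Psi=0$, and Lemma \ref{lem:holonomia} (which is exactly where $N_J=0$ and $d\theta_\omega=0$ are used) upgrades this to $d\Psi-\theta_\omega\wedge\Psi=0$. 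Finally, feeding $H=-d^c\omega$ into the Bianchi identity $dH-\langle F_A\wedge F_A\rangle=0$ produces $dd^c\omega+\langle F_A\wedge F_A\rangle=0$. Thus $(\Psi,\omega,A)$ solves the twisted Hull--Strominger system \eqref{eq:tHS}, and $b$ is the two-form recording the isomorphism $E\cong E_{P,H,A}$ via \eqref{eq:HCS}, as in the Remark following Lemma \ref{lem:KSEqSMCA}.

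For the converse I would simply run every arrow backwards. Given $(\Psi,\omega,A,b)$ with $(\Psi,\omega,A)$ solving \eqref{eq:tHS}, set $g=\omega(\cdot,J\cdot)$, $H=-d^c\omega$ and $\varphi=-\theta_\omega$. The equations $N_J=0$, $d\theta_\omega=0$ and $d\Psi-\theta_\omega\wedge\Psi=0$ give $\nabla^B\Psi=0$ by Lemma \ref{lem:holonomia}, so the hypotheses \eqref{eq:KSEqon2ndimmanconv} of the converse in Proposition \ref{prop:KSEqevendim} hold; combined with the last equation of \eqref{eq:tHS} this yields a solution of \eqref{eq:KSEqBi}. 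The converse of Lemma \ref{lem:KSEqSMCA} then produces a string algebroid $E_{P,H,A}$ carrying a solution of \eqref{eq:killingE} with $\operatorname{div}=\operatorname{div}_0-\langle 2\varphi,\cdot\rangle$; since $d\varphi=0$, this divergence is closed. The prescribed $b$ together with the classification of string algebroids \cite[Proposition A.6]{grt2} supplies the isomorphism $E_{P,H,A}\cong E_{P,H_0,A_0}=E$, along which the solution is transported to $E$.

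The main obstacle is not any single computation but the bookkeeping of $b$ and the isomorphism. I expect the genuine content to be the bijectivity statement: one must show the two constructions are mutually inverse, i.e.\ that $V_+$ determines $(g,H,A)$ uniquely, the closed $\operatorname{div}$ determines $\theta_\omega$, the pure spinor $\eta$ determines the $\SU(n)$-structure $(\Psi,\omega)$, and --- crucially --- that $b$ records \emph{exactly} the isomorphism data relating $E_{P,H,A}$ to the fixed $E$, with no information lost or double-counted. This last point is what promotes the otherwise isomorphism-class-valued correspondence of Lemma \ref{lem:KSEqSMCA} to an honest one-to-one correspondence, and it rests on the uniqueness clauses in Lemma \ref{lem:KSEqSMCA} and in the \cite{grt2} classification.
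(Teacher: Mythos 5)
Your proposal is correct and takes essentially the same route as the paper: the forward direction chains Lemma \ref{lem:KSEqSMCA} with Proposition \ref{prop:KSEqevendim}, using closedness of $\operatorname{div}$ to obtain $d\theta_\omega=0$ and Lemma \ref{lem:holonomia} to pass from $\nabla^B\Psi=0$ to $d\Psi-\theta_\omega\wedge\Psi=0$ (a step the paper's terse proof leaves implicit), while the converse runs the arrows backwards with $\operatorname{div}=\operatorname{div}_0+\langle 2\theta_\omega,\cdot\rangle$ and invokes \cite[Proposition A.6]{grt2} to identify $E_{P,-d^c\omega,A}$ with $E$. Your identification of the $b$-bookkeeping as the crux of bijectivity is exactly the paper's mechanism: $V_+$ is equivalent to a metric together with an isotropic splitting $s\colon T\to E$ (by \cite[Proposition 3.4]{GF1}), and $s$ is equivalent to a pair $(b,a)\in\Omega^2\oplus\Omega^1(\ad P)$ relative to the fixed presentation $E=E_{P,H_0,A_0}$, so $b$ is uniquely determined rather than chosen up to isomorphism.
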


\begin{proof}
Given a solution of \eqref{eq:killingE} on $E$, the generalized metric $V_+ \subset E$ determines a Riemann metric $g$ on $M$ and an isotropic splitting $s \colon T \to E$ (see \cite[Proposition 3.4]{GF1}). The splitting $s$ is equivalent to a pair $(b,a) \in \Omega^2 \oplus \Omega^1(\ad P)$. Set $A = A_0 + a$ and define $H \in \Omega^3$ by \eqref{eq:HCS}. Then $(H,A)$ satisfies \eqref{eq:Bianchireal} and $s$ determines an isomorphism $E \cong E_{P,H,A}$ such that $V_+$ is as in \eqref{eq:Vpm}. The first part of the proof follows combining Lemma \ref{lem:KSEqSMCA} with Proposition \ref{prop:KSEqevendim}. For the converse, given $(\Psi,\omega,A,b)$ as in the statement, it follows from Proposition \ref{prop:KSEqevendim} and Lemma \ref{lem:KSEqSMCA} that $E_{P,-d^c\omega,A}$ carries a solution of the Killing spinor equations \eqref{eq:killingE} with closed divergence
$$
\operatorname{div} = \operatorname{div}_0 + \langle 2\theta_\omega,\cdot \rangle.
$$
Applying \cite[Proposition A.6]{grt2}, equation \eqref{eq:HCS} implies that $E_{P,-d^c\omega,A}$ is isomorphic to $E$, and the statement follows.
\end{proof}

Motivated by Proposition \ref{prop:KSEqevendimCA}, we want to understand in more detail the different conditions which arise from the Killing spinor equations in even dimensions, as in Proposition \ref{prop:KSEqevendim}, in terms of the geometry of the string algebroid. Our first goal is to interpret the integrability conditions in the second column of \eqref{eq:KSEqon2ndimman} as an involutivity condition. A similar analysis for the conditions in the first column will be undertaken in Section \ref{sec:Dterm}. This is indeed a very delicate question, related to the moment map construction in \cite{grt3}. We fix a string algebroid $E$ over our even-dimensional oriented spin manifold $M$. Given a generalized metric $V_+ \subset E$, a pure spinor line
$$
\langle \eta \rangle \subset \Omega^0(S^+)
$$
is equivalent to an orthogonal almost complex structure $J$ on $V_+$ (and hence on $T$) compatible with the orientation (see \cite[Chapter IV, Proposition 9.7]{MicLaw}). Equivalently, using that
$$
V_+^{1,0} = \{a_+ \in V_+ \otimes \C \; | \; a_+ \cdot \langle \eta \rangle = 0 \},
$$
the pure spinor line $\langle \eta \rangle$ is given by a decomposition
\begin{equation}\label{eq:lbarl}
\ell \oplus \overline{\ell} = V_+ \otimes \C
\end{equation}
such that $\langle \ell, \ell \rangle = \langle \overline \ell, \overline \ell \rangle = 0$ and $\overline \ell \cong \ell^*$ via $\langle,\rangle$. We will use the convention that $\ell = V_+^{1,0}$.

\begin{lemma}\label{lem:Ftermexp}
Let $E$ be a string algebroid over $M$. Let $(V_+,J)$ be a pair given by a generalized metric $V_+ \subset E$ and an almost complex structure $J$ on $M$. Then $J$ is orthogonal with respect to the metric $g$ determined by $V_+$ if and only if $\ell$ and $\overline{\ell}$ in \eqref{eq:lbarl} are isotropic, that is,
$$
\langle \ell, \ell \rangle = \langle \overline \ell, \overline \ell \rangle = 0.
$$
In this case, $\ell \oplus \overline{\ell}$ satisfies the F-term condition, that is,
\begin{equation}\label{eq:Fterm}
[\ell , \ell] \subset \ell, \qquad [\overline \ell , \overline \ell] \subset \overline \ell,
\end{equation}
if and only if the corresponding tuple $(g,H,A,J)$ satisfies
$$
N_J = 0, \qquad H + d^c\omega = 0, \qquad F_A^{0,2} = 0,
$$
where $\omega = g(J,)$.
\end{lemma}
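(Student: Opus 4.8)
The plan is to prove the two equivalences separately, working in the explicit model $E_{P,H,A}$ provided by the generalized metric $V_+$, where the bracket is given by \eqref{eq:bracketE0} and $V_\pm$ are as in \eqref{eq:Vpm}. First I would establish the orthogonality characterization. Since $J$ is orthogonal with respect to $g$ exactly when its $+i$-eigenspace $T^{1,0}_J \subset T \otimes \C$ is isotropic for $g$, and since $\sigma_+ \colon (T,g) \to V_+$ in \eqref{eq:isommetry} is an isometry intertwining $J$ on $T$ with $J$ on $V_+$, the subbundle $\ell = V_+^{1,0} = \sigma_+(T^{1,0}_J)$ is isotropic for $\langle \cdot,\cdot\rangle|_{V_+}$ precisely when $T^{1,0}_J$ is $g$-isotropic, i.e. when $J$ is orthogonal. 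The same statement for $\overline{\ell}$ follows by conjugation, so this direction is essentially a transport of the pointwise linear-algebra fact through $\sigma_+$.

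For the main equivalence, the strategy is to unwind the F-term condition \eqref{eq:Fterm} using the explicit bracket. Writing a general section of $\ell$ as $a_+ = X + g(X)$ with $X \in \Omega^0(T^{1,0}_J)$ (using \eqref{eq:Vpm} and the isometry $\sigma_+$), I would compute $[a_+, b_+]$ for $X, Y \in \Omega^0(T^{1,0}_J)$ via \eqref{eq:bracketE0} and then project onto $V_-$ and onto the $T^{0,1}$-part; the condition $[\ell,\ell] \subset \ell$ is equivalent to the vanishing of these projections. The anchor-level part of the bracket, $[X,Y] - F_A(X,Y)$, immediately separates the geometry: the $T^{0,1}$-component of the Lie bracket $[X,Y]$ captures the Nijenhuis tensor, so its vanishing on $T^{1,0}_J$ gives $N_J = 0$, while the $\ad P$-component forces $F_A(X,Y) = 0$ for $X,Y$ of type $(1,0)$, i.e. $F_A^{2,0} = 0$, which is equivalent to $F_A^{0,2} = 0$ since $F_A$ is real. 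The remaining, more delicate part involves the cotangent components $L_X \eta - \iota_Y d\xi + \iota_Y \iota_X H$ together with the pairing terms; after substituting $\xi = g(X)$, $\eta = g(Y)$ and using $g = \omega(J\cdot,\cdot)$, the requirement that the $V_-$-projection land inside $\ell$ should reduce — modulo the already-established $N_J = 0$ — to the vanishing of the $(0,3)$ and appropriate mixed parts of $H + d^c\omega$, yielding $H + d^c\omega = 0$. The conjugate statement $[\overline{\ell},\overline{\ell}] \subset \overline{\ell}$ gives the complex-conjugate conditions, which combine with the first to pin down the real three-form.

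I would organize the cotangent computation by recalling the standard identity relating $d^c\omega$ to $N_J$ and the $(p,q)$-decomposition of $d\omega$ (as in Gauduchon's formulae already invoked in the proof of Proposition \ref{prop:KSEqevendim}), so that the isotropy of $\ell$ plus involutivity translates cleanly into the stated tensorial equations rather than an unstructured system. A useful bookkeeping device is that, because $\ell$ and $\overline{\ell}$ are isotropic and dual via $\langle\cdot,\cdot\rangle$, the condition $[\ell,\ell]\subset\ell$ is equivalent to $\langle [\ell,\ell], \ell\rangle = 0$, which by axiom~(4) of Definition \ref{defn:CA} can be rewritten using invariance of the pairing; this lets me test involutivity against a spanning set rather than projecting explicitly, and should shorten the $H$-term manipulation considerably.

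The main obstacle I anticipate is the cotangent/three-form part: disentangling the contributions of $H$, the connection terms $\langle \iota_X F_A, t\rangle$, and the $d\xi$ term to isolate exactly $H + d^c\omega = 0$ requires care with the $(p,q)$-type decomposition and with the sign and factor conventions in \eqref{eq:bracketE0}. In particular one must check that the $F_A$-dependent pairing terms do not contaminate the three-form equation once $F_A^{0,2} = 0$ is imposed, and that the integrability $N_J = 0$ is genuinely needed (not merely sufficient) to pass from the raw involutivity to the clean form $H + d^c\omega = 0$. I expect this to be where the bulk of the verification lies, whereas the orthogonality equivalence and the extraction of $N_J = 0$ and $F_A^{0,2} = 0$ are comparatively direct.
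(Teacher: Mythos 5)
Your overall route is the paper's: work in the model $E_{P,H,A}$ determined by $V_+$, write sections of $\ell$ (resp.\ $\overline{\ell}$) as $X+g(X)$ with $X$ of type $(1,0)$ (resp.\ $(0,1)$), expand the bracket \eqref{eq:bracketE0}, and read off the three conditions from the type decomposition, using reality of $H$ and $F_A$ together with $[\overline{\ell},\overline{\ell}]=\overline{[\ell,\ell]}$ to get the full statement from one computation. The paper streamlines the Cartan-calculus step by observing that $\overline{\ell}=e^{i\omega}T^{0,1}_J$ and that the $i\omega$-twist behaves like a complex $B$-field, giving in one line
\[
[e^{i\omega}X,e^{i\omega}Y]_{H,A}=e^{i\omega}[X,Y]-F_A(X,Y)+\iota_Y\iota_X(H+i\,d\omega),
\]
after which no Gauduchon-type identities are needed; also note that the pairing terms $2\langle\iota_XF_A,t\rangle-2\langle\iota_YF_A,r\rangle$ in \eqref{eq:bracketE0} vanish identically here because sections of $\ell\oplus\overline{\ell}\subset V_+\otimes\C$ have $r=t=0$, so your worry about ``contamination'' of the three-form equation by these terms is moot.

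There is, however, a genuine error in your proposed shortcut. The claim that $[\ell,\ell]\subset\ell$ is equivalent to $\langle[\ell,\ell],\ell\rangle=0$ is false: since $\ell^{\perp}=\ell\oplus(V_-\otimes\C)$, pairing against $\ell$ only detects the $\overline{\ell}$-component of $[\ell,\ell]$ and is completely blind to the $V_-\otimes\C$-component. Concretely, for $X,Y\in\Omega^0(T^{1,0}_J)$ the term $-F_A(X,Y)\in\Omega^0(\ad P\otimes\C)\subset V_-\otimes\C$ pairs to zero with every section of $\ell$ (by \eqref{eq:pairingE0}, elements of $V_+$ have no $\ad P$-component), and the one-form term pairs with $e^{-i\omega}Z$, $Z\in T^{1,0}_J$, to produce only the $(3,0)$-type contraction of $H-i\,d\omega$. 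So your test would recover neither $F_A^{0,2}=0$ nor the $(2,1)$-half of $H+d^c\omega=0$, and the ``only if'' direction of the equivalence would break: you would conclude involutivity from strictly weaker conditions than the statement asserts. The correct dual formulation is that $[\ell,\ell]\subset\ell$ if and only if $\langle[\ell,\ell],\,\ell\oplus(V_-\otimes\C)\rangle=0$, since $\ell=\bigl(\ell\oplus(V_-\otimes\C)\bigr)^{\perp}$; in particular the $V_-$-pairings are exactly where the gauge condition and half of the torsion condition live. Your fallback plan in the second paragraph --- explicitly projecting the bracket onto the $T^{0,1}$-, $\ad P$- and cotangent components --- is sound and suffices; just drop the pairing device, or enlarge the test set as above.
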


\begin{proof}
The first part is straightforward and is left to the reader. Assuming orthogonality of $J$, via the isomorphism $E \cong E_{P,H,A}$ induced by $V_+$, we can write
$$
\overline \ell = e^{i \omega} T^{0,1}_J.
$$
Using the explicit formula for the bracket in Example \ref{def:E0}, for $X,Y \in \Omega^0(T^{0,1}_J)$, we have
\begin{equation*}
	[e^{i \omega} X, e^{i \omega} Y]_{H,A}  = e^{i\omega}[X,Y] - F_A(X,Y) + \iota_Y\iota_X (H + i d\omega).
\end{equation*}
Hence, $[\overline \ell , \overline \ell] \subset \overline \ell$ is equivalent to 
$$
[X,Y] \in  \Omega^0(T^{0,1}_J), \qquad H^{0,3 + 1,2} + i \dbar \omega = 0, \qquad F_A^{0,2} = 0.
$$
The statement follows from $H^{3,0 + 2,1} = \overline{H^{0,3 + 1,2}}$ and $[\overline \ell , \overline \ell] = \overline{[\ell,\ell]}$.
\end{proof}


\begin{remark}
The name \emph{F-term} for the involutivity conditions \eqref{eq:Fterm} was already used in \cite[Definition 2.21]{AAGa} for analogue conditions in the study of Killing spinors on quadratic Lie algebras, and we shall adopt here the same nomenclature. This is motivated by similar equations appearing in supersymmetric field theories in theoretical physics.
\end{remark}

We pause for a moment our analysis of the Killing spinor equations in order to introduce a relaxed version of the twisted Hull--Strominger system which is relevant for Theorem \ref{teo:mainTCAcHE}. The \emph{coupled Hermitian-Einstein equation} arises from the study of canonical metrics on holomorphic Courant algebroids and has been recently studied by the third author jointly with Gonzalez Molina \cite{GFGM0}, inspired by \cite{OssaLarforsSvanes,GaJoSt}. For this purpose, we change gears and fix a complex manifold $(M,J)$ endowed with a string algebroid $E$, with underlying principal $K$-bundle $P$ and pairing $\left\langle\cdot,\cdot\right\rangle: \mathfrak{k} \otimes \mathfrak{k} \to \mathbb{R}$. We will consider generalized metrics $V_+ \subset E$ such that the associated decomposition \eqref{eq:lbarl} satisfies $\langle \ell, \ell \rangle = \langle \overline \ell, \overline \ell \rangle = 0$ and the F-term condition \eqref{eq:Fterm}. With these assumptions, one can obtain by reduction a string algebroid in the holomorphic category (see \cite{grt3}), defined by
$$
\mathcal{Q}_{\overline{\ell}} = \overline{\ell}^\perp/\overline{\ell}.
$$
In particular, $\mathcal{Q}_{\overline{\ell}}$ is a holomorphic orthogonal vector bundle over $(M,J)$ (endowed with a bracket on holomorphic sections and a holomorphic anchor map, satisfying holomorphic analogues of the axioms of Definition \ref{defn:CA}). As a smooth complex orthogonal vector bundle, it can be identified with
$$
\mathcal{Q}_{\overline{\ell}} \cong V_- \otimes \C,
$$
and therefore it canonically inherits a (possibly indefinite) Hermitian metric $\mathbf{G}$, given by
$$
\mathbf{G}(a_-,b_-) = -\langle a_-,\overline{b_-} \rangle,
$$
for $a_-,b_- \in V_- \otimes \C$. Since $\mathcal{Q}_{\overline{\ell}}$ has a holomorphic structure, $\mathbf{G}$ has an associated Chern connection, whose curvature we denote by $F_{\mathbf{G}}$.

\begin{definition}\label{def:cHE}
Let $E$ be string algebroid over an $n$-dimensional complex manifold $(M,J)$. Let $V_+ \subset E$ be a generalized metric such that the associated decomposition \eqref{eq:lbarl} satisfies $\langle \ell, \ell \rangle = \langle \overline \ell, \overline \ell \rangle = 0$ and the F-term condition \eqref{eq:Fterm}. Then, we say that $V_+$ satisfies the \emph{coupled Hermitian-Einstein equation} if
\begin{equation}\label{eq:cHE}
F_{\mathbf{G}} \wedge \omega^{n-1} = 0.
\end{equation}
\end{definition}

Observe that the generalized metric enters both in the definition of $\mathbf{G}$ and $\omega = g(J,)$ in \eqref{eq:cHE}, and therefore this equation has further coupling than the standard Hermitian-Einstein equation on holomorphic vector bundles. To see this more clearly, we next recall the explicit characterization of \eqref{eq:cHE} in terms of classical tensors obtained in \cite[Proposition 4.9]{GFGM0}. For the next result, given a connection $A$ on $P$, we define a smooth section $\Lambda_\omega F_A$ of the Lie-algebra bundle $(\ad P,[,])$ by the formula
\[
F_A \wedge \frac{\omega^{n-1}}{(n-1)!} = \Lambda_\omega F_A \frac{\omega^{n}}{n!}.
\]
We will denote by $d^A$ the corresponding covariant derivative on $\Omega^0(\ad P)$. 

\begin{proposition}[\cite{GFGM0}]\label{prop:cHECA}
Let $E = E_{P,H_0,A_0}$ be a string algebroid over an $n$-dimensional complex manifold $(M,J)$, as in Example \ref{def:E0}. Then there is a one-to-one correspondence between solutions $V_+ \subset E$ of \eqref{eq:cHE} and triples $(\omega,A,b)$, where $(\omega,A)$ is a solution of the coupled Hermitian-Yang--Mills system on $((M,J),P)$, defined by
\begin{equation}\label{eq:cHECA}
\begin{split}
	F_A^{0,2} = 0, \qquad [\Lambda_\omega F_A, ] & = 0,\\
	d^A(\Lambda_\omega F_A) & = 0,\\
	\rho_B + \left\langle \Lambda_\omega F_A,F_A\right\rangle & = 0,\\
	dd^c \omega + \left\langle F_A \wedge F_A\right\rangle  & = 0,	
	\end{split}
\end{equation}
for $\rho_B$ the Bismut Ricci form of the Hermitian form $\omega$, and $b \in \Omega^2$ satisfies \eqref{eq:HCS} with $a = A_0 - A \in \Omega^1(\ad P)$.
\end{proposition}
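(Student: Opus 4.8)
The strategy is to make the reduced holomorphic bundle $\mathcal{Q}_{\overline{\ell}}$, the Hermitian metric $\mathbf{G}$, and its Chern curvature $F_{\mathbf{G}}$ fully explicit in classical terms, and then to read \eqref{eq:cHE} off blockwise. To begin, exactly as in the proof of Proposition~\ref{prop:KSEqevendimCA}, a generalized metric $V_+\subset E$ determines, via \cite[Proposition 3.4]{GF1}, a Hermitian metric $g$ on $(M,J)$ together with an isotropic splitting, equivalently a pair $(b,a)\in\Omega^2\oplus\Omega^1(\ad P)$. Setting $A=A_0-a$ and defining $H$ by \eqref{eq:HCS} yields an isomorphism $E\cong E_{P,H,A}$ under which $V_+$ takes the form \eqref{eq:Vpm}. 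By Lemma~\ref{lem:Ftermexp}, the standing hypotheses (isotropy of $\ell,\overline{\ell}$ and the F-term condition \eqref{eq:Fterm}) are equivalent to $N_J=0$, $H+d^c\omega=0$ and $F_A^{0,2}=0$; in particular $\omega=g(J\cdot,\cdot)$ is Hermitian and $A$ endows $P$ with a holomorphic structure. The last equation of \eqref{eq:cHECA} is then precisely the Bianchi identity \eqref{eq:Bianchireal} with $H=-d^c\omega$, so it holds automatically, while $F_A^{0,2}=0$ is the first one.

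Next I would identify $\mathcal{Q}_{\overline{\ell}}=\overline{\ell}^{\perp}/\overline{\ell}$ concretely. Using \eqref{eq:Vpm} one checks that $\overline{\ell}^{\perp}=\overline{\ell}\oplus(V_-\otimes\C)$, whence $\mathcal{Q}_{\overline{\ell}}\cong V_-\otimes\C$ as a smooth orthogonal bundle, and involutivity of $\overline{\ell}$ equips it with the structure of a holomorphic string algebroid, sitting in a filtration with associated graded $T^{*}_{1,0}\oplus(\ad P\otimes\C)\oplus T^{1,0}$ (cf.\ \cite{grt3}). The metric $\mathbf{G}(a_-,b_-)=-\langle a_-,\overline{b_-}\rangle$ restricts to the metric induced by $\omega$ on the tangential pieces $T^{1,0}$ and $T^{*}_{1,0}$, and to $\langle\cdot,\cdot\rangle$ on $\ad P\otimes\C$; the holomorphic structures on the graded pieces are those of $(T^{1,0},\omega)$, of $(\ad P\otimes\C,\dbar^A)$ coming from $F_A^{0,2}=0$, and its dual, while the extension data are governed by $F_A$ and by the torsion $\partial\omega$.

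The heart of the argument is to compute $\Lambda_\omega F_{\mathbf{G}}\in\Omega^0(\End\mathcal{Q}_{\overline{\ell}})$, whose vanishing is \eqref{eq:cHE}, and to decompose it along this filtration. Since $F_{\mathbf{G}}$ is valued in the orthogonal Lie algebra $\mathfrak{so}(\mathcal{Q}_{\overline{\ell}},\mathbf{G})$, the independent blocks are few and should match the remaining equations in \eqref{eq:cHECA}: the endomorphism block on $\ad P\otimes\C$ should reproduce $[\Lambda_\omega F_A,\cdot]$, giving $[\Lambda_\omega F_A,\cdot]=0$; the mixed blocks $T^{1,0}\leftrightarrow\ad P\otimes\C$ should reproduce $d^A(\Lambda_\omega F_A)$, giving $d^A(\Lambda_\omega F_A)=0$; and the tangential block, together with the top-to-bottom block $T^{1,0}\to T^{*}_{1,0}$, should jointly reproduce the $2$-form $\rho_B+\langle\Lambda_\omega F_A,F_A\rangle$, the gauge term arising from the second fundamental form of the extension. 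The identification of the tangential contribution with the \emph{Bismut} Ricci form (rather than the Chern Ricci form) is exactly where Proposition~\ref{prop:rho20}, and the torsion bivector of Lemma~\ref{lem:sigmaomega} via Proposition~\ref{prop:rho20sigma}, enter, accounting for the $(2,0)+(0,2)$ torsion terms.

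Finally, the correspondence is a bijection: conversely, given $(\omega,A)$ solving \eqref{eq:cHECA} and $b$ satisfying \eqref{eq:HCS}, one builds $E_{P,-d^c\omega,A}$, which is isomorphic to $E$ by \cite[Proposition A.6]{grt2}, equips it with the generalized metric \eqref{eq:Vpm}, and reverses the computation above, the F-term condition holding by Lemma~\ref{lem:Ftermexp}. I expect the main obstacle to be the explicit curvature calculation of the previous paragraph: one must track the second fundamental forms of the filtration of $\mathcal{Q}_{\overline{\ell}}$ and verify that the Courant bracket of Example~\ref{def:E0} encodes simultaneously the Bismut connection on $T$ and the connection $A$ on $P$, with precisely the couplings appearing in \eqref{eq:cHECA}. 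This is the content of \cite[Proposition 4.9]{GFGM0}, to which the detailed computation may be referred.
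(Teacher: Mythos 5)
Your outline is sound and in substance coincides with how the paper itself treats this statement: the paper offers no independent proof of Proposition \ref{prop:cHECA}, recalling it from \cite[Proposition 4.9]{GFGM0}, and your preliminary reductions --- the dictionary $V_+ \leftrightarrow (g,b,a)$ via \cite[Proposition 3.4]{GF1}, Lemma \ref{lem:Ftermexp} giving $F_A^{0,2}=0$ and $H=-d^c\omega$ so that the Bianchi identity \eqref{eq:Bianchireal} becomes the anomaly-cancellation equation, the identification $\mathcal{Q}_{\overline{\ell}}\cong V_-\otimes\C$, and the converse via \cite[Proposition A.6]{grt2} --- are exactly the moves the paper makes in the parallel Proposition \ref{prop:KSEqevendimCA}. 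Since you correctly isolate the blockwise computation of $\Lambda_\omega F_{\mathbf{G}}$ as the sole remaining content and defer it to the same reference the paper cites, your proposal is essentially the paper's own route, augmented by a plausible sketch of that curvature calculation.
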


By Lemma \ref{lem:holonomia}, any solution of the twisted Hull--Strominger \eqref{eq:tHS} satisfies $\rho_B = 0$ and $\Lambda_\omega F_A = 0$, and therefore it automatically provides a solution of \eqref{eq:cHECA}. Solutions of the coupled Hermitian-Yang--Mills system \eqref{eq:cHECA} on a complex manifold $(M,J)$ which does not admit balanced metrics, and hence does not admit solutions of the Hull--Strominger system \eqref{eq:HS}, have been constructed recently by Gonzalez Molina in \cite{GM}.

\subsection{The D-term equation and special frames}\label{sec:Dterm}

In the light of Lemma \ref{lem:Ftermexp}, it is natural to ask whether the conditions in the first column of \eqref{eq:KSEqon2ndimman} can be characterized in terms of the string algebroid $E$. In this section we give a characterization of these equations using local frames, which shall be compared with the global moment map construction in \cite{grt3}. We will also provide a global characterization of the coupled Hermitian-Einstein equation \eqref{eq:cHE} in terms of $E$, provided that the manifold admits a global holomorphic volume form.

We start with a purely local setup: let $E$ be a string algebroid over an open set $U \subset \C^n$, where we fix holomorphic coordinates $(z_1, \ldots, z_n) \in \CC^n$. We consider the canonical holomorphic volume form
$$
\Omega_0 = dz_1 \wedge \ldots \wedge dz_n.
$$
Let $V_+ \subset E$ be a generalized metric such that the associated decomposition \eqref{eq:lbarl} satisfies $\langle \ell, \ell \rangle = \langle \overline \ell, \overline \ell \rangle = 0$ and the F-term condition \eqref{eq:Fterm}. Given a section $e \in \Omega^0(E \otimes \C)$, we will denote by
$$
e = e_\ell + e_{\overline \ell} + e_- \in \ell \oplus \overline{\ell} \oplus (V_- \otimes \C)
$$
the different pieces in this decomposition. Using the isomorphism $E \cong E_{-d^c\omega,A}$ induced by $V_+$, we construct local isotropic frames
\begin{equation}\label{eq:localisoframe}
\begin{split}
\epsilon_j & = e^{-i\omega}g^{-1}d\overline{z}_j  = g^{-1}d\overline{z}_j + d\overline{z}_j \in \ell,\\
\overline \epsilon_j & = e^{i\omega}\dbar_j = \dbar_j + g \dbar_j  \in \overline \ell,
\end{split}
\end{equation}
where we use the compact notation $\dbar_j : = \frac{\partial}{\partial\overline{z}_j}$. The key to our developments will be the quantity
\begin{equation}\label{eq:mu}
\mu = \frac{1}{2}\sum_{j=1}^n\left[\overline{\epsilon}_j,\epsilon_j\right] \in \Omega^0(E \otimes \C).
\end{equation}
To give an explicit formula for $\mu$, we need the following structural property of our frames.

\begin{lemma}\label{lem:frameprop}
The frames in \eqref{eq:localisoframe} satisfy, for all $j,k = 1, \ldots, n$, that
\begin{enumerate}

\item[\textup{(1)}] $\langle \epsilon_j, \overline \epsilon_k \rangle  = \delta_{jk}$, $\langle \epsilon_j, \epsilon_k \rangle  = \langle \overline \epsilon_j, \overline \epsilon_k \rangle = 0$,

\item[\textup{(2)}] $[\overline \epsilon_j, \overline \epsilon_k ] = 0$.

\end{enumerate}
	
\end{lemma}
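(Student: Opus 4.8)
The plan is to handle the two assertions separately, working entirely within the explicit model $E\cong E_{-d^c\omega,A}$ determined by $V_+$, with pairing \eqref{eq:pairingE0} and Dorfman bracket \eqref{eq:bracketE0} as in Example \ref{def:E0}. The facts I will lean on are that, in the chosen holomorphic coordinates, $g^{-1}d\overline{z}_j=g^{l\overline{j}}\partial_l$ is of type $(1,0)$ while $\dbar_j=\partial/\partial\overline{z}_j$ is of type $(0,1)$, and correspondingly $\epsilon_j\in\ell=e^{-i\omega}T^{1,0}$ and $\overline{\epsilon}_j\in\overline{\ell}=e^{i\omega}T^{0,1}_J$ (the latter identity was recorded in the proof of Lemma \ref{lem:Ftermexp}, and the former follows by conjugation since $\omega$ is real).

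For part (1) I would expand each pairing using the polarized form of \eqref{eq:pairingE0}, namely $\langle X+r+\xi,Y+t+\eta\rangle=\tfrac12(\xi(Y)+\eta(X))+\langle r,t\rangle$, observing that all three frame vectors have vanishing $\ad P$-component. The pairing $\langle\epsilon_j,\overline{\epsilon}_k\rangle$ then reduces to $\tfrac12(d\overline{z}_j(\dbar_k)+g(\dbar_k,g^{-1}d\overline{z}_j))=\tfrac12(\delta_{jk}+\delta_{jk})=\delta_{jk}$, where the second term is evaluated using the defining property of $g^{-1}$ together with the symmetry of $g$. The pairing $\langle\epsilon_j,\epsilon_k\rangle$ involves only terms of the shape $d\overline{z}_k(g^{-1}d\overline{z}_j)$, which vanish because one contracts a $(0,1)$-form against a $(1,0)$-vector; and $\langle\overline{\epsilon}_j,\overline{\epsilon}_k\rangle$ reduces to $g(\dbar_j,\dbar_k)$-type terms, which vanish because $g$ is Hermitian. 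This part is purely mechanical.

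Part (2) is where some structure is genuinely used, and here I would avoid expanding the full bracket in favour of a short conceptual argument. By the F-term hypothesis \eqref{eq:Fterm} we have $[\overline{\epsilon}_j,\overline{\epsilon}_k]\in\Omega^0(\overline{\ell})$. Since $\pi$ restricts to an isomorphism $\overline{\ell}\xrightarrow{\sim}T^{0,1}_J$ (as $\pi|_{V_+}$ is an isomorphism onto $T$ and respects the $J$-decomposition) carrying the frame $\{\overline{\epsilon}_l\}$ to the coordinate frame $\{\dbar_l\}$, I may write $[\overline{\epsilon}_j,\overline{\epsilon}_k]=\sum_l f^l\overline{\epsilon}_l$ for some functions $f^l$. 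Applying the anchor and invoking axiom (2) of Definition \ref{defn:CA} gives $\sum_l f^l\dbar_l=\pi[\overline{\epsilon}_j,\overline{\epsilon}_k]=[\dbar_j,\dbar_k]=0$, so every $f^l$ vanishes and hence $[\overline{\epsilon}_j,\overline{\epsilon}_k]=0$.

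The main obstacle is really the role of the hypotheses in part (2): both the involutivity $[\overline{\ell},\overline{\ell}]\subset\overline{\ell}$ and the bracket–anchor compatibility are indispensable, and without F-term one could only conclude that the bracket lands in $\overline{\ell}$, not that it vanishes. Should one prefer the direct route, expanding \eqref{eq:bracketE0} via the formula $[e^{i\omega}X,e^{i\omega}Y]=e^{i\omega}[X,Y]-F_A(X,Y)+\iota_Y\iota_X(H+id\omega)$ from Lemma \ref{lem:Ftermexp}, the delicate point becomes checking that $[\dbar_j,\dbar_k]=0$, that $F_A^{0,2}=0$, and—most importantly—that the $(0,3)$- and $(1,2)$-components of $H+id\omega$ cancel; this last cancellation is exactly the content of the F-term computation in Lemma \ref{lem:Ftermexp}, where $H+d^c\omega=0$ forces $H^{0,3}=0$ and $H^{1,2}=-i\dbar\omega$. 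The conceptual argument sidesteps this entirely.
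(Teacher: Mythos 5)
Your proposal is correct, and for part (2) it takes a genuinely different route from the paper. Part (1) is precisely the computation the paper declares ``straightforward using the formula for the pairing \eqref{eq:pairingE0}'' and leaves to the reader; your polarization and type considerations are exactly right. For part (2), the paper follows what you call the direct route: it expands
\begin{equation*}
[\overline \epsilon_j, \overline \epsilon_k] = [e^{i \omega} \dbar_j, e^{i \omega} \dbar_k]
= e^{i\omega}[\dbar_j, \dbar_k] - F_A(\dbar_j, \dbar_k) + \iota_{\dbar_k}\iota_{\dbar_j}\left(-d^c\omega + i d\omega\right)
\end{equation*}
and kills each term via Lemma \ref{lem:Ftermexp}: coordinate fields commute, $F_A^{0,2}=0$, and since $H=-d^c\omega$ in the splitting determined by $V_+$, one has $-d^c\omega+id\omega = 2i\partial\omega$, a $(2,1)$-form whose double contraction with $(0,1)$-vectors vanishes. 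Your main argument instead uses the F-term condition only as a black box ($[\overline\ell,\overline\ell]\subset\overline\ell$), then the anchor axiom $\pi[a,b]=[\pi a,\pi b]$ together with the injectivity of $\pi|_{\overline\ell}$ (which sends $\overline\epsilon_l \mapsto \dbar_l$) and $[\dbar_j,\dbar_k]=0$; this is slicker and more robust, since it works verbatim for any isotropic frame of $\overline\ell$ projecting to a commuting frame of $T^{0,1}$, without ever invoking the explicit characterization $H+d^c\omega=0$, $F_A^{0,2}=0$ of Lemma \ref{lem:Ftermexp}. What the paper's computation buys in return is explicitness about which tensorial identities are at play, which it reuses elsewhere (e.g.\ in Lemma \ref{lem:braketsum0} and Lemma \ref{lem:dcomega}). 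One bookkeeping nuance in your fallback sketch: in the model $E\cong E_{-d^c\omega,A}$ the vanishing of the $(0,3)$- and $(1,2)$-components of $H+id\omega$ is automatic from $H=-d^c\omega$; the role of the F-term condition there is to justify that the metric splitting realizes $H=-d^c\omega$ at all (plus $F_A^{0,2}=0$), rather than to produce an extra cancellation — this does not affect the validity of your argument.
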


\begin{proof}
The isotropic condition (1) is straightforward using the formula for the pairing \eqref{eq:pairingE0}, and is left to the reader. To check (2), we argue as in the proof of Lemma \ref{lem:Ftermexp}:
\begin{equation*}
[\overline \epsilon_j, \overline \epsilon_k] =	[e^{i \omega} \dbar_j, e^{i \omega} \dbar_k]  = e^{i\omega}[\dbar_j, \dbar_k]- F_A(\dbar_j, \dbar_k) + \iota_{\dbar_k}\iota_{\dbar_j} (-d^c\omega + i d\omega) = 0,
\end{equation*}
where we have used the F-term condition combined with Lemma \ref{lem:Ftermexp}.
\end{proof}

We prove next the main technical lemma of this section.

\begin{lemma}\label{lem:braketsum0}
With the hypotheses above, define a complex one-form by
$$
\varphi_\omega = - d \log \|\Omega_0\|_\omega - i \left(d^*\omega - d^c\log \|\Omega_0\|_\omega \right) \in \Omega^1(\C).
$$
Then $\mu$ in \eqref{eq:mu} satisfies
\begin{equation}
\label{eq:globalsumlocal}
2\mu = \sigma_+\left(g^{-1}\varphi_\omega^{1,0}\right) - \sigma_-\left(g^{-1}\varphi_\omega\right)+i\Lambda_\omega F_A,
\end{equation}
where $\sigma_\pm \colon T \to V_\pm: X \mapsto X \pm gX$.
\end{lemma}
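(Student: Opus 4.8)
The plan is to evaluate $2\mu=\sum_{j=1}^n[\overline\epsilon_j,\epsilon_j]$ directly from the explicit Dorfman bracket $[\cdot,\cdot]_0$ of Example \ref{def:E0}, specialised to $H=-d^c\omega$, after substituting the frames \eqref{eq:localisoframe}. Writing $\overline\epsilon_j=\dbar_j+g\dbar_j$ and $\epsilon_j=g^{-1}d\overline z_j+d\overline z_j$ (so both have vanishing $\ad P$-component), most terms of the bracket drop out: every term containing the $\ad P$-entries vanishes, and what survives is the tangent bracket $[\dbar_j,g^{-1}d\overline z_j]$, the curvature term $-F_A(\dbar_j,g^{-1}d\overline z_j)$, and the cotangent terms $L_{\dbar_j}(d\overline z_j)-\iota_{g^{-1}d\overline z_j}\,d(g\dbar_j)+\iota_{g^{-1}d\overline z_j}\iota_{\dbar_j}(-d^c\omega)$. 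I would then compute the three components in $T\oplus\ad P\oplus T^*$ separately and match them against the right-hand side, using that $\sigma_\pm(X)=X\pm gX$ together with the explicit form of $\varphi_\omega$.

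The $\ad P$-component is the easiest. The F-term condition, via Lemma \ref{lem:Ftermexp}, gives $F_A^{0,2}=0$, and since $F_A$ is real-valued this forces $F_A$ to be of type $(1,1)$. Hence $-\sum_j F_A(\dbar_j,g^{-1}d\overline z_j)$ is precisely the $\omega$-trace of $F_A$ up to a factor $i$, and a short index computation identifies it with $i\Lambda_\omega F_A$, matching the last summand on the right-hand side.

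For the remaining two components I would work in the holomorphic coordinates $(z_1,\dots,z_n)$. The tangent part reduces to $\sum_j[\dbar_j,g^{-1}d\overline z_j]=\big(\sum_j\dbar_j g^{k\overline j}\big)\partial_k$, a $(1,0)$-vector field, which I would rewrite using $\dbar_j g^{k\overline j}=-g^{k\overline p}g^{s\overline j}\dbar_j g_{s\overline p}$ so as to isolate the trace $\dbar\log\det(g_{i\overline j})$ and the torsion contraction. In the cotangent part the Lie-derivative term vanishes, since $d\overline z_j$ is closed and $\iota_{\dbar_j}d\overline z_j\equiv 1$, while $-\iota_{g^{-1}d\overline z_j}\,d(g\dbar_j)$ and $\iota_{g^{-1}d\overline z_j}\iota_{\dbar_j}(-d^c\omega)$ again reduce to first derivatives of the metric. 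By design of $\sigma_\pm$, the tangent and cotangent components should then assemble into the tangent and cotangent parts of $\sigma_+(g^{-1}\varphi_\omega^{1,0})-\sigma_-(g^{-1}\varphi_\omega)$, namely $g^{-1}(\varphi_\omega^{1,0}-\varphi_\omega)=-g^{-1}\varphi_\omega^{0,1}$ and $\varphi_\omega^{1,0}+\varphi_\omega$, respectively.

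The crux, and the main obstacle, is the identification of these metric-derivative contractions with $\varphi_\omega$. This requires the standard Hermitian identities expressing $d^*\omega$ and the Lee form in terms of $g^{m\overline j}\partial_m g_{k\overline j}$ and its conjugate, combined with $d\log\|\Omega_0\|_\omega=-\tfrac12\,d\log\det(g_{i\overline j})$, which follows from the definition of the pointwise norm of $\Omega_0$. Keeping careful track of the type decomposition into $(1,0)$ and $(0,1)$ parts, and of the precise coefficients (the factors of $i$ in $\varphi_\omega$ and the relative signs between the $d^*\omega$ and $d^c\log\|\Omega_0\|_\omega$ terms), is where essentially all of the work lies; there is no conceptual difficulty beyond this bookkeeping once the bracket has been expanded.
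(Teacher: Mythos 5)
Your proposal is correct in outline and would close, but it takes a genuinely different route from the paper. You expand the Dorfman bracket \eqref{eq:bracketE0} directly: with vanishing $\ad P$-entries in both frames, only $[\dbar_j,g^{-1}d\overline{z}_j]$, $-F_A(\dbar_j,g^{-1}d\overline{z}_j)$ and the three cotangent terms survive, $L_{\dbar_j}d\overline{z}_j=0$ as you say, the curvature sum gives $i\Lambda_\omega F_A$ (in fact the $(1,1)$-type argument is not even needed here, since contracting with $\dbar_j$ and the $(1,0)$-vector $g^{-1}d\overline{z}_j$ kills the $(2,0)$ and $(0,2)$ parts automatically), and you correctly identify the tangent and cotangent parts of the right-hand side as $-g^{-1}\varphi_\omega^{0,1}$ and $\varphi_\omega^{1,0}+\varphi_\omega$. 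The paper proceeds differently: it pairs $2\mu$ against test sections of $V_-\otimes\C$, $\overline{\ell}$ and $\ell$, uses the structural identity $[w,\sigma_+(Y)]_+=\sigma_+\big(\nabla^+_XY-g^{-1}\langle F_A,r\rangle\big)$ from \cite[Eq.~(5.10)]{grt} and Lemma \ref{lem:frameprop} (which gives $\mu_\ell=0$ for free), and then makes the decisive observation that, because the F-term condition forces $\nabla^+=\nabla^B$ (Lemma \ref{lem:Ftermexp}), the scalar $\Delta=\sum_j d\overline{z}_j(\nabla^B_X\dbar_j)$ is exactly the connection one-form of the induced Bismut connection on $K_M^{-1}$ in the frame $\partial_1\wedge\cdots\wedge\partial_n$; Gauduchon's formula $\nabla^{B,K_M^{-1}}=\nabla^{C,K_M^{-1}}+id^*\omega$, together with $\nabla^{C,K_M^{-1}}=d-2\partial\log\|\Omega_0\|_\omega$, then yields $\Delta=\varphi_\omega(X)$ with no coordinate expansion of $d^*\omega$ whatsoever. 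What your route buys is elementarity and self-containedness (no generalized-geometry input beyond the bracket itself); what it costs is precisely the step you flag as the crux, which remains only sketched: you must derive by hand the coordinate formulas for $d^*\omega$, the torsion contractions $\iota_{g^{-1}d\overline{z}_j}\iota_{\dbar_j}d^c\omega$, and $d\log\|\Omega_0\|_\omega=-\tfrac12\,d\log\det(g_{i\overline{j}})$, tracking all $d^c$-sign and factor-of-two conventions. That verification is standard and will succeed, but if you carry it out you will find that the trace-plus-torsion combination you isolate from $\dbar_jg^{k\overline{j}}$ and the cotangent terms is nothing other than the Chern connection form on $K_M^{-1}$ corrected by $id^*\omega$ --- i.e., the paper's appeal to Gauduchon's formula is the packaged form of exactly your bookkeeping, and is the cleanest way to organize it.
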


\begin{proof}
Consider the decomposition
$$
\mu = \mu_\ell + \mu_{\overline \ell} + \mu_- \in \ell \oplus \overline{\ell} \oplus (V_- \otimes \C)|_U.
$$
We will compute these three terms separately, using Lemma \ref{lem:frameprop}. Given $w := \sigma_-(X) + r \in \Omega^0(V_- \otimes \C)$, by \cite[Equation (5.10)]{grt} we have 
\begin{equation}\label{eq:clavesuma}
[w,\sigma_+(Y)]_+ = \sigma_+ (\nabla^+_XY - g^{-1} \langle F_A ,r \rangle).
\end{equation}
Thus, using the Courant algebroid axiom $(5)$ in Definition \ref{defn:CA},
	\begin{equation*}
	\begin{split}
	\left\langle 2\mu,w\right\rangle & = \sum_{j=1}^n\left\langle \epsilon_j,\left[w,\sigma_+\left(\overline{\partial}_j\right)\right]\right\rangle\\
	& = \sum_{j=1}^n\left\langle g^{-1}d\overline{z}_j + d\overline{z}_j,\sigma_+\left(\nabla_X^+\overline{\partial}_j -g^{-1}\left\langle\iota_{\overline{\partial}_j}F_A,r\right\rangle\right)\right\rangle\\
& = \Delta - \sum_{j=1}^n\left\langle F_A\left(\overline{\partial}_j,g^{-1}d\overline{z}_j\right),r\right\rangle,\\
\end{split}
\end{equation*}
where
$$
\Delta := \sum_{j=1}^nd\overline{z}_j\left(\nabla_X^+\overline{\partial}_j\right).
$$
Note that, writing $\omega = \tfrac{i}{2}\sum_{l \leqslant j}g_{l\overline{j}}dz_j \wedge d \overline{z}_j$ and $F_A = \sum_{l \leqslant j}F_{l\overline{j}}dz_j \wedge d \overline{z}_j$, we have
$$
F_A\left(\overline{\partial}_j,g^{-1}d\overline{z}_j\right) = - 2 g^{l\overline{j}}F_{l\overline{j}} = - i \Lambda_\omega F_A.
$$
On the other hand, since $\ell \oplus \overline{\ell}$ satisfies the F-term condition \eqref{eq:Fterm}, by Lemma \ref{lem:Ftermexp} we have $\nabla^+ = \nabla^B$, the Bismut connection of the Hermitian metric $g$. Then, setting $\partial_j = \frac{\partial}{\partial z_j}$, 
\begin{equation*}
\overline{\Delta} = \sum_{j=1}^ndz_j\left(\nabla_{\overline X}^B\partial_j\right).
\end{equation*}
Let $\nabla^{B,K_M^{-1}}$ be the connection on $K_M^{-1} := \Lambda^nT^{1,0}$ induced by $\nabla^B$, given in holomorphic coordinates by
\begin{equation*}
\nabla^{B,K_M^{-1}} = d + \sum_{i,j=1}^n\left(\Gamma^i_{ji}dz_j+ \Gamma^i_{\overline{j}i}d\overline{z}_j\right), 
\end{equation*}
where the \textit{Christoffel symbols} $\Gamma^i_{jk}, \Gamma^i_{\overline{j}k}$ of $\nabla^B$ are defined by
\begin{equation*}
\sum_{i=1}^n\Gamma_{jk}^i\partial_i := \nabla^{B}_{\partial_j}\left(\partial_k\right), \qquad \sum_{i=1}^n\Gamma_{\overline{j}k}^i\partial_i := \nabla^{B}_{\overline{\partial}_j}\left(\partial_k \right).
\end{equation*}
Hence, we obtain that
\begin{equation*}
\nabla^{B,K_M^{-1}}_{\overline{X}} = \iota_{\overline X} d + \overline{\Delta}.
\end{equation*}
The connection on $K_M^{-1}$ induced by the Chern connection of $g$ is given by
$$
\nabla^{C,K_M^{-1}} = d - 2 \partial \log \|\Omega_0\|_\omega.
$$
Applying now Gauduchon's formula \cite[Equation (2.7.6)]{Gau}, we have that
\begin{equation*}
\nabla^{B,K_M^{-1}} = \nabla^{C,K_M^{-1}} + i d^*\omega  = d - d \log \|\Omega_0\|_\omega + i(d^*\omega - d^c\log \|\Omega_0\|_\omega) = d + \overline{\varphi_\omega},
\end{equation*}
and we conclude
$$
\Delta = \overline{\overline{\varphi_\omega}(\overline{X})} = \varphi_\omega(X).
$$
Collecting the different terms, we obtain the following
$$
2\mu_- = - \sigma_-(g^{-1} \varphi_\omega) + i \Lambda_\omega F_A.
$$
As for $\mu_\ell$, we note first that, by Lemma \ref{lem:frameprop}, for all $k$ we have
\begin{equation*}
\left\langle 2\mu , \overline{\epsilon}_k\right\rangle = \sum_{j=1}^n \left\langle \epsilon_j,\left[\overline \epsilon_k,\overline \epsilon_j\right]\right\rangle = 0,
\end{equation*}
and hence $\mu_\ell = 0$. Finally, using that
$$
\epsilon_k = 2d\overline{z}_k-d\overline{z}_k+g^{-1}d\overline{z}_k = 2d\overline{z}_k+\sigma_-\left(g^{-1}d\overline{z}_k\right),
$$
applying the Courant algebroid axiom $(5)$ in Definition \ref{defn:CA}, combined with the explicit formula for the Dorfman bracket \eqref{eq:bracketE0} and equation \eqref{eq:clavesuma}, since $d\overline{z}_{k}$ is exact,
\begin{equation*}
\begin{split}
\left\langle 2\mu,\epsilon_k\right\rangle & = \sum_{j=1}^n \left\langle \epsilon_j,\left[\epsilon_k,\overline{\epsilon}_j\right] \right\rangle\\
&  = \sum_{j=1}^n \left\langle \epsilon_j,\left[\sigma_-\left(g^{-1}d\overline{z}_k\right),\overline{\epsilon}_j\right]\right\rangle\\
& = \sum_{j=1}^nd\overline{z}_j\left(\nabla^B_{g^{-1}d\overline{z}_k}\overline{\partial}_j\right)\\
& = \varphi^{1,0}_\omega\left(g^{-1}d\overline{z}_k\right).
\end{split}
\end{equation*}
Similarly as before, we conclude that $2\mu_{\overline{\ell}} = \sigma_+(g^{-1}\varphi^{1,0}_\omega)$, which completes the proof.
\end{proof}

We describe next the global picture, regarding the twisted Hull--Strominger system \eqref{eq:tHS}. Let $E$ be a string algebroid over an even-dimensional oriented spin manifold $M$. We fix a pair $(V_+,\eta)$, given by a generalized metric on $E$ and a pure spinor $\eta \in \Omega^0(S^+)$, such that the corresponding decomposition \eqref{eq:lbarl} satisfies the F-term condition \eqref{eq:Fterm}. Via the identification $E = E_{-d^c\omega,A}$, we obtain a triple $(\Psi,\omega,A)$ as in Proposition \ref{prop:KSEqevendimCA}, where $(\Psi,\omega)$ is the $\SU(n)$-structure associated to $\eta$.
Under these assumptions, we want to understand when $(\Psi,\omega,A)$ is a solution of the twisted Hull--Strominger \eqref{eq:tHS} in terms of the geometry of $E$.

\begin{lemma}\label{lem:braketsum}
Let $E$ be a string algebroid over an even-dimensional oriented spin manifold $M$. Consider a pair $(V_+,\eta)$, given by a generalized metric $V_+$ and a pure spinor $\eta \in \Omega^0(S^+)$. Assume that the corresponding decomposition \eqref{eq:lbarl} satisfies the F-term condition \eqref{eq:Fterm}, and furthermore that the associated $\SU(n)$-structure $(\Psi,\omega)$ satisfies
\begin{equation}\label{eq:confbalbis}
d\Psi-\theta_\omega\wedge\Psi = 0, \qquad d\theta_\omega = 0.
\end{equation}
Then, in the local holomorphic coordinates defined in Lemma \ref{lem:holconstdetatlas}, one has
\begin{equation}
\label{eq:globalsum}
2\mu = \sigma_+\left(g^{-1}\theta_\omega^{1,0}\right) - \sigma_-\left(g^{-1}\theta_\omega\right)+i\Lambda_\omega F_A,
\end{equation}
and therefore \eqref{eq:mu} defines a global section $\mu \in \Omega^0(E \otimes \C)$. 
\end{lemma}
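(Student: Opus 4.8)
The plan is to reduce the global statement to the purely local computation already carried out in Lemma~\ref{lem:braketsum0}, applied patchwise over the special holomorphic atlas of Lemma~\ref{lem:holconstdetatlas}, and then to verify that in these adapted coordinates the auxiliary one-form $\varphi_\omega$ appearing there coincides with the globally defined Lee form $\theta_\omega$.

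First I would fix a coordinate patch $U$ of the atlas of Lemma~\ref{lem:holconstdetatlas}, with holomorphic coordinates $(z_1,\ldots,z_n)$ and volume form $\Omega_0 = dz_1\wedge\cdots\wedge dz_n$. Since the F-term condition \eqref{eq:Fterm} is assumed, the hypotheses of Lemma~\ref{lem:braketsum0} hold on $U$, so that lemma gives
\begin{equation*}
2\mu = \sigma_+\left(g^{-1}\varphi_\omega^{1,0}\right) - \sigma_-\left(g^{-1}\varphi_\omega\right) + i\Lambda_\omega F_A
\end{equation*}
on $U$, where $\varphi_\omega = -d\log\|\Omega_0\|_\omega - i\left(d^*\omega - d^c\log\|\Omega_0\|_\omega\right)$. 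It therefore suffices to prove that $\varphi_\omega = \theta_\omega$ on $U$.

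The key step is this identification. By the F-term condition and Lemma~\ref{lem:Ftermexp} we have $N_J = 0$ and $H = -d^c\omega$, so $\nabla^+ = \nabla^B$; combined with the hypotheses $d\Psi - \theta_\omega\wedge\Psi = 0$ and $d\theta_\omega = 0$, Lemma~\ref{lem:holonomia} yields $\nabla^B\Psi = 0$. In the adapted coordinates, Lemma~\ref{lem:holconstdetatlas} provides $\phi_U$ with $\left.\Psi\right|_U = e^{\phi_U}\Omega_0$ and $\left.\theta_\omega\right|_U = d\phi_U$, and dually the section $\Psi^{-1} = e^{-\phi_U}\Omega_0^{-1}$ of $K^{-1}_M$ is parallel for the induced Bismut connection $\nabla^{B,K^{-1}_M}$. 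Recalling from the proof of Lemma~\ref{lem:braketsum0} that $\nabla^{B,K^{-1}_M} = d + \overline{\varphi_\omega}$ in the frame $\Omega_0^{-1}$, the parallelism $\nabla^{B,K^{-1}_M}\left(e^{-\phi_U}\Omega_0^{-1}\right) = 0$ forces $\overline{\varphi_\omega} = d\phi_U = \theta_\omega$, and since $\theta_\omega$ is real we conclude $\varphi_\omega = \theta_\omega$. (Alternatively one may argue directly from the norm: since $\|\Psi\|_\omega = 1$ one gets $\log\|\Omega_0\|_\omega = -\Re\phi_U$, and then the displayed formula for $\varphi_\omega$ together with $\theta_\omega = Jd^*\omega$ gives the same conclusion; this route requires carefully tracking the $d^c$ and $J$ sign conventions, which is the only genuinely fiddly point.)

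Finally, substituting $\varphi_\omega = \theta_\omega$ yields \eqref{eq:globalsum} on every patch $U$. The right-hand side $\sigma_+(g^{-1}\theta_\omega^{1,0}) - \sigma_-(g^{-1}\theta_\omega) + i\Lambda_\omega F_A$ is built only from the globally defined data $g$, $\omega$, $\theta_\omega$, $F_A$ and the bundle maps $\sigma_\pm$, hence is a global section of $E\otimes\C$ independent of the chosen patch. Since on each $U$ the locally defined $\mu = \frac12\sum_j[\overline{\epsilon}_j,\epsilon_j]$ equals one half of this patch-independent section, the local definitions agree on overlaps and glue to a single global section $\mu\in\Omega^0(E\otimes\C)$. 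I expect the main obstacle to be precisely the coordinate identification $\varphi_\omega = \theta_\omega$: conceptually it expresses that in the adapted atlas the coordinate volume form $\Omega_0$ differs from the parallel form $\Psi$ only through the Lee potential $\phi_U$, but making this rigorous relies crucially on the $\nabla^B$-parallelism of $\Psi$ (Lemma~\ref{lem:holonomia}) and on the holomorphic normalization \eqref{eq:Psinormalized} of Lemma~\ref{lem:holconstdetatlas}.
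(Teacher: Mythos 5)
Your proposal is correct, and it shares the paper's overall skeleton --- restrict to a patch of the adapted atlas of Lemma \ref{lem:holconstdetatlas}, invoke Lemma \ref{lem:braketsum0}, identify $\varphi_\omega = \theta_\omega$, and glue --- but your derivation of the key identification is genuinely different. The paper's proof is a two-line substitution: the normalization \eqref{eq:Psinormalized} together with $\|\Psi\|_\omega = 1$ gives $\|\Omega_0\|_\omega = e^{-\phi_U}$ and $\theta_\omega = d\phi_U$, and plugging into the definition of $\varphi_\omega$, using $\theta_\omega = Jd^*\omega$ and the $d^c$ convention, makes the imaginary part cancel directly: $\varphi_\omega = \theta_\omega - i\left(-J\theta_\omega - Jd\log\|\Omega_0\|_\omega\right) = \theta_\omega$. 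You instead invoke Lemma \ref{lem:holonomia} --- legitimately, since the F-term condition gives $N_J = 0$ and $\nabla^+ = \nabla^B$ via Lemma \ref{lem:Ftermexp}, and \eqref{eq:confbalbis} supplies the remaining hypotheses --- to obtain $\nabla^B\Psi = 0$, and then read off $\overline{\varphi_\omega} = d\phi_U$ from the parallelism of $\Psi^{-1} = e^{-\phi_U}\Omega_0^{-1}$ against the connection-form identity $\nabla^{B,K_M^{-1}} = d + \overline{\varphi_\omega}$ established inside the proof of Lemma \ref{lem:braketsum0}; reality of $\theta_\omega$ then delivers both the real part $-d\log\|\Omega_0\|_\omega = \theta_\omega$ and the vanishing of $d^*\omega - d^c\log\|\Omega_0\|_\omega$ simultaneously, with no circularity since Lemma \ref{lem:braketsum0} does not depend on the present statement. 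What your route buys is convention-independence: the $J$, $d^c$, $d^*\omega$ sign bookkeeping that you correctly flag as the fiddly point is absorbed once and for all into the already-proved connection-form identity, and the argument makes transparent why $\varphi_\omega$ must be the Lee form (it is the connection form that makes the parallel section $\Psi^{-1}$ covariantly constant). What it costs is the extra input of Bismut holonomy, which the paper's direct computation never needs --- indeed your parenthetical ``alternative'' via $\log\|\Omega_0\|_\omega = -\Re\phi_U$ is precisely the paper's argument. Your final gluing step coincides with the paper's: the right-hand side of \eqref{eq:globalsum} is built from patch-independent data, so the locally defined sections $\mu$ agree on overlaps and define a global section of $E\otimes\C$.
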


\begin{proof}
The statement follows from Lemma \ref{lem:braketsum0}, by observing that
$$
\|\Omega_0\|_\omega = e^{-\phi_U}, \qquad \theta_\omega = d\phi_U,
$$
in the holomorphic coordinates in Lemma \ref{lem:holconstdetatlas}, and therefore
\[
\varphi_\omega = \theta_\omega - i (- J \theta_\omega - J d\log \|\Omega_0\|_\omega ) = \theta_\omega.
\qedhere
\]
\end{proof}

\begin{remark}
An abstract proof of the independence of $\mu$ on the choice of local coordinates in the atlas in Lemma \ref{lem:holconstdetatlas} has been given in \cite[Lemma 6.3.9]{Arriba}, using that the holomorphic Jacobian of the change of frame has constant determinant.
\end{remark}

We provide the desired characterization of the twisted Hull--Strominger \eqref{eq:tHS} in the following result. The name D-term equation below is again borrowed from \cite[Definition 2.21]{AAGa}, and inspired by similar conditions in theoretical physics.

\begin{proposition}\label{prop:DtermTHS}
Let $E$ be a string algebroid over an even-dimensional oriented spin manifold $M$. Consider a pair $(V_+,\eta)$, given by a generalized metric $V_+$ and a pure spinor $\eta \in \Omega^0(S^+)$. Assume that the corresponding decomposition \eqref{eq:lbarl} satisfies the F-term condition \eqref{eq:Fterm} and furthermore that the associated $\SU(n)$-structure $(\Psi,\omega)$ satisfies \eqref{eq:confbalbis}.
Then $(\Psi,\omega,A)$ solves the twisted Hull--Strominger system \eqref{eq:tHS} if and only if there exists a complex one-form $\varepsilon \in \Omega^1(\C) \subset \Omega^0(E \otimes \C)$ such that $(V_+,\varepsilon)$ is a solution of the D-term equation, that is,
\begin{equation}\label{eq:Dterm}
\mu = \varepsilon_{\ell} - \varepsilon.
\end{equation}
Furthermore, if this is the case, then we have $\varepsilon = - \theta_\omega$ and
$$
[\varepsilon,] = 0, \qquad \langle \varepsilon , \varepsilon \rangle = 0. 
$$
\end{proposition}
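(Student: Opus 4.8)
The plan is to reduce Proposition~\ref{prop:DtermTHS} to the explicit formula \eqref{eq:globalsum} obtained in Lemma~\ref{lem:braketsum}, and to interpret the twisted Hull--Strominger equations \eqref{eq:tHS} as the vanishing of the appropriate components of $\mu$. Under the standing hypotheses (the F-term condition \eqref{eq:Fterm} and the conformally balanced conditions \eqref{eq:confbalbis}), Lemma~\ref{lem:Ftermexp} already guarantees that $N_J=0$, $H+d^c\omega=0$ and $F_A^{0,2}=0$. Thus the content still to be captured by the D-term equation \eqref{eq:Dterm} is precisely the remaining equations of \eqref{eq:tHS}, namely the Hermitian--Yang--Mills condition $F_A\wedge\omega^{n-1}=0$ (equivalently $\Lambda_\omega F_A=0$) and the integrability/balanced-type information already subsumed in \eqref{eq:confbalbis}. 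The key observation is that by \eqref{eq:globalsum} the three components of $\mu$ in the decomposition $\ell\oplus\overline{\ell}\oplus(V_-\otimes\C)$ are
\[
2\mu_\ell = 0, \qquad
2\mu_{\overline{\ell}} = \sigma_+\!\left(g^{-1}\theta_\omega^{1,0}\right), \qquad
2\mu_- = -\,\sigma_-\!\left(g^{-1}\theta_\omega\right)+i\Lambda_\omega F_A,
\]
where I read off $\mu_\ell=0$ from the computation in the proof of Lemma~\ref{lem:braketsum0}.

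First I would establish the ``only if'' direction. Assume $(\Psi,\omega,A)$ solves \eqref{eq:tHS}; then $\Lambda_\omega F_A=0$, so the $i\Lambda_\omega F_A$ term drops out of $\mu_-$. I set $\varepsilon=-\theta_\omega\in\Omega^1(\C)\subset\Omega^0(E\otimes\C)$, viewed as a section of $E\otimes\C$ via $\pi^*$, i.e.\ lying in the image of $T^*$. The point is to verify \eqref{eq:Dterm}, that is $\mu=\varepsilon_\ell-\varepsilon$. Decomposing $\varepsilon=-\theta_\omega$ into its $\ell$, $\overline{\ell}$ and $V_-$ parts using the explicit frames \eqref{eq:localisoframe}, and comparing with the three displayed components of $\mu$, the identity \eqref{eq:Dterm} becomes a direct matching of $(1,0)$ and $(0,1)$ pieces of $\theta_\omega$ against $\sigma_\pm(g^{-1}\theta_\omega^{\bullet})$. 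Concretely, since $\theta_\omega$ is a one-form, its embedding in $E$ via $T^*\hookrightarrow E$ has vanishing $V_+^{1,0}=\ell$ and $V_+^{0,1}=\overline\ell$ contributions only through the metric, and one checks that $-\varepsilon + \varepsilon_\ell = \mu$ holds componentwise precisely when $\Lambda_\omega F_A=0$. I expect this to be the step where the careful bookkeeping of the isotropic frames \eqref{eq:localisoframe} and the projections $\sigma_\pm$ is essential.

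For the ``if'' direction, suppose there exists $\varepsilon\in\Omega^1(\C)\subset\Omega^0(E\otimes\C)$ with $\mu=\varepsilon_\ell-\varepsilon$. Projecting this equation onto $V_-\otimes\C$ and onto $\overline\ell$ and using \eqref{eq:globalsum}, I would extract two consequences: comparing $V_-$ parts forces the $\Lambda_\omega F_A$ contribution to be matched by a one-form, and since $\varepsilon$ and $\varepsilon_\ell-\varepsilon$ live in prescribed subbundles, the Lie-algebra-valued component $i\Lambda_\omega F_A\in\ad P\subset V_-\otimes\C$ must vanish (as $\theta_\omega$ and $\varepsilon$ carry no $\ad P$ component). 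This yields $\Lambda_\omega F_A=0$, hence $F_A\wedge\omega^{n-1}=0$, completing the equivalence with \eqref{eq:tHS} given the already-known equations. Comparing the remaining components then pins down $\varepsilon=-\theta_\omega$, establishing the stated value of $\varepsilon$.

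Finally I would verify the two closing identities. The identity $\langle\varepsilon,\varepsilon\rangle=0$ is immediate from $\varepsilon=-\theta_\omega\in\Omega^1(\C)$ lying in the image of the isotropic embedding $\pi^*\colon T^*\to E$, since $T^*$ is isotropic for the pairing \eqref{eq:pairingE0} (indeed $\langle\pi^*\xi,\pi^*\xi\rangle=0$). For $[\varepsilon,\cdot]=0$, i.e.\ that $\varepsilon=-\theta_\omega$ is an infinitesimal symmetry, I would invoke the closedness $d\theta_\omega=0$ from \eqref{eq:confbalbis}: by the infinitesimal isometry conditions \eqref{eq:infisometry} specialized to a closed one-form (the ``closed'' case $X=r=0$, $d\varphi=0$ of Definition~\ref{def:compatibleE}), a closed one-form in $\Omega^1\subset\Omega^0(E)$ acts trivially through the Dorfman bracket, so $[\varepsilon,\cdot]=0$. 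The main obstacle I anticipate is the component-by-component matching in the two directions above, where one must keep precise track of how $\theta_\omega^{1,0}$ and $\theta_\omega^{0,1}$ distribute across $\ell$, $\overline\ell$ and $V_-$ under the frames \eqref{eq:localisoframe}; everything else reduces to the already-established Lemma~\ref{lem:braketsum} and the isotropy of $T^*$.
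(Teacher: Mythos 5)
Your proof is correct and follows essentially the same route as the paper: under the standing hypotheses both arguments reduce the twisted Hull--Strominger system to $\Lambda_\omega F_A = 0$, and then match the explicit $\ell\oplus\overline{\ell}\oplus(V_-\otimes\C)$ components of $\mu$ given by Lemma \ref{lem:braketsum} against the decomposition of a complex one-form (using in particular that $\varepsilon_\ell-\varepsilon$ carries no $\ad P$ component), forcing $\varepsilon=-\theta_\omega$ and $\Lambda_\omega F_A=0$. One minor quibble: $[\varepsilon,\cdot]=0$ follows most directly from the Dorfman bracket formula \eqref{eq:bracketE0}, which for a one-form $\xi$ gives $[\xi,\cdot]=-\iota_{\pi(\cdot)}d\xi$, rather than from the isometry condition \eqref{eq:infisometry} (which only characterizes $[\varepsilon,V_+]\subset V_+$), though your conclusion via $d\theta_\omega=0$ coincides with the paper's.
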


\begin{proof}
With the given hypotheses, $(\Psi,\omega,A)$ solves the twisted Hull--Strominger system \eqref{eq:tHS} if and only $\Lambda_\omega F_A = 0$. Given $\varepsilon \in \Omega^1(\C)$, the decomposition $\varepsilon = \varepsilon_\ell + \varepsilon_{\overline{\ell}} + \varepsilon_-$ is given by
\begin{align*}
\varepsilon_\ell & = \frac{1}{2}(g^{-1}\varepsilon^{0,1} + \varepsilon^{0,1}) = \frac{1}{2} \sigma_+ (g^{-1}\varepsilon^{0,1}),\\
\varepsilon_{\overline{\ell}} & = \frac{1}{2}(g^{-1}\varepsilon^{1,0} + \varepsilon^{1,0}) = \frac{1}{2} \sigma_+ (g^{-1}\varepsilon^{1,0}),\\
\varepsilon_- & = \frac{1}{2}(- g^{-1}\varepsilon + \varepsilon) = - \frac{1}{2} \sigma_- (g^{-1}\varepsilon).
\end{align*}
Hence, we have
$$
\varepsilon_\ell - \varepsilon = - \frac{1}{2}\sigma_+ (g^{-1}\varepsilon^{1,0}) + \frac{1}{2}\sigma_- (g^{-1}\varepsilon),
$$
and therefore, by Lemma \ref{lem:braketsum}, equation \eqref{eq:Dterm} is equivalent to $\varepsilon = - \theta_\omega$ and $\Lambda_\omega F_A = 0$. The last part of the statement follows from $d \theta_\omega = 0$, by the formulae for the bracket and the pairing in Example \ref{def:E0}.
\end{proof}

\begin{remark}
Ideally, we would have liked to provide a characterization of the twisted Hull--Strominger system \eqref{eq:tHS} without imposing condition \eqref{eq:confbalbis}. However, without this assumption, the expression \eqref{eq:mu} does not necessarily define a global section of $E \otimes \C$.
\end{remark}

To finish this section, we provide a similar characterization of the coupled Hermitian-Einstein equation \eqref{eq:cHE} via the D-term equation \eqref{eq:Dterm}, assuming that the complex manifold $(M,J)$ admits a holomorphic volume form $\Omega$. We fix a string algebroid $E$ over $(M,J)$ and consider generalized metrics $V_+ \subset E$ such that the associated decomposition \eqref{eq:lbarl} satisfies $\langle \ell, \ell \rangle = \langle \overline \ell, \overline \ell \rangle = 0$ and the F-term condition \eqref{eq:Fterm}. In this setup, Lemma \ref{lem:braketsum0} gives a global section
$$
\mu \in \Omega^0(E \otimes \C).
$$

\begin{proposition}\label{prop:DtermCHE}
Let $E$ be a string algebroid over an $n$-dimensional complex manifold $(M,J)$ endowed with a holomorphic volume form $\Omega$.  Let $V_+ \subset E$ be a generalized metric such that the associated decomposition \eqref{eq:lbarl} satisfies $\langle \ell, \ell \rangle = \langle \overline \ell, \overline \ell \rangle = 0$ and the F-term condition \eqref{eq:Fterm}. Then the Hermitian metric $\mathbf{G}$ on the holomorphic bundle $\mathcal{Q}_{\overline{\ell}} : = \overline{\ell}^\perp/\overline{\ell}$ associated to $V_+$ satisfies (see Definition \ref{def:cHE})
$$
F_{\mathbf{G}} \wedge \omega^{n-1} = 0
$$
if and only if there exists $\varepsilon \in \Omega^0(E \otimes \C)$ that is an infinitesimal symmetry of $E \otimes \C$, i.e., 
$$
[\varepsilon,] = 0,
$$
such that $(V_+,\varepsilon)$ is a solution of the D-term equation \eqref{eq:Dterm}. Furthermore, if this is the case, then we have
\begin{equation}\label{eq:varepsilonDterm}
\begin{split}&
\varepsilon = d \log \|\Omega\|_\omega + i \left(d^*\omega - d^c\log \|\Omega\|_\omega \right) - \frac{i}{2} \Lambda_\omega F_A \in \Omega^0(E \otimes \C),\\&
\langle \varepsilon,\varepsilon \rangle = -\frac{1}{4} \langle \Lambda_\omega F_A ,\Lambda_\omega F_A \rangle \in \R.
\end{split}
\end{equation}
\end{proposition}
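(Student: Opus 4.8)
The plan is to produce the explicit section $\varepsilon$ of \eqref{eq:varepsilonDterm}, verify that it solves the D-term equation \eqref{eq:Dterm} with no extra hypothesis, and then show that it is an infinitesimal symmetry precisely when the coupled Hermitian-Einstein equation holds. Since $\Omega$ is a holomorphic volume form, around any point one may choose local holomorphic coordinates in which $\Omega = dz_1 \wedge \cdots \wedge dz_n$; this places us in the situation of Lemma \ref{lem:braketsum0} with $\|\Omega_0\|_\omega = \|\Omega\|_\omega$, so the right-hand side of \eqref{eq:globalsumlocal} is globally well defined and $\mu \in \Omega^0(E \otimes \C)$ is a global section. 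Writing $\varphi_\omega = -d\log\|\Omega\|_\omega - i(d^*\omega - d^c\log\|\Omega\|_\omega)$ as in Lemma \ref{lem:braketsum0}, I set $\varepsilon := -\varphi_\omega - \tfrac{i}{2}\Lambda_\omega F_A$, which is exactly \eqref{eq:varepsilonDterm}. Decomposing $\varepsilon$ into $\ell \oplus \overline\ell \oplus (V_- \otimes \C)$ as in the proof of Proposition \ref{prop:DtermTHS} (the one-form $-\varphi_\omega$ splits by the formulas there, while $\Lambda_\omega F_A \in \ad P \subset V_-$ contributes only to the $V_-$-summand) gives $\varepsilon_\ell = -\tfrac12\sigma_+(g^{-1}\varphi_\omega^{0,1})$, and a short computation comparing $\varepsilon_\ell - \varepsilon$ with \eqref{eq:globalsumlocal} yields $\varepsilon_\ell - \varepsilon = \mu$. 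Thus the D-term equation holds for this $\varepsilon$ unconditionally.

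Next I would determine when $\varepsilon$ is an infinitesimal symmetry. Writing $\varepsilon = X + r + \xi$ with $X = 0$, $r = -\tfrac{i}{2}\Lambda_\omega F_A$ and $\xi = -\varphi_\omega$, and expanding $[\varepsilon, \cdot]$ with the Dorfman bracket \eqref{eq:bracketE0}, the vanishing of $[\varepsilon, \cdot]$ reduces to the three conditions $d^A(\Lambda_\omega F_A) = 0$, $[\Lambda_\omega F_A, \cdot] = 0$ and $d\xi + 2\langle F_A, r\rangle = 0$. For the last one I would invoke the curvature computation already performed in Lemma \ref{lem:braketsum0}, namely $\nabla^{B,K_M^{-1}} = d + \overline{\varphi_\omega}$, which gives $\rho_B = i\,d\overline{\varphi_\omega} = -i\,d\varphi_\omega$ and hence $d\varphi_\omega = i\rho_B$; substituting $r$ and $\xi$ then turns $d\xi + 2\langle F_A, r\rangle = 0$ into $\rho_B + \langle\Lambda_\omega F_A, F_A\rangle = 0$. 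Since the F-term condition forces $F_A^{0,2} = 0$ and $H = -d^c\omega$ by Lemma \ref{lem:Ftermexp}, so that the Bianchi identity \eqref{eq:Bianchireal} of the string algebroid reads $dd^c\omega + \langle F_A \wedge F_A\rangle = 0$, these conditions are together precisely the coupled Hermitian-Yang--Mills system \eqref{eq:cHECA} for the data $(\omega,A)$ determined by $V_+$. By Proposition \ref{prop:cHECA} the latter is equivalent to $F_{\mathbf{G}} \wedge \omega^{n-1} = 0$. This proves the forward implication, with $\varepsilon$ the required symmetry, and establishes that $[\varepsilon, \cdot] = 0$ is equivalent to the coupled Hermitian-Einstein equation.

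For the converse I would argue uniqueness. The D-term equation $\mu = \varepsilon_\ell - \varepsilon$ constrains only the $\overline\ell$- and $V_-$-components of a solution (recall $\mu_\ell = 0$ from Lemma \ref{lem:braketsum0}), so any solution $\tilde\varepsilon$ satisfies $\tilde\varepsilon - \varepsilon \in \ell \subset V_+ \otimes \C$. As $\pi(\varepsilon) = 0$ while $\pi$ restricts to an isomorphism $\ell \cong T^{1,0}$, the $T$-component of $[\tilde\varepsilon, a]$ equals the Lie bracket $[\pi(\tilde\varepsilon - \varepsilon), \pi(a)]$, and its vanishing for all $a$ forces $\pi(\tilde\varepsilon - \varepsilon) = 0$, hence $\tilde\varepsilon = \varepsilon$. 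Therefore any infinitesimal symmetry solving \eqref{eq:Dterm} coincides with \eqref{eq:varepsilonDterm}, and the previous paragraph yields $F_{\mathbf{G}} \wedge \omega^{n-1} = 0$. Finally, the value of $\langle\varepsilon, \varepsilon\rangle$ follows immediately from the pairing \eqref{eq:pairingE0}: since $X = 0$, one has $\langle\varepsilon, \varepsilon\rangle = \langle r, r\rangle = -\tfrac14\langle\Lambda_\omega F_A, \Lambda_\omega F_A\rangle \in \R$.

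The main obstacle is the third symmetry identity, i.e. recognizing $d\xi + 2\langle F_A, r\rangle = 0$ as the Bismut--Ricci equation $\rho_B + \langle\Lambda_\omega F_A, F_A\rangle = 0$; this is not formal and rests on the relation $d\varphi_\omega = i\rho_B$ coming from Gauduchon's comparison of the Chern and Bismut connections on $K_M^{-1}$, as already exploited in Lemma \ref{lem:braketsum0}. The remaining steps---the component decomposition of $\varepsilon$, the expansion of the Dorfman bracket, and the evaluation of the pairing---are routine bookkeeping once the prior lemmas are in place.
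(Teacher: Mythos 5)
Your proposal is correct and follows essentially the same route as the paper's proof: the explicit section $\varepsilon = -\varphi_\omega - \tfrac{i}{2}\Lambda_\omega F_A$ solving the D-term equation via Lemma \ref{lem:braketsum0}, the reduction of $[\varepsilon,\cdot]=0$ to the conditions \eqref{eq:infsymm} with the relation $d\varphi_\omega = i\rho_B$ turning the last of them into $\rho_B + \langle \Lambda_\omega F_A, F_A\rangle = 0$, and Proposition \ref{prop:cHECA} closing both implications; your converse (solutions of the D-term equation differ by a section of $\ell$, and the anchor argument via the Leibniz axiom kills this ambiguity) is a repackaging of the paper's observation that $[\varepsilon,\cdot]=0$ forces $X=\pi(\varepsilon)=0$, after which the D-term equation pins down $\varepsilon$ uniquely. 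One minor point to patch: the assertion $\langle\varepsilon,\varepsilon\rangle \in \R$ means a real \emph{constant} (this feeds into the central charge in Theorem \ref{teo:mainTCAcHE}), whereas your pointwise evaluation via \eqref{eq:pairingE0} only gives real-valuedness, so you should add the paper's one-liner $\mathcal{D}\langle\varepsilon,\varepsilon\rangle = 2[\varepsilon,\varepsilon]=0$, or equivalently note that $d^A(\Lambda_\omega F_A)=0$ together with invariance of the pairing gives $d\langle\Lambda_\omega F_A,\Lambda_\omega F_A\rangle = 0$.
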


\begin{proof}
Let $\varepsilon = X + z + \varphi$ be an arbitrary section of $E \otimes \C$. Then, by the explicit formula for the Dorfman bracket in Example \ref{def:E0}, $[\varepsilon,]= 0$ is equivalent to
\begin{equation}\label{eq:infsymm}
X = 0, \qquad d^A z = 0, \qquad [z,\cdot] = 0, \qquad d\varphi + 2 \langle F_A,z \rangle = 0.
\end{equation}
Furthermore, for $\varepsilon = z + \varphi$, using Lemma \ref{lem:braketsum0} and arguing as in the proof of Proposition \ref{prop:DtermTHS}, the D-term equation \eqref{eq:Dterm} is equivalent to
$$
- \sigma_+ (g^{-1}\varphi^{1,0}) + \sigma_- (g^{-1}\varphi) - 2z = \sigma_+\left(g^{-1}\varphi_\omega^{1,0}\right) - \sigma_-\left(g^{-1}\varphi_\omega\right)+i\Lambda_\omega F_A,
$$
or, equivalently, to
$$
\varphi = - \varphi_\omega, \qquad z = - \frac{i}{2} \Lambda_\omega F_A.
$$
Assume now that $F_\mathbf{G} \wedge \omega^{n-1} = 0$ and set $\varepsilon = - \varphi_\omega - \tfrac{i}{2}\Lambda_\omega F_A$, as in \eqref{eq:varepsilonDterm}. Then, by Proposition \ref{prop:cHECA}, we have
$$
[\Lambda_\omega F_A,] = 0, \qquad d^A(\Lambda_\omega F_A) = 0.
$$
Furthermore, by the proof of Lemma \ref{lem:braketsum0}, we have
$$
d \varphi_\omega = -id\left(d^*\omega - d^c\log \|\Omega\|_\omega \right) = i \rho_B,
$$
which gives
$$
d \varphi_\omega + i\langle F_A, \Lambda_\omega F_A \rangle = 0,
$$
and we conclude that $[\varepsilon,]= 0$ by \eqref{eq:infsymm}. Conversely, if $\varepsilon \in \Omega^0(E \otimes \C)$ is an infinitesimal symmetry satisfying the D-term equation \eqref{eq:Dterm}, then $\varepsilon = - \varphi_\omega - \tfrac{i}{2}\Lambda_\omega F_A$ as in \eqref{eq:varepsilonDterm} and equation \eqref{eq:infsymm} implies that $(\omega,A)$ satisfies the coupled Hermitian-Yang--Mills equations \eqref{eq:cHECA}. The condition $F_\mathbf{G} \wedge \omega^{n-1} = 0$ follows now from Proposition \ref{prop:cHECA}. Finally, observe that $[\varepsilon,] = 0$ implies that
$$
\mathcal{D}\langle \varepsilon,\varepsilon\rangle = 2 [\varepsilon,\varepsilon] = 0,
$$
and hence, by definition of $\varepsilon$, we have $\langle \varepsilon,\varepsilon\rangle = -\frac{1}{4} \langle \Lambda_\omega F_A ,\Lambda_\omega F_A \rangle \in \R$, as claimed.
\end{proof}

\section{Embeddings of $N=2$ superconformal vertex algebras}\label{Resultados1}

\subsection{Background on SUSY vertex algebras}\label{ssec:background}

In this section, we review some basic notions and examples of the theory of supersymmetric vertex algebras that we will need. Our treatment will follow the superfield formalism introduced by Heluani--Kac~\cite{SUSYVA}, where details can be found. We are concerned only with $N_K=1$ SUSY vertex algebras within the formalism in~\cite[\S 4]{SUSYVA}, so we will refer to this special case simply as `SUSY vertex algebras'.

We work over an algebraically closed field $\CC$ of characteristic $0$. As usual, the adjective `super' applied to algebras, modules and vector spaces will mean $\ZZ/2\ZZ$-graded. Considering the pair $Z = (z,\theta)$ consisting of an even indeterminate $z$ and a Grassmannian indeterminate $\theta$ (odd, commuting with $z$, and such that $\theta^2 = 0$), we use the notation
$$
Z^{j|J} := z^j\theta^J, \qquad \textrm{ for $j \in \Z$ and $J \in \{0,1\}$}.
$$
In the superfield approach, a SUSY vertex algebra consists of a vector superspace $V$, equipped with a non-zero even vector $\left|0\right\rangle\in V$ (vacuum), an odd endomorphism $S\colon V\to V$ (supersymmetry generator) and an even linear map $Y\colon V\to(\End V)[[z^{\pm 1}]][\theta]$ (the state-superfield correspondence) mapping each vector $a\in V$ to a superfield, i.e., a formal sum
\[
Y(a,Z)=\sum_{j \in \Z \atop J=0,1} Z^{-1-j|1-J}a_{(j|J)},
\]
with `Fourier supermodes' $a_{(j|J)}\in\End V$, such that $Y(a,Z)b$ is a formal Laurent series for all $b\in V$. This data should satisfy several conditions called vacuum axioms, translation invariance
\begin{equation*}\label{eq:enhanced-translation-invariance}
[S,Y(a,Z)]=(\partial_{\theta}-\theta\partial_z)Y(a,Z),
\end{equation*}
and locality (see \cite{SUSYVA} for details).

Extending earlier work by Bakalov--Kac~\cite{BK03,Kac} for vertex algebras, Heluani--Kac~\cite{SUSYVA} obtained an equivalent definition of SUSY vertex algebra, 
which is the one we will use for our computations. 
To outline this definition, we introduce the even endomorphism $T := S^2$, so $V$ is a supermodule over the translation algebra $\cH$, defined as the (unital) associative superalgebra with an odd generator $S$, an even generator $T$, and the relation $S^2=T$. This superalgebra can be identified with the parameter algebra, defined as the associative superalgebra $\cL$ with an odd generator $-\chi$, an even generator $-\lambda$, and the relation $\chi^2=-\lambda$. The two pairs of generators are denoted $\nabla=(T,S)$ and $\Lambda=(\lambda,\chi)$. Using the state-field correspondence, the \emph{$\Lambda$-bracket} and the \emph{normally ordered product} of two states $a,b\in V$ are defined by
\begin{equation*}\label{eq:Lambda-bracket-nop}
\left[a_\Lambda b\right] = \displaystyle\sum_{j\in\N \atop J=0,1} \frac{\Lambda^{j|J}}{j!}a_{(j|J)}b,
\qquad
:ab:=a_{(-1|1)}b,
\end{equation*}
respectively, where $\Lambda^{j|J}=\lambda^j\chi^J$.

The properties of the $\Lambda$-bracket motivate the definition~\cite[Def. 4.10]{SUSYVA} of a \emph{SUSY Lie conformal algebra} as an $\cH$-supermodule $\cR$ equipped with a parity-reversing bilinear map
\begin{equation*}\label{eq:Lambda-bracket.1}
\cR\times \cR\lto \cL\otimes \cR,\quad (a,b)\longmapsto [a_{\Lambda}b],
\end{equation*}
that satisfies the following identities for all $a,b,c\in\cR$:
\begin{gather}\label{eq:sesquiLambda}
\left[Sa_\Lambda b\right] = \chi\left[a_\Lambda b\right],\qquad
\left[a_\Lambda Sb\right] = -(-1)^{\left|a\right|} \left(S+\chi\right)\left[a_\Lambda b\right],  
\\\label{eq:comLambda}
\left[a_\Lambda b\right] = (-1)^{\left|a\right|\left|b\right|}\left[b_{-\Lambda-\nabla}a\right],
\\\label{eq:JacobiLambda}
\left[a_\Lambda\left[b_\Gamma c\right]\right] = (-1)^{\left|a\right|+1}\left[\left[a_\Lambda b\right]_{\Lambda+\Gamma}c\right] + (-1)^{\left(\left|a\right|+1\right)\left(\left|b\right|+1\right)}\left[b_\Gamma\left[a_\Lambda c\right]\right].
\end{gather}
The identities~\eqref{eq:sesquiLambda} (sesquilinearity) and~\eqref{eq:comLambda} (skew-symmetry or commutativity) take place in $\cL\otimes \cR$, while the identity~\eqref{eq:JacobiLambda} (the Jacobi identity) takes place in $\cL\otimes\cL'\otimes\cR$, where $\cL'$ is a copy of $\cL$ with the formal variables $\Lambda=(\lambda,\chi)$ replaced by $\Gamma=(\gamma,\eta)$.
The degree of a homogeneous element $a$ is denoted $\left|a\right|\in\ZZ/2\ZZ$. 
The precise meaning of the expressions in~\eqref{eq:sesquiLambda},~\eqref{eq:comLambda},~\eqref{eq:JacobiLambda} is explained, with our conventions, in \cite[Appendix A.1]{AAGa}.

Adding the data given by the normally ordered product, one obtains the following~\cite[Def. 4.19]{SUSYVA}.
A \emph{SUSY vertex algebra} is equivalent to a SUSY Lie conformal algebra $V$, whose underlying vector superspace is equipped with a structure of differential superalgebra, where the (odd) differential $S\colon V\to V$ is induced by the $\cH$-module structure on $V$, that is unital with identity $\left|0\right\rangle\in V$, and such that the multiplication of this superalgebra, 
\begin{equation}\label{eq:VA-equiv.1}
V\times V\lto V,\quad (a,b)\longmapsto :ab:,
\end{equation}
called normally ordered product, satisfies the following identities, respectively called quasicommutativity, quasiassociativity and the non-commutative Wick formula:
\begin{gather}\label{eq:cuasicon}
:ab:-(-1)^{\left|a\right|\left|b\right|}:ba: = \int_{-\nabla}^0 d\Lambda \left[a_\Lambda b\right],
\\\label{eq:cuasiaso}
::ab:c:-:a:bc:: = :\!\left(\int_0^\nabla d^r\!\Lambda\, a\right) \left[b_\Lambda c\right]\!:+(-1)^{\left|a\right|\left|b\right|} :\!\left(\int_0^\nabla d^r\!\Lambda\, b \right)\left[a_\Lambda c\right]\!:,
\\\label{eq:Wick}
\left[a_\Lambda:bc:\right] = :\left[a_\Lambda b\right]c:+(-1)^{\left(\left|a\right|+1\right)\left|b\right|} :b\left[a_\Lambda c\right]:+\int_0^\Lambda d\Gamma \left[\left[a_\Lambda b\right]_\Gamma c\right].
\end{gather}
The meaning of the integrals in these identities 
is explained, with our conventions, in \cite[Appendix A.2]{AAGa}.

As in the theory of vertex algebras, a SUSY Lie conformal algebra $\cR$ determines a SUSY vertex algebra $V(\cR)$ and an embedding of SUSY Lie conformal algebras $\cR\hra V(\cR)$ that is universal for morphisms $\cR\to V'$ into other SUSY vertex algebras $V'$.
One says that $V(\cR)$ is the SUSY vertex algebra \emph{generated} by $\cR$~\cite[\S 1.8]{SUSYVA}, or more precisely, $V(\cR)$ is the \emph{universal enveloping SUSY vertex algebra} of $\cR$~\cite[Theorem 3.4.2 and \S 4.21]{SUSYVA}. 
%

Next we describe two SUSY vertex algebras relevant to this paper, referring to \cite[Section 3.1]{AAGa} and~\cite{SUSYVA} for details on these examples in the classical approach to vertex algebras.

\begin{example}\label{exam:NS}
Let $\cR$ be the SUSY Lie conformal algebra  whose underlying $\cH$-module is freely generated by an odd vector $H$ and a scalar $c$, with $\Lambda$-bracket
\begin{equation}\label{eq:NS}
\left[{H}_\Lambda H\right] = \left(2T+\chi S+3\lambda\right)H+\frac{\chi\lambda^2}{3}c.
\end{equation}
The $N=1$ superconformal vertex algebra of central charge $c$, also called the \emph{Neveu--Schwarz vertex algebra}, is the corresponding universal enveloping SUSY vertex algebra.
\end{example}

\begin{example}\label{exam:N2}
Let $\cR$ be the SUSY Lie conformal algebra whose underlying $\cH$-module is freely generated by an odd vector $H$ (a `Neveu--Schwarz vector'), an even vector $J$ (a `current') and a scalar $c$, with $\Lambda$-brackets given by~\eqref{eq:NS} and 
\begin{equation}\label{eq:N2SCVA}
\left[{J}_\Lambda J\right] = -\left(H+\frac{\lambda\chi}{3}c\right),
\quad
\left[{H}_\Lambda J\right] = \left(2T+2\lambda+\chi S\right)J.
\end{equation}
The corresponding universal enveloping SUSY vertex algebra is called the \emph{$N=2$ superconformal vertex algebra of central charge $c$}.
\end{example}

\begin{remark}\label{rem:tech}
The Neveu--Schwarz vector $H$ can be recovered from the current $J$.
More precisely, given a SUSY vertex algebra $V$ and an even vector $J\in V$ satisfying the identities~\eqref{eq:N2SCVA} for some odd vector $H\in V$ and an invariant central element $c$, $H$ automatically satisfies~\eqref{eq:NS}, so $J$ and $H$ generate an $N=2$ superconformal vertex algebra of central charge $c$. This follows by a direct application of the Jacobi identity for SUSY Lie conformal algebras (see, e.g.,~\cite[Lemma A.1]{GCYHeluani} for details).
\end{remark}

\subsection{Local generators of supersymmetry}\label{sec:localgen}

The aim of this section is to construct local supersymmetry generators on the chiral de Rham complex, using abstract versions of the F-term and D-term conditions on complex Courant algebroids, generalizing those in Lemma \ref{lem:Ftermexp} and Proposition \ref{prop:DtermTHS}. This will serve as preparatory material for Theorem \ref{teo:mainTCAlocal}, where we will give a local version of our main results in Section \ref{sec:mainglobal}.

We start recalling the coordinate independent description of the chiral de Rham complex using Courant algebroids~\cite{Heluani09,HeluaniRev,GCYHeluani}. Let $E$ be a smooth complex Courant algebroid over a smooth manifold $M$. The definition of this object is analogous to Definition \ref{defn:CA}, replacing $\left\langle\cdot,\cdot\right\rangle$ by a $\C$-valued pairing, the anchor map by a morphism $\pi \colon E \to T \otimes \C$ of complex vector bundles, and smooth functions on $M$ by $\C$-valued smooth functions $C^\infty(M,\C)$. Let $\Pi E$ be the corresponding purely odd super vector bundle, so a local section $e$ of $E$ is identified with the corresponding local section $\Pi e$ of $\Pi E$. Abusing notation, $\langle, \rangle$, $[,]$ and
\begin{equation}\label{eq:parity-reversed-D.1}
\mathcal{D}\colon C^\infty(M,\C) \lto \Omega^0(\Pi E)  
\end{equation}
will respectively denote the corresponding super-skew-symmetric bilinear form on $\Pi E$, the Dorfman bracket on $\Pi E$, and the odd operator given by composing the exterior differential $d$, the map $\pi^* \colon T^* \otimes \C \to E^*$, the isomorphism $E^* \cong E$ provided by $\langle, \rangle$, and the parity change operator $\Pi$. 

Let $\cH$ be the translation algebra defined in Section~\ref{ssec:background} and $\underline{\CC}$ the sheaf of locally constant functions on $M$. 
The next result provides a coordinate-free description of the chiral de Rham complex on the smooth manifold $M$. The original chiral de Rham complex in \cite{MSV} is recovered from this construction when $E = (T \oplus T^*)\otimes \C$, the standard Courant algebroid on $M$.

\begin{proposition}[\cite{Heluani09,GCYHeluani}]\label{prop:CDRE}
Let $E$ be a complex Courant algebroid over $M$. Then there exists a unique sheaf of SUSY vertex algebras $\Omega^{\ch}_E$ over $M$, equipped with embeddings of sheaves of $\underline{\CC}$-modules
$$
\iota \colon C^\infty(M,\C) \lhra \Omega^{\ch}_E, \qquad \jmath \colon \Pi E \lhra \Omega^{\ch}_E,
$$
satisfying the following properties:
\begin{enumerate}
\item[\textup{(1)}] $\iota$ is an isomorphism of unital commutative algebras onto its image:
$$
\iota(1)=\left|0\right\rangle,\quad\iota(fg) = :\iota(f)\iota(g):,
$$

\item[\textup{(2)}] $\iota$ and $\jmath$ are compatible with the $C^\infty(M,\C)$-module structure of $\Pi E$ and the $\cH$-module structure of $\Omega^{\ch}_E$:
$$
\jmath(fa) = :\iota(f)\jmath(a):, \qquad 2 S \iota(f) = \jmath(\mathcal{D}f),
$$

\item[\textup{(3)}] $\iota$ and $\jmath$ are compatible with the Dorfman bracket and pairing:
$$
{[\jmath(a)}_\Lambda\jmath(b) ] = \jmath([a,b]) + 2 \chi \iota (\langle a,b\rangle),
$$

\item[\textup{(4)}] $\iota$ and $\jmath$ are compatible with the action of $\Omega^0(\Pi E)$ on $C^\infty(M,\C)$:
$$
{[\jmath(a)}_\Lambda \iota(f)] = \iota(\pi(a)(f))
$$

\item [\textup{(5)}] $\Omega^{\ch}_E$ is universal with the above properties,
\end{enumerate}
	
for all $f,g \in C^\infty(M,\C)$, $a,b \in \Omega^0(\Pi E)$. 
\end{proposition}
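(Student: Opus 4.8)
The plan is to realise $\Omega^{\ch}_E$ as the universal enveloping SUSY vertex algebra of a sheaf of SUSY Lie conformal algebras built canonically from the Courant algebroid data, and then to cut it down by the ``classical'' relations (1) and the first identity of (2). The starting observation is that properties (3), (4) and the second identity of (2) involve only the $\Lambda$-bracket and the $\cH$-module structure; they suggest defining a sheaf $\cR$ of SUSY Lie conformal algebras whose underlying $\cH$-module is generated over $\underline{\CC}$ by $C^\infty(M,\C)$ and $\Omega^0(\Pi E)$, subject to the single relation $2S\iota(f) = \jmath(\mathcal{D}f)$, and whose $\Lambda$-bracket is prescribed by (3), (4) together with $[\iota(f)_\Lambda\iota(g)] = 0$.

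First I would verify that these prescriptions really define a SUSY Lie conformal algebra, i.e.\ that they are compatible with the $\cH$-module relation and satisfy sesquilinearity \eqref{eq:sesquiLambda}, skew-symmetry \eqref{eq:comLambda} and the Jacobi identity \eqref{eq:JacobiLambda}. This step is a direct dictionary between the two axiom systems: skew-symmetry of $[\jmath(a)_\Lambda\jmath(b)]$ corresponds to axiom (5) of Definition \ref{defn:CA}, whose term $\mathcal{D}\langle a,b\rangle$ produces exactly the $S$-dependence forced by \eqref{eq:comLambda}; the Jacobi identity among sections is the Jacobi/Leibniz rule for the Dorfman bracket, axiom (1); compatibility of (3) and (4) with the bracket on functions uses axioms (2) and (3); and sesquilinearity relative to the relation $\jmath(\mathcal{D}f) = 2S\iota(f)$ reduces to the identity $\langle\mathcal{D}f,a\rangle = \pi(a)(f)$ together with $[\mathcal{D}f,b]=0$, both consequences of the Courant axioms. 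The $\chi$-term in property (3) is precisely what encodes the pairing $\langle\cdot,\cdot\rangle$, reflecting axiom (4) of Definition \ref{defn:CA}.

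Next I would set $\Omega^{\ch}_E := V(\cR)/\mathcal{I}$, where $V(\cR)$ is the universal enveloping SUSY vertex algebra of $\cR$ (formed sheaf-theoretically) and $\mathcal{I}$ is the ideal generated by $\iota(1)-\left|0\right\rangle$, by $:\iota(f)\iota(g):\,-\,\iota(fg)$ and by $:\iota(f)\jmath(a):\,-\,\jmath(fa)$, i.e.\ by the relations (1) and the first identity of (2). Using quasicommutativity \eqref{eq:cuasicon}, quasiassociativity \eqref{eq:cuasiaso} and the non-commutative Wick formula \eqref{eq:Wick}, one checks that $\mathcal{I}$ is stable under $S$, under the $\Lambda$-bracket and under the normally ordered product, so the quotient is again a SUSY vertex algebra and $\iota,\jmath$ descend to it. Uniqueness is then formal: any sheaf of SUSY vertex algebras equipped with maps satisfying (1)--(4) receives, by the universal property of $V(\cR)$, a morphism from $V(\cR)$ that annihilates $\mathcal{I}$ and hence factors through $\Omega^{\ch}_E$, and property (5) upgrades this to a canonical isomorphism.

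The one genuinely non-formal point, and the main obstacle, is to show that $\iota$ and $\jmath$ remain \emph{injective} on the quotient, equivalently that $\mathcal{I}$ does not collapse functions or sections. This cannot be read off abstractly and is established through a local model: over a trivialising patch, $E$ is isomorphic to a (possibly twisted) standard Courant algebroid, for which $\Omega^{\ch}_E$ is identified with the $bc$-$\beta\gamma$ system of \cite{MSV}. In that model a Poincar\'e--Birkhoff--Witt-type monomial basis is available, $\iota$ and $\jmath$ are manifestly embeddings, and the generators of $\mathcal{I}$ are exactly the defining normal-ordering relations of the $bc$-$\beta\gamma$ vertex algebra; this simultaneously supplies local existence and non-degeneracy. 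Finally, since the whole construction is functorial in $E$, an isomorphism of Courant algebroids over an overlap induces an isomorphism of the associated sheaves intertwining $\iota$ and $\jmath$, so the local models glue to a global sheaf $\Omega^{\ch}_E$ on $M$, recovering the construction of \cite{Heluani09,GCYHeluani}.
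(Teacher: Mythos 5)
Your overall strategy is the correct one, and it is essentially the route of the references \cite{Heluani09,GCYHeluani} to which the paper (which gives no proof of its own) delegates this statement: form the SUSY Lie conformal algebra $\cR$ generated over $\cH$ by $C^\infty(M,\C)$ and $\Omega^0(\Pi E)$ modulo $\jmath(\mathcal{D}f)=2S\iota(f)$, with $\Lambda$-bracket dictated by (3)--(4) and $[\iota(f)_\Lambda\iota(g)]=0$; verify the dictionary between the Courant axioms and the axioms \eqref{eq:sesquiLambda}--\eqref{eq:JacobiLambda} (your identification of the $\chi$- and $S$-terms in skew-symmetry with axiom (5) of Definition \ref{defn:CA} via the relation $\jmath(\mathcal{D}f)=2S\iota(f)$, and of sesquilinearity with $[\mathcal{D}f,\cdot]=0$ and $\langle\mathcal{D}f,a\rangle=\pi(a)(f)$, is exactly right); then pass to the universal enveloping SUSY vertex algebra $V(\cR)$ of \cite{SUSYVA} and quotient by the ideal of ``classical'' relations, checking its stability via \eqref{eq:cuasicon}, \eqref{eq:cuasiaso}, \eqref{eq:Wick}. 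Uniqueness from (5) is formal, as you say.

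The genuine gap is in your injectivity step, which is the one non-formal point and which you resolve by an incorrect local model. The claim that ``over a trivialising patch, $E$ is isomorphic to a (possibly twisted) standard Courant algebroid, for which $\Omega^{\ch}_E$ is identified with the $bc$-$\beta\gamma$ system'' holds only for \emph{exact} Courant algebroids (locally standard by the \v{S}evera classification). The proposition is stated for an arbitrary complex Courant algebroid, and the algebroids this paper actually feeds into it are the string algebroids $E_{P,H,A}\otimes\C$ with underlying bundle $(T\oplus\ad P\oplus T^*)\otimes\C$: these are transitive but not exact, their local model is $T\oplus\mathcal{G}\oplus T^*$ with a nontrivial bundle of quadratic Lie algebras, and the corresponding local model of $\Omega^{\ch}_E$ is the $bc$-$\beta\gamma$ system \emph{tensored with a superaffine current sector} attached to $(\mathfrak{k},\langle\cdot,\cdot\rangle)$ --- not the $bc$-$\beta\gamma$ system alone. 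For transitive $E$ your PBW-monomial argument survives in this enlarged model, so the repair is mild; but for a general Courant algebroid the anchor need not even be surjective and no local normal form exists, so injectivity of $\iota$ and $\jmath$ cannot be reduced to a coordinate model at all. There one must instead run a filtration argument directly on $V(\cR)/\mathcal{I}$ (filtering by conformal weight and identifying the associated graded with a free commutative superalgebra of jets, in the spirit of \cite{MSV,Bressler}), which is how the cited references establish non-degeneracy. As written, your proof establishes the proposition only for exact $E$, and in particular not for the cases underlying Theorems \ref{teo:mainTCA} and \ref{teo:mainTCAcHE}.
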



Next we introduce abstract versions of the F-term and D-term conditions, generalizing those on Lemma \ref{lem:Ftermexp} and Proposition \ref{prop:DtermTHS}. Let $E$ be a complex Courant algebroid over a smooth manifold $M$. We fix subbundles 
\begin{equation}\label{eq:abstractllbar}
\ell \oplus \overline{\ell} \subset E
\end{equation}
such that $\langle \ell, \ell \rangle = \langle \overline \ell, \overline \ell \rangle = 0$ and $\overline \ell \cong \ell^*$ via $\langle,\rangle$. By hypothesis, the restrictions of the pairing to $C_+ := \ell \oplus \overline{\ell} $ and $C_- := (\ell \oplus \overline{\ell})^\perp$ are non-degenerate. Considering the direct sum decomposition
\begin{equation}\label{eq:abstractdecomp}
E = \ell \oplus \overline{\ell} \oplus C_-,
\end{equation}
we will use the notation
\begin{equation}\label{eq:abstractdecomp.1}
a = a_+ + a_- = a_\ell + a_{\overline{\ell}} + a_-,  
\end{equation}
where $a_\pm \in C_\pm$, $a_\ell \in \ell$, $a_{\overline{\ell}} \in \overline{\ell}$, for the corresponding direct sum decompositions of any section $a\in\Omega^0(E)$.

\begin{definition}\label{def:Fterm}
We say that $\ell \oplus \overline{\ell} \subset E$ as in \eqref{eq:abstractllbar} satisfies the \emph{F-term condition} if
\begin{equation}\label{eq:Ftermabs}
[\ell , \ell] \subset \ell, \qquad [\overline \ell , \overline \ell] \subset \overline \ell.
\end{equation}
\end{definition}

We will denote $n = \dim \ell$. In this section, we will assume that $\ell$ and $\overline{\ell}$ are endowed with global isotropic frames 
$$
\left\lbrace\epsilon_j\right\rbrace_{j=1}^n \subset \ell, \qquad  \left\lbrace\overline{\epsilon}_j\right\rbrace_{j=1}^n \subset \overline{\ell},
$$
that is, satisfying (cf. Lemma \ref{lem:frameprop})
$$
\langle \epsilon_j, \overline \epsilon_k \rangle  = \delta_{jk}, \qquad  \langle \epsilon_j, \epsilon_k \rangle  = \langle \overline \epsilon_j, \overline \epsilon_k \rangle = 0.
$$
Motivated by Proposition \ref{prop:DtermTHS}, we introduce the following definition.

\begin{definition}\label{def:Dterm}
Let $(\ell \oplus \overline{\ell},\left\lbrace\epsilon_j,\overline{\epsilon}_j\right\rbrace,\varepsilon)$ be a triple given by $\ell \oplus \overline{\ell} \subset E$ as in \eqref{eq:abstractllbar}, isotropic global frames $\left\lbrace\epsilon_j,\overline{\epsilon}_j\right\rbrace_{j=1}^n$, and a section $\varepsilon \in \Omega^0(E)$. We say that this tuple satisfies
\begin{enumerate}

\item[\textup{(1)}] the \emph{weak D-term equation} if
\begin{equation}\label{eq:DtermgravCA}
\frac{1}{2}\sum_{j=1}^n\left[\overline{\epsilon}_j,\epsilon_j\right] + \varepsilon \in \ell \oplus \overline{\ell},
\end{equation}
\item[\textup{(2)}] the \emph{D-term equation} if
\begin{equation}\label{eq:DtermECA}
\frac{1}{2}\sum_{j=1}^n\left[\overline{\epsilon}_j,\epsilon_j\right] = \varepsilon_{\ell} - \varepsilon.
\end{equation}
\end{enumerate}
\end{definition}

In our main applications in Section \ref{sec:mainglobal}, Lemma \ref{lem:frameprop} will apply and the quantity $\sum_{j=1}^n\left[\overline{\epsilon}_j,\epsilon_j\right]$ will be independent of the given frame. The former implies $[\overline{\epsilon}_j,\overline{\epsilon}_k] = 0$, which explains the asymmetry in equation \eqref{eq:DtermECA}. The latter justifies that, in the sequel, we will abuse notation and refer to a solution of the (weak) D-term equation simply as $(\ell \oplus \overline{\ell},\varepsilon)$.

We are ready to introduce the generators of supersymmetry of our interest. Using $\left\lbrace\epsilon_j,\overline{\epsilon}_j\right\rbrace_{j=1}^n$, we define frames of $\Pi \ell$ and $\Pi \overline{\ell}$, respectively by
\begin{equation}\label{eq:framesodd}
e^j = \Pi\epsilon_j, \qquad e_j = \Pi\overline{\epsilon}_j.
\end{equation}
Recall that we work with parity-reversed sections. Note also that for all elements $a,b,c \in \left\lbrace e^j,e_j\right\rbrace_{j=1}^n$ of the isotropic frame,
\begin{subequations}\label{eq:local-frame-identitities.1}
\begin{gather}\label{eq:local-frame-identitities.1.a}
[a,b] = -[b,a],
\\\label{eq:local-frame-identitities.1.b}
\langle[a,b],c\rangle = -\langle b,[a,c]\rangle = \langle a,[b,c]\rangle,
\end{gather}\end{subequations}
by the Courant algebroid axioms. Define the endomorphism
\begin{equation*}
I_+\colon C_+\longrightarrow C_+\colon\quad a_+\longmapsto a_{\overline{\ell}} - a_\ell.
\end{equation*}
Extending this to $I_+\colon\Pi C_+\to\Pi C_+$ by $I_+\Pi a_+ := \Pi I_+a_+$, we define
\begin{equation}\label{eq:def.w}
w := I_+ \left(\sum_{j=1}^n\left[e^j,e_j\right]_+\right) 
= \sum_{j=1}^n\left[e^j,e_j\right]_{\overline{\ell}} - \sum_{j=1}^n\left[e^j,e_j\right]_{\ell} \in \Omega^0(\Pi C_+).
\end{equation}
Fix a global section $\varepsilon \in \Omega^0(E)$, that will play the role of a `complex divergence', and consider the corresponding odd section
\begin{equation*}
u = \Pi \varepsilon. 
\end{equation*}
We define the following sections of the chiral de Rham complex $\Omega^{\ch}_E$ of $E$:
\begin{equation}
\label{eq:JHlocal2}
\begin{split}
J :&= \frac{i}{2}\sum_{j=1}^n:e^je_j: - Siu,\\
H :&= \frac{1}{2}\sum_{j=1}^n\left(:e_j\left(Se^j\right):+:e^j\left(Se_j\right):\right)\\
& + \frac{1}{4}\sum_{j,k=1}^n\left(:e_j:e^k\left[e^j,e_k\right]::+:e^j:e_k\left[e_j,e^k\right]::\right.\\
&-\left.:e_j:e_k\left[e^j,e^k\right]::-:e^j:e^k\left[e_j,e_k\right]::\right)\\
&+\frac{T}{2}w-T I_+u_+ + \frac{S}{2}\sum_{j=1}^n\left(:\left[u,e^j\right]e_j:+:e^j\left[u,e_j\right]:-2T\left\langle\left[u,e^j\right],e_j\right\rangle\right),
\end{split}
\end{equation}
and
\begin{equation}
\label{eq:cc2}
c := 3\left(n - 2\left(\sum_{j=1}^n\left\langle\left[u,e^j\right],e_j\right\rangle-\left\langle u,u\right\rangle\right)\right).
\end{equation}
Note that we are considering $\ell\oplus\overline{\ell}$ as an ordered pair in the above definitions.

The proofs of the following two important technical results are given in Appendix \ref{app:MRP}.

\begin{proposition}
\label{teo:mainresult2}
Assume that $\ell\oplus\overline{\ell}\subset E$ satisfies the F-term condition \eqref{eq:Ftermabs}. Then the sections \eqref{eq:JHlocal2} and \eqref{eq:cc2} satisfy
\begin{equation}
\label{eq:JJcorregido}
\begin{split}
\left[J_\Lambda J\right] &= -\left(H+\frac{\lambda\chi}{3}c\right)-\frac{1}{2}\left(\chi S+\lambda \right)S\left(c-3n\right).
\end{split}
\end{equation}
\end{proposition}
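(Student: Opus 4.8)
The plan is to verify \eqref{eq:JJcorregido} by a direct computation of $[J_\Lambda J]$, reducing everything to the elementary $\Lambda$-brackets of the frame generators $\{e^j,e_j\}$ and of $u=\Pi\varepsilon$ furnished by Proposition \ref{prop:CDRE}. Combining item (3) of that proposition with the frame identities $\langle\epsilon_j,\overline{\epsilon}_k\rangle=\delta_{jk}$ and $\langle\epsilon_j,\epsilon_k\rangle=\langle\overline{\epsilon}_j,\overline{\epsilon}_k\rangle=0$, the only basic contractions are
\[
[e^j{}_\Lambda e_k]=\jmath([\epsilon_j,\overline{\epsilon}_k])+2\chi\,\delta_{jk},\qquad [e^j{}_\Lambda e^k]=\jmath([\epsilon_j,\epsilon_k]),\qquad [e_j{}_\Lambda e_k]=\jmath([\overline{\epsilon}_j,\overline{\epsilon}_k]),
\]
and the F-term condition \eqref{eq:Ftermabs} guarantees $[\epsilon_j,\epsilon_k]\in\ell$ and $[\overline{\epsilon}_j,\overline{\epsilon}_k]\in\overline{\ell}$, so that these re-expand in the frames and no component along $C_-$ is produced. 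The brackets involving $u$ come from item (3) applied to $\varepsilon$, and those against functions from item (4).

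Writing $J=\tfrac{i}{2}A-Siu$ with $A=\sum_j:e^je_j:$, I would expand $[J_\Lambda J]$ bilinearly as $-\tfrac14[A_\Lambda A]-\tfrac{i}{2}[A_\Lambda Siu]-\tfrac{i}{2}[(Siu)_\Lambda A]+[(Siu)_\Lambda(Siu)]$. The bulk of the work is $[A_\Lambda A]$: applying the non-commutative Wick formula \eqref{eq:Wick} in each slot and simplifying the resulting iterated normally ordered products with quasiassociativity \eqref{eq:cuasiaso} and quasicommutativity \eqref{eq:cuasicon}, the contact part $2\chi\,\delta_{jk}$ feeds, through sesquilinearity \eqref{eq:sesquiLambda}, the quadratic terms $\tfrac12\sum_j(:e_j(Se^j):+:e^j(Se_j):)$ of $H$, while the structure terms $\jmath([\epsilon_j,\epsilon_k])$ and $\jmath([\overline{\epsilon}_j,\overline{\epsilon}_k])$ produce the cubic terms of $H$; here the frame identities \eqref{eq:local-frame-identitities.1} are used repeatedly to antisymmetrize and to commute brackets past the pairing. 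The $\lambda\chi$ central term is generated by the integral $\int_0^\Lambda d\Gamma\,[[\,\cdot_\Lambda\,\cdot]_\Gamma\,\cdot]$ in \eqref{eq:Wick} acting on the contact part, and produces the value $c=3n$ in the case $\varepsilon=0$.

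For the $u$-dependent pieces I would pull the outer $S$ out of $Siu$ by sesquilinearity \eqref{eq:sesquiLambda}, reducing the cross terms to $[(iu)_\Lambda A]$ and $[A_\Lambda(iu)]$ decorated by $\chi$ and $S+\chi$ prefactors, and relate the two by skew-symmetry \eqref{eq:comLambda}; these yield the remaining terms of $H$, namely those built from $[u,e^j]$, $w$, $I_+u_+$ and $\langle[u,e^j],e_j\rangle$. Finally $[(Siu)_\Lambda(Siu)]$ reduces, again by \eqref{eq:sesquiLambda}, to $[u_\Lambda u]=\jmath([\varepsilon,\varepsilon])+2\chi\langle\varepsilon,\varepsilon\rangle$; using axiom (5) of Definition \ref{defn:CA} in the form $[\varepsilon,\varepsilon]=\tfrac12\mathcal{D}\langle\varepsilon,\varepsilon\rangle$ together with $2S\iota(f)=\jmath(\mathcal{D}f)$, this contributes the scalars $\langle u,u\rangle$ and $\sum_j\langle[u,e^j],e_j\rangle$ that complete $c$ as in \eqref{eq:cc2} and assemble, with the non-constancy of $c$, the correction $-\tfrac12(\chi S+\lambda)S(c-3n)$.

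The main obstacle will be the combinatorics and sign bookkeeping of the cubic and quartic normally ordered products arising in $[A_\Lambda A]$: each use of \eqref{eq:Wick} spawns a boundary term and an integral term, every reordering via \eqref{eq:cuasicon}--\eqref{eq:cuasiaso} introduces further nested brackets, and the parity-reversal signs must be tracked throughout. The delicate point is to show that all would-be $C_-$-valued and non-frame contributions cancel by virtue of the F-term condition and the Jacobi identity \eqref{eq:JacobiLambda}, leaving exactly $-(H+\tfrac{\lambda\chi}{3}c)$ together with the single anomaly term; this is precisely the content of the long computation carried out in Appendix \ref{app:MRP}.
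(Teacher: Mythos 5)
Your proposal follows essentially the same route as the paper's proof: writing $J=J_0-Siu$ with $J_0=\tfrac{i}{2}\sum_j:e^je_j:$, the paper expands $[J_\Lambda J]$ by sesquilinearity, computes $[{J_0}_\Lambda J_0]=-(H'+\lambda\chi\,n)$ via the non-commutative Wick formula \eqref{eq:Wick} together with quasicommutativity and quasiassociativity (Proposition \ref{lem:primerpaso}), handles the cross terms through the general bracket $[a_\Lambda J_0]$ of Lemma \ref{lem:lempp} specialized to $a=u$, and evaluates $[u_\Lambda u]=(S+2\chi)\langle u,u\rangle$ exactly as you do, via $\mathcal{D}\langle u,u\rangle=2[u,u]$; the central term $\lambda\chi\,n$ indeed arises from the iterated-bracket integral acting on the contact part, as you predict.

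One caveat about your framing, which would mislead you if followed literally. You place the F-term condition \eqref{eq:Ftermabs} at the heart of the quadratic computation, asserting that the frame brackets re-expand without $C_-$-components and that ``all would-be $C_-$-valued contributions cancel'' in $[A_\Lambda A]$. No such cancellation occurs or is needed: the generator $H$ in \eqref{eq:JHlocal2} is built from the full, unprojected brackets $[e^j,e_k]$, $[e^j,e^k]$, $[e_j,e_k]$, so Proposition \ref{lem:primerpaso} holds for arbitrary isotropic frames and its proof never invokes \eqref{eq:Ftermabs}; the F-term hypothesis becomes essential only in the companion computations (Lemma \ref{lem:lempi}, Proposition \ref{lem:pasointermedio}, Proposition \ref{lem:lempasofinal}), that is, for the bracket $[H_\Lambda J]$ of Proposition \ref{teo:mainresult}, not for \eqref{eq:JJcorregido}. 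Relatedly, the $w$-dependent part of $H$ already arises inside $[{J_0}_\Lambda J_0]$ (through $H'$), not from the $u$ cross terms, which instead contribute $TI_+u_+$, the $\tfrac{S}{2}$-terms, and the scalar corrections $\sum_j\langle[u,e^j],e_j\rangle$ and $\langle u,u\rangle$ that assemble $c$ and the anomaly $-\tfrac12(\chi S+\lambda)S(c-3n)$. These misattributions do not derail the plan — the hypothesis is available and harmless — but the actual bookkeeping is organized around carrying the full brackets into $H$ rather than around F-term-driven cancellations.
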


\begin{remark}\label{rem:constant-central-charge.1}
By Remark~\ref{rem:tech} and~\eqref{eq:cc2}, a necessary condition to construct an embedding of the $N=2$ superconformal vertex algebra with the above generators $J$ and $H$ is to have
\begin{equation*}\label{eq:condnecN2}
\sum_{j=1}^n\left\langle\left.\left[u,e^j\right]\right|e_j\right\rangle-\left\langle u,u\right\rangle \in \C.
\end{equation*}
This also simplifies formula \eqref{eq:JJcorregido}, since the last summand will then be zero. In particular, this condition holds when $\left[\varepsilon,\cdot\right]=0$, because $\mathcal{D}\langle \varepsilon,\varepsilon \rangle = 2[\varepsilon,\varepsilon] = 0$ implies that $\langle \varepsilon,\varepsilon \rangle \in \C$.
	
\end{remark}

Now, we define functions, for $i,j = 1, \ldots, n$,
\begin{subequations}
\label{eq:condigenCour}
\begin{align}\label{eq:R.bis}
R :&= \sum_{j,k=1}^n\left(3\left\langle\left[e^j,e_j\right]_-,\left[e^k,e_k\right]_-\right\rangle-\left\langle\left[e^j,e_k\right]_-,\left[e^k,e_j\right]_-\right\rangle\right),
\\\label{eq:Fijsuper.bis}
F^{ij} :&= \tr\(\pi_{\overline{\ell}}\circ\ad_{\left[e^i,e^j\right]}\big|_{\overline{\ell}}\)+\sum_{k=1}^n\left(\left\langle\mathcal{D}\left\langle e^i,\left[e_k,e^k\right]\right\rangle,e^j\right\rangle-\left\langle\mathcal{D}\left\langle e^j,\left[e_k,e^k\right]\right\rangle,e^i\right\rangle\right),
\\\label{eq:Fijsub.bis}
F_{ij} :&= \tr\(\pi_{\ell}\circ\ad_{\left[e_i,e_j\right]}\big|_{\ell}\)+\sum_{k=1}^n\left(\left\langle\mathcal{D}\left\langle e_i,\left[e^k,e_k\right]\right\rangle,e_j\right\rangle-\left\langle\mathcal{D}\left\langle e_j,\left[e^k,e_k\right]\right\rangle,e_i\right\rangle\right),
\end{align}
\end{subequations}
where $\pi_\ell\colon E\to\ell$ and $\pi_{\overline{\ell}}\colon E\to\overline{\ell}$ are the projections given by the decomposition~\eqref{eq:abstractdecomp}.

\begin{proposition}
\label{teo:mainresult}
Assume that $\ell\oplus\overline{\ell}\subset E$ satisfies the F-term condition \eqref{eq:Ftermabs} and that $\varepsilon \in \Omega^0(E)$ satisfies $\left[\varepsilon,\cdot\right]=0$. Then, setting $u=\Pi\varepsilon$, the sections \eqref{eq:JHlocal2} and \eqref{eq:cc2} satisfy
\begin{equation}
\label{eq:newiden}
\begin{split}
H=& \frac{1}{2}\sum_{j=1}^n\left(:e_j\left(Se^j\right):+:e^j\left(Se_j\right):\right)\\
& + \frac{1}{4}\sum_{j,k=1}^n\left(:e_j:e^k\left[e^j,e_k\right]::+:e^j:e_k\left[e_j,e^k\right]::\right.\\
&-\left.:e_j:e_k\left[e^j,e^k\right]::-:e^j:e^k\left[e_j,e_k\right]::\right) +\frac{T}{2}w-T I_+u_+,\\
c=&3\left(n+2\left\langle \varepsilon,\varepsilon\right\rangle\right) \in \C.
\end{split}
\end{equation}
Moreover,
\begin{align}
\nonumber
[J_\Lambda& J]= -\left(H+\frac{\lambda\chi}{3}c\right),
\\\label{eq:teo:mainresult}
[H_\Lambda& J]= \left(2\lambda+2T+\chi S\right)\left(\frac{i}{2}\sum_{j=1}^n:e^je_j:-Si\left(u_++\frac12\sum_{j=1}^n\left[e^j,e_j\right]_-\right)\right)
\\\nonumber
&+\frac{i}{4}TS\mathcal{D}R+\frac{i}{4}\sum_{i,j=1}^n\lambda\left(:F^{ij}:e_je_i::-:F_{ij}:e^je^i::\right)
\\\nonumber
&-i\left(T+\frac32\lambda\right)\sum_{j,k=1}^n\left(:e_k\left[u_++\frac12\left[e^j,e_j\right]_-,e^k\right]:+:e^k\left[u_++\frac12\left[e^j,e_j\right]_-,e_k\right]_-:\right).
\end{align}
\end{proposition}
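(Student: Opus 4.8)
The plan is to treat the three assertions in increasing order of difficulty, extracting as much as possible from the hypothesis $[\varepsilon,\cdot]=0$ before turning to the genuinely computational identity \eqref{eq:teo:mainresult}. First I would dispose of the simplification of $H$ and the value of $c$. Since $[\varepsilon,\cdot]=0$ on $E$, the parity change gives $[u,\cdot]=0$ on $\Pi E$, so in particular $[u,e^j]=[u,e_j]=0$ and $\langle[u,e^j],e_j\rangle=0$ for every $j$. Hence the entire last line of $H$ in \eqref{eq:JHlocal2}, namely $\tfrac{S}{2}\sum_{j}\big(:[u,e^j]e_j:+:e^j[u,e_j]:-2T\langle[u,e^j],e_j\rangle\big)$, vanishes termwise, which is exactly the first formula in \eqref{eq:newiden}. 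For the central charge, the same vanishing reduces \eqref{eq:cc2} to $c=3(n+2\langle u,u\rangle)$, and the compatibility of the pairing on $\Pi E$ with that on $E$ gives $\langle u,u\rangle=\langle\varepsilon,\varepsilon\rangle$, whence $c=3(n+2\langle\varepsilon,\varepsilon\rangle)$. Finally, Courant algebroid axiom $(5)$ yields $\mathcal{D}\langle\varepsilon,\varepsilon\rangle=2[\varepsilon,\varepsilon]=0$, so $\langle\varepsilon,\varepsilon\rangle$ is locally constant and $c\in\C$, matching Remark \ref{rem:constant-central-charge.1}.

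The identity $[J_\Lambda J]=-(H+\tfrac{\lambda\chi}{3}c)$ then follows immediately from Proposition \ref{teo:mainresult2}. Indeed, \eqref{eq:JJcorregido} differs from the desired formula only by the correction $-\tfrac12(\chi S+\lambda)S(c-3n)$, and by the previous paragraph $c-3n=6\langle\varepsilon,\varepsilon\rangle$ is a central constant, so $S(c-3n)=0$ and the correction drops out.

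The substance of the proposition is the computation of $[H_\Lambda J]$. The starting point is Proposition \ref{prop:CDRE}(3), which on the odd frame gives the structure constants $[e^i{}_\Lambda e^j]=\Pi[\epsilon_i,\epsilon_j]$, $[e_i{}_\Lambda e_j]=\Pi[\overline{\epsilon}_i,\overline{\epsilon}_j]$ and $[e^i{}_\Lambda e_j]=\Pi[\epsilon_i,\overline{\epsilon}_j]+2\chi\delta_{ij}$, where isotropy kills all pairing contributions except the mixed Kronecker term and the F-term condition \eqref{eq:Ftermabs} determines which of $\ell$, $\overline{\ell}$ the brackets land in. With these in hand I would expand $J$ and the already simplified $H$ into their normally ordered monomials and compute $[H_\Lambda J]$ by repeated use of the non-commutative Wick formula \eqref{eq:Wick}, handling the $S$ and $T$ prefactors through sesquilinearity \eqref{eq:sesquiLambda} and restoring normal order with quasi-associativity \eqref{eq:cuasiaso} and quasicommutativity \eqref{eq:cuasicon}. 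The result is a long list of cubic and quartic normally ordered terms together with derivative and integral contributions, which must then be reorganized so that the pairing-and-trace combinations assemble into the quantities $R$, $F^{ij}$, $F_{ij}$ of \eqref{eq:condigenCour}, the remaining linear-in-bracket terms collecting around $u_++\tfrac12\sum_j[e^j,e_j]_-$.

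I expect the bookkeeping in this last step to be the main obstacle: the sheer number of terms, the super signs introduced by the parity reversal, and the $\int_0^\Lambda d\Gamma$ and $\int_0^\nabla d^r\Lambda$ contributions make it easy to misplace coefficients, and recognizing the trace and divergence combinations $F^{ij}$, $F_{ij}$ among many superficially distinct monomials relies on the invariance identities \eqref{eq:local-frame-identitities.1} and careful use of the F-term condition. I also note that applying the Jacobi identity \eqref{eq:JacobiLambda} to three copies of $J$ only produces a consistency relation for $[H_\Lambda J]$ and does not pin down the correction terms, so—unlike the situation in Remark \ref{rem:tech}, where both relations in \eqref{eq:N2SCVA} are assumed—there is no shortcut around the direct Wick computation, which is carried out in Appendix \ref{app:MRP}.
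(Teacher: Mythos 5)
Your handling of the first two assertions is correct and coincides with the paper's argument: $[u,\cdot]=0$ kills the last line of $H$ in \eqref{eq:JHlocal2} termwise and reduces \eqref{eq:cc2} to $c=3(n+2\langle\varepsilon,\varepsilon\rangle)$, constancy of $\langle\varepsilon,\varepsilon\rangle$ follows from $\mathcal{D}\langle\varepsilon,\varepsilon\rangle=2[\varepsilon,\varepsilon]=0$ as in Remark \ref{rem:constant-central-charge.1}, and $[J_\Lambda J]$ then drops out of Proposition \ref{teo:mainresult2} because $S(c-3n)=0$. The genuine gap is \eqref{eq:teo:mainresult}: everything this identity asserts lives in its precise coefficients --- the term $\frac{i}{4}TS\mathcal{D}R$ with $R$ as in \eqref{eq:condigenCour}, the prefactor $-i(T+\frac{3}{2}\lambda)$, and above all the shift of $u_+$ by $\frac{1}{2}\sum_j[e^j,e_j]_-$ inside both the $S$-term and the bracket terms --- and your proposal never produces them. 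You correctly observe that the Jacobi identity on three copies of $J$ yields only a consistency relation (this is exactly how the paper later exploits it, in Lemma \ref{lem:locura}), so these coefficients can only be certified by actually executing the expansion; but you explicitly defer that execution (``I expect the bookkeeping \ldots\ to be the main obstacle'') and fall back on citing Appendix \ref{app:MRP}. As a blind proof, then, the central identity is asserted rather than proved.

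Moreover, your organizational claim that ``there is no shortcut around the direct Wick computation'' is not accurate, and the plan of expanding the full $H$ against the full $J$ in one pass would be substantially harder than what the paper does. The paper isolates the frame parts $J_0=\frac{i}{2}\sum_j:e^je_j:$ and $H'$ (see \eqref{eq:JHglocal} and \eqref{eq:def.H-prime}) and splits $[H_\Lambda J]$ by sesquilinearity into the brackets of these pieces against $J_0$ and the $u$-dependent pieces. Only one of the resulting brackets, $[{H'}_\Lambda J_0]$, requires the heavy direct computation (Proposition \ref{lem:lempasofinal}, resting on the frame brackets $[{H'}_\Lambda e^i]$, $[{H'}_\Lambda e_i]$ of Proposition \ref{lem:pasointermedio} and the identities of Lemma \ref{lem:importprop}). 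The cross term $[{H'}_\Lambda u]$ is obtained with no Wick expansion at all: by skew-symmetry plus the Jacobi identity applied to the triple $(u,J_0,J_0)$, using $[{J_0}_\Lambda J_0]=-(H'+\frac{\lambda\chi}{3}c_0)$ from Proposition \ref{lem:primerpaso} and the elementary bracket $[u_\Lambda J_0]$ from Lemma \ref{lem:lempp}; this is Lemma \ref{lem:uHu}, while $[{I_+u_+}_\Lambda u]=0$ because $[u,\cdot]=0$. A monolithic Wick expansion would in principle reach the same answer, but the decomposition and this Jacobi trick for the $u$-terms are the ideas your proposal is missing, and without either them or the completed brute-force calculation the proof of \eqref{eq:teo:mainresult} remains open.
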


\subsection{Local embeddings}\label{sec:maintheolocal}

Our next goal is to prove a local abstract version of the embeddings of our main Theorems~\ref{teo:mainTCA} and~\ref{teo:mainTCAcHE}, assuming the abstract versions of the F-term and D-term conditions given in Definitions~\ref{def:Fterm} and~\ref{def:Dterm}. The setup and the notation will be those of Section~\ref{sec:localgen}. We will use the Einstein summation convention for repeated indices. 


\begin{lemma}\label{lem:local-embedding.1}
Assume that $(\ell\oplus\overline{\ell},\varepsilon)$ satisfies the F-term condition \eqref{eq:Ftermabs} and the weak D-term equation \eqref{eq:DtermgravCA}, and that $\varepsilon \in \Omega^0(E)$ satisfies $\left[\varepsilon,\cdot\right]=0$. Then, setting $u=\Pi\varepsilon$, the sections \eqref{eq:JHlocal2} satisfy 
\begin{equation}
\label{eq:teoDtermgrav}
\begin{split}
\left[{H}_\Lambda{J}\right] & = \left(2\lambda+2T+\chi S\right)J\\
& + \frac{i}{4}\left(TS\mathcal{D}R+\lambda\left(:F^{ij}:e_je_i::-:F_{ij}:e^je^i::\right)\right).
\end{split}
\end{equation}
\end{lemma}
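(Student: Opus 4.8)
The plan is to obtain~\eqref{eq:teoDtermgrav} by specialising the general computation of $[H_\Lambda J]$ in Proposition~\ref{teo:mainresult}. Two of that proposition's hypotheses, namely the F-term condition and $[\varepsilon,\cdot]=0$, are already among the assumptions of the lemma, so~\eqref{eq:teo:mainresult} holds verbatim; the remaining assumption, the weak D-term equation~\eqref{eq:DtermgravCA}, will serve only to collapse the bracket combinations appearing there onto the single section $u=\Pi\varepsilon$. Concretely, I would first translate~\eqref{eq:DtermgravCA} into the odd frame. Since $\ell\oplus\overline{\ell}=C_+$, the membership $\tfrac12\sum_j[\overline{\epsilon}_j,\epsilon_j]+\varepsilon\in C_+$ is equivalent to the vanishing of the $C_-$-component; passing to the parity-reversed sections $e^j=\Pi\epsilon_j$, $e_j=\Pi\overline{\epsilon}_j$ and reversing the order of the Dorfman bracket via the skew-symmetry~\eqref{eq:local-frame-identitities.1.a}, this reads
$$
u_-=\frac{1}{2}\sum_{j=1}^n[e^j,e_j]_-, \qquad\text{so that}\qquad u_++\frac{1}{2}\sum_{j=1}^n[e^j,e_j]_-=u.
$$

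With this identity in hand, I would substitute into~\eqref{eq:teo:mainresult}. In the leading term the argument $\tfrac{i}{2}\sum_j:e^je_j:-Si\bigl(u_++\tfrac12\sum_j[e^j,e_j]_-\bigr)$ becomes $\tfrac{i}{2}\sum_j:e^je_j:-Siu$, which is exactly $J$ by~\eqref{eq:JHlocal2}; hence this contribution equals $(2\lambda+2T+\chi S)J$, the first summand on the right-hand side of~\eqref{eq:teoDtermgrav}. The terms carrying $TS\mathcal{D}R$ and $\lambda\bigl(:F^{ij}:e_je_i::-:F_{ij}:e^je^i::\bigr)$ do not involve this combination and are therefore reproduced unchanged, yielding the second summand. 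It remains to dispose of the last term of~\eqref{eq:teo:mainresult}: after the same substitution its bracket argument is $u$, so the term is proportional to $\sum_{k}\bigl(:e_k[u,e^k]:+:e^k[u,e_k]_-:\bigr)$, and this vanishes because $[\varepsilon,\cdot]=0$ on $E$ forces $[u,\cdot]=0$ on $\Pi E$, killing every bracket $[u,e^k]$ and $[u,e_k]$. Collecting the three contributions gives precisely~\eqref{eq:teoDtermgrav}.

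I do not anticipate a substantive analytic obstacle, since Proposition~\ref{teo:mainresult} already encodes the hard vertex-algebra computation. The only point demanding care is the bookkeeping in the first step: one must correctly track the $C_\pm$-decomposition together with the parity change $\Pi$ and the order reversal of the bracket, so as to confirm that~\eqref{eq:DtermgravCA} delivers exactly $u_++\tfrac12\sum_j[e^j,e_j]_-=u$ with the correct sign. An error here would simultaneously obstruct the identification of the leading term with $J$ and the vanishing of the final term, so this is the step worth verifying most carefully.
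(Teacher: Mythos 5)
Your proposal is correct and follows exactly the paper's route: the paper's proof is the one-line observation that the weak D-term equation, combined with $[\varepsilon,\cdot]=0$, reduces Proposition~\ref{teo:mainresult} to the stated identity, and your expanded bookkeeping --- translating \eqref{eq:DtermgravCA} via $[\overline{\epsilon}_j,\epsilon_j]\mapsto[e_j,e^j]=-[e^j,e_j]$ to get $u_-=\tfrac12\sum_j[e^j,e_j]_-$, hence $u_++\tfrac12\sum_j[e^j,e_j]_-=u$, then identifying the leading term with $J$ and killing the final term by $[u,\cdot]=0$ --- is precisely the implicit content of that line. The sign check you flag comes out right because the frame pairings $\langle e^j,e_k\rangle=\delta_{jk}$ are constant, so \eqref{eq:local-frame-identitities.1.a} applies and the substitution is exact.
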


\begin{proof}
Applying the weak D-term equation \eqref{eq:DtermgravCA}, this result follows directly from Proposition \ref{teo:mainresult} using that $\left[\varepsilon,\cdot\right]=0$.
\end{proof}

Using the Jacobi identity for the $\Lambda$-bracket, we provide now a key relation between $R$ and the quantities $:F^{ij}:e_je_i::$ and $:F_{ij}:e^je^i::$ appearing in the right-hand side of~\eqref{eq:teoDtermgrav}. 

\begin{lemma}\label{lem:locura}
Assume $(\ell\oplus\overline{\ell},\varepsilon)$ satisfies the F-term condition \eqref{eq:Ftermabs} and the weak $D$-term equation \eqref{eq:DtermgravCA}, and that $\varepsilon \in \Omega^0(E)$ satisfies $\left[\varepsilon,\cdot\right]=0$. Then, setting $u = \Pi \varepsilon$, we have
\begin{equation}\label{eq:igualdadloca}
3T^2R = T\left(:F^{ij}:e_je_i::-:F_{ij}:e^je^i::\right).
\end{equation}
In particular, if $:F^{ij}:e_je_i::$ and $:F_{ij}:e^je^i::$ are identically zero, then $T^2R=0$.
\end{lemma}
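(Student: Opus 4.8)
The plan is to run the Jacobi identity \eqref{eq:JacobiLambda} on the triple $(J,J,J)$, feeding in the two brackets already at our disposal: the self-bracket $[J_\Lambda J]=-(H+\tfrac{\lambda\chi}{3}c)$ from Proposition \ref{teo:mainresult}, and the expression for $[H_\Lambda J]$ from Lemma \ref{lem:local-embedding.1}. It is convenient to abbreviate $B := \,:F^{ij}:e_je_i::-:F_{ij}:e^je^i::$ and to split, for a formal variable $\Xi=(\xi,\zeta)$,
\[
[H_\Xi J]=N(\Xi)+A(\Xi),\qquad N(\Xi)=(2\xi+2T+\zeta S)J,\quad A(\Xi)=\tfrac{i}{4}\bigl(TS\mathcal{D}R+\xi B\bigr),
\]
so that $N$ is the genuine $N=2$ part and $A$ measures the failure of the desired relation $[H_\Lambda J]=(2\lambda+2T+\chi S)J$ from \eqref{eq:N2SCVA}. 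The strategy is to show that the Jacobi identity forces a linear relation on $A$, from which \eqref{eq:igualdadloca} drops out.

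First I would collapse the Jacobi identity. Since $J$ is even and $c$ is central, substituting $[J_\Gamma J]=-(H+\tfrac{\gamma\eta}{3}c)$ and $[J_\Lambda J]=-(H+\tfrac{\lambda\chi}{3}c)$ turns \eqref{eq:JacobiLambda} into
\[
-[J_\Lambda H]=[H_{\Lambda+\Gamma}J]+[J_\Gamma H].
\]
Next I would eliminate the two $[J_\bullet H]$ brackets by skew-symmetry \eqref{eq:comLambda}, which for the even $J$ reads $[J_\Lambda H]=[H_{-\Lambda-\nabla}J]$; this rewrites the constraint entirely in terms of $[H_\bullet J]=N+A$. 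Because the genuine $N=2$ relations of Example \ref{exam:N2} are themselves Jacobi-consistent (Remark \ref{rem:tech}), the $N$-parts cancel identically, and subtracting that consistency relation leaves the pure anomaly constraint
\[
-A(-\Lambda-\nabla)=A(\Lambda+\Gamma)+A(-\Gamma-\nabla).
\]

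The computational heart is then to evaluate the three substitutions of $A$. As $A$ has no $\zeta$-dependence, the odd substitution $\zeta\mapsto-\chi-S$ is inert and only $\xi\mapsto-\lambda-T$ acts: the $\xi$-independent piece $\tfrac{i}{4}TS\mathcal{D}R$ is unchanged, while the linear piece $\tfrac{i}{4}\xi B$ becomes $\tfrac{i}{4}(-\lambda-T)B$, with $T$ acting on $B$. Collecting terms, the $\lambda B$ and $\gamma B$ contributions cancel and the constraint reduces to $\tfrac{i}{2}TB=\tfrac{3i}{4}TS\mathcal{D}R$, that is $TB=\tfrac{3}{2}TS\mathcal{D}R$. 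Finally I would use property (2) of Proposition \ref{prop:CDRE}, $2S\iota(f)=\jmath(\mathcal{D}f)$, to identify $\mathcal{D}R$ with $2SR$ inside $\Omega^{\ch}_E$; then $S\mathcal{D}R=2TR$ and $TS\mathcal{D}R=2T^2R$, so $TB=3T^2R$, which is exactly \eqref{eq:igualdadloca}. The final assertion is immediate, since $B=0$ gives $TB=0$ and hence $T^2R=0$.

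The main obstacle I anticipate is the careful handling of the operator substitution $\Lambda\mapsto-\Lambda-\nabla$ in skew-symmetry, with $\nabla=(T,S)$ acting on the (operator-valued) Fourier coefficients and the attendant sign bookkeeping for the odd variables $\chi,S$, following the conventions of \cite[Appendix A.1]{AAGa}; in particular one must confirm that the $N=2$ part cancels exactly and that the $\xi$-independent anomaly $TS\mathcal{D}R$ is genuinely untouched by the substitution. A secondary but essential check is the normalization in the passage $TS\mathcal{D}R=2T^2R$, on which the numerical factor $3$ in \eqref{eq:igualdadloca} depends.
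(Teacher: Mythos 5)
Your proposal is correct and follows essentially the same route as the paper: both apply the Jacobi identity to the triple $(J,J,J)$ using $[J_\Lambda J]=-\left(H+\tfrac{\lambda\chi}{3}c\right)$ together with the formula for $[H_\Lambda J]$ from Lemma \ref{lem:local-embedding.1}, convert brackets via skew-symmetry $\Lambda\mapsto-\Lambda-\nabla$, and conclude with $TS\mathcal{D}R=2T^2R$ from part (2) of Proposition \ref{prop:CDRE}. Your splitting $[H_\Xi J]=N(\Xi)+A(\Xi)$ with cancellation of the $N$-parts is just a repackaging of the paper's step of computing $[J_\Lambda H]$ twice (once by skew-symmetry, once by Jacobi) and subtracting, since both resulting expressions share the common term $-\left(2\lambda+T+\chi S\right)J$.
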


\begin{proof}
By \eqref{eq:teoDtermgrav}, applying skew-symmetry~\eqref{eq:comLambda} for the $\Lambda$-bracket, we have
\begin{equation*}
\left[J_\Lambda H\right] = -\left(2\lambda+T+\chi S\right)J+\frac{i}{4}\left(TS\mathcal{D}R-\left(\lambda+T\right)\left(:F^{ij}:e_je_i::-:F_{ij}:e^je^i::\right)\right).
\end{equation*}
Now, by the Jacobi identity~\eqref{eq:JacobiLambda} for the $\Lambda$-bracket, since
\begin{equation*}
\left[J_\Lambda J\right] = -\left(H+\frac{\lambda\chi}{3}c\right),
\end{equation*}
using that $\left[\varepsilon,\cdot\right]=0$ and $\langle \varepsilon , \varepsilon \rangle \in \C$, we obtain
\begin{equation*}
\begin{split}
\left[J_\Lambda H\right] & = \left[J_\Lambda\left(H+\frac{\gamma\eta}{3}c\right)\right]=-\left[J_\Lambda\left[{J}_\Gamma{J}\right]\right]\\
& = -\left[\left[J_\Lambda{J}\right]_{\Lambda+\Gamma}{J}\right]-\left[{J}_\Gamma\left[J_\Lambda{J}\right]\right]\\
& = -\left(\left[\left[J_\Lambda{J}\right]_{\Lambda+\Gamma}{J}\right]+\left[\left[J_\Lambda{J}\right]_{-\nabla-\Gamma}{J}\right]\right)\\
& = -\left(2\lambda+T+\chi S\right)J-\frac{i}{2}TS\mathcal{D}R-\frac{i}{4}\left(\lambda-T\right)\left(:F^{ij}:e_je_i::-:F_{ij}:e^je^i::\right).
\end{split}
\end{equation*}
Subtracting these two identities and using the identity $TS\mathcal{D}R=2T^2R$ (by part (2) of Proposition~\ref{prop:CDRE}), we obtain \eqref{eq:igualdadloca}.
\end{proof}

For the next result, we will assume that the given frames satisfy the following natural condition, which we already met in Lemma \ref{lem:frameprop},
\begin{equation}
\label{eq:specframe}
\left[e_j,e_k\right] = 0, \quad \text{for } j,k \in \{1,\ldots,n\}.
\end{equation}

\begin{proposition}\label{teo:teoglobalespe}
Assume that $(\ell\oplus\overline{\ell},\varepsilon)$ satisfies the F-term condition \eqref{eq:Ftermabs} and the $D$-term equation \eqref{eq:DtermECA}, that $\varepsilon \in \Omega^0(E)$ satisfies $\left[\varepsilon,\cdot\right]=0$, and that the given frames satisfy \eqref{eq:specframe}. Then $F_{ij}=0$ and setting $u = \Pi\varepsilon$, the sections \eqref{eq:JHlocal2} satisfy
\begin{equation}\label{eq:teo:teoglobalespe.1}
\left[{H}_\Lambda{J}\right]=\left(2\lambda+2T+\chi S\right)J + \frac{i}{4}\left(TS\mathcal{D}R + \lambda :F^{ij}:e_je_i::\right),
\end{equation}
where
\begin{subequations}\label{eq:teo:teoglobalespe.2}
\begin{align}\label{teo:teoglobalespe.2.a}
R&
= - 4\langle [e^j,u],e_j\rangle + 8\left\langle u_-,u_-\right\rangle,
\\\label{eq:teo:teoglobalespe.2.b}
F^{ij} & = \left\langle\mathcal{D}\left\langle\left[e^i,e^j\right],e_k\right\rangle,e^k\right\rangle,\quad \text{for } i,j \in \{1,\ldots,n\}.
\end{align}
\end{subequations}
\end{proposition}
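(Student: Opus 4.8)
The plan is to deduce the statement from Lemma \ref{lem:local-embedding.1} and Proposition \ref{teo:mainresult} by specializing the general expressions \eqref{eq:R.bis}, \eqref{eq:Fijsuper.bis} and \eqref{eq:Fijsub.bis} under the extra hypotheses \eqref{eq:DtermECA} and \eqref{eq:specframe}. First I would note that the $D$-term equation \eqref{eq:DtermECA} implies the weak $D$-term equation \eqref{eq:DtermgravCA}, since $\frac12\sum_j[\overline\epsilon_j,\epsilon_j]+\varepsilon=\varepsilon_\ell\in\ell\subset\ell\oplus\overline\ell$; hence Lemma \ref{lem:local-embedding.1} applies and gives $[H_\Lambda J]=(2\lambda+2T+\chi S)J+\frac{i}{4}(TS\mathcal{D}R+\lambda(:F^{ij}:e_je_i::-:F_{ij}:e^je^i::))$. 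The two facts I would extract from the hypotheses and use repeatedly are: (i) the parity-reversed form of \eqref{eq:DtermECA}, namely $\sum_{j=1}^n[e^j,e_j]=2(u_{\overline\ell}+u_-)$, so that $\sum_j[e^j,e_j]_\ell=0$ and $\sum_j[e^j,e_j]_-=2u_-$; and (ii) that $[\varepsilon,\cdot]=0$ together with axiom $(5)$ of Definition \ref{defn:CA} gives $[e^j,u]=\mathcal{D}\langle e^j,u\rangle$ (and $\langle u,u\rangle\in\C$).

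Next I would show $F_{ij}=0$. In \eqref{eq:Fijsub.bis} the term $\tr(\pi_\ell\circ\ad_{[e_i,e_j]}|_\ell)$ vanishes because $[e_i,e_j]=0$ by \eqref{eq:specframe}, while the remaining summands vanish since, by \eqref{eq:local-frame-identitities.1.b}, $\langle e_i,[e^k,e_k]\rangle=\langle e^k,[e_i,e_k]\rangle=0$, again using \eqref{eq:specframe}. Substituting $F_{ij}=0$ into the bracket furnished by Lemma \ref{lem:local-embedding.1} immediately yields \eqref{eq:teo:teoglobalespe.1}.

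For $F^{ij}$ I would proceed from \eqref{eq:Fijsuper.bis}. Using (i), the identity $\sum_k\langle e^i,[e_k,e^k]\rangle=-2\langle e^i,u\rangle$ turns the pair of $\mathcal{D}$-terms into $-2\pi(e^j)\langle e^i,u\rangle+2\pi(e^i)\langle e^j,u\rangle$, while the trace term equals $\sum_k\langle[[e^i,e^j],e_k],e^k\rangle$ by reading off coefficients in the isotropic frame. I would then rewrite $[[e^i,e^j],e_k]$ via axiom $(5)$ and use the invariance axiom $(4)$ of the pairing to recombine the trace and $\mathcal{D}$ contributions into the single exact expression $\langle\mathcal{D}\langle[e^i,e^j],e_k\rangle,e^k\rangle$, which is \eqref{eq:teo:teoglobalespe.2.b}; here the F-term condition $[e^i,e^j]\in\ell$ controls which components survive.

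Finally, for $R$ in \eqref{eq:R.bis} the first sum is $3\langle\sum_j[e^j,e_j]_-,\sum_k[e^k,e_k]_-\rangle=12\langle u_-,u_-\rangle$ by (i). For the second sum I would first note that $[e^j,e_k]_\ell=0$ (by \eqref{eq:specframe} and \eqref{eq:local-frame-identitities.1.b}), so that, $\overline\ell$ being isotropic, $\langle[e^j,e_k]_-,[e^k,e_j]_-\rangle=\langle[e^j,e_k],[e^k,e_j]\rangle$; then I would reduce this quadratic expression with the invariance axiom $(4)$ and the Jacobi axiom $(1)$, feeding in (i) and (ii), to obtain $\sum_{j,k}\langle[e^j,e_k]_-,[e^k,e_j]_-\rangle=4\langle u_-,u_-\rangle+4\langle[e^j,u],e_j\rangle$. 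Subtracting gives $R=-4\langle[e^j,u],e_j\rangle+8\langle u_-,u_-\rangle$, which is \eqref{teo:teoglobalespe.2.a}. The main obstacle throughout is the careful bookkeeping of the super-signs in the parity-reversed Courant algebroid and, specifically, the Jacobi-identity reduction of the quadratic term in $R$ (and the analogous recombination in $F^{ij}$), where the $D$-term relation (i) and the infinitesimal-symmetry relation (ii) must be inserted in the right order.
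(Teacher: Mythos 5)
Your proposal is correct and follows essentially the same route as the paper's proof: deduce \eqref{eq:teo:teoglobalespe.1} from Lemma~\ref{lem:local-embedding.1} (after observing that \eqref{eq:DtermECA} implies \eqref{eq:DtermgravCA}), kill $F_{ij}$ using \eqref{eq:specframe}, reduce $F^{ij}$ to the exact expression $\left\langle\mathcal{D}\left\langle\left[e^i,e^j\right],e_k\right\rangle,e^k\right\rangle$, and compute $R$ by passing to full brackets via $[e^j,e_k]_\ell=0$ and then reducing with the Courant axioms and the D-term relation $\tfrac12[e^j,e_j]=u-u_\ell$. The only organizational difference is in the $F^{ij}$ step, where the paper first recombines the trace and $\mathcal{D}$-terms via Lemma~\ref{lem:rewriting-Fij} (using only the F-term condition) and then kills the residual term $\left\langle\left[\left[e_k,e^k\right],e^i\right],e^j\right\rangle$ with the D-term, $[\varepsilon,\cdot]=0$ and $[u_\ell,e^i]\in\ell$, whereas you substitute the D-term into the $\mathcal{D}$-terms first and cancel the leftovers $-2\langle[e^i,e^j],u\rangle-2\pi(e^j)\langle e^i,u\rangle+2\pi(e^i)\langle e^j,u\rangle$ via axioms (4)--(5) and $[e^i,u]=\mathcal{D}\langle e^i,u\rangle$ --- the same algebra in a different order, and your stated intermediate identities (including $\langle u-u_\ell,u-u_\ell\rangle=\langle u_-,u_-\rangle$) agree with the paper's.
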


\begin{proof}
By~\eqref{eq:local-frame-identitities.1.a}, the D-term equation~\eqref{eq:DtermECA} can be written as
\begin{equation}\label{eq:Dterm.bis}
\frac{1}{2}[e^j,e_j]=u-u_\ell,
\end{equation}
where $u-u_\ell\in\overline{\ell}\oplus C_-$, 
by~\eqref{eq:abstractdecomp.1}. 
Applying this 
and~\eqref{eq:specframe}, we see from~\eqref{eq:Fijsub.bis} that
\begin{align*}
F_{ij} &
=2\(\langle\mathcal{D}\langle e_i,u-u_\ell\rangle,e_j\rangle-\langle\mathcal{D}\langle e_j,u-u_\ell\rangle,e_i\rangle\)=0,
\end{align*}
so~\eqref{eq:teo:teoglobalespe.1} follows from Lemma~\ref{lem:local-embedding.1}. Furthermore,~\eqref{eq:teo:teoglobalespe.2.b} follows from Lemma~\ref{lem:rewriting-Fij}, as
\begin{equation*}
\frac{1}{2}\left\langle\left[[e_k,e^k],e^i\right],e^j\right\rangle = \left\langle\left[u_{\ell}-u,e^i\right],e^j\right\rangle = \left\langle\left[u_{\ell},e^i\right],e^j\right\rangle = 0,
\end{equation*}
by the D-term equation \eqref{eq:DtermECA}, $\left[\varepsilon,\cdot\right] = 0$ and $[u_{\ell},e^i]\in\ell$ (by the F-term condition~\eqref{eq:Ftermabs}). 
As for the function $R$, note that $\langle [e^r,e_s],e_i\rangle=\langle e^r,[e_s,e_i]\rangle=0$ by~\eqref{eq:specframe}, so 
\[
[e^r,e_s]_\ell=0,
\]
for all $1\leq r,s\leq n$. 
Thus $[e^j,e_k]=[e^j,e_k]_{\overline{\ell}}+[e^j,e_k]_-$ by~\eqref{eq:abstractdecomp.1}, and hence
\begin{align*}
\left\langle\left[e^j,e_k\right],\left[e^r,e_s\right]\right\rangle 
& = \left\langle\left[e^j,e_k\right],\left[e^r,e_s\right]_\ell\right\rangle+\left\langle\left[e^j,e_k\right]_-,\left[e^r,e_s\right]_-\right\rangle
= \left\langle\left[e^j,e_k\right]_-,\left[e^r,e_s\right]_-\right\rangle.
\end{align*}
Applying this formula twice in~\eqref{eq:R.bis}, we obtain 
\begin{equation}\label{eq:R-intermediate.1}
\begin{split}
R&=3\left\langle\left[e^j,e_j\right],\left[e^k,e_k\right]\right\rangle-\left\langle\left[e^j,e_k\right],\left[e^k,e_j\right]\right\rangle
\end{split}\end{equation}
Note now that~\eqref{eq:local-frame-identitities.1.b} and~\eqref{eq:specframe} imply 
$[u_\ell,e^k]=\mathcal{D}\langle u_\ell,e^k\rangle-[e^k,u_\ell]=-[e^k,u_\ell]$.
Using this,~\eqref{eq:local-frame-identitities.1.b}, \eqref{eq:specframe}, $[u,\cdot]=0$,~\eqref{eq:Dterm.bis} and the Courant algebroid axioms, we obtain
\begin{align*}
\langle e_i,[e_j,e^k]\rangle&=\langle[e_i,e_j],e^k\rangle=0,
\\
\langle e_k,[[e^j,e_j],e^k]\rangle&
=2\langle e_k,[e^k,u_\ell]\rangle
=2\langle e^k,\mathcal{D}\langle e_k,u\rangle\rangle-2\langle[e^k,e_k],u_\ell\rangle
\\&
=2\langle e_k,[e^k,u]\rangle+4\langle u-u_\ell,u-u_\ell\rangle,
\\
\langle e_k,[e_j,[e^j,e^k]]\rangle&
=\langle\mathcal{D}\langle e_k,[e^j,e^k]\rangle,e_j\rangle
=2\langle\mathcal{D}\langle e^j,u-u_\ell\rangle,e_j\rangle
=2\langle [e^j,u],e_j\rangle
\end{align*}

The last three identities,~\eqref{eq:local-frame-identitities.1.a} and the Courant algebroid axioms imply 
\begin{align*}
\langle[e^j,e_k],[e^k,e_j]\rangle&
=\langle e_k,[e^j,[e_j,e^k]]\rangle
=4\(\langle [e^j,u],e_j\rangle+\langle u-u_\ell,u-u_\ell\rangle\).
\end{align*}
Substituting this into~\eqref{eq:R-intermediate.1} and using~\eqref{eq:Dterm.bis}, we obtain~\eqref{teo:teoglobalespe.2.a}.
\end{proof}

We finish this section with a construction of a local embedding of the $N=2$ superconformal vertex algebra on the space of sections of the chiral de Rham complex.

\begin{theorem}\label{teo:mainTCAlocal}
Let $E$ be a smooth complex Courant algebroid over a smooth manifold $M$, $\ell\oplus\overline{\ell}\subset E$ as in \eqref{eq:abstractllbar} and $\varepsilon \in \Omega^0(E)$.
Suppose that $\Pi(\ell\oplus\overline{\ell})$ admits a global isotropic frame $\left\lbrace e^j,e_j\right\rbrace$ satisfying \eqref{eq:specframe},
$(\ell\oplus\overline{\ell},\varepsilon)$ satisfies the F-term condition \eqref{eq:Ftermabs} and the $D$-term equation \eqref{eq:DtermECA},
and that $\left[\varepsilon,\cdot\right]=0$.
Assume further that
\begin{equation}\label{eq:algecond}
:F^{ij}:e_je_i:: = 0.
\end{equation}
Then, setting $u=\Pi\varepsilon$, the sections
\begin{equation}
\label{eq:JHnewexact}
\begin{split}
J = & \frac{i}{2}:e^je_j: - Siu,\\
H = & \frac{1}{2}\left(:e_j\left(Se^j\right):+:e^j\left(Se_j\right):\right)+\frac{1}{4}\left(:e_j:e^k\left[e^j,e_k\right]::\right.\\
+ & \left.:e^j:e_k\left[e_j,e^k\right]::-:e_j:e_k\left[e^j,e^k\right]::-:e^j:e^k\left[e_j,e_k\right]::\right)+Tu_{\ell},
\end{split}
\end{equation}
induce an embedding of the $N=2$ superconformal vertex algebra with central charge
\begin{equation*}
c=3\dim \ell + 6\left\langle\varepsilon,\varepsilon\right\rangle \in \C
\end{equation*}
into the space of global sections of the chiral de Rham complex $\Omega^{\ch}_E$.
\end{theorem}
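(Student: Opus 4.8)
The plan is to verify that the even field $J$ in \eqref{eq:JHnewexact} satisfies the two defining $\Lambda$-bracket identities \eqref{eq:N2SCVA} of the $N=2$ superconformal vertex algebra, with the odd field $H$ and central charge $c$ as stated, and then to invoke Remark~\ref{rem:tech}: once $[J_\Lambda J]=-(H+\tfrac{\lambda\chi}{3}c)$ and $[H_\Lambda J]=(2T+2\lambda+\chi S)J$ hold, the vector $H$ automatically satisfies \eqref{eq:NS}, so $J$ and $H$ generate a copy of the $N=2$ superconformal vertex algebra and the universal property yields the desired embedding into $\Gamma(\Omega^{\ch}_E)$. Note first that the full D-term equation \eqref{eq:DtermECA} implies the weak D-term equation \eqref{eq:DtermgravCA}, since $\tfrac12\sum_j[\overline\epsilon_j,\epsilon_j]+\varepsilon=\varepsilon_\ell\in\ell$; hence, together with the F-term condition and $[\varepsilon,\cdot]=0$, all of Propositions~\ref{teo:mainresult2},~\ref{teo:mainresult} and~\ref{teo:teoglobalespe} and Lemmas~\ref{lem:local-embedding.1} and~\ref{lem:locura} are at our disposal.

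I would begin by reconciling the expression for $H$ in \eqref{eq:JHnewexact} with the simplified form \eqref{eq:newiden} of Proposition~\ref{teo:mainresult}, which is valid because $[u,\cdot]=0$ annihilates the last line of \eqref{eq:JHlocal2}. The only apparent discrepancy is the term $\tfrac{T}{2}w-TI_+u_+$ versus $Tu_\ell$. Writing the D-term equation as $\tfrac12[e^j,e_j]=u-u_\ell$ (cf.~\eqref{eq:Dterm.bis}) and projecting onto $\ell$ and $\overline\ell$ gives $\sum_j[e^j,e_j]_\ell=0$ and $\sum_j[e^j,e_j]_{\overline\ell}=2u_{\overline\ell}$, whence $w=2u_{\overline\ell}$ by \eqref{eq:def.w}. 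Since $I_+u_+=u_{\overline\ell}-u_\ell$, I obtain $\tfrac{T}{2}w-TI_+u_+=Tu_{\overline\ell}-T(u_{\overline\ell}-u_\ell)=Tu_\ell$, matching \eqref{eq:JHnewexact}. The central charge is $c=3(n+2\langle\varepsilon,\varepsilon\rangle)=3\dim\ell+6\langle\varepsilon,\varepsilon\rangle$ by \eqref{eq:newiden}, and $\langle\varepsilon,\varepsilon\rangle\in\C$ because $[\varepsilon,\cdot]=0$ forces $\mathcal{D}\langle\varepsilon,\varepsilon\rangle=2[\varepsilon,\varepsilon]=0$.

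The identity $[J_\Lambda J]=-(H+\tfrac{\lambda\chi}{3}c)$ is then exactly the first relation in the ``Moreover'' part of Proposition~\ref{teo:mainresult}. For the second relation, Proposition~\ref{teo:teoglobalespe} (whose hypotheses include the special-frame condition \eqref{eq:specframe}) gives $F_{ij}=0$ and
\[
[H_\Lambda J]=(2\lambda+2T+\chi S)J+\tfrac{i}{4}\left(TS\mathcal{D}R+\lambda:F^{ij}:e_je_i::\right).
\]
The hypothesis \eqref{eq:algecond} annihilates the last summand, so the crux reduces to showing $TS\mathcal{D}R=0$. This is the point where the argument really turns: since $:F^{ij}:e_je_i::=0$ by \eqref{eq:algecond} and $:F_{ij}:e^je^i::=0$ because $F_{ij}=0$, Lemma~\ref{lem:locura} yields $3T^2R=0$, hence $T^2R=0$; combined with the identity $TS\mathcal{D}R=2T^2R$ (from part~(2) of Proposition~\ref{prop:CDRE}) this gives $TS\mathcal{D}R=0$ and therefore $[H_\Lambda J]=(2\lambda+2T+\chi S)J$.

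With both identities \eqref{eq:N2SCVA} established, Remark~\ref{rem:tech} completes the proof, producing the embedding of the $N=2$ superconformal vertex algebra of central charge $c=3\dim\ell+6\langle\varepsilon,\varepsilon\rangle$ into $\Gamma(\Omega^{\ch}_E)$. I expect the genuinely delicate part to be the vanishing of the anomalous term $TS\mathcal{D}R$: it is not a formal consequence of the $N=2$ relations but is precisely why the algebraic condition \eqref{eq:algecond} is imposed, and it depends on feeding the identity $3T^2R=T(:F^{ij}:e_je_i::-:F_{ij}:e^je^i::)$ of Lemma~\ref{lem:locura}, itself obtained from the Jacobi identity for the $\Lambda$-bracket, back into \eqref{eq:teo:teoglobalespe.1}.
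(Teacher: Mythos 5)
Your proposal is correct and follows exactly the paper's route: the paper proves Theorem~\ref{teo:mainTCAlocal} by citing precisely Remark~\ref{rem:tech}, Proposition~\ref{teo:mainresult}, Lemma~\ref{lem:locura} and Proposition~\ref{teo:teoglobalespe}, which is the chain you assemble. Your only addition is to spell out steps the paper leaves implicit --- the reconciliation $\tfrac{T}{2}w - TI_+u_+ = Tu_\ell$ via the D-term equation \eqref{eq:Dterm.bis}, and the deduction $TS\mathcal{D}R = 2T^2R = 0$ from \eqref{eq:algecond} together with $F_{ij}=0$ --- and both are carried out correctly.
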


\begin{proof}
This follows from Remark~\ref{rem:tech}, Proposition~\ref{teo:mainresult}, Lemma~\ref{lem:locura} and Proposition~\ref{teo:teoglobalespe}.
\end{proof}
      
\subsection{Global embeddings}\label{sec:mainglobal}  

We can now prove the main results of this paper, Theorems~\ref{teo:mainTCA} and~\ref{teo:mainTCAcHE}, which provide embeddings of the $N=2$ superconformal vertex algebra in the chiral de Rham complex, respectively constructed from solutions of the twisted Hull--Strominger system \eqref{eq:tHS} and the coupled Hermitian-Einstein equation \eqref{eq:cHE}.

We start with the twisted Hull--Strominger system. Let $M$ be a $2n$-dimensional spin manifold endowed with a principal $K$-bundle $P$. Suppose the Lie algebra $\mathfrak{k}$ is equipped with a non-degenerate bi-invariant symmetric bilinear form $\left\langle\cdot,\cdot\right\rangle: \mathfrak{k} \otimes \mathfrak{k} \to \mathbb{R}$, such that
$$
p_1(P) = 0 \in H^4_{dR}(M,\mathbb{R}).
$$
Let $(\Psi,\omega,A)$ be a solution of the twisted Hull--Strominger system on $(M,P)$ (see Definition \ref{def:tHS}). By Example \ref{def:E0}, we have an associated string algebroid $E = E_{P,-d^c\omega,A}$, with underlying vector bundle
$$
T \oplus \ad P \oplus T^*,
$$
pairing \eqref{eq:pairingE0}, and bracket \eqref{eq:bracketE0} for $H = - d^c \omega$. 
Consider the generalized metric
\begin{equation*}
V_+ = \{X + g(X), X \in T\}, \quad V_- = \{X - g(X) + r, X \in T, r \in \ad P\},
\end{equation*}
where $g = \omega(,J)$, for the orthogonal integrable complex structure $J$ determined by $\Psi$. 
The complex structure $J$ gives a decomposition
\begin{equation}\label{eq:Ecdecomglobal}
E \otimes \C = \ell \oplus \overline{\ell} \oplus (V_- \otimes \C)
\end{equation}
which satisfies the F-term condition \eqref{eq:Ftermabs} (see Lemma \ref{lem:Ftermexp}), where
$$
\ell = e^{-i\omega}(T^{1,0}), \qquad \overline{\ell} = e^{i\omega}(T^{0,1}).
$$
Consider the holomorphic atlas $\{(U_\alpha,\psi_\alpha)\}_{\alpha \in \mathcal{A}}$ constructed in Lemma \ref{lem:holconstdetatlas} from the pair $(\Psi,\omega)$ (regarded as a solution of \eqref{eq:confbal}). Arguing as in Section \ref{sec:Dterm}, for each open coordinate patch $U_\alpha \subset M$ with holomorphic coordinates $(z_1,\ldots,z_n)$, we have well-defined local frames
$$
\left\{ \epsilon_j^\alpha = e^{-i\omega}g^{-1}d\overline{z}_j\right\}_{j=1}^n, \qquad \left\{ \overline{\epsilon}_j^\alpha = e^{i\omega}\dbar_j\right\}_{j=1}^n,
$$ 
of $\ell$ and $\overline{\ell}$, respectively, where $\dbar_j : = \frac{\partial}{\partial\overline{z}_j}$. By Lemma \ref{lem:frameprop}, we have the relations
$$
\langle \epsilon_j^\alpha, \overline \epsilon_k^\alpha \rangle  = \delta_{jk}, \qquad \langle \epsilon_j^\alpha, \epsilon_k^\alpha \rangle  = \langle \overline \epsilon_j^\alpha, \overline \epsilon_k^\alpha \rangle = 0, \qquad[\overline \epsilon_j^\alpha, \overline \epsilon_k^\alpha ] = 0.
$$
Furthermore, by Proposition \ref{prop:DtermTHS}, we have a family of solutions of the D-term equation \eqref{eq:DtermECA}:
$$
\frac{1}{2}\sum_{j=1}^n\left[\overline{\epsilon}_j^\alpha,\epsilon_j^\alpha\right] = \theta_\omega - (\theta_\omega)_{\ell},
$$
where $\varepsilon = - \theta_\omega \in \Omega^0(E \otimes \C)$ satisfies $\left[\varepsilon,\cdot\right]=0$ and $\langle \varepsilon , \varepsilon \rangle = 0$. We are therefore under the hypotheses of Propositions~\ref{teo:mainresult} and~\ref{teo:teoglobalespe}, so setting
$$
e^j_\alpha = \Pi  \epsilon_j^\alpha \in \Omega^0(\Pi \ell|_{U_\alpha}), \qquad e_j^\alpha = \Pi  \overline{\epsilon}_j^\alpha \in \Omega^0(\Pi \overline{\ell}|_{U_\alpha}),
$$
we obtain local sections $J^\alpha,H^\alpha \in \Omega^{\ch}_{E\otimes\C}(U_\alpha)$ of the chiral de Rham complex, defined by \eqref{eq:JHlocal2}, satisfying
\begin{equation}\label{eq:N=2aux}
\begin{split}
\left[J^\alpha_\Lambda J^\alpha \right] &= -\left(H^\alpha+\frac{\lambda\chi}{3}(3n)\right)\\
\left[{H^\alpha}_\Lambda{J^\alpha}\right] & = \left(2\lambda+2T+\chi S\right)J^\alpha + \frac{i}{4}\left(TS\mathcal{D}R^\alpha + \lambda F^\alpha \right),
\end{split} 
\end{equation}
where $F^\alpha = :F^{ij}_\alpha:e_j^\alpha e_i^\alpha::$ and
\begin{equation*}
\begin{split}
R^\alpha&= - 4\langle [e^j_\alpha,u],e_j^\alpha\rangle + 8\left\langle u_-,u_-\right\rangle,
\\
F^{ij}_\alpha & = \left\langle\mathcal{D}\left\langle\left[e^i_\alpha,e^j_\alpha\right],e_k^\alpha\right\rangle,e^k_\alpha\right\rangle,\quad \text{for } i,j \in \{1,\ldots,n\}.
\end{split}
\end{equation*}

\begin{lemma}\label{lem:globalsec}
The family of local sections 
$$
J^\alpha, H^\alpha, F^\alpha \in \Omega^{\ch}_{E\otimes\C}(U_\alpha)
$$ 
agree on overlaps $U_\alpha \cap U_\beta$, and therefore define global sections, denoted $J$, $H$ and $F$, respectively, of the chiral de Rham complex $\Omega^{\ch}_{E\otimes\C}$.
Furthermore, the family of local functions $R^\alpha$ agree on overlaps $U_\alpha \cap U_\beta$, and therefore define a global function $R\in C^\infty(M)$. 
\end{lemma}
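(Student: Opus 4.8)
The plan is to prove the stronger statement that each of the local expressions is \emph{independent of the chosen special frame}, from which the gluing on overlaps is immediate. Observe first that $u=\Pi\varepsilon=-\Pi\theta_\omega$ is a globally defined odd section, with $\pi(u)=-\pi(\pi^*\theta_\omega)=0$ because $\pi\circ\pi^*=0$ in \eqref{eq:Couseqaux}. Since $J^\alpha,H^\alpha$ in \eqref{eq:JHlocal2} and $R^\alpha,F^{ij}_\alpha$ in \eqref{eq:R.bis}--\eqref{eq:Fijsub.bis} are assembled from $u$ and the frames $\{e^j_\alpha,e_j^\alpha\}$ purely by the universal operations of the SUSY vertex algebra $\Omega^{\ch}_{E\otimes\C}$ (normally ordered products, $S$, $T$, and $\Lambda$-bracket components), it suffices to compare them with the analogous expressions built from $\{e^j_\beta,e_j^\beta\}$ on $U_\alpha\cap U_\beta$.

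Next I would write down the transition. On $U_\alpha\cap U_\beta$ the holomorphic coordinate change $z^\beta=\psi(z^\alpha)$ has holomorphic Jacobian $A=(\partial z^\beta_j/\partial z^\alpha_k)$, and by Lemma \ref{lem:holconstdetatlas} its determinant is locally constant. A direct computation from \eqref{eq:localisoframe} gives, with Einstein summation,
\[
e^j_\beta=\overline{A_{jk}}\,e^k_\alpha,\qquad e_j^\beta=\overline{(A^{-1})_{kj}}\,e_k^\alpha,
\]
so the two frames transform by mutually inverse-transpose \emph{antiholomorphic} matrices, compatibly with Lemma \ref{lem:frameprop}(1). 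Two facts drive the argument: first, $\pi(e^j_\alpha)=g^{-1}d\overline z_j$ is of type $(1,0)$, hence annihilates the antiholomorphic transition functions $\overline A,\overline{A^{-1}}$; second, $\pi(u)=0$ as above.

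I would then compute the change in each expression. Each of $J^\alpha,H^\alpha,F^\alpha,R^\alpha$ is a trace-type contraction $\sum_j(\cdots)$ whose leading, classical part is manifestly invariant because $\overline{A_{jk}}\,\overline{(A^{-1})_{lj}}=\delta_{kl}$. The remaining terms are the quantum corrections coming from quasiassociativity \eqref{eq:cuasiaso}, the non-commutative Wick formula \eqref{eq:Wick}, and the derivative part of the Dorfman bracket; all of these are evaluated using Proposition \ref{prop:CDRE}, in particular the identity $[\jmath(a)_\Lambda\iota(f)]=\iota(\pi(a)(f))$. By the two facts above, every correction involving $\pi(e^j_\alpha)(\overline A)$ or $\pi(u)(\overline A)$ vanishes, and the only surviving derivatives are $\pi(e_l^\alpha)(\overline{A_{jk}})=\overline{\partial_lA_{jk}}$. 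Using the symmetry $\partial_lA_{jk}=\partial_kA_{jl}$ of the holomorphic Jacobian, I expect each surviving anomaly to collapse, after the index sums, into a multiple of $\overline{\partial_k\log\det A}=\overline{\operatorname{tr}(A^{-1}\partial_kA)}$ or of $\mathcal D\log\det A$; the constant-determinant property of Lemma \ref{lem:holconstdetatlas} then forces all of them to vanish, giving $J^\alpha=J^\beta$, $H^\alpha=H^\beta$, $F^\alpha=F^\beta$ and $R^\alpha=R^\beta$.

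The hard part will be the bookkeeping for $H^\alpha$, which contains quartic normally ordered products together with the terms $\tfrac{T}{2}w$, $TI_+u_+$ and the $S$-exact tail in \eqref{eq:JHlocal2}; one must track all anomalies generated by the repeated use of \eqref{eq:cuasiaso} and \eqref{eq:Wick} and check that they recombine exactly into $\operatorname{tr}(A^{-1}\partial A)$ and its conjugate. As a warm-up and sanity check I would first treat $R^\alpha$: using $[\varepsilon,\cdot]=0$ to rewrite $\langle[e^j,u],e_j\rangle=\pi(e_j)\langle e^j,u\rangle$, the single anomalous term is precisely $\overline{\partial_k\log\det A}\,\langle e^k,u\rangle$, which vanishes; the treatment of $J^\alpha$ is similar and purely quadratic, while $F^\alpha$ reduces to the covariance of $F^{ij}_\alpha$ as a $(2,0)$-tensor once one checks, via $\langle e^p_\alpha,e^q_\alpha\rangle=0$ and the first fact, that the bracket $[e^i_\beta,e^j_\beta]$ transforms tensorially.
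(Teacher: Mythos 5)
Your proposal is correct in substance, and for $J^\alpha$, $F^\alpha$ and $R^\alpha$ it is essentially the paper's own argument: the paper also writes the transition $f_j=A^k_je_k$, $f^j=B^k_je^k$ with $B=A^{-1}$ antiholomorphic, records exactly your two driving facts as \eqref{eq:constdetyhol} (the anchors $\pi(e^k)$ are of type $(1,0)$, hence kill the transition matrices, and $\mathcal{D}\det A=0$ by Lemma \ref{lem:holconstdetatlas}), and collapses the surviving quantum correction for $J$ into $\tr(:A^{-1}(TA):)=T(\log\det A)=0$ via the chiral Jacobi formula of Lemma \ref{lem:matrixid} --- precisely your $\tr(A^{-1}\partial A)$ mechanism. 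Likewise your tensoriality claim for $[e^i_\beta,e^j_\beta]$ (which does hold, since the derivative terms of \eqref{eq:14} involve only $\pi(e^p)$ acting on antiholomorphic functions and pairings that vanish by isotropy) is what the paper verifies by its explicit expansion for $F^\alpha$; and the paper, like you, treats $R^\alpha$ by a direct frame computation (deferred to \cite[Lemma 10.3.5]{Arriba}, as $R$'s globality is not used later). The one genuine divergence is the part you yourself flag as ``the hard part'': the quartic bookkeeping for $H^\alpha$. The paper avoids this entirely with a shortcut your plan misses: on each chart the local relation $[J^\alpha{}_\Lambda J^\alpha]=-(H^\alpha+\lambda\chi n)$ from \eqref{eq:N=2aux} (Proposition \ref{teo:mainresult}, with $\langle\varepsilon,\varepsilon\rangle=0$) expresses $H^\alpha$ through the $\Lambda$-bracket of $J^\alpha$, and since the $\Lambda$-bracket is a canonical operation of the sheaf $\Omega^{\ch}_{E\otimes\C}$, globality of $J$ gives globality of $H$ for free. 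Your brute-force route would work in principle --- all anomalies from \eqref{eq:cuasiaso} and \eqref{eq:Wick} do vanish by the same two facts --- but it trades a one-line deduction for a substantial and error-prone computation; conversely, your approach has the mild virtue of not invoking the $N=2$ relation, so it would establish globality of $H$ even in situations where the D-term hypothesis behind \eqref{eq:N=2aux} were weakened.
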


\begin{proof}
We will use basic identities on the chiral de Rham complex collected in Appendix~\ref{app:1}. 
Let $U_\alpha$ and $U_\beta$ be open coordinate patches in our atlas. To simplify notation, set
$$
e^j := e^j_\alpha, \quad e_j = e^j_\alpha, \qquad f^j := e^j_\beta, \quad f_j := e_j^\beta.
$$
Then there exist matrix-valued functions
$$
A = \left(A_j^k\right)_{j,k = 1}^n,
\;
B = \left(B_j^k\right)_{j,k = 1}^n \in M_{n\times n}\left(C^\infty(U_\alpha \cap U_\beta,\C)\right),
$$ 
such that
\begin{equation*}
f_j = \sum_{k=1}^n A^k_je_k \text{ and } f^{j} = \sum_{k=1}^n B_j^ke^k, \quad \text{for } j \in \{1,\ldots,n\}.
\end{equation*}
By the properties of our atlas (see Lemma \ref{lem:holconstdetatlas}), and the definition of the frames, we have
\begin{equation}
\label{eq:constdetyhol}
B= A^{-1}, \quad \mathcal{D}\det A = 0, \quad \pi\left(e^k\right)\left(B_j^r\right) = 0 = \pi\left(f^r\right)\left(A_k^j\right)\!,\; \text{for } j,k,r \in \{1,\ldots,n\}.
\end{equation}
By the basic identities~\eqref{eq:14}, \eqref{eq:affa}, \eqref{eq:abffab}, \eqref{eq:abfabf} \eqref{eq:fgabagab} and \eqref{eq:fagbfagb}, we have 
\begin{equation*}
\begin{split}
:f^{j}f_j: & = :e^je_j:+:B^j_k\left(TA_j^k\right):.
\end{split}
\end{equation*}
Now, by Lemma \ref{lem:matrixid}, the second term of the right-hand side is
\begin{equation*}
:B^j_k(TA_j^k):=\tr\left(:A^{-1}\left(TA\right):\right)=T(\log\det A)=\frac{1}{2}S\mathcal{D}(\log\det A)= 0,
\end{equation*}
so $J^\alpha = J^\beta$ on $U_\alpha \cap U_\beta$. By the first relation in \eqref{eq:N=2aux}, the family of local sections $H^\alpha$ also agree on overlaps.

As for the family of local sections $F^\alpha$, we need to prove that
\begin{equation*}
:\left\langle\mathcal{D}\left\langle\left[e^i,e^j\right],e_k\right\rangle,e^k\right\rangle:e_je_i:: = :\left\langle\mathcal{D}\left\langle\left[f^i,f^j\right],f_k\right\rangle,f^k\right\rangle:f_jf_i::.
\end{equation*}
Indeed, by \eqref{eq:abfabf} \eqref{eq:fgabagab} and \eqref{eq:fagbfagb}, notice that
\begin{equation*}
\begin{split}
::A^q_je_q::A^r_ie_r:: &= :\left(A^q_jA^r_i\right):e_qe_r::,
\end{split}
\end{equation*}
while by the Courant algebroid axioms and \eqref{eq:14},
\begin{equation*}
\begin{split}
\left\langle\mathcal{D}\left\langle\left[B^i_re^r,B^j_me^m\right],A_k^se_s\right\rangle,B^k_pe^p\right\rangle & = B^k_pA^j_mB^i_r\left\langle\left[e^r,e^m\right],e_s\right\rangle\left\langle\mathcal{D}A_k^s,e^p\right\rangle\\
&+B^i_r\left\langle\left[e^r,e^m\right],e_s\right\rangle\left\langle\mathcal{D}B^j_m,e^s\right\rangle\\
&+B^j_m\left\langle\left[e^r,e^m\right],e_s\right\rangle\left\langle\mathcal{D}B^i_r,e^s\right\rangle\\
&+B^j_mB^i_r\left\langle\mathcal{D}\left\langle\left[e^r,e^m\right],e_s\right\rangle,e^s\right\rangle\\
&+B^k_pB^j_m\left\langle\mathcal{D}B^i_s,e^m\right\rangle \left\langle\mathcal{D}A_k^s,e^p\right\rangle\\
&+\left\langle\mathcal{D}B^i_k,e^m\right\rangle\left\langle\mathcal{D}A^j_m,e^k\right\rangle+\left\langle\mathcal{D}B^j_m,e^m\right\rangle \left\langle\mathcal{D}B^i_k,e^k\right\rangle\\
&+B^j_m\left\langle\mathcal{D}\left\langle\mathcal{D}B^i_k,e^m\right\rangle,e^k\right\rangle+B^i_k\left\langle\mathcal{D}\left\langle\mathcal{D}B^j_m,e^m\right\rangle,e^k\right\rangle\\
&+B^k_pB^i_s\left\langle\mathcal{D}B^j_m,e^m\right\rangle \left\langle\mathcal{D}A_k^s,e^p\right\rangle,
\end{split}
\end{equation*}
which implies the required identity, thanks to third equation in \eqref{eq:constdetyhol}. 

The result about $R^\alpha$ follows by an explicit calculation in the local frames. Since we are not going to use this result, we refer to \cite[Lemma 10.3.5]{Arriba} for a complete proof. 
\end{proof}

In the next result, we establish the relation between the global section $F$ of $\Omega^{\ch}_{E\otimes\C}$ and the torsion bivector field of $(\Psi,\omega)$, defined in Lemma \ref{lem:sigmaomega}.

\begin{lemma}\label{lem:dcomega}
Let $\sigma_\omega$ be the torsion bivector field of $(\Psi,\omega)$, defined in Lemma \ref{lem:sigmaomega}. Then, in the holomorphic coordinates provided by Lemma \ref{lem:holconstdetatlas}, we have that
	\begin{equation*}
	\begin{split}
	\left(\sigma_{\omega}\right)_{\overline{ij}} & = F^{ij}, \quad \text{for } i,j \in \{1,\ldots,n\}.
	\end{split}
	\end{equation*}
\end{lemma}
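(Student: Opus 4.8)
The plan is to reduce $F^{ij}$ to a purely metric quantity by exploiting the F-term condition, and then to match it with the local expression \eqref{eq:sigmaomegaexplocal} for $\sigma_\omega$ by a direct coordinate computation. Throughout I work in a coordinate patch from the atlas of Lemma \ref{lem:holconstdetatlas} and abbreviate $U_i := \pi(\epsilon_i) = g^{-1}d\overline{z}_i = g^{p\overline{i}}\partial_p$, a frame of $T^{1,0}$, so that $\epsilon_i = U_i + d\overline{z}_i$ in the splitting $E = T\oplus\ad P\oplus T^*$. First I would rewrite the definition \eqref{eq:teo:teoglobalespe.2.b} in geometric terms: using the identity $\langle\mathcal{D}f,a\rangle = \pi(a)(f)$ (and its parity-reversed counterpart), together with the dictionary of Proposition \ref{prop:CDRE} relating the operators on $\Pi E$ to those on $E$, one obtains
\[
F^{ij} = \sum_{k=1}^n \pi(\epsilon_k)\big(\langle[\epsilon_i,\epsilon_j],\overline{\epsilon}_k\rangle\big),
\]
where now $[\cdot,\cdot]$ and $\langle\cdot,\cdot\rangle$ denote the Dorfman bracket and pairing of $E\otimes\C$.

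Next I would compute the inner scalar $\langle[\epsilon_i,\epsilon_j],\overline{\epsilon}_k\rangle$. By Lemma \ref{lem:Ftermexp} the decomposition \eqref{eq:Ecdecomglobal} satisfies the F-term condition, so $[\epsilon_i,\epsilon_j]\in\ell$ and we may write $[\epsilon_i,\epsilon_j] = c^a_{ij}\epsilon_a$ for smooth functions $c^a_{ij}$. Pairing against $\overline{\epsilon}_k$ and using $\langle\epsilon_a,\overline{\epsilon}_k\rangle = \delta_{ak}$ gives $\langle[\epsilon_i,\epsilon_j],\overline{\epsilon}_k\rangle = c^k_{ij}$, while applying the anchor and the axiom $\pi[\epsilon_i,\epsilon_j] = [\pi\epsilon_i,\pi\epsilon_j]$ identifies the $c^a_{ij}$ with the structure functions of the frame $\{U_a\}$, namely $[U_i,U_j] = c^a_{ij}U_a$. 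Thus the contributions of $F_A$ and $H$ to the Dorfman bracket drop out, and $c^k_{ij}$ is a purely metric quantity.

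The third step is the coordinate computation of the structure functions. From $U_i = g^{p\overline{i}}\partial_p$ one gets $[U_i,U_j] = (g^{p\overline{i}}\partial_p g^{r\overline{j}} - g^{p\overline{j}}\partial_p g^{r\overline{i}})\partial_r$; solving $[U_i,U_j] = c^a_{ij}U_a$ by contracting with $g_{r\overline{k}}$ and using $\partial_p(g^{r\overline{j}}g_{r\overline{k}}) = 0$ yields, after relabelling dummy indices,
\[
c^k_{ij} = g^{m\overline{i}}g^{l\overline{j}}(\partial_l g_{m\overline{k}} - \partial_m g_{l\overline{k}}).
\]
Recalling from the proof of Proposition \ref{prop:rho20sigma} that $T_{ml\overline{k}} = \partial_m g_{l\overline{k}} - \partial_l g_{m\overline{k}}$ and $i\partial\omega = -T$, this reads $c^k_{ij} = g^{m\overline{i}}g^{l\overline{j}}(i\partial\omega)_{ml\overline{k}}$; the antisymmetry in $i,j$ matches that of $(\sigma_\omega)_{\overline{ij}}$. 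Finally, substituting $\pi(\epsilon_k) = g^{s\overline{k}}\partial_s$ into the displayed formula for $F^{ij}$ gives $F^{ij} = g^{s\overline{k}}\partial_s\big(g^{m\overline{i}}g^{l\overline{j}}(i\partial\omega)_{ml\overline{k}}\big)$, which is precisely the coefficient $(\sigma_\omega)_{\overline{ij}}$ in the local expression \eqref{eq:sigmaomegaexplocal}, completing the proof.

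I expect the main obstacle to be bookkeeping rather than conceptual: one must carefully track the parity signs when passing from the parity-reversed frame $\{e^j,e_j\}$ and the operators $\mathcal{D},[\cdot,\cdot],\langle\cdot,\cdot\rangle$ on $\Pi E$ to the even Dorfman bracket, pairing and anchor on $E\otimes\C$, and confirm that they combine to the overall $+$ sign demanded by the statement. The remaining coordinate manipulation is routine once the structure functions $c^k_{ij}$ are in hand, the only delicate point there being the consistent use of the conventions for $g^{p\overline{q}}$, for the components $(i\partial\omega)_{ml\overline{k}}$, and for the antisymmetrization implicit in \eqref{eq:sigmaomegaexplocal}.
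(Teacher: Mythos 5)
Your proof is correct, and it takes a genuinely different route from the paper's at the central step. Both arguments share the endpoints: like the paper, you first use $\langle\mathcal{D}f,a\rangle=\pi(a)(f)$ to reduce \eqref{eq:teo:teoglobalespe.2.b} to $F^{ij}=\sum_k\pi(\epsilon_k)\big(\langle[\epsilon_i,\epsilon_j],\overline{\epsilon}_k\rangle\big)$, and you finish by matching against the local formula for $\sigma_\omega$. The difference is how the scalar $\langle[\epsilon_i,\epsilon_j],\overline{\epsilon}_k\rangle$ is evaluated. The paper computes it through the generalized-metric identity \eqref{eq:clavesuma} from Lemma \ref{lem:braketsum0}, i.e., through the Bismut connection: writing $\epsilon_i=2d\overline{z}_i+\sigma_-(g^{-1}d\overline{z}_i)$ it obtains $\langle[e^i,e^j],e_k\rangle=-d\overline{z}_j\big(\nabla^B_{g^{-1}d\overline{z}_i}\overline{\partial}_k\big)=-d^c\omega\big(g^{-1}d\overline{z}_i,g^{-1}d\overline{z}_j,\overline{\partial}_k\big)$, using $\nabla^B(g^{-1}d\overline{z}_i)\in\Omega^{1,0}$ and $[g^{-1}d\overline{z}_i,\overline{\partial}_k]^{0,1}=0$, and then compares with \eqref{eq:sigmaomegaexp}. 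You instead bypass all connection machinery: the F-term condition (Lemma \ref{lem:Ftermexp}) forces $[\epsilon_i,\epsilon_j]\in\ell$, duality of the frames (Lemma \ref{lem:frameprop}) extracts $c^k_{ij}$, and the anchor axiom together with injectivity of $\pi|_\ell$ (a consequence of $\pi|_{V_+}$ being an isomorphism) identifies the $c^a_{ij}$ with the Lie structure functions of the frame $\{g^{-1}d\overline{z}_a\}$ of $T^{1,0}$, which you then compute bare-handedly and compare with \eqref{eq:sigmaomegaexplocal}. Your route is more elementary and makes manifest, a priori, that neither $H$ nor $F_A$ can contribute (the anchor kills them), at the cost of essentially re-deriving part of the coordinate formula of Lemma \ref{lem:sigmaomega}; the paper's route yields the coordinate-free intermediate identity $\langle[e^i,e^j],e_k\rangle=-d^c\omega(\cdot,\cdot,\cdot)$, which is precisely what is reused verbatim in the examples (Lemma \ref{lem:IwaF} and the Picard-type computation in Section \ref{sec:Picard}), so it carries extra mileage within the paper. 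One caveat you correctly anticipated: your convention $\pi(\epsilon_i)=g^{p\overline{i}}\partial_p$ drops the factor $2$ implicit in the paper's $g^{-1}d\overline{z}_i$ (compare Example \ref{ex:sigmanonzero}); since $g^{-1}$ enters homogeneously to the same degree in $F^{ij}$ and in \eqref{eq:sigmaomegaexplocal}, the claimed equality is insensitive to this choice, but you should fix the convention once and apply it uniformly on both sides.
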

\begin{proof}
Arguing as in the last part of the proof of Lemma \ref{lem:braketsum}, we have that
\begin{equation*}
\begin{split}
\left\langle\left[e^i,e^j\right],e_k\right\rangle & = -\left\langle e^j,\left[e^i,e_k\right]\right\rangle\\
& = - \left\langle \epsilon_j,\left[\sigma_-\left(g^{-1}d\overline{z}_i\right),\overline{\epsilon}_k\right]\right\rangle\\
& = -d\overline{z}_j\left(\nabla^B_{g^{-1}d\overline{z}_i}\overline{\partial}_k\right)\\
& = -d\overline{z}_j\left(\nabla^+_{g^{-1}d\overline{z}_i}\overline{\partial}_k-\nabla^+_{\overline{\partial}_k}\left(g^{-1}d\overline{z}_i\right)\right)\\
& = -d^c\omega\left(g^{-1}d\overline{z}_i,g^{-1}d\overline{z}_j,\overline{\partial}_k\right),
\end{split}
\end{equation*}
for all $i,j,k \in \{1,\ldots,n\}$, since $\nabla^Bg^{-1}d\overline{z}_i \in \Omega^{1,0}$  and
\begin{equation*}
\left[g^{-1}d\overline{z}_i,\overline{\partial}_k\right]^{0,1}=\overline{\left[g^{-1}dz_i,\partial_k\right]^{1,0}}=\overline{\overline{\partial}_{g^{-1}dz_i}\left(\partial_k\right)}=0.
\end{equation*}
The proof follows now by comparing the explicit formula \eqref{eq:sigmaomegaexp} with 
\[
F^{ij} 
= -\pi(\epsilon_k)(d^c\omega\left(g^{-1}d\overline{z}_i,g^{-1}d\overline{z}_j,\overline{\partial}_k\right)) = ig^{m \overline{k}}\partial_m(\partial \omega\left(g^{-1}d\overline{z}_i,g^{-1}d\overline{z}_j,\overline{\partial}_k\right)).
\qedhere
\]
\end{proof}
      
We are ready to prove our first main result.
      
\begin{theorem}\label{teo:mainTCA}
Let $M$ be a $2n$-dimensional smooth manifold endowed with a principal bundle $P$. Let $(\Psi,\omega,A)$ be a solution of the twisted Hull--Strominger system \eqref{eq:tHS} on $(M,P)$, and consider the associated string algebroid $E = E_{P,-d^c\omega,A}$ and the chiral de Rham complex $\Omega^{\ch}_{E \otimes \C}$. Then the following expressions, defined by the frames \eqref{eq:localisoframe} (see also \eqref{eq:framesodd}) induced by the atlas in Lemma \ref{lem:holconstdetatlas},
\begin{equation}\label{eq:JHglocalmain}
\begin{split}
J & := \frac{i}{2}\sum_{j=1}^n:e^je_j: + Si\Pi \theta_\omega,\\
H & := \frac{1}{2}\sum_{j=1}^n\left(:e_j\left(Se^j\right):+:e^j\left(Se_j\right):\right) + \frac{1}{4}\sum_{j,k=1}^n \Big{(}:e_j:e^k[e^j,e_k]::\\
& + :e^j:e_k[e_j,e^k]:: - :e_j:e_k[e^j,e^k]:: - :e^j:e^k [e_j,e_k]::\Big{)} - T \Pi g^{-1} \theta_\omega^{0,1},
\end{split}
\end{equation}
define global sections of $\Omega^{\ch}_{E \otimes \C}$. Furthermore, the sections $J$ and $H$ induce an embedding of the $N=2$ superconformal vertex algebra with central charge $c = 3n$ into the space of global sections of $\Omega^{\ch}_{E\otimes\C}$.
\end{theorem}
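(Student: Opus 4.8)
The plan is to deduce the statement from the local embedding result, Theorem~\ref{teo:mainTCAlocal}, applied on each chart of the atlas of Lemma~\ref{lem:holconstdetatlas}, together with the gluing Lemma~\ref{lem:globalsec}. All the hypotheses of Theorem~\ref{teo:mainTCAlocal} are already in place from the discussion preceding the statement: the decomposition~\eqref{eq:Ecdecomglobal} satisfies the F-term condition~\eqref{eq:Ftermabs} by Lemma~\ref{lem:Ftermexp}; the frames $\{\epsilon_j^\alpha,\overline\epsilon_j^\alpha\}$ induced by~\eqref{eq:localisoframe} are isotropic and satisfy the special-frame relation $[\overline\epsilon_j^\alpha,\overline\epsilon_k^\alpha]=0$, that is~\eqref{eq:specframe}, by Lemma~\ref{lem:frameprop}; and by Proposition~\ref{prop:DtermTHS} the pair $(\ell\oplus\overline\ell,\varepsilon)$ solves the D-term equation~\eqref{eq:DtermECA} with $\varepsilon=-\theta_\omega$, which is moreover an infinitesimal symmetry, $[\varepsilon,\cdot]=0$, and satisfies $\langle\varepsilon,\varepsilon\rangle=0$. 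Thus the only remaining hypothesis to verify is the algebraic condition~\eqref{eq:algecond}, namely that the global section $F=:F^{ij}e_je_i::$ vanishes.

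Verifying $F=0$ is the heart of the argument, and the only place where the full strength of the twisted Hull--Strominger system is used. First I would invoke Lemma~\ref{lem:dcomega}, which identifies the local coefficients $F^{ij}$ with the components $(\sigma_\omega)_{\overline{ij}}$ of the torsion bivector field of $(\Psi,\omega)$. Next, Proposition~\ref{prop:rho20sigma} expresses this bivector as $\sigma_\omega=-(g^{-1}\otimes g^{-1})\rho_B^{2,0}$, the raised $(2,0)$-part of the Bismut Ricci form. Finally, since $(\Psi,\omega,A)$ solves~\eqref{eq:tHS}, the equation $d\Psi-\theta_\omega\wedge\Psi=0$ together with $d\theta_\omega=0$ and $N_J=0$ gives $\nabla^B\Psi=0$ by Lemma~\ref{lem:holonomia}; as $\Psi$ trivialises the canonical bundle, the Bismut connection on $K_M^{-1}$ is flat and hence $\rho_B=0$ (cf.\ the paragraph following Proposition~\ref{prop:cHECA}). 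Therefore $\sigma_\omega=0$, every $F^{ij}$ vanishes as a function, and $F=0$, establishing~\eqref{eq:algecond}. I would emphasise that this is exactly the point at which the Hermitian--Yang--Mills / $\SU(n)$-holonomy hypothesis removes the vertex-algebra obstruction; note also that $F_{ij}=0$ automatically by Proposition~\ref{teo:teoglobalespe}, and that $F=0$ forces $T^2R=0$ through Lemma~\ref{lem:locura}, so the residual $R$-term in the $[H_\Lambda J]$-bracket drops out as well.

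With~\eqref{eq:algecond} secured, Theorem~\ref{teo:mainTCAlocal} applies on each chart $U_\alpha$ and produces local sections $J^\alpha,H^\alpha$ generating an $N=2$ superconformal vertex algebra of central charge $c=3\dim\ell+6\langle\varepsilon,\varepsilon\rangle=3n$, the last equality because $\langle\varepsilon,\varepsilon\rangle=0$. By Lemma~\ref{lem:globalsec} these local sections agree on overlaps and glue to the global sections $J,H$ of $\Omega^{\ch}_{E\otimes\C}$; after unwinding the identifications $\varepsilon=-\theta_\omega$ and $u=\Pi\varepsilon$, the local formula~\eqref{eq:JHnewexact} matches the stated expressions~\eqref{eq:JHglocalmain}. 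Since the $\Lambda$-bracket and the normally ordered product are operations in the sheaf of SUSY vertex algebras, the defining $N=2$ relations~\eqref{eq:N2SCVA}, which hold on each $U_\alpha$ after the cancellations above, hold for the global sections; by Remark~\ref{rem:tech} the current $J$ then generates, together with its associated $H$, an embedding of the $N=2$ superconformal vertex algebra of central charge $3n$ into the global sections of $\Omega^{\ch}_{E\otimes\C}$. The main obstacle is precisely the verification of $F=0$: although it reduces cleanly to the vanishing of $\rho_B^{2,0}$, this is the essential geometric input, and without it the $[H_\Lambda J]$-bracket would retain the anomalous term $\tfrac{i}{4}\lambda F$ obstructing the $N=2$ structure.
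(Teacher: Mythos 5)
Your proposal is correct and follows essentially the same route as the paper's proof: both establish $F=0$ via the chain Lemma~\ref{lem:holonomia} $\Rightarrow \rho_B=0$, Proposition~\ref{prop:rho20sigma} $\Rightarrow \sigma_\omega=0$, Lemma~\ref{lem:dcomega} $\Rightarrow F^{ij}=0$, kill the residual $R$-term via Lemma~\ref{lem:locura}, and glue with Lemma~\ref{lem:globalsec}. The only cosmetic difference is that you package the vertex-algebra input by invoking Theorem~\ref{teo:mainTCAlocal} chart by chart, whereas the paper cites the glued relations \eqref{eq:N=2aux} (i.e.\ Propositions~\ref{teo:mainresult} and~\ref{teo:teoglobalespe} together with Remark~\ref{rem:tech}) directly -- the same underlying facts in either formulation.
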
 

\begin{proof}
By Lemma \ref{lem:globalsec}, the sections $J$ and $H$ are global sections of $\Omega^{\ch}_{E \otimes \C}$ and satisfy the relations \eqref{eq:N=2aux}, for the sections $F$ and $R$ defined in Lemma \ref{lem:globalsec}. By Lemma \ref{lem:holonomia}, we have $\rho_B = 0$, and therefore, by Proposition \ref{prop:rho20sigma}, the torsion bivector field $\sigma_\omega$ vanishes identically. Using this, combined with Lemma \ref{lem:dcomega}, we have that $F = 0$ in \eqref{eq:N=2aux}. Finally, the proof follows from Lemma \ref{lem:locura}, which implies $TS\mathcal{D}R = 2T^2 R = 0$, because $TF = 0$.
\end{proof}

We construct next a new embedding from solutions of the coupled Hermitian-Einstein equation \eqref{eq:cHE}. The construction is similar to that of Theorem \ref{teo:mainTCA}, and hence we should just emphasize the main points. Let $(M,J)$ be a complex $n$-dimensional manifold which admits a holomorphic volume form $\Omega$. We fix a string algebroid $E$ over $(M,J)$ endowed with a generalized metric $V_+ \subset E$ such that the associated decomposition \eqref{eq:lbarl} satisfies $\langle \ell, \ell \rangle = \langle \overline \ell, \overline \ell \rangle = 0$ and the F-term condition \eqref{eq:Fterm}. In this setup, Lemma \ref{lem:braketsum0} gives a global section for arbitrary holomorphic coordinates
$$
\mu = \frac{1}{2}\sum_{j=1}^n\left[\overline{\epsilon}_j,\epsilon_j\right] \in \Omega^0(E \otimes \C),
$$
and we have a solution of the D-term equation \eqref{eq:DtermECA} with
\begin{equation}\label{eq:varepsilonDtermbis}
\varepsilon = d \log \|\Omega\|_\omega + i \left(d^*\omega - d^c\log \|\Omega\|_\omega \right) - \frac{i}{2} \Lambda_\omega F_A \in \Omega^0(E \otimes \C).
\end{equation}
Assuming that $V_+$ solves the coupled Hermitian-Einstein equation \eqref{eq:cHE}, by Proposition \ref{prop:DtermCHE} we have that $\left[\varepsilon,\cdot\right]=0$ and $\langle \varepsilon , \varepsilon \rangle \in \CC$. We are therefore under the hypotheses of Proposition \ref{teo:mainresult} and Proposition \ref{teo:teoglobalespe}, and for any holomorphic coordinate patch $U \subset M$ we obtain local sections $J_U,H_U \in \Omega^{\ch}_{E\otimes\C}(U)$ of the chiral de Rham complex, defined by \eqref{eq:JHlocal2} and satisfying the identities in Proposition \ref{teo:mainresult} and Proposition \ref{teo:teoglobalespe}. Assuming that we choose holomorphic coordinates such that 
$$
\Omega = dz_1 \wedge \ldots \wedge dz_n,
$$ 
the analogue of Lemma \ref{lem:globalsec} now holds, with the same proof, yielding global sections $J,H,F,R$ of $\Omega^{\ch}_{E\otimes\C}$ (the only difference being the formula for $R \in C^\infty(M)$, which is irrelevant for our applications). Furthermore, Lemma \eqref{lem:dcomega} also applies, for the given holomorphic coordinates, establishing the relation between the global section $F$ of $\Omega^{\ch}_{E\otimes\C}$ and the torsion bivector field $\sigma_\omega$, defined in Lemma \ref{lem:sigmaomega}.

We have arrived at our second main result.

\begin{theorem}\label{teo:mainTCAcHE}
Let $E$ be a string algebroid over an $n$-dimensional complex manifold $(M,J)$ endowed with a holomorphic volume form $\Omega$.  Let $V_+ \subset E$ be a generalized metric such that the associated decomposition \eqref{eq:lbarl} satisfies $\langle \ell, \ell \rangle = \langle \overline \ell, \overline \ell \rangle = 0$ and the F-term condition \eqref{eq:Fterm}. Assume that $V_+$ is a solution of the coupled Hermitian-Einstein equation \eqref{eq:cHE}. Then, defining $\varepsilon \in \Omega^0(E \otimes \C)$ by \eqref{eq:varepsilonDtermbis}, the following expressions, defined by the frames \eqref{eq:localisoframe} (see also \eqref{eq:framesodd}) for any choice of holomorphic coordinates such that $\Omega = dz_1 \wedge \ldots \wedge dz_n$,
\begin{equation}\label{eq:JHgmaincHE}
\begin{split}
J = & \frac{i}{2}\sum_{j=1}^n:e^je_j: - Si\Pi \varepsilon,\\
H = & \frac{1}{2}\sum_{j=1}^n\left(:e_j\left(Se^j\right):+:e^j\left(Se_j\right):\right)+\frac{1}{4}\sum_{j,k=1}^n\left(:e_j:e^k\left[e^j,e_k\right]::\right.\\
+ & \left.:e^j:e_k\left[e_j,e^k\right]::-:e_j:e_k\left[e^j,e^k\right]::-:e^j:e^k\left[e_j,e_k\right]::\right)+T\Pi\varepsilon_{\ell},
\end{split}
\end{equation}
induce an embedding of the $N=2$ superconformal vertex algebra into the space of global sections of $\Omega^{\ch}_{E\otimes\C}$, with central charge
\begin{equation*}
c=3n -\frac{3}{2} \langle \Lambda_\omega F_A ,\Lambda_\omega F_A \rangle
\in \R.
\end{equation*}
\end{theorem}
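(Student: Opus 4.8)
The plan is to transcribe the proof of Theorem~\ref{teo:mainTCA}, since the construction of the global sections $J,H,F$ and the global function $R$ has already been carried out in the discussion preceding the statement. By Proposition~\ref{prop:DtermCHE}, the hypotheses guarantee that $\varepsilon$ in \eqref{eq:varepsilonDtermbis} satisfies $[\varepsilon,\cdot]=0$ and $\langle\varepsilon,\varepsilon\rangle\in\C$, and that $(V_+,\varepsilon)$ solves the D-term equation \eqref{eq:DtermECA}. We are therefore under the hypotheses of Proposition~\ref{teo:mainresult} and Proposition~\ref{teo:teoglobalespe}, so (using the D-term equation to bring $H$ into the form \eqref{eq:JHgmaincHE}, exactly as in \eqref{eq:newiden} and Theorem~\ref{teo:mainTCAlocal}) the global sections satisfy
\[
[J_\Lambda J]=-\Big(H+\tfrac{\lambda\chi}{3}c\Big),\qquad
[H_\Lambda J]=(2\lambda+2T+\chi S)J+\tfrac{i}{4}\big(TS\mathcal{D}R+\lambda F\big),
\]
with $F=:F^{ij}:e_je_i::$ and $c=3(n+2\langle\varepsilon,\varepsilon\rangle)\in\C$. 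By Remark~\ref{rem:tech} it then suffices to show that the anomalous term $\tfrac{i}{4}(TS\mathcal{D}R+\lambda F)$ vanishes and to identify $c$.

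The key point, and the only genuinely new step compared with Theorem~\ref{teo:mainTCA}, is that $F=0$. Here $\rho_B$ need \emph{not} vanish, in contrast with the twisted Hull--Strominger case where Lemma~\ref{lem:holonomia} gives $\rho_B=0$; instead, by Proposition~\ref{prop:cHECA} the coupled Hermitian-Einstein equation forces $\rho_B=-\langle\Lambda_\omega F_A,F_A\rangle$. However, $\sigma_\omega$ depends only on $\rho_B^{2,0}$ by Proposition~\ref{prop:rho20sigma}, and this component is still zero: since $F_A^{0,2}=0$ and $F_A$ is real, $F_A^{2,0}=\overline{F_A^{0,2}}=0$, so $F_A$ is of type $(1,1)$; hence $\langle\Lambda_\omega F_A,F_A\rangle$ is of type $(1,1)$ and $\rho_B^{2,0}=0$. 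Thus $\sigma_\omega=-(g^{-1}\otimes g^{-1})\rho_B^{2,0}=0$, and Lemma~\ref{lem:dcomega} (valid for the chosen coordinates with $\Omega=dz_1\wedge\cdots\wedge dz_n$) gives $F^{ij}=(\sigma_\omega)_{\overline{ij}}=0$, so $F=0$. The remaining part of the anomaly then vanishes exactly as in Theorem~\ref{teo:mainTCA}: Lemma~\ref{lem:locura} gives $TS\mathcal{D}R=2T^2R=0$ because $TF=0$.

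Finally, the central charge is computed directly: the formula $c=3(n+2\langle\varepsilon,\varepsilon\rangle)$ from \eqref{eq:newiden}, together with $\langle\varepsilon,\varepsilon\rangle=-\tfrac14\langle\Lambda_\omega F_A,\Lambda_\omega F_A\rangle$ from \eqref{eq:varepsilonDterm} in Proposition~\ref{prop:DtermCHE}, yields $c=3n-\tfrac32\langle\Lambda_\omega F_A,\Lambda_\omega F_A\rangle\in\R$. With the anomaly gone, the two displayed identities are precisely the defining relations \eqref{eq:N2SCVA} of the $N=2$ superconformal vertex algebra of central charge $c$, so by Remark~\ref{rem:tech} the sections $J$ and $H$ generate an embedded $N=2$ superconformal vertex algebra of this central charge, completing the proof.

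I expect the only real obstacle to be the vanishing $\rho_B^{2,0}=0$: once one observes that the torsion bivector field probes only the $(2,0)$-part of the Bismut--Ricci form, and that this part is unaffected by the generally nonzero Hermitian--Einstein contribution $\langle\Lambda_\omega F_A,F_A\rangle$ (because $F_A$ is of type $(1,1)$), the entire argument reduces to the Hull--Strominger case already settled in Theorem~\ref{teo:mainTCA}.
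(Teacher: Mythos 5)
Your proposal is correct and takes essentially the same route as the paper's own (much terser) proof: the paper likewise reduces everything to the argument of Theorem~\ref{teo:mainTCA}, observing that the coupled Hermitian-Einstein equation forces $\rho_B^{2,0}=0$ via Proposition~\ref{prop:cHECA}, hence $\sigma_\omega=0$ by Proposition~\ref{prop:rho20sigma} and $F=0$ by Lemma~\ref{lem:dcomega}, with the central charge read off from $\langle\varepsilon,\varepsilon\rangle=-\tfrac{1}{4}\langle\Lambda_\omega F_A,\Lambda_\omega F_A\rangle$ in Proposition~\ref{prop:DtermCHE}. Your explicit justification that $\rho_B^{2,0}=0$ --- namely that $F_A$ is of type $(1,1)$ since $F_A^{0,2}=0$ and $F_A$ is real, so the coupling term $\langle\Lambda_\omega F_A,F_A\rangle$ is $(1,1)$ --- is exactly the step the paper leaves implicit in its citation of Proposition~\ref{prop:cHECA}.
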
 

\begin{proof}
The proof is analogous to that of Theorem \ref{teo:mainTCA}. Notice that for any solution of the coupled Hermitian-Einstein equation \eqref{eq:cHE} one has $\rho_B^{2,0} = 0$ (see Proposition \ref{prop:cHECA}), and therefore the associated torsion bivector field vanishes $\sigma_\omega = 0$, by Proposition \ref{prop:rho20sigma}.
\end{proof}

\begin{remark}
It should be emphasized that, for solutions of the Hull--Strominger system \eqref{eq:HS}, Theorem \ref{teo:mainTCA} and Theorem \ref{teo:mainTCAcHE} apply and give the same embedding. This follows from the existence of a global holomorphic volume $\Omega$ in this case, which implies that the atlas in Lemma \ref{lem:holconstdetatlas} satisfies $\Omega = dz_1 \wedge \ldots \wedge dz_n$, combined with formula \eqref{eq:varepsilonDtermbis}, which specializes to
$$
\varepsilon = d \log \|\Omega\|_\omega = - \theta_\omega.
$$
\end{remark}

\section{Examples}\label{sec:geoexam}

\subsection{Complex domains in $\C^2$}\label{sec:exdomain}

Our first class of examples where Theorem \ref{teo:mainTCA} and Theorem \ref{teo:mainTCAcHE} apply is a local situation in complex dimension $2$. For simplicity, we will consider the twisted Hull--Strominger system \eqref{eq:tHS} with trivial structure group $K = \{1\}$. In this case, the system becomes the so-called \emph{twisted Calabi--Yau equations} (see \cite[Definition 2.3]{grst})
\begin{equation}\label{eq:tCY}
\begin{split}
	d \Psi - \theta_\omega \wedge \Psi & = 0,\\
	d \theta_\omega & = 0,\\
	dd^c \omega  & = 0.
	\end{split}
\end{equation}
Observe that these equations include the condition that $\omega$ is a pluriclosed, that is, $dd^c \omega =0$. The application of Theorem \ref{teo:mainTCA} in this situation will lead us to embeddings of the $N=2$ superconformal vertex algebra into the chiral de Rham complex $\Omega^{\ch}_{E \otimes \C}$ of the exact Courant algebroid $E = E_{-d^c \omega}$. 

Let $X \subset \C^2$ be a complex domain. We fix the flat $\SU(2)$-structure
$$
\omega_0 = \frac{i}{2}\left(dz_1\wedge d\overline{z}_1+dz_2\wedge d\overline{z}_2\right), \qquad \Psi_0 = \frac{1}{2}dz_1 \wedge dz_2.
$$
Given a positive smooth harmonic function $f \in C^\infty(X)$, that is,
$$
\Delta_{\omega_0} f := 2i \Lambda_{\omega_0} \dbar \partial f = 0,
$$
we define 
\begin{equation*}\label{eq:TCYCS}
\omega = f \omega_0, \qquad \Psi = f \Psi_0.
\end{equation*}
Since $d\omega = d\log f \wedge \omega$, the associated Lee form is
\begin{equation*}\label{eq:LFCS}
\theta_\omega := d\log f \in \Omega^1.
\end{equation*}

\begin{lemma}\label{lem:SU(2)local}
The pair $(\Psi,\omega)$ defines an $\SU(2)$-structure solving the twisted Calabi--Yau equations \eqref{eq:tCY}.
\end{lemma}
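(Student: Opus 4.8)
The plan is to verify each of the three equations in \eqref{eq:tCY} directly, after first checking that $(\Psi,\omega)$ is genuinely an $\SU(2)$-structure. For the latter, note that since $f>0$ is a real function, the decomposable form $\Psi=f\Psi_0$ determines the same almost complex structure as $\Psi_0$, namely the standard integrable $J$ on $\C^2$; in particular $N_J=0$. To see that the normalisation $\|\Psi\|_\omega=1$ holds, I would observe that both $\Psi\wedge\overline{\Psi}=f^2\,\Psi_0\wedge\overline{\Psi_0}$ and $\omega^2/2!=f^2\,\omega_0^2/2!$ scale by the same factor $f^2$, so that $\|\Psi\|_\omega=\|\Psi_0\|_{\omega_0}$; a short computation with $\Psi_0=\tfrac12 dz_1\wedge dz_2$ and $\omega_0$ then shows that the flat model pair $(\Psi_0,\omega_0)$ satisfies $\|\Psi_0\|_{\omega_0}=1$.

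Next I would identify the Lee form. Since $\omega_0$ has constant coefficients and is closed, $d\omega=df\wedge\omega_0=d\log f\wedge\omega$, so from the structure equation $d\omega=\theta_\omega\wedge\omega$ (the case $n=2$ of $d\omega^{n-1}=\theta_\omega\wedge\omega^{n-1}$) one reads off $\theta_\omega=d\log f$. The second equation $d\theta_\omega=0$ is then immediate, as $\theta_\omega$ is exact. For the first equation, again using that $\Psi_0$ is closed, $d\Psi=df\wedge\Psi_0$, while $\theta_\omega\wedge\Psi=d\log f\wedge f\Psi_0=df\wedge\Psi_0$; hence $d\Psi-\theta_\omega\wedge\Psi=0$, and this holds for \emph{any} positive $f$, not only harmonic ones.

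The harmonicity hypothesis enters only in the third equation. Here the key computation is that, because $\omega_0$ is $\partial\dbar$-closed with constant coefficients, $dd^c\omega=2i\partial\dbar(f\omega_0)=2i\,\partial\dbar f\wedge\omega_0$; and since $2i\,\partial\dbar f$ is a $(1,1)$-form on a complex surface, $2i\,\partial\dbar f\wedge\omega_0=\Lambda_{\omega_0}(2i\,\partial\dbar f)\,\tfrac{\omega_0^2}{2}=-\Delta_{\omega_0}f\,\tfrac{\omega_0^2}{2}$. Thus $dd^c\omega=0$ is equivalent to $\Delta_{\omega_0}f=0$, which is precisely the assumed harmonicity of $f$, completing the verification.

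None of these steps presents a genuine obstacle; the proof is a sequence of direct verifications. The only points requiring care are bookkeeping ones: getting the sign and constant conventions straight in the identity $dd^c=2i\,\partial\dbar$ and in the relation between $\Lambda_{\omega_0}(2i\,\partial\dbar f)$ and the flat Laplacian $\Delta_{\omega_0}f$, and confirming the normalisation $\|\Psi_0\|_{\omega_0}=1$ for the model pair. I would double-check these constants against the conventions fixed earlier in the paper before finalising.
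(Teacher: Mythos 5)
Your proposal is correct and follows essentially the same route as the paper's proof: a direct verification that $\Psi\wedge\overline{\Psi}=f^2\,\Psi_0\wedge\overline{\Psi}_0=\omega^2/2$, that $d\Psi=df\wedge\Psi_0=\theta_\omega\wedge\Psi$ with $\theta_\omega=d\log f$ exact, and that $dd^c\omega=dd^cf\wedge\omega_0=-\Delta_{\omega_0}f\,\tfrac{\omega_0^2}{2}$ vanishes precisely by harmonicity. Your signs and the identity $\alpha\wedge\omega_0=(\Lambda_{\omega_0}\alpha)\tfrac{\omega_0^2}{2}$ for $(1,1)$-forms on a surface are consistent with the paper's conventions ($\Delta_{\omega_0}f=2i\Lambda_{\omega_0}\dbar\partial f$), so no adjustment is needed.
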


\begin{proof}
Notice that $\Psi \wedge \omega = 0$ and 
$$
\Psi \wedge \overline{\Psi} = f^2 \Psi_0 \wedge \overline{\Psi}_0 = f^2 \frac{\omega^2_0}{2} = \frac{\omega^2}{2},
$$
so we have an $\SU(2)$-structure. Now, since
\begin{equation*}\label{eq:dcomegaCS}
d^c\omega = d^cf \wedge \omega_0 = d^c\log f \wedge \omega
\end{equation*}
and $\Delta_{\omega_0}f=0$, we conclude
\begin{equation*}
\begin{split}
d \Psi & = d \log f \wedge \Psi = \theta_\omega \wedge \Psi,\\
dd^c\omega & = dd^cf \wedge \omega_0 = - \Delta_{\omega_0} f \frac{\omega_0^2}{2} = 0.
\qedhere
\end{split}
\end{equation*}
\end{proof}




The hypotheses of Theorem \ref{teo:mainTCA} are therefore satisfied, and we conclude the following. 

\begin{proposition}\label{prop:N2CS}
Let $X \subset \C^2$ be a complex domain, and consider a positive harmonic function $f\in C^\infty(X)$ with respect to the standard flat K\"ahler metric. Then the solution of the twisted Calabi--Yau equations $(\Psi,\omega)$ in Lemma \ref{lem:SU(2)local} induces an embedding of the $N=2$ superconformal vertex algebra of central charge $c = 6$ into the space of global sections of the chiral de Rham complex $\Omega^{\ch}_{E\otimes\C}$, for the exact Courant algebroid $E=E_{-d^c\omega}$. The generators of this embedding are given by \eqref{eq:JHglocalmain}, for the frames
$$
e^j = \Pi \left(2f^{-1}\partial_j + d\overline{z}_j\right), \qquad e_j = \Pi \left(\overline{\partial}_j + \tfrac{1}{2}f dz_j\right).
$$
\end{proposition}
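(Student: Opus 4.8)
The plan is to obtain Proposition~\ref{prop:N2CS} as a direct specialization of Theorem~\ref{teo:mainTCA} to the case of trivial structure group $K = \{1\}$. First I would observe that when $K = \{1\}$ one has $\ad P = 0$, so the bracket~\eqref{eq:bracketE0} and pairing~\eqref{eq:pairingE0} reduce to those of the exact Courant algebroid $E = E_{-d^c\omega}$ on $T \oplus T^*$, and the twisted Hull--Strominger system~\eqref{eq:tHS} becomes the twisted Calabi--Yau equations~\eqref{eq:tCY} (all curvature terms $F_A$ disappearing). By Lemma~\ref{lem:SU(2)local}, the pair $(\Psi,\omega)$ built from a positive harmonic $f$ solves~\eqref{eq:tCY}; in particular $dd^c\omega = 0$ is exactly the harmonicity $\Delta_{\omega_0} f = 0$, while $d\Psi - \theta_\omega \wedge \Psi = 0$ and $d\theta_\omega = 0$ hold with $\theta_\omega = d\log f$. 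Thus all hypotheses of Theorem~\ref{teo:mainTCA} are met, with complex dimension $n = 2$, and it yields an embedding of the $N=2$ superconformal vertex algebra of central charge $c = 3n = 6$ into the global sections of $\Omega^{\ch}_{E\otimes\C}$.

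The second step is to make the frames~\eqref{eq:localisoframe} explicit for the conformally flat metric $\omega = f\omega_0$. Since $\omega = \frac{i}{2}(f\,dz_1\wedge d\overline{z}_1 + f\,dz_2\wedge d\overline{z}_2)$, the computation of $g^{-1}$ in Example~\ref{ex:sigmanonzero} with $u = v = f$ gives $g^{-1}d\overline{z}_j = 2f^{-1}\partial_j$, and dually $g\,\overline{\partial}_j = \tfrac{1}{2}f\,dz_j$. Substituting into~\eqref{eq:localisoframe} and applying the parity reversal~\eqref{eq:framesodd} produces
\[
e^j = \Pi\left(2f^{-1}\partial_j + d\overline{z}_j\right), \qquad e_j = \Pi\left(\overline{\partial}_j + \tfrac{1}{2}f\,dz_j\right),
\]
as claimed. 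I would then verify directly, using the reduced pairing $\langle X+\xi, Y+\eta\rangle = \tfrac{1}{2}(\xi(Y)+\eta(X))$ obtained by polarizing~\eqref{eq:pairingE0}, that $\langle \epsilon_j, \overline{\epsilon}_k\rangle = \delta_{jk}$ and $\langle \epsilon_j,\epsilon_k\rangle = \langle \overline{\epsilon}_j,\overline{\epsilon}_k\rangle = 0$, so that $\{e^j, e_j\}$ is indeed the isotropic frame underlying Theorem~\ref{teo:mainTCA}.

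Finally, I would record that the standard holomorphic coordinates $(z_1, z_2)$ on $X$ already constitute an atlas of the type in Lemma~\ref{lem:holconstdetatlas}: indeed $\Omega = e^{-\log f}\Psi = \tfrac{1}{2}dz_1\wedge dz_2$ is a global holomorphic volume form, the constant-determinant condition being trivially satisfied. Hence the local generators glue with no correction and the expressions~\eqref{eq:JHglocalmain}, evaluated at $\theta_\omega = d\log f$, are the desired global generators $J$ and $H$, and the proposition follows. There is no substantial obstacle here beyond careful bookkeeping: the only points requiring attention are confirming that the trivial-group reduction genuinely lands in the exact Courant algebroid $E_{-d^c\omega}$, that the normalization conventions fixing $g^{-1}d\overline{z}_j$ and $g\,\overline{\partial}_j$ are compatible with the pairing~\eqref{eq:pairingE0}, and that the constant factor in $\Psi_0 = \tfrac{1}{2}dz_1\wedge dz_2$ does not disturb the atlas (it does not, being constant).
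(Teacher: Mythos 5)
Your proposal is correct and follows essentially the same route as the paper: the embedding is obtained by combining Lemma~\ref{lem:SU(2)local} with Theorem~\ref{teo:mainTCA} in the trivial-group case $K=\{1\}$, and the frame formulae follow because the standard coordinates on $\C^2$ belong to the canonical atlas of Lemma~\ref{lem:holconstdetatlas}, since $\theta_\omega = d\log f$ is globally exact. Your extra verifications (the explicit computation $g^{-1}d\overline{z}_j = 2f^{-1}\partial_j$, $g\,\overline{\partial}_j = \tfrac{1}{2}f\,dz_j$, the isotropy checks, and the harmless constant in $\Psi_0$) are all accurate bookkeeping that the paper leaves implicit.
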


\begin{proof}
The existence of the embedding follows from Lemma \ref{lem:SU(2)local} and Theorem \ref{teo:mainTCA}. The formula for the frames follows by construction, observing that the canonical coordinates in $\C^2$ belong to the canonical atlas associated to $(\Psi,\omega)$, given that $\theta_\omega$ is globally exact.
\end{proof}

We next analyze an interesting particular case of the previous result. Fix points $p_1, \ldots, p_k \in \CC^2$ and positive integers $n_1,\ldots,n_k$. Consider a distributional solution of the equation
$$
\Delta_{\omega_0} f = 4\pi^2 \sum_{j=1}^k n_j \delta_{p_j},
$$
on $\C^2$, where $\delta_{p_j}$ is the Dirac delta distribution located at $p_j$. Explicitly, this is given by
$$
f = C_0 + \sum_{j=1}^k \frac{n_j}{|z-p_j|^2},
$$
for some undetermined constant $C_0\in \RR$. Consider the domain
$$
X = \C^2 \backslash \{p_1,\ldots,p_k\}.
$$
Restricting $f$ to $X$, we obtain a positive harmonic function, for every $C_0 \geqslant 0$, and hence Proposition \ref{prop:N2CS} applies. The first generator of supersymmetry for the corresponding $N=2$ structure can be therefore written explicitly as
\begin{align}
\nonumber
J & := \frac{i}{2}:\Pi \left(2f^{-1}\partial_1 + d\overline{z}_1\right)\Pi\left(\overline{\partial}_1 + \tfrac{1}{2}f dz_1\right):
\\\label{eq:J5brane}
& + \frac{i}{2}:\Pi \left(2f^{-1}\partial_2 + d\overline{z}_2\right)\Pi\left(\overline{\partial}_2 + \tfrac{1}{2}f dz_2\right): + Si\Pi d \log f
\\\nonumber
& = \frac{i}{2}\sum_{j=1}^2 \left(2:f^{-1}:\Pi\partial_j\Pi\overline{\partial}_j:: + \frac{1}{2}:f:\Pi d\overline{z}_j\Pi dz_j:: + :\Pi\partial_j\Pi dz_j: + :\Pi d\overline{z}_j\Pi\overline{\partial}_j:\right),
\end{align}
where the last equality follows applying \eqref{eq:affa}, \eqref{eq:abffab}, \eqref{eq:abfabf}, \eqref{eq:afbafb} and \eqref{eq:fgabagab}.

The previous construction can be coupled to Hermitian-Yang--Mills connections, showing an interesting link to physics. For instance, given that the Bismut connection $\nabla^B$ of the $\SU(2)$-structure $(\Psi,\omega)$ associated to $f$ has vanishing Bismut Ricci form, it follows that
$$
\nabla^- = \nabla^g + \frac{1}{2}g^{-1}d^c\omega
$$
is an anti-self-dual orthogonal connection on $(T,g)$ (see, e.g., \cite[Remark 9.53]{GFStreets}). Taking the \emph{standard embedding ansatz} $A = \nabla^- \oplus \nabla^-$ on the $\operatorname{Spin}(4) \times \operatorname{Spin}(4)$-bundle of $V_0 \oplus V_1$, with  $V_j = (T,g)$ and the pairing
$$
\langle , \rangle = \tr_{V_0} - \tr_{V_1},
$$
we obtain a solution $(\omega,A)$ of the Hull--Strominger system \eqref{eq:HS} with $\Omega = \tfrac{1}{2}dz_1 \wedge dz_2$ for which Theorem \ref{teo:mainTCA} applies, giving the same supersymmetry generator $J$ as in \eqref{eq:J5brane}. These geometries corresponds to the `symmetric' \emph{heterotic string solitons} constructed by Callan, Harvey and Strominger \cite{CHS}. In the physical interpretation of these solutions, the Dirac delta distributions correspond to $k$ fivebrane sources on $\mathbb{R}^6 \times X$ with quantized charges $n_j \in \mathbb{N}$ and world-volume $\R^6 \times \{p_j\}$. It is interesting to observe that Theorem \ref{teo:mainTCA} matches the value of the central charge $c= 6$ for the corresponding sigma model, predicted by string theory. We expect that our main theorem applied to this class of examples provides a rigorous construction of the chiral part of the superconformal field theory studied in \cite[Section 7]{CHS}. According to physics, this has an enhanced $(4,4)$-supersymmetry, which should recover, in particular, our $(0,2)$-model. A rigorous construction should follow along the lines of \cite[Section 5.1]{AAGa} (see Remark \ref{rem:Hopfcompare}).

\subsection{Compact complex surfaces}\label{sec:surfaces}

In this section we analyse in further detail the general result obtained by direct application of Theorem \ref{teo:mainTCA} to solutions of the twisted Hull--Strominger system \eqref{eq:tHS} on compact complex surfaces.

\begin{theorem}\label{th:CCSemb}
Let $(M,J)$ be a smooth compact complex surface endowed with a principal bundle $P$. Let $(\Psi,\omega,A)$ be a solution of the twisted Hull--Strominger system \eqref{eq:tHS} on $(M,P)$, and consider the associated string algebroid $E = E_{P,-d^c\omega,A}$ and the chiral de Rham complex $\Omega^{\ch}_{E \otimes \C}$. Then there exists an embedding of the $N=2$ superconformal vertex algebra of central charge $c = 6$ into the space of global sections of the chiral de Rham complex $\Omega^{\ch}_{E\otimes\C}$. The generators of this embedding are given by \eqref{eq:JHglocalmain} for the frames defined in \eqref{eq:localisoframe} (see also \eqref{eq:framesodd}) induced by the atlas in Lemma \ref{lem:holconstdetatlas}.
\end{theorem}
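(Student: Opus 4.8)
The plan is to deduce Theorem \ref{th:CCSemb} directly from Theorem \ref{teo:mainTCA} by specializing to complex dimension $n=2$. A compact complex surface $(M,J)$ carrying a solution $(\Psi,\omega,A)$ of the twisted Hull--Strominger system \eqref{eq:tHS} is precisely a $2n$-dimensional manifold with $n=2$ together with a solution of \eqref{eq:tHS}, so the conclusion of Theorem \ref{teo:mainTCA} applies verbatim once its standing hypotheses are confirmed, and the central charge reads $c=3n=6$.

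First I would record that the ambient geometry fits the framework of Section \ref{sec:mainglobal}. The $\SU(2)$-structure supplies a nowhere-vanishing section $\Psi$ of the canonical bundle $K_M$, so $K_M$ is topologically trivial and $c_1(M)=0$; since $w_2(TM)\equiv c_1(TM)\pmod 2$ for an almost complex manifold, we obtain $w_2(M)=0$ and $M$ is spin, as required. The hypothesis that $(\Psi,\omega,A)$ solves \eqref{eq:tHS} then packages all the structural inputs of Theorem \ref{teo:mainTCA}: the equations $d\Psi-\theta_\omega\wedge\Psi=0$ and $d\theta_\omega=0$ produce, through Lemma \ref{lem:holconstdetatlas}, the adapted holomorphic atlas and the local isotropic frames \eqref{eq:localisoframe}; integrability together with $F_A^{0,2}=0$ gives, via Lemma \ref{lem:Ftermexp}, the F-term condition \eqref{eq:Fterm} for the decomposition $E\otimes\C=\ell\oplus\overline{\ell}\oplus(V_-\otimes\C)$; and Proposition \ref{prop:DtermTHS} furnishes the D-term equation \eqref{eq:Dterm} with $\varepsilon=-\theta_\omega$, which additionally satisfies $[\varepsilon,\cdot]=0$ and $\langle\varepsilon,\varepsilon\rangle=0$.

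With these verifications in place, I would simply invoke Theorem \ref{teo:mainTCA}: it constructs the global sections $J$ and $H$ of $\Omega^{\ch}_{E\otimes\C}$ given by \eqref{eq:JHglocalmain} in the frames \eqref{eq:localisoframe}, and the resulting $N=2$ embedding, now of central charge $c=3\cdot 2=6$. I do not expect any genuine obstacle specific to surfaces: the substantive analytic input --- the vanishing $\rho_B=0$ from Lemma \ref{lem:holonomia}, the consequent vanishing of the torsion bivector $\sigma_\omega$ by Proposition \ref{prop:rho20sigma} and hence of the anomaly section $F$ by Lemma \ref{lem:dcomega}, together with the $\Lambda$-bracket computations behind the $[H_\Lambda J]$ relation --- is entirely internal to the proof of Theorem \ref{teo:mainTCA} and carries over unchanged. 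The only points requiring care are the bookkeeping of the standing spin assumption and the re-indexing $n=2$, which is why the surface case is best viewed as a clean corollary rather than a separate argument.
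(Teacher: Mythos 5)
Your proposal is correct and coincides with the paper's own treatment: Theorem \ref{th:CCSemb} is presented there precisely as a direct application of Theorem \ref{teo:mainTCA} in the case $n=2$, with central charge $c=3n=6$, all the substantive work (F-term via Lemma \ref{lem:Ftermexp}, D-term via Proposition \ref{prop:DtermTHS}, vanishing of $\sigma_\omega$ and of $F$) being internal to the proof of Theorem \ref{teo:mainTCA}. Your extra check that the $\SU(2)$-structure trivializes $K_M$, so $c_1(M)=0$ and hence $w_2(M)=0$, making $M$ spin, is a correct bookkeeping point that the paper leaves implicit.
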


The twisted Hull--Strominger system in complex dimension two is rather rigid. For instance, arguing as in the proof of \cite[Proposition 2.16]{grst}, for any solution $(\Psi,\omega,A)$ there exists a smooth function $\phi$ such that 
$$
\Psi' = e^{\phi}\Psi, \qquad \omega' = e^{\phi}\omega
$$
is a solution of the twisted Calabi--Yau equation \eqref{eq:tCY}. Here, $\omega'$ is a Gauduchon metric in the conformal class of $\omega$, which exists by Gauduchon's Theorem~\cite{Gauduchon.1984} (the rest of equations easily follow). Applying now the classification result in \cite[Proposition 2.10]{grst}, it follows that either $\omega'$ is K\"ahler (and Ricci flat) or $(M,J)$ is a quaternionic Hopf surface and $\omega'$ is a constant rescaling of the Boothby metric. In the first case, $\omega$ is conformally K\"ahler, while in the second case the metric $\omega'$ is locally conformally flat, since its pull-back to the universal cover $\widetilde M = \C^2 \backslash \{0\}$ is 
$$
\tilde\omega' = \frac{ai\left(dz_1\wedge d\overline{z}_1+dz_2\wedge d\overline{z}_2\right)}{|z|^2}
$$ 
for some positive real constant $a > 0$. 

\begin{remark}\label{rem:Hopfcompare}
In the case that $(M,J)$ is a quaternionic Hopf surface, there is a diffeomorphism $M \cong \SU(2) \times \U(1)$ and our result in Theorem \ref{th:CCSemb} provides a vast generalization of the $N=2$ superconformal embeddings constructed in \cite{AAGa} from homogeneous (that is, left-invariant) solutions of the Killing spinor equations. In fact, for a homogeneous solution of the twisted Calabi--Yau equations on $M$, there is a compatible hyperholomorphic structure and \cite[Proposition 5.2]{AAGa} applies, giving an embedding of the $N=4$ superconformal vertex algebra with central charge $c = 6$ into  $\Omega^{\ch}_{E\otimes\C}$. In this setup, we expect that the embedding in Theorem \ref{th:CCSemb} recovers one of the $N=2$ subalgebras in this family.
\end{remark}

By the previous discussion, any compact solution of the twisted Hull--Strominger system in complex dimension two is, in particular, locally conformally K\"ahler. For completeness, in the next result we show that any locally conformally K\"ahler manifold has vanishing torsion bivector field or, equivalently, $\rho_B^{2,0} = 0$. This illustrates the fact that this condition is strictly weaker than the existence of solutions to the (twisted) Hull-Strominger system.

\begin{proposition}\label{prop:lcK}
Let $(M,J)$ be a complex manifold of arbitrary dimension endowed with a locally conformally K\"ahler Hermitian form $\omega$. Then the torsion bivector field of $\omega$, defined in Lemma \ref{lem:sigmaomega}, identically vanishes: $\sigma_\omega = 0$. 
\end{proposition}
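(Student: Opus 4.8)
The plan is to reduce the vanishing of $\sigma_\omega$ to the vanishing of the $(2,0)$-part of the Bismut Ricci form, and then to exploit the special shape of the torsion of a locally conformally K\"ahler metric. By Proposition \ref{prop:rho20sigma} one has $\sigma_\omega = -(g^{-1}\otimes g^{-1})\rho_B^{2,0}$, so it suffices to prove that $\rho_B^{2,0} = 0$. This is a pointwise statement, so I would work in local holomorphic coordinates and use the formula of Proposition \ref{prop:rho20}, namely $(\rho_B^{2,0})_{jk} = -i\,g^{p\bar l}\nabla^C_p(\partial\omega)_{jk\bar l}$.

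First I would encode the locally conformally K\"ahler condition. Writing the closed Lee form as $\alpha$, with $d\omega = \alpha\wedge\omega$, taking the $(2,1)$-part yields $\partial\omega = \alpha^{1,0}\wedge\omega$. In components this reads $(\partial\omega)_{jk\bar l} = i(\alpha_j g_{k\bar l} - \alpha_k g_{j\bar l})$, where the $\alpha_j$ are the components of $\alpha^{1,0}$. The crucial simplification is that the Chern connection satisfies $\nabla^C\omega = 0$, being compatible with both $g$ and $J$, so that $\nabla^C_p(\partial\omega) = (\nabla^C_p\alpha^{1,0})\wedge\omega$; contracting with $g^{p\bar l}$ then collapses the metric factors and gives $g^{p\bar l}\nabla^C_p(\partial\omega)_{jk\bar l} = i(\nabla^C_k\alpha_j - \nabla^C_j\alpha_k)$, whence $(\rho_B^{2,0})_{jk} = \nabla^C_k\alpha_j - \nabla^C_j\alpha_k$.

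The remaining step is to show that this antisymmetrised Chern--Hessian vanishes. Expanding with the Christoffel symbols $\Gamma^s_{ij} = g^{s\bar l}\partial_i g_{j\bar l}$, I would split $\nabla^C_k\alpha_j - \nabla^C_j\alpha_k = (\partial_k\alpha_j - \partial_j\alpha_k) - (\Gamma^s_{kj} - \Gamma^s_{jk})\alpha_s$. The first bracket is the $(2,0)$-part of $d\alpha$, which vanishes since $\alpha$ is closed. For the second, the torsion of the Chern connection is $\Gamma^s_{kj} - \Gamma^s_{jk} = -g^{s\bar l}T_{jk\bar l}$ with $T = -i\partial\omega$; here the conformal shape gives $T_{jk\bar l} = \alpha_j g_{k\bar l} - \alpha_k g_{j\bar l}$, so that $(\Gamma^s_{kj} - \Gamma^s_{jk})\alpha_s = -(\alpha_j\delta^s_k - \alpha_k\delta^s_j)\alpha_s = -(\alpha_j\alpha_k - \alpha_k\alpha_j) = 0$. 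This is the heart of the argument: the torsion of a locally conformally K\"ahler metric is ``pure trace'' along $\alpha$, and contracting it back against $\alpha$ antisymmetrises $\alpha_j\alpha_k$ to zero.

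I expect the main obstacle to be organisational rather than conceptual: one must keep careful track of the index conventions and the factors of $i$ relating $\partial\omega$, the torsion $T$, and the Chern torsion, so that the two contributions cancel exactly, and one must note that the identity $\partial\omega = \alpha^{1,0}\wedge\omega$ holds on a cover (with $\alpha = df$ on simply connected patches), so that the globally defined tensor $\sigma_\omega$ of Lemma \ref{lem:sigmaomega} is seen to vanish everywhere. An alternative, more computational route would bypass the Bismut Ricci form entirely and insert $\omega = e^f\omega_K$, with $\omega_K$ K\"ahler, directly into the local expression \eqref{eq:sigmaomegaexplocal}; there the K\"ahler symmetry $\partial_m g_{a\bar l} = \partial_a g_{m\bar l}$ produces the same cancellation, but the bookkeeping is heavier, so I would favour the route through Propositions \ref{prop:rho20sigma} and \ref{prop:rho20}.
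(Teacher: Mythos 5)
Your proof is correct, and it takes a genuinely different route from the paper's. The paper argues entirely on the bivector side: it writes $\omega = e^u\omega_0$ locally with $\omega_0$ K\"ahler, invokes Ruan's canonical coordinates \cite[Proposition 2.2]{Ruan} to arrange $(g_0)_{m\overline{j}}(p)=\delta_{mj}$ and $\partial\big((g_0)_{m\overline{j}}\big)(p)=0$ at an arbitrary point $p$, and then evaluates the explicit local expression of Lemma \ref{lem:sigmaomega} directly, reducing $(\sigma_\omega)_{\overline{i}\overline{j}}(p)$ to the commutator of second partials of $f=e^{-u}$ --- essentially the ``alternative, more computational route'' you sketch and set aside at the end. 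You instead pass through Proposition \ref{prop:rho20sigma} to reduce everything to $\rho_B^{2,0}=0$, and kill that tensorially: since $\nabla^C\omega=0$, Jordan's formula (Proposition \ref{prop:rho20}) collapses to the antisymmetrised Chern derivative $\nabla^C_k\alpha_j-\nabla^C_j\alpha_k$ of the Lee form, whose coordinate part vanishes because $\alpha$ is closed ($\partial\alpha^{1,0}=0$, as $\alpha$ is locally exact), and whose torsion part vanishes because the lcK torsion $T_{jk\overline{l}}=\alpha_jg_{k\overline{l}}-\alpha_kg_{j\overline{l}}$ is pure trace along $\alpha$, so contracting it back against $\alpha_s$ antisymmetrises $\alpha_j\alpha_k$ to zero. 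I have checked your factor bookkeeping ($T=-i\partial\omega$, $(\partial\omega)_{jk\overline{l}}=i(\alpha_jg_{k\overline{l}}-\alpha_kg_{j\overline{l}})$, $g^{p\overline{l}}g_{k\overline{l}}=\delta^p_k$, $\Gamma^s_{kj}-\Gamma^s_{jk}=-g^{s\overline{l}}T_{jk\overline{l}}$) against the conventions of Propositions \ref{prop:rho20} and \ref{prop:rho20sigma}, and it is consistent. What your route buys: it is coordinate-free at the tensor level (no normal coordinates needed), it proves explicitly the statement the paper records only in prose before the proposition (that lcK forces $\rho_B^{2,0}=0$), and it isolates the minimal input --- namely $\partial\omega=\alpha^{1,0}\wedge\omega$ with $\partial\alpha^{1,0}=0$, which is formally weaker than the full lcK condition. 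What the paper's route buys: self-containedness (only Lemma \ref{lem:sigmaomega}, the local formula \eqref{eq:sigmaomegaexplocal} and Ruan's coordinates, without routing through the Bismut Ricci form), and uniformity with the hands-on computations of $\sigma_\omega$ in the other examples of Section \ref{sec:geoexam}.
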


\begin{proof}
Let $p \in M$. By hypothesis, there exists an open set $U \subset M$ containing $p$ such that
$$
\omega = e^u \omega_0,
$$
for a locally defined smooth function $u$ and a local K\"ahler metric $\omega_0$. Applying \cite[Proposition 2.2]{Ruan}, after restricting to a possibly smaller neighbourhood, we can find holomorphic coordinates centered at $p$ such that
$$
\omega_0 = \frac{i}{2}\sum_{j=1}^n dz_j \wedge d \overline{z}_j + \frac{i}{2}\sum_{1 \leq k < l \leq n} z_k \overline{z}_l R_{i\overline{j}k\overline{l}} dz_i \wedge d \overline{z}_j + O(|z|^3),
$$
where $R_{i\overline{j}k\overline{l}}$ are the coefficients of the Chern curvature of $\omega_0$. From this, we obtain the following identities for the components of the K\"ahler metric $g_0$: 
$$
(g_0)_{m\overline{j}}(p) = \delta_{mj}, \qquad \partial((g_0)_{m\overline{j}})(p) = 0.
$$
Observe also that
\begin{align*}
i\partial \omega(g^{-1}d\overline{z}_i,g^{-1}d\overline{z}_j,\overline{\partial}_k) & = - (i\partial f \wedge \omega_0)(g^{-1}_0 d\overline{z}_i,g^{-1}_0 d\overline{z}_j,\overline{\partial}_k)\\
& = g_0^{m\overline{i}}\delta_{jk}\partial_m f  - g_0^{m\overline{j}}\delta_{ik}\partial_m f,
\end{align*}
where $f = e^{-u}$. Finally, using the Einstein summation convention we can directly calculate
\begin{align*}
(\sigma_\omega)_{\overline{i}\overline{j}}(p) & = f g^{s\overline{k}}_0\partial_s\left(g_0^{m\overline{i}}\delta_{jk}\partial_m f  - g_0^{m\overline{j}}\delta_{ik}\partial_m f \right)\\
& = f(\delta_{sk}\delta_{mi}\delta_{jk}\partial_s\partial_m f - \delta_{sk}\delta_{mj}\delta_{ik}\partial_s\partial_m f)\\
& = f(\partial_j\partial_i f - \partial_i\partial_j f)(p)\\
& = 0.
\qedhere
\end{align*}
\end{proof}

\subsection{Locally homogeneous solutions}\label{subsubsec:Iwasawa}

As mentioned in Section \ref{sec:intro}, the method developed in \cite{AAGa} in the homogeneous setup provides an embedding of the $N=2$ superconformal vertex algebra for all the known homogeneous solutions of the Hull--Strominger system with $\nabla$ Hermitian-Yang--Mills \cite{FeiYau,FIUV,GFGM,OU,OUVi}. A drawback of that method is that the embedding can only be constructed, a priori, for the chiral de Rham complex on the universal cover of the manifold, which is non-compact. By direct application of Theorem \ref{teo:mainTCA}, we obtain a refinement of this result for the compact, locally homogeneous, solutions constructed in the aforementioned references. To illustrate this general result, in this section we apply Theorem \ref{teo:mainTCA} to a natural family of homogeneous solutions of the Hull--Strominger system on the Iwasawa manifold, constructed in \cite{GFGM}. We should emphasize that our construction does not rely on any homogeneous assumption, unlike the previous construction undertaken in \cite[Section 5.2]{AAGa} for the universal cover.

Consider the \textit{Iwasawa manifold} $M =\Gamma \backslash H_\CC$, given by the compact quotient of the complex Heisenberg Lie group
$$
H_\CC =\left\{ \left(\begin{array}{c c c}
1 & z_2 & z_3\\
0 & 1 & z_1\\
0 & 0 & 1\\
\end{array}\right) \hspace{1mm} |\hspace{1mm} z_i \in \mathbb{C}\right\}
$$
by the lattice $\Gamma \subset \CC$ of Gaussian matrices (with entries in $\mathbb{Z}[i]$). The manifold $M$ has a natural complex structure $J$, induced by the bi-invariant complex structure on $H_\CC$. Let 
\begin{equation}\label{eq:Iwasawarel}
\omega_1=dz_1 \hspace{2mm} , \hspace{2mm} \omega_2=dz_2 \hspace{2mm} , \hspace{2mm} \omega_3=dz_3-z_2 dz_1,
\end{equation}
be left-invariant $1$-forms on the Lie group $H_\CC$. These forms are lattice invariant and descend to $(M,J)$. Moreover, they define a global basis of $T^*_{1,0}$ and satisfy the structure equations
$$
d\omega_1=d\omega_2=0 \hspace{3mm}, \hspace{3mm} d\omega_3=\omega_{12}.
$$
For any choice of
$$
(m,n,p)\in \mathbb{Z}^{3}\backslash \{0\},
$$
we consider the following purely imaginary $(1,1)$-form on $(M,J)$:
\begin{equation}\label{lbtorus}
F=\pi (m(\omega_{1\overline{1}}-\omega_{2\overline{2}})+n(\omega_{1\overline{2}}+\omega_{2\overline{1}})+ip(\omega_{1\overline{2}}-\omega_{2\overline{1}})).
\end{equation}
Note that $\tfrac{i}{2\pi} F$ has integral periods and hence, by general theory, this is the curvature form of the Chern connection of a holomorphic Hermitian line bundle $(\mathcal{L},h)\rightarrow (M,J)$.

Fix $(m_0,n_0,p_0), (m_1,n_1,p_1) \in \mathbb{Z}^3 \backslash \{0\}$ and consider the associated holomorphic Hermitian bundles $(\mathcal{L}_j,h_j) \to (M,J)$ with Chern connections $A_{h_j}$, for $j = 0,1$. Let $P$ denote the $(\U(1)\times \U(1))$-bundle of unitary frames of $(\mathcal{L}_0,h_1) \oplus (\mathcal{L}_1,h_1)$. We choose the holomorphic volume form $\Omega=\omega_{123}$ and $0\neq\alpha \in \RR$, and consider the Hull--Strominger system on $(M,P)$
\begin{equation}\label{eq:HSIwasawa}
\begin{split}
F_{A_0}^{0,2} = 0, \qquad F_{A_0} \wedge \omega^2 & = 0,\\
F_{A_1}^{0,2} = 0, \qquad F_{A_1} \wedge \omega^2 & = 0,\\
d(\|\Omega\|_\omega \omega^2) & = 0,\\
dd^c \omega - \alpha F_{A_0} \wedge F_{A_0} +  \alpha F_{A_1} \wedge F_{A_1} & = 0.
\end{split}
\end{equation}
where we use the product unitary connections $A = A_0 \oplus A_1$ (cf. \eqref{eq:HS}). Consider the hermitian form defined by 
\begin{align*}
\omega=\tfrac{i}{2}(\omega_{1\overline{1}}+\omega_{2\overline{2}}+\omega_{3\overline{3}}). 
\end{align*}
Note that $\omega$ is a balanced Hermitian metric.

\begin{proposition}[{\cite[Proposition 4.1]{GFGM}}]\label{prop:HSsol}
With the notation above, $(\omega,A_{h_0},A_{h_1})$ is a solution of the Hull--Strominger system \eqref{eq:HSIwasawa} if and only if
\begin{equation*}\label{eq:alphasol}
\alpha=\frac{1}{2\pi^2(m_0^2+n_0^2+p_0^2-m_1^2-n_1^2-p_1^2)}. 
\end{equation*}
\end{proposition}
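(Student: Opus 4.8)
The plan is to verify the four groups of equations in the system~\eqref{eq:HSIwasawa} one at a time, observing that the first three hold for \emph{every} value of $\alpha$, so that the whole content of the ``if and only if'' is concentrated in the anomaly equation $dd^c\omega-\alpha F_{A_0}\wedge F_{A_0}+\alpha F_{A_1}\wedge F_{A_1}=0$, which I will reduce to a single scalar identity. Throughout I use that $\{\omega_1,\omega_2,\omega_3\}$ is a global unitary coframe for $g=\omega(\cdot,J\cdot)$, with structure equations $d\omega_1=d\omega_2=0$, $d\omega_3=\omega_{12}$ from~\eqref{eq:Iwasawarel}, and that each curvature $F_{A_j}$ is the $(1,1)$-form~\eqref{lbtorus} associated with $(m_j,n_j,p_j)$.

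The integrability and gauge equations are automatic. Since~\eqref{lbtorus} is manifestly of type $(1,1)$, one has $F_{A_j}^{0,2}=0$. For the Hermitian--Yang--Mills equation, note that in~\eqref{lbtorus} the coefficients of $\omega_{1\overline{1}}$ and $\omega_{2\overline{2}}$ are $\pi m$ and $-\pi m$, while the coefficient of $\omega_{3\overline{3}}$ is zero; hence $\Lambda_\omega F_{A_j}=0$, so $F_{A_j}$ is primitive and $F_{A_j}\wedge\omega^2=0$ (equivalently, a direct wedge computation shows that only the terms pairing $\omega_{1\overline{1}}$ and $\omega_{2\overline{2}}$ with $\omega_{3\overline{3}}$ survive, and they cancel). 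For the conformally balanced equation, $\Omega=\omega_{123}$ has constant norm $\|\Omega\|_\omega$ because the $\omega_i$ form a unitary coframe, so the equation reduces to $d\omega^2=0$; writing $\omega^2$ as a constant combination of $\omega_{1\overline{1}2\overline{2}}$, $\omega_{1\overline{1}3\overline{3}}$ and $\omega_{2\overline{2}3\overline{3}}$ and differentiating with the structure equations, every term is either built from closed forms or acquires a repeated holomorphic (or anti-holomorphic) index once $d\omega_3=\omega_{12}$ is applied, so $d\omega^2=0$. None of these three steps constrains $\alpha$.

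It remains to analyse the anomaly equation. First I would compute $dd^c\omega$: using $d\omega_3=\omega_{12}$ one gets $\bar\partial\omega=-\tfrac{i}{2}\omega_{3\overline{1}\overline{2}}$ and then $\partial\bar\partial\omega=-\tfrac{i}{2}\omega_{12\overline{1}\overline{2}}$, so that $dd^c\omega=2i\,\partial\bar\partial\omega=\omega_{12\overline{1}\overline{2}}$. Next I would compute $F_{A_j}\wedge F_{A_j}$: since~\eqref{lbtorus} involves only $\omega_1,\omega_2$ and their conjugates, the square is a multiple of $\omega_{12\overline{1}\overline{2}}$, and collecting the surviving cross-terms (the product of the $\omega_{1\overline{1}}$ and $\omega_{2\overline{2}}$ parts, and that of the $\omega_{1\overline{2}}$ and $\omega_{2\overline{1}}$ parts, each weighted by its reordering sign) gives $F_{A_j}\wedge F_{A_j}=2\pi^2(m_j^2+n_j^2+p_j^2)\,\omega_{12\overline{1}\overline{2}}$. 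Both $dd^c\omega$ and $F_{A_j}\wedge F_{A_j}$ therefore lie in the one-dimensional space spanned by $\omega_{12\overline{1}\overline{2}}$, so the anomaly equation collapses to the scalar identity $1=2\pi^2\alpha\,(m_0^2+n_0^2+p_0^2-m_1^2-n_1^2-p_1^2)$, which is exactly the asserted value of $\alpha$; this proves both implications simultaneously.

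The only genuinely delicate point is the bookkeeping in this last step: keeping the convention-dependent constant in $dd^c\omega=2i\partial\bar\partial\omega$ consistent, and, above all, tracking the reordering sign $\omega_{1\overline{2}}\wedge\omega_{2\overline{1}}=-\omega_{1\overline{1}2\overline{2}}$, which is what makes the $n^2+p^2$ term enter $F_{A_j}\wedge F_{A_j}$ with the same sign as $m^2$. Once these signs are fixed the remaining computations are routine, and the key structural observation---that everything relevant lives in the single top-degree monomial $\omega_{12\overline{1}\overline{2}}$---is precisely what reduces the system to one scalar equation in $\alpha$.
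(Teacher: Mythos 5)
Your proposal is correct, and all the delicate constants and signs check out: $dd^c\omega = 2i\partial\dbar\omega = \omega_{12\overline{1}\overline{2}}$ (your intermediate step $d\omega = \tfrac{i}{2}(\omega_{12\overline{3}}-\omega_{3\overline{12}})$ agrees with the computation in Lemma \ref{lem:IwaF}), $F_{A_j}\wedge F_{A_j} = 2\pi^2(m_j^2+n_j^2+p_j^2)\,\omega_{12\overline{1}\overline{2}}$ with the reordering sign $\omega_{1\overline{2}}\wedge\omega_{2\overline{1}} = -\omega_{1\overline{1}2\overline{2}}$ handled correctly, and the remaining equations indeed hold for every $\alpha$ since $\Lambda_\omega F_{A_j}=0$, $\|\Omega\|_\omega$ is constant, and $d\omega^2=0$. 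The paper itself states this proposition without proof, citing \cite[Proposition 4.1]{GFGM}, so your direct verification by reduction to a single scalar identity on the one-dimensional span of $\omega_{12\overline{1}\overline{2}}$ is exactly the expected argument.
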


For the sake of completeness, we provide next a direct calculation of the torsion bivector field $\sigma_{\omega}$ of the hermitian metric $\omega$ in the family of solutions in Proposition \ref{prop:HSsol}. In particular, by Proposition \ref{prop:rho20sigma} this gives an independent proof of $\rho_B^{2,0} = 0$ for this metric.

\begin{lemma}\label{lem:IwaF}
The torsion bivector field  of $\omega=\tfrac{i}{2}(\omega_{1\overline{1}}+\omega_{2\overline{2}}+\omega_{3\overline{3}})$ identically vanishes.
\end{lemma}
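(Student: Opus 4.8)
The plan is to compute the components of $\sigma_\omega$ directly from formula \eqref{eq:sigmaomegaexp} in Lemma \ref{lem:sigmaomega}, exploiting that in the left-invariant coframe $\{\omega_1,\omega_2,\omega_3\}$ the metric $\omega=\tfrac{i}{2}(\omega_{1\overline{1}}+\omega_{2\overline{2}}+\omega_{3\overline{3}})$ is unitary, so that all the scalar quantities which get differentiated in \eqref{eq:sigmaomegaexp} turn out to be antiholomorphic and are therefore killed by the $(1,0)$-vector fields $g^{-1}d\overline{z}_k$. The first step is to read off $\partial\omega$ from the structure equations $d\omega_1=d\omega_2=0$ and $d\omega_3=\omega_{12}$. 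Since $\partial\omega_{1\overline{1}}=\partial\omega_{2\overline{2}}=0$ and $\partial\omega_{3\overline{3}}=(\partial\omega_3)\wedge\overline{\omega}_3=\omega_1\wedge\omega_2\wedge\overline{\omega}_3$, one obtains $i\partial\omega=-\tfrac12\,\omega_1\wedge\omega_2\wedge\overline{\omega}_3$.

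Next I would trivialize everything in the invariant frame. Let $\{Z_1,Z_2,Z_3\}$ be the $(1,0)$-frame dual to $\{\omega_i\}$; then $g(Z_i,\overline{Z}_j)=\delta_{ij}$ and $g^{-1}\omega_i=\overline{Z}_i$. Using the coordinate expressions \eqref{eq:Iwasawarel}, namely $\omega_1=dz_1$, $\omega_2=dz_2$, $\omega_3=dz_3-z_2\,dz_1$, one writes each $g^{-1}d\overline{z}_k$ as an explicit $(1,0)$-vector field (the fields $Z_k$, possibly shifted by an $\overline{z}_2 Z_1$ term), and one finds $\overline{\omega}_3(\overline{\partial}_k)=-\overline{z}_2,0,1$ for $k=1,2,3$. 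Plugging into \eqref{eq:sigmaomegaexp}, the evaluation $\partial\omega(g^{-1}d\overline{z}_p,g^{-1}d\overline{z}_q,\overline{\partial}_k)$ factors as a pairing of $\omega_1\wedge\omega_2$ against the two $(1,0)$-arguments, times $\overline{\omega}_3(\overline{\partial}_k)$; both factors are constants or antiholomorphic functions of $\overline{z}_2$. Since $g^{-1}d\overline{z}_k$ is a $(1,0)$-field, it annihilates antiholomorphic functions, so every summand in \eqref{eq:sigmaomegaexp} vanishes and hence $\sigma_\omega=0$.

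The conceptually cleanest route, which also isolates the one subtle point, is the following invariant reformulation. The transition between the holomorphic coordinate coframe $\{d\overline{z}_k\}$ and the left-invariant coframe $\{\overline{\omega}_k\}$ has antiholomorphic, hence $\partial$-closed, coefficients; combined with the tensoriality in the first slot and the $\partial$-closed pull-out established in the proof of Lemma \ref{lem:sigmaomega}, this lets one rewrite the defining expression \eqref{eq:sigmaomega} as $\sigma_\omega=\sum_{l=1}^3\bigl[Z_l,(g^{-1}\otimes g^{-1})(\iota_{\overline{Z}_l}i\partial\omega)\bigr]^{0,2}$. Only $l=3$ survives, since $\iota_{\overline{Z}_l}(\omega_1\wedge\omega_2\wedge\overline{\omega}_3)=\delta_{3l}\,\omega_1\wedge\omega_2$, giving $\sigma_\omega=-\tfrac12\,[Z_3,\overline{Z}_1\wedge\overline{Z}_2]^{0,2}$. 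This vanishes at once because $Z_3=\partial_{z_3}$ commutes with $\overline{Z}_1=\partial_{\overline{z}_1}+\overline{z}_2\partial_{\overline{z}_3}$ and $\overline{Z}_2=\partial_{\overline{z}_2}$.

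The only real difficulty is thus bookkeeping: \eqref{eq:sigmaomegaexp} is phrased in holomorphic coordinates, whereas the natural objects here live in the non-coordinate invariant coframe $\{\omega_i\}$ (note that $\omega_3$ is not exact, so it is genuinely not a coordinate coframe). Once the antiholomorphic nature of the change of frame, equivalently of the scalars being differentiated, is pinned down, the vanishing is immediate; either of the two routes above makes this precise. Finally, by Proposition \ref{prop:rho20sigma}, $\sigma_\omega=0$ yields $\rho_B^{2,0}=0$ for this metric, as announced.
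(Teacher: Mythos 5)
Your proposal is correct, and it contains two arguments of different character. Your first route is essentially the paper's own proof: the paper likewise reads off $\partial\omega=\tfrac{i}{2}\,\omega_{12\overline{3}}$ from the structure equations, writes out $g^{-1}d\overline{z}_1=2(\partial_1+z_2\partial_3)$, $g^{-1}d\overline{z}_2=2\partial_2$, $g^{-1}d\overline{z}_3=2\bigl(\overline{z}_2\partial_1+(1+|z_2|^2)\partial_3\bigr)$ (your frame expression $2(Z_3+\overline{z}_2Z_1)$ agrees with this and in fact corrects an apparent typo in the paper's third formula), and then evaluates the contractions $i\partial\omega\bigl(g^{-1}d\overline{z}_p,g^{-1}d\overline{z}_q,\overline{\partial}_k\bigr)$ one component at a time, finding in each case a constant or a polynomial in $\overline{z}_2$ whose $(1,0)$-derivatives vanish. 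What you add here is the structural observation that \emph{all} of these scalars are antiholomorphic because both factors $(\omega_1\wedge\omega_2)(\cdot,\cdot)$ and $\overline{\omega}_3(\overline{\partial}_k)$ are polynomials in $\overline{z}_2$; this disposes of every component of $\sigma_\omega$ at once, whereas the paper checks $(\sigma_\omega)_{\overline{12}}$, $(\sigma_\omega)_{\overline{13}}$, $(\sigma_\omega)_{\overline{23}}$ case by case. Your second route is genuinely different from the paper and is the more illuminating one: the frame-change from $\{d\overline{z}_k,\overline{\partial}_k\}$ to $\{\overline{\omega}_l,\overline{Z}_l\}$ is justified exactly as you say, by tensoriality of \eqref{eq:02Sch} in the first slot together with the pull-out property for $\partial$-closed coefficients established in the proof of Lemma \ref{lem:sigmaomega}, applied to the transition matrix whose entries $1,\pm\overline{z}_2$ are antiholomorphic; the resulting reduction $\sigma_\omega\propto\bigl[Z_3,\overline{Z}_1\wedge\overline{Z}_2\bigr]^{0,2}=0$ makes the vanishing transparent (it comes down to $Z_3=\partial_3$ commuting with the antiholomorphic frame) and would transfer verbatim to other invariant metrics on complex nilmanifolds, e.g.\ the Picard-type example treated in Section \ref{sec:Picard}, where the paper again computes component by component. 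Two cosmetic caveats: with the paper's normalization $\omega=\tfrac{i}{2}\sum_j\omega_{j\overline{j}}$ one has $g^{-1}\overline{\omega}_j=2Z_j$, equivalently $g(Z_i,\overline{Z}_j)=\tfrac12\delta_{ij}$ (compare Example \ref{ex:sigmanonzero}), so your identities $g^{-1}\omega_i=\overline{Z}_i$ and $\sigma_\omega=-\tfrac12[Z_3,\overline{Z}_1\wedge\overline{Z}_2]^{0,2}$ are off by harmless constant factors; and your values for the contractions differ from the paper's displayed ones by an overall sign, which is a matter of wedge-evaluation convention and is immaterial for the vanishing.
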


\begin{proof}
Applying \eqref{eq:Iwasawarel}, one can check that $d\omega = \frac{i}{2}\left(\omega_{12\overline{3}}-\omega_{3\overline{12}}\right)$, and hence
\begin{align*}
d^c\omega = -d\omega(J,J,J) = \frac12\left(\omega_{12\overline{3}}+\omega_{3\overline{12}}\right).
\end{align*}
Now, notice that
\begin{equation*}
\begin{split}
g^{-1}d\overline{z}_1 &= 2\left(\partial_1 + z_2 \partial_3\right),\\
g_u^{-1}d\overline{z}_2 &= 2\partial_2,\\
g_u^{-1}d\overline{z}_3 &= 2\left(\overline{z}_2 \partial_2+\left(1+|z_2|^2\right)\partial_3\right).
\end{split}
\end{equation*}
So we obtain 
\begin{equation*}
\begin{split}
i\partial\omega\left(g^{-1}d\overline{z}_1,g^{-1}d\overline{z}_2,\overline{\partial}_1\right) & = -2\overline{z}_2,\\
i\partial\omega\left(g^{-1}d\overline{z}_1,g^{-1}d\overline{z}_2,\overline{\partial}_2\right) & = 0,\\
i \partial \omega\left(g^{-1}d\overline{z}_1,g^{-1}d\overline{z}_2,\overline{\partial}_3\right) & = 2.
\end{split}
\end{equation*}
and therefore, by direct application of Lemma \ref{lem:dcomega},
\begin{equation*}
\left(\sigma_{\omega}\right)_{\overline{1}\overline{2}} = -2\left(g^{-1}d\overline{z}_1(\overline{z}_2)-g^{-1}d\overline{z}_3(2)\right)=0.
\end{equation*}
Similarly, we obtain that
$$
i\partial\omega\left(g^{-1}d\overline{z}_1,g^{-1}d\overline{z}_3,\overline{\partial}_j\right)= 0,
$$
and therefore $\left(\sigma_{\omega}\right)_{\overline{1}\overline{3}} = 0$. Finally, we have
\begin{equation*}
\begin{split}
i \partial \omega\left(g^{-1}d\overline{z}_2,g^{-1}d\overline{z}_3,\overline{\partial}_1\right) &= 2\overline{z}_2^2,\\
i \partial \omega\left(g^{-1}d\overline{z}_2,g^{-1}d\overline{z}_3,\overline{\partial}_2\right) &= 0,\\
i\partial \omega\left(g^{-1}d\overline{z}_2,g^{-1}d\overline{z}_3,\overline{\partial}_3\right) & = -2\overline{z}_2,
\end{split}
\end{equation*}
and therefore
\begin{equation*}
\left(\sigma_{\omega}\right)_{\overline{23}} = 2\left(g^{-1}d\overline{z}_1(\overline{z}_2^2)+g^{-1}d\overline{z}_3(\overline{z}_2)\right) = 0.
\qedhere
\end{equation*}
\end{proof}

We have arrived at the following result.

\begin{proposition}
\label{prop:Iwas2}
For $(m_0,n_0,p_0),(m_1,n_1,p_1) \in \Z^3 \backslash \{0\}$, consider the solution of the Hull--Strominger system $(\omega,A_{h_0},A_{h_1})$ in Proposition \ref{prop:HSsol}, the associated string algebroid $E$, and the chiral de Rham complex $\Omega^{\ch}_{E \otimes \C}$. Then there exists an embedding of the $N=2$ superconformal vertex algebra of central charge $c = 9$ into the space of global sections of the chiral de Rham complex $\Omega^{\ch}_{E\otimes\C}$. The generators of this embedding are given by \eqref{eq:JHglocalmain} for the frames (see also \eqref{eq:framesodd}) 
\begin{equation*}
\begin{split}
\epsilon_j & = g^{-1}d\overline{z}_j + d\overline{z}_j \in \ell,\\
\overline \epsilon_j & = \dbar_j + g \dbar_j  \in \overline \ell,
\end{split}
\end{equation*}
induced by the natural coordinates in the universal cover.
\end{proposition}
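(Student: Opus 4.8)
The plan is to deduce this statement as a direct application of Theorem~\ref{teo:mainTCA} with $n=3$, so that the central charge $c=3n=9$ comes out automatically. First I would record the relevant $\SU(3)$-structure: take $\omega=\tfrac{i}{2}(\omega_{1\overline 1}+\omega_{2\overline 2}+\omega_{3\overline 3})$ together with a constant rescaling $\Psi$ of the holomorphic volume form $\Omega=\omega_{123}$, normalized so that $\|\Psi\|_\omega=1$. Using the structure equations $d\omega_1=d\omega_2=0$ and $d\omega_3=\omega_{12}$, one checks that $d\Omega=0$; moreover $\omega$ is balanced, so its Lee form vanishes, $\theta_\omega=0$. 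Hence the two middle equations of the twisted Hull--Strominger system~\eqref{eq:tHS}, namely $d\Psi-\theta_\omega\wedge\Psi=0$ and $d\theta_\omega=0$, hold trivially.

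Next I would invoke Proposition~\ref{prop:HSsol}: for the displayed value of $\alpha$ the triple $(\omega,A_{h_0}\oplus A_{h_1})$ solves the Hull--Strominger system~\eqref{eq:HSIwasawa}, which supplies the gauge-theoretic equations $F_A^{0,2}=0$, $F_A\wedge\omega^2=0$ and the Bianchi-type equation $dd^c\omega+\langle F_A\wedge F_A\rangle=0$. Combined with the previous paragraph, this shows that $(\Psi,\omega,A)$ is a solution of the twisted Hull--Strominger system~\eqref{eq:tHS} on $(M,P)$, so all the hypotheses of Theorem~\ref{teo:mainTCA} are met; the theorem then produces an embedding of the $N=2$ superconformal vertex algebra of central charge $c=3\cdot 3=9$ into the global sections of $\Omega^{\ch}_{E\otimes\C}$, with generators given by~\eqref{eq:JHglocalmain}.

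It remains to identify the frames. Since $\theta_\omega=0$, any local potential $\phi_U$ from Lemma~\ref{lem:holconstdetatlas} is constant, so a holomorphic chart belongs to the canonical atlas precisely when $\Omega$ is a constant multiple of the coordinate volume form. In the left-invariant coordinates $(z_1,z_2,z_3)$ lifted to the universal cover $H_\C$ one computes $\Omega=\omega_1\wedge\omega_2\wedge\omega_3=dz_1\wedge dz_2\wedge dz_3$, so these coordinates lie in the atlas, and the general frames~\eqref{eq:localisoframe} specialize to the stated $\epsilon_j=g^{-1}d\overline{z}_j+d\overline{z}_j$ and $\overline\epsilon_j=\dbar_j+g\dbar_j$.

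In this argument essentially no hard work remains, since the heavy lifting is carried out once and for all in Theorem~\ref{teo:mainTCA}; the only point requiring care is the vanishing of the obstruction section $F$ appearing in~\eqref{eq:N=2aux}. That vanishing follows abstractly from $\rho_B=0$ via Lemma~\ref{lem:holonomia} and Proposition~\ref{prop:rho20sigma}, and Lemma~\ref{lem:IwaF} gives an independent, fully explicit confirmation that the torsion bivector field $\sigma_\omega$ is identically zero for this metric, which makes the whole reduction transparent.
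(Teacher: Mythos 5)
Your proposal is correct and takes essentially the same approach as the paper: the result is a direct application of Theorem~\ref{teo:mainTCA} to the solution from Proposition~\ref{prop:HSsol}, with the frame identification following exactly as you argue from $\Omega = dz_1\wedge dz_2\wedge dz_3$ in the given coordinates and $\theta_\omega = 0$. You merely make explicit the intermediate verifications (the $\SU(3)$-structure, membership of the charts in the canonical atlas, and the vanishing of $F$ via Lemma~\ref{lem:holonomia}, Proposition~\ref{prop:rho20sigma} and Lemma~\ref{lem:IwaF}) that the paper's two-line proof leaves implicit.
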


\begin{proof}
This is a direct consequence of Theorem \ref{teo:mainTCA} and Proposition \ref{prop:HSsol}. The formula for the frame follows because $\Omega = dz_1 \wedge dz_2 \wedge dz_3$ in the given coordinates, and $\theta_\omega = 0$.
\end{proof}

\subsection{Torus fibrations over Calabi--Yau surfaces}\label{sec:Picard}

In this section, we complete our study of the landscape of solutions of the Hull-Strominger with $\nabla$ Hermitian-Yang-Mills on compact non-K\"ahler threefolds. For this, we apply Theorem \ref{teo:mainTCA} to an interesting class of non-homogeneous solutions of the Hull--Strominger system \eqref{eq:HS} constructed by the third author in \cite{GF4}, inspired by the seminal work by Fu and Yau \cite{FuYau}. The underlying geometry is given by a $\mathbb{T}^2$-fibration over a K\"ahler $K3$ surface, considered originally in the physics literature \cite{DRS,GoPro}. Interestingly, these solutions are invariant under T-duality, and we speculate that they actually provide examples of $(0,2)$-mirror symmetry, following the methods in \cite{AAGa}.

Let $S$ be a $K3$ surface endowed with a K\"ahler Ricci-flat metric $\omega_S$ and a holomorphic volume form $\Omega_S$. We do not assume that $S$ is algebraic. Let $\omega_1$ and $\omega_2$ be anti-self-dual $(1,1)$-forms on $S$ such that 
$$
[\omega_i/2\pi] \in H^2(S,\ZZ).
$$
Let $M$ be the total space of the fibred product of the principal $U(1)$-bundles determined by $[\omega_1/2\pi]$ and $[\omega_2/2\pi]$. Then $M$ is a principal $\mathbb{T}^2$-bundle over $S$,
\begin{equation*}\label{eq:T2fibration}
p \colon M \longrightarrow S,
\end{equation*}
and we can choose a connection $\upsilon$ on $M$ with curvature 
$$
i F_\upsilon = (\omega_1,\omega_2) \in \Omega^2(S,\RR^2).
$$
Identifying $\RR^2 \cong \CC$ we construct a $\mathbb{T}^2$-invariant complex one-form $\sigma$ on $M$ such that
\begin{equation*}\label{eq:sigma}
d\sigma = \omega_1 + i \omega_2.
\end{equation*}
Consider $M$ endowed with the almost complex structure $J$ determined by the complex $3$-form
$$
\Omega = p^*\Omega_S \wedge \sigma.
$$
This complex structure is integrable, that is, $d\Omega = 0$, and 
the corresponding complex threefold $(M,J)$ is non-K\"ahler unless $\omega_1 = \omega_2 = 0$ (see \cite[Theorem 1]{GoPro}).

Let $u$ be a smooth function on $M$. Then the hermitian metric 
\begin{equation}\label{eq:GPansatz}
\omega_{u} = p^*(e^u \omega_S) + \frac{i}{2} \sigma \wedge \overline{\sigma},
\end{equation}
satisfies the conformally balanced equation \cite{GoPro}
$$
d(\|\Omega\|_{\omega_u} \omega_{u} \wedge \omega_{u}) = 0.
$$
Consider the smooth complex vector bundle $T^{1,0} \to M$, and fix some background Hermitian metric $h_0$. We stress the fact that we do not consider any preferred holomorphic structure on $T^{1,0}$.

Consider the bilinear form defined by Poincar\'e duality
$$
Q \colon H^2(S,\ZZ) \times H^2(S,\ZZ) \longrightarrow \ZZ
$$
and denote $Q(\beta,\beta) = Q(\beta)$, for $\beta \in H^2(S,\ZZ)$. Let $\alpha \in \mathbb{R} \backslash \{0\}$ be such that
\begin{equation}\label{eq:integral}
\frac{Q([\omega_1/2\pi]) + Q([\omega_2/2\pi])}{\alpha}\in \mathbb{Z}.
\end{equation}

Let $(\mathbb{W},h_1)$ be a smooth Hermitian vector bundle over $S$ with rank $r$, $c_1(\mathbb{W}) = 0$ and second Chern class
\begin{equation}\label{eq:c2Wbis}
c_2(\mathbb{W}) =  24 + \frac{1}{\alpha}(Q([\omega_1/2\pi]) + Q([\omega_2/2\pi])).
\end{equation}
Definition \eqref{eq:c2Wbis} implies the more familiar `anomaly cancellation' condition for the pull-back bundle $p^*\mathbb{W}$ on the cohomology of $M$: 
$$
c_2(p^*\mathbb{W}) - c_2(M) = 0 \in H^4(M,\RR).
$$

\begin{theorem}[\cite{GF4}]\label{thmGP}
Let $\alpha \in \Q$ satisfying \eqref{eq:integral}, and let $(\mathbb{W},h_1)$ be a smooth complex hermitian vector bundle over $S$ with rank $r$, $c_1(\mathbb{W}) = 0$, second Chern class \eqref{eq:c2Wbis}, and satisfying 
$$
r \leqslant c_2(\mathbb{W}).
$$
Then there exists a smooth function $u$ on $S$ and Hermitian-Yang--Mills unitary connections $\nabla$ on $T^{1,0}$ and $A_1$ on $p^*\mathbb{W}$ such that $(\omega_{u},\nabla,A_1)$ is a solution of the Hull--Strominger system 
\begin{equation}\label{eq:HSGP}
\begin{split}
	R_{\nabla}^{0,2} = 0, \qquad R_{\nabla} \wedge \omega^2 & = 0,\\
	F_{A_1}^{0,2} = 0, \qquad F_{A_1} \wedge \omega^2 & = 0,\\
	d(\|\Omega\|_\omega \omega^2) & = 0,\\
	dd^c \omega - \alpha \tr R_{\nabla} \wedge R_{\nabla} +  \alpha \tr F_{A_1} \wedge F_{A_1} & = 0,
	\end{split}
\end{equation}
where $R_{\nabla}$ denotes the curvature of $\nabla$. Furthermore, we can assume that the connection $\nabla$ on $T^{1,0}$ is $\omega_{u}$-unitary.
\end{theorem}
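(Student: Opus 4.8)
The plan is to follow the strategy of Fu--Yau \cite{FuYau}, adapted to this fibred geometry, reducing the full system to a single scalar PDE on the base $S$ together with the existence of suitable anti-self-dual connections. First I would dispose of the two Hermitian--Yang--Mills equations. The crucial observation is that for any $(1,1)$-form $F$ on $S$ that is anti-self-dual with respect to $\omega_S$ --- equivalently $\omega_S$-primitive, so $F\wedge\omega_S=0$ --- the pullback $p^*F$ satisfies $p^*F\wedge\omega_u^2=0$ for \emph{every} choice of $u$, since $\omega_u^2 = e^{2u}p^*\omega_S^2 + ie^u\, p^*\omega_S\wedge\sigma\wedge\overline{\sigma}$ and both $F\wedge\omega_S$ and $F\wedge\omega_S^2$ vanish (the latter for dimensional reasons on the surface $S$). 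Moreover $p$ is holomorphic for $J$, so $p^*F$ stays of type $(1,1)$. Thus any connection whose curvature is pulled back from an ASD connection on $S$ is automatically HYM with respect to $\omega_u$, simultaneously for all $u$, which decouples the HYM equations from the conformal factor $u$.

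For the bundle $p^*\mathbb{W}$ I would first produce a stable holomorphic structure on $\mathbb{W}\to S$ with $c_1=0$ and the prescribed $c_2$; on a $K3$ surface the hypothesis $r\leqslant c_2(\mathbb{W})$ is exactly the condition guaranteeing non-emptiness of the moduli space of such stable bundles. Donaldson--Uhlenbeck--Yau then yields an $\omega_S$-HYM connection $A_1^S$, which is ASD because $c_1=0$ and $\omega_S$ is Ricci-flat; pulling back gives $A_1$ on $p^*\mathbb{W}$. For $T^{1,0}M$ I would build $\nabla$ from the pullback of the Levi-Civita connection of $\omega_S$ on $T^{1,0}S$ --- whose curvature is ASD since $\omega_S$ is Ricci-flat --- together with the connection in the fibre directions induced by $\upsilon$, whose curvature $\omega_1+i\omega_2=d\sigma$ is ASD by hypothesis. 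The total curvature $R_\nabla$ is then ASD, hence $R_\nabla^{0,2}=0$ and $R_\nabla\wedge\omega_u^2=0$, and the final unitarity assertion follows from the explicit form of the ansatz. The Chern--Weil form $\tr R_\nabla\wedge R_\nabla$ accounts, through \eqref{eq:c2Wbis}, for both the Euler number $24$ of the $K3$ surface $S$ and the torus contributions $Q([\omega_i/2\pi])$.

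With the connections fixed and the conformally balanced equation already built into the ansatz \eqref{eq:GPansatz}, only the anomaly cancellation equation remains. Substituting $\omega_u$ and the pulled-back ASD curvatures into $dd^c\omega_u-\alpha\tr R_\nabla\wedge R_\nabla+\alpha\tr F_{A_1}\wedge F_{A_1}=0$, the vertical $\sigma$-contributions organize so that the equation descends to a second-order fully nonlinear scalar equation for $u$ on $S$ of Fu--Yau type, schematically $i\partial\dbar(e^u\omega_S-\alpha e^{-u}\rho)\wedge\omega_S+\alpha\, i\partial\dbar u\wedge i\partial\dbar u+\mu\,\tfrac{\omega_S^2}{2}=0$, where $\rho$ and the source term are determined by $\omega_1,\omega_2$ and the Chern--Weil forms of $\nabla,A_1$. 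The integrality condition \eqref{eq:integral} enters precisely here: it is what makes $c_2(\mathbb{W})=24+\tfrac{1}{\alpha}(Q([\omega_1/2\pi])+Q([\omega_2/2\pi]))$ an integer, so that a bundle $\mathbb{W}$ with this second Chern class exists and the cohomological balance can be met.

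The hard part is solving this Fu--Yau equation. I would invoke the existence theorem of Fu--Yau \cite{FuYau} (and its refinements by Phong--Picard--Zhang \cite{PPZ}), whose proof proceeds by the continuity method and rests on delicate a priori $C^0$, $C^1$ and $C^2$ estimates for the elliptic Hessian-type operator, valid in the region where $\omega_u$ stays positive; the sign of $\alpha$ and the smallness implicit in \eqref{eq:integral} keep the solution inside the ellipticity cone. Granting a smooth solution $u$, the triple $(\omega_u,\nabla,A_1)$ solves \eqref{eq:HSGP}, completing the argument. The principal technical obstacle is exactly these a priori estimates for the Fu--Yau operator, which is neither convex nor concave; this is where essentially all of the analytic difficulty of the Hull--Strominger system on this geometry is concentrated.
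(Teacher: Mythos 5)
Your overall skeleton matches the actual construction in \cite{GF4} (which this paper quotes rather than reproves): the Goldstein--Prokushkin ansatz \eqref{eq:GPansatz} makes the conformally balanced equation automatic; pulled-back $\omega_S$-ASD connections are HYM with respect to $\omega_u$ for \emph{every} $u$, by exactly the computation you give; and $r\leqslant c_2(\mathbb{W})$ guarantees a stable bundle on the $K3$ (Mukai vector $v=(r,0,r-c_2)$ has $v^2=2r(c_2-r)\geqslant 0$), to which Donaldson--Uhlenbeck--Yau applies. But your final PDE step contains a genuine inconsistency. Having fixed $\nabla$ and $A_1$ so that $\tr R_\nabla\wedge R_\nabla$ and $\tr F_{A_1}\wedge F_{A_1}$ are $u$-independent $4$-forms pulled back from $S$, and using that $d\sigma=p^*(\omega_1+i\omega_2)$ is of type $(1,1)$, one computes
\begin{equation*}
dd^c\omega_u \;=\; p^*\Big(2i\partial\dbar\big(e^u\big)\wedge\omega_S+\omega_1\wedge\omega_1+\omega_2\wedge\omega_2\Big),
\end{equation*}
so the anomaly equation in \eqref{eq:HSGP} descends to a \emph{linear} Poisson equation for $e^u$ on $S$, not to a Fu--Yau equation: the terms $e^{-u}\rho$ and $\alpha\,i\partial\dbar u\wedge i\partial\dbar u$ in Fu--Yau arise precisely from the $u$-dependence of the Chern connection of $\omega_u$ on $T^{1,0}$, which you have (correctly, for the purposes of this theorem) discarded. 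The condition \eqref{eq:c2Wbis} is exactly the vanishing of the total integral of the fixed source term, i.e.\ the solvability condition for the Poisson equation, and positivity of $e^u$ is then arranged by adding a large constant. This linearization is the whole point of the HYM ansatz in \cite{GF4}; your schematic quadratic equation is wrong for your own setup.

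Consequently the appeal to the Fu--Yau existence theorem (and \cite{PPZ}) cannot close the argument as written: those estimates concern the specific equation generated by taking $\nabla$ to be the Chern connection of $\omega_u$, and if you solved \emph{that} equation instead, the resulting $\nabla$ would not be Hermitian--Yang--Mills, so the conclusion $R_\nabla\wedge\omega_u^2=0$ --- the very hypothesis this paper needs for Theorem \ref{teo:mainTCA} --- would fail; while for your fixed connections the quadratic term is absent and the cited theorem addresses the wrong equation. Note also that \eqref{eq:integral} is an integrality condition, not a smallness condition, so ``the smallness implicit in \eqref{eq:integral}'' has no content. Finally, the unitarity assertion does not ``follow from the explicit form of the ansatz'': the naive pullback connection is unitary for $p^*g_S\oplus 1$ but not for $e^u p^*g_S\oplus 1$; one must correct it, e.g.\ by $\nabla\mapsto\nabla+\tfrac{1}{2}du\otimes\Id$ on the horizontal summand, which is $\omega_u$-unitary and leaves the curvature (hence the HYM property and the Chern--Weil form) unchanged since $d(du)=0$.
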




Our next result is a direct consequence of Theorem \ref{teo:mainTCA}. 

\begin{proposition}\label{prop:GPchiral}
Let $(\omega_u,\nabla,A_1)$ be the solution of the Hull--Strominger system in Theorem \ref{thmGP}, and consider the associated string algebroid $E$ and the chiral de Rham complex $\Omega^{\ch}_{E \otimes \C}$. Then there exists an embedding of the $N=2$ superconformal vertex algebra of central charge $c = 9$ into the space of global sections of the chiral de Rham complex $\Omega^{\ch}_{E\otimes\C}$. The generators of this embedding are given by \eqref{eq:JHglocalmain} for the frames \eqref{eq:localisoframe} induced by the atlas in Lemma \ref{lem:holconstdetatlas}.
\end{proposition}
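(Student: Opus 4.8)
The plan is to deduce the statement directly from Theorem~\ref{teo:mainTCA}, since the geometric input provided by Theorem~\ref{thmGP} is precisely a solution of the Hull--Strominger system in complex dimension three with Hermitian--Yang--Mills connection. The only real work is to match the concrete system~\eqref{eq:HSGP} with the abstract twisted Hull--Strominger system~\eqref{eq:tHS} on a suitable pair $(M,P)$, and then to read off the central charge from $n=3$.

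First I would set up the gauge data. Let $P$ be the principal bundle of unitary frames of $T^{1,0}\oplus p^*\mathbb{W}$, with connection $A = \nabla\oplus A_1$ and the indefinite bi-invariant pairing $\langle\cdot,\cdot\rangle = \alpha\big(\tr_{p^*\mathbb{W}}-\tr_{T^{1,0}}\big)$. With this choice one has $F_A = R_\nabla\oplus F_{A_1}$ and $\langle F_A\wedge F_A\rangle = \alpha\,\tr F_{A_1}\wedge F_{A_1} - \alpha\,\tr R_\nabla\wedge R_\nabla$, so the four lines of~\eqref{eq:HSGP} reproduce~\eqref{eq:HS} term by term. The key point is that \emph{both} $\nabla$ and $A_1$ are Hermitian--Yang--Mills by Theorem~\ref{thmGP}, whence $F_A^{0,2}=0$ and $F_A\wedge\omega_u^{2}=0$, which are exactly the first two equations of~\eqref{eq:tHS}.

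Next I would promote this to a genuine solution of~\eqref{eq:tHS}. Since $(M,J)$ carries the closed holomorphic volume form $\Omega = p^*\Omega_S\wedge\sigma$, the $\SU(3)$-structure $(\Psi,\omega_u)$ with $\Psi = \Omega/\|\Omega\|_{\omega_u}$ is well defined, has unit norm $\|\Psi\|_{\omega_u}=1$, and has exact Lee form $\theta_{\omega_u} = d\log\|\Omega\|_{\omega_u}$. In particular $d\theta_{\omega_u}=0$, and the conformally balanced equation $d(\|\Omega\|_{\omega_u}\omega_u^{2})=0$ is equivalent to $d\Psi-\theta_{\omega_u}\wedge\Psi=0$, as in the passage from~\eqref{eq:HS} to~\eqref{eq:tHS} recorded after Definition~\ref{def:tHS}. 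Thus $(\Psi,\omega_u,A)$ solves~\eqref{eq:tHS} on $(M,P)$.

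With these checks in place, Theorem~\ref{teo:mainTCA} applies verbatim: the expressions~\eqref{eq:JHglocalmain}, built from the frames~\eqref{eq:localisoframe} attached to the atlas of Lemma~\ref{lem:holconstdetatlas}, define global sections $J,H$ of $\Omega^{\ch}_{E\otimes\C}$ and generate an embedded $N=2$ superconformal vertex algebra of central charge $c = 3n = 9$, since $\dim_\C M = n = 3$. Because $\Omega$ is global one may take the atlas with $\Omega = dz_1\wedge dz_2\wedge dz_3$, so these generators coincide with those furnished by Theorem~\ref{teo:mainTCAcHE} as well, consistently with $\Lambda_\omega F_A = 0$. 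I expect no serious obstacle: the argument is a bookkeeping identification, the only mildly delicate points being the correct sign of the indefinite pairing so that the anomaly term of~\eqref{eq:HSGP} matches $\langle F_A\wedge F_A\rangle$, and the fact, guaranteed by Theorem~\ref{thmGP}, that it is the full connection $A=\nabla\oplus A_1$ that is Hermitian--Yang--Mills.
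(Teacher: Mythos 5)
Your argument is correct and is essentially the paper's own proof: the paper obtains Proposition~\ref{prop:GPchiral} as a direct consequence of Theorem~\ref{teo:mainTCA}, and the identification of \eqref{eq:HSGP} with the abstract system \eqref{eq:tHS} --- via the product bundle, the connection $A=\nabla\oplus A_1$, the indefinite pairing $\alpha\left(\tr_{p^*\mathbb{W}}-\tr_{T^{1,0}}\right)$, and the exact Lee form coming from the global holomorphic volume form $\Omega=p^*\Omega_S\wedge\sigma$ --- is exactly the bookkeeping the paper records after Definition~\ref{def:tHS} and in the discussion preceding the proposition. One cosmetic slip: the Lee form is $\theta_{\omega_u}=-d\log\|\Omega\|_{\omega_u}$ rather than $+d\log\|\Omega\|_{\omega_u}$ (cf.\ $\varepsilon=d\log\|\Omega\|_\omega=-\theta_\omega$ in the remark following Theorem~\ref{teo:mainTCAcHE}), but this does not affect your reasoning, which only uses exactness and closedness of $\theta_{\omega_u}$.
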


To finish this section, we provide a direct calculation of the torsion bivector field $\sigma_{\omega}$ in an explicit example, originally considered by Picard in \cite{Picard}. This is given by a $\mathbb{T}^2$-fibration over a torus $\mathbb{T}^4$, endowed with a Hermitian metric with the Goldstein--Prokushkin ansatz \eqref{eq:GPansatz}. In particular, by Proposition \ref{prop:rho20sigma} this gives an independent proof of $\rho_B^{2,0} = 0$ for these metrics.

\begin{example}
Consider the compact complex $3$-dimensional manifold given by $(M,J):=\C^3/\sim$, where we define
\begin{equation*}
(z_1,z_2,z_3) \sim (z_1+a,z_2+c,z_3 + \overline{a}z_2+b)
\end{equation*}
for $a,b,c \in \Z[i]$. Observe that there is a holomorphic fibration structure
$$
p\colon (M,J) \longrightarrow  \C/\Lambda \times \C/\Lambda \cong \mathbb{T}^4\colon \left[\left(z_1,z_2,z_3\right)\right] \longmapsto \left([z_1],[z_2]\right),
$$
whose fibres are complex tori, where $\Lambda$ is the lattice generated by $\langle 1,i\rangle$. The following $1$-forms
$$
\omega_1=dz_1, \qquad \omega_2 = dz_2, \qquad \omega_3 = dz_3-\overline{z}_1dz_2,
$$
define a global holomorphic frame of $T_{1,0}^*M$, and satisfy the structure equations
$$
d\omega_1= 0, \qquad d\omega_2=0,\qquad d\omega_3=\omega_{2\overline{1}}.
$$
Consider the flat metric on the base torus
$$
\omega_{\mathbb{T}^4} = \frac{i}{2}\left(\omega_{1\overline{1}}+\omega_{2\overline{2}}\right).
$$
Then, for a choice of smooth function $u \in C^\infty(M)$, we define the Hermitian form (cf. \eqref{eq:GPansatz})
$$
\omega_u = p^* e^u\omega_{\mathbb{T}^4} +\frac{i}{2}\omega_{3\overline{3}}.
$$
We check next that $\sigma_{\omega_u} = 0$. Observe that we have
\begin{equation*}
\begin{split}
g_u^{-1}d\overline{z}_1 &= 2e^{-u}\partial_1,\\
g_u^{-1}d\overline{z}_2 &= 2\left(e^{-u}\partial_2 + e^{-u}\overline{z}_1\partial_3\right),\\
g_u^{-1}d\overline{z}_3 &= 2\left(z_1e^{-u}\partial_2 + \left(1+e^{-u}|z_1|^2\right)\partial_3\right),
\end{split}
\end{equation*}
and also $\omega_3 = dz_3 + \partial(-\overline{z}_1z_2)$ and
$$
\partial\omega_u = e^u \partial u \wedge \omega_{\mathbb{T}^4} + \tfrac{i}{2}(dz_3 -\overline{z}_1dz_2) \wedge dz_1 \wedge d \overline{z}_2.
$$
By Lemma \ref{lem:dcomega}, we obtain
\begin{equation*}
\begin{split}
i\partial\omega_u\left(g^{-1}_ud\overline{z}_1,g^{-1}_ud\overline{z}_2,\overline{\partial}_1\right) & = -2\partial_2\left(e^{-u}\right),\\
i\partial\omega_u\left(g^{-1}_ud\overline{z}_1,g^{-1}_ud\overline{z}_2,\overline{\partial}_2\right) &= 2\partial_1\left(e^{-u}\right),\\
i\partial\omega_u\left(g^{-1}_ud\overline{z}_1,g^{-1}_ud\overline{z}_2,\overline{\partial}_3\right) & = 0.
\end{split}
\end{equation*}
From this, we have that
\begin{equation*}
\left(\sigma_{\omega}\right)_{\overline{12}} : = 4e^{-u}\left(\partial_1\partial_2\left(e^{-u}\right) -\partial_2\partial_1\left(e^{-u}\right)\right)=0.
\end{equation*}
Similarly, we obtain
\begin{equation*}
\begin{split}
i\partial \omega_u\left(g^{-1}_ud\overline{z}_1,g^{-1}_ud\overline{z}_3,\overline{\partial}_1\right) & = -2\overline{\partial}_2\left(z_1e^{-u}\right),\\
i\partial\omega_u\left(g^{-1}_ud\overline{z}_1,g^{-1}_ud\overline{z}_3,\overline{\partial}_2\right) & = 2\overline{\partial}_1\left(z_1e^{-u}\right),\\
i\partial\omega_u\left(g^{-1}_ud\overline{z}_1,g^{-1}_ud\overline{z}_3,\overline{\partial}_3\right) & = 0.
\end{split}
\end{equation*}
and therefore
\begin{equation*}
\left(\sigma_{\omega}\right)_{\overline{13}} : = 4e^{-u}\left(\partial_1\partial_2\left(z_1e^{-u}\right) -\partial_2\partial_1\left(z_1e^{-u}\right)\right)=0.
\end{equation*}
By direct calculation, for $j = 1,2,3$ we also have that
$$
\partial\omega_u\left(g^{-1}_ud\overline{z}_2,g^{-1}_ud\overline{z}_3,\overline{\partial}_j\right) = 0,
$$
and therefore $\left(\sigma_{\omega}\right)_{\overline{23}} = 0$, which concludes the proof, by antisymmetry.
\end{example}

\begin{remark}
Observe that the complex 3-manifold $(M,J)$ of the previous example has a natural holomorphic volume form, given by $\Omega = \omega_{123}$, and furthermore, $\omega_u$ is balanced, for any $u$. Similarly as in Theorem \ref{thmGP}, it is not difficult to find a bundle $\mathbb{W} \to \mathbb{T}^4$ and Hermitian-Yang--Mills connections $\nabla$ on $T^{1,0}$ and $A_1$ on $p^*\mathbb{W}$ such that $(\omega_u,\nabla,A_1)$ is a solution of the Hull--Strominger system \eqref{eq:HSGP} (for suitable $u$ and $\alpha$). 
Therefore, Theorem \ref{teo:mainTCA} applies giving embeddings of the $N=2$ superconformal vertex algebra on the chiral de Rham complex of the solution.
\end{remark}

\appendix

\section{Basic identities}\label{app:1}

In this appendix we collect various identities for the chiral de Rham complex that are extensively used in the core part of the text. We use the Einstein summation convention for repeated indices.

\subsection{Basic chiral identities}

Let $\Omega^{\ch}_E$ be the chiral de Rham complex of a complex Courant algebroid $E$ over a smooth manifold $M$ (see Section \ref{sec:localgen}).


\begin{lemma}
For $f \in C^\infty(M,\C)$ and $a,b \in \Omega^0(E)$, the following identity holds:
\begin{align}
\label{eq:14}
\left[fa,b\right] & = f\left[a,b\right]-f\mathcal{D}\left\langle a,b\right\rangle-\left\langle\mathcal{D}f,b\right\rangle a+\mathcal{D}\left\langle b,fa\right\rangle.
\end{align}
\end{lemma}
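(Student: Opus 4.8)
The identity \eqref{eq:14} is a statement purely about the Dorfman bracket, the pairing and the operator $\mathcal{D}$ on $\Omega^0(E)$, so the plan is to derive it directly from the (complex analogues of the) Courant algebroid axioms in Definition~\ref{defn:CA}. The only genuine difficulty is that the Dorfman bracket is not skew-symmetric, and that the Leibniz-type axiom~(3) only governs a function placed in the \emph{second} argument, whereas here the function $f$ sits in the first slot. The strategy is therefore to move $f$ into the second slot using the symmetrization axiom~(5), apply axiom~(3) there, and then symmetrize back.

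Concretely, I would first apply axiom~(5) together with the symmetry of $\langle\cdot,\cdot\rangle$ to write
\[
[fa,b] = \mathcal{D}\langle b,fa\rangle - [b,fa].
\]
Next, I would use axiom~(3) to extract the function from the second argument, $[b,fa] = f[b,a] + \pi(b)(f)\,a$, and then apply axiom~(5) once more to the remaining bracket, $[b,a] = \mathcal{D}\langle a,b\rangle - [a,b]$. Substituting these in gives
\[
[fa,b] = f[a,b] - f\,\mathcal{D}\langle a,b\rangle - \pi(b)(f)\,a + \mathcal{D}\langle b,fa\rangle.
\]
Finally, I would rewrite the anchor term using the identity $\pi(b)(f) = \langle\mathcal{D}f,b\rangle$ recorded just after Definition~\ref{defn:CA}, which turns $-\pi(b)(f)\,a$ into $-\langle\mathcal{D}f,b\rangle\,a$ and yields exactly \eqref{eq:14}.

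The computation carries no parity signs, since $a$ and $b$ are honest (even) sections of $E$ rather than of $\Pi E$; the only point requiring care is the bookkeeping of which slot the function occupies at each symmetrization, together with the correct use of the symmetry $\langle fa,b\rangle = \langle b,fa\rangle$ and of the $\mathcal{D}$--$\pi$ compatibility. Thus the ``main obstacle'' is organizational rather than conceptual, and the result follows from three applications of the axioms.
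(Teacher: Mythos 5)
Your proposal is correct: the chain (axiom (5) with $f$ in the first slot, axiom (3) to extract $f$ from the second slot, axiom (5) again on $[b,a]$, and finally the identity $\pi(b)(f)=\langle\mathcal{D}f,b\rangle$) yields exactly \eqref{eq:14}, and your remark that no parity signs enter is accurate since $a,b\in\Omega^0(E)$. This is essentially the same approach as the paper, which states only that the identity ``follows easily from the Courant algebroid axioms''; your write-up simply makes explicit the intended computation.
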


\begin{proof}
This follows easily from the Courant algebroid axioms (Definition \ref{defn:CA}).
\end{proof}

\begin{lemma}\label{lem:tech2bisbis}
For all $f,g \in C^\infty(M,\C)$, the following identity holds:
\begin{align*}
:\left(T^mf\right)\left(T^ng\right): &= :\left(T^ng\right)\left(T^mf\right):, \quad \text{for } m,n \in \N.
\end{align*}
\end{lemma}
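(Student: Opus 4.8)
The plan is to reduce the asserted symmetry of the normally ordered product to the vanishing of the $\Lambda$-bracket $[f_\Lambda g]$ of the two functions (identified with their images under $\iota$ in $\Omega^{\ch}_E$), and then to propagate that vanishing through the operators $T=S^2$ via sesquilinearity before applying quasicommutativity. The crux, and the step I expect to be the real obstacle, is establishing that $[\iota(f)_\Lambda\iota(g)]=0$ for all $f,g\in C^\infty(M,\C)$; everything else is formal.

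To prove that vanishing, I would combine properties (2) and (4) of Proposition~\ref{prop:CDRE}. Since $2S\iota(f)=\jmath(\mathcal{D}f)$, sesquilinearity~\eqref{eq:sesquiLambda} gives $\chi[\iota(f)_\Lambda\iota(g)]=[S\iota(f)_\Lambda\iota(g)]=\tfrac12[\jmath(\mathcal{D}f)_\Lambda\iota(g)]$, and property~(4) identifies the right-hand side with $\tfrac12\iota(\pi(\mathcal{D}f)(g))$. This vanishes because $\pi\circ\mathcal{D}=0$, a standard consequence of the Courant algebroid axioms (equivalently $\pi\circ\pi^*=0$ for the complex $T^*\otimes\C\to E\to T\otimes\C$). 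Hence $\chi B=0$, where $B:=[\iota(f)_\Lambda\iota(g)]$. Writing the odd element $B$ uniquely as $B=B_0+\chi B_1$ with $B_0,B_1\in\CC[\lambda]\otimes\Omega^{\ch}_E$ and using $\chi^2=-\lambda$, one gets $\chi B=\chi B_0-\lambda B_1$; separating the two $\cL$-components forces $B_0=0$ and $\lambda B_1=0$, and injectivity of multiplication by $\lambda$ then gives $B_1=0$, so $B=0$. I note that this deduction uses only $\chi B=0$ and is therefore insensitive to the Koszul sign conventions governing how $S$ and $\chi$ are moved past one another.

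Next I would deduce $[(T^mf)_\Lambda(T^ng)]=0$ for all $m,n\in\N$: iterating sesquilinearity~\eqref{eq:sesquiLambda}, and using that $f,g$ are even, expresses $[(T^m\iota f)_\Lambda(T^n\iota g)]$ as a fixed operator in $S$, $\chi$ and $\lambda$ applied to $[\iota(f)_\Lambda\iota(g)]$ (each $T$ extracted from the left slot produces a factor $-\lambda$, each from the right slot a factor $(S+\chi)^2$); since the bracket it is applied to is zero, the precise operator is irrelevant. Finally the lemma follows from quasicommutativity~\eqref{eq:cuasicon}, since both entries are even:
\begin{equation*}
:\!\left(T^mf\right)\left(T^ng\right)\!: - :\!\left(T^ng\right)\left(T^mf\right)\!: = \int_{-\nabla}^0 d\Lambda\,\left[\left(T^mf\right)_\Lambda\left(T^ng\right)\right] = 0.
\end{equation*}
The $m=n=0$ case is the statement $:fg:=:gf:$, recovered as the trivial instance of the same argument.
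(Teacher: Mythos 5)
Your proof is correct and follows essentially the same route as the paper, whose entire proof reads ``Immediate, by the quasicommutativity axiom~\eqref{eq:cuasicon}''---tacitly using that the integral term $\int_{-\nabla}^0 d\Lambda\,[(T^mf)_\Lambda(T^ng)]$ vanishes. Your derivation of $[\iota(f)_\Lambda\iota(g)]=0$ from $2S\iota(f)=\jmath(\mathcal{D}f)$, sesquilinearity, $\pi\circ\mathcal{D}=0$, and the propagation through $T=S^2$ simply fills in, correctly, the step the paper leaves implicit.
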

\begin{proof}
Immediate, by the quasicommutativity axiom~\eqref{eq:cuasicon}.
\end{proof}

\begin{lemma}\label{lem:tech2bis}
For all $f \in C^\infty(M,\C)$ and $a \in \Omega^0(\Pi E)$, the following identities hold:
\begin{align}
\left[f_\Lambda a\right] & = \left\langle\mathcal{D}f,a\right\rangle,\label{eq:immedi}\\
S\left(fa\right)&= :f\left(Sa\right):+\frac12:\left(\mathcal{D}f\right)a:\label{eq:claveAi},\\
:\left(T^ma\right)\left(T^nf\right): &= :\left(T^nf\right)\left(T^ma\right):, \quad \text{for } m,n \in \N\label{eq:affa}.
\end{align}
\end{lemma}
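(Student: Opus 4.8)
The plan is to work entirely inside the sheaf of SUSY vertex algebras $\Omega^{\ch}_E$ and, as the paper does, to identify a function $f$ with $\iota(f)$ and a section $a\in\Omega^0(\Pi E)$ with $\jmath(a)$. Each of the three identities then reduces to a short manipulation combining the defining compatibilities of $\iota$ and $\jmath$ in Proposition~\ref{prop:CDRE} with one of the structural axioms of a SUSY vertex algebra recalled in Section~\ref{ssec:background}. I would treat the three parts independently and in the order stated.

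For \eqref{eq:immedi} I would start from Proposition~\ref{prop:CDRE}(4), which gives $[\jmath(a)_\Lambda\iota(f)]=\iota(\pi(a)(f))$, an expression with no dependence on $\Lambda=(\lambda,\chi)$. Applying skew-symmetry \eqref{eq:comLambda} with $|\iota(f)|=0$ turns $[\iota(f)_\Lambda\jmath(a)]$ into $[\jmath(a)_{-\Lambda-\nabla}\iota(f)]$; since the latter bracket is $\Lambda$-independent, the substitution $\Lambda\mapsto-\Lambda-\nabla$ is harmless and we recover $\iota(\pi(a)(f))$. The identity $\pi(a)(f)=\langle\mathcal{D}f,a\rangle$ from Definition~\ref{defn:CA} then yields \eqref{eq:immedi}. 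For \eqref{eq:claveAi} I would use that $\jmath(fa)=:\iota(f)\jmath(a):$ and that $S$ is an odd derivation of the normally ordered product (the differential superalgebra structure of Section~\ref{ssec:background}), so that $S(:\iota(f)\jmath(a):)=:(S\iota(f))\jmath(a):+:\iota(f)(S\jmath(a)):$ because $\iota(f)$ is even. Substituting $S\iota(f)=\tfrac12\jmath(\mathcal{D}f)$ from Proposition~\ref{prop:CDRE}(2) gives exactly $\tfrac12:(\mathcal{D}f)a:+:f(Sa):$.

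The remaining identity \eqref{eq:affa} is the delicate one, and I expect it to be the main obstacle. The natural route mirrors the one-line proof of Lemma~\ref{lem:tech2bisbis}: by quasicommutativity \eqref{eq:cuasicon}, and since $|T^ma|\,|T^nf|=0$, one has $:(T^ma)(T^nf):-:(T^nf)(T^ma):=\int_{-\nabla}^0 d\Lambda\,[(T^ma)_\Lambda(T^nf)]$, so it suffices to show that this integral vanishes. Using sesquilinearity \eqref{eq:sesquiLambda} to strip the translations, $[(T^ma)_\Lambda(T^nf)]=(-\lambda)^m(S+\chi)^{2n}[a_\Lambda f]$, and by \eqref{eq:immedi} the innermost bracket $[a_\Lambda f]=\iota(\pi(a)(f))$ carries no $\chi$. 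The key computation I would carry out is that $(S+\chi)^2$ acts on any $\chi$-free element as $T-\lambda$ (the two cross terms cancel because $S$ and $\chi$ anticommute and $\chi^2=-\lambda$), whence $[(T^ma)_\Lambda(T^nf)]=(-\lambda)^m(T-\lambda)^n\iota(\pi(a)(f))$ is again $\chi$-free. Since $\int_{-\nabla}^0 d\Lambda$ involves a Berezin integration in $\chi$ and therefore annihilates every $\chi$-free ($J=0$) term, the right-hand side is zero. The only real care needed is to track the parity signs in the iterated sesquilinearity and to invoke the precise convention for $\int_{-\nabla}^0 d\Lambda$ from \cite[Appendix A.2]{AAGa}; once the integrand is seen to be purely even in $\chi$, the conclusion is immediate.
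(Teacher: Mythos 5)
Your proposal is correct and follows essentially the same route as the paper, whose proof is a one-liner citing skew-symmetry~\eqref{eq:comLambda} for~\eqref{eq:immedi}, the odd-derivation property of $S$ together with $2S\iota(f)=\jmath(\mathcal{D}f)$ for~\eqref{eq:claveAi}, and quasicommutativity~\eqref{eq:cuasicon} for~\eqref{eq:affa}. Your expanded treatment of~\eqref{eq:affa} --- using sesquilinearity to reduce the integrand to $(-\lambda)^m(T-\lambda)^n\,\iota(\pi(a)(f))$, which is $\chi$-free and hence killed by the Berezin integration in $\int_{-\nabla}^0 d\Lambda$ --- is exactly the computation the paper leaves implicit in the word ``immediate,'' and your sign bookkeeping (in particular $-(-1)^{|a|}=+1$ for odd $a$ and $(S+\chi)^2=T-\lambda$) is accurate.
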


\begin{proof}
The first identity is a direct consequence of the skew-symmetry axiom~\eqref{eq:comLambda} of the $\Lambda$-bracket, the second one follows because $S$ is an odd derivation for the normally ordered product~\eqref{eq:VA-equiv.1}, and the last one is immediate from quasicommutativity.
\end{proof}

\begin{lemma}
For all $a,b \in \Omega^0(\Pi E)$, the following identities hold:
\begin{align}
:ab:+:ba: & = 2T\left\langle a,b\right\rangle,\label{eq:cuasicom}\\
:\left(Sa\right)b: & = :b\left(Sa\right):+T\left[a,b\right],\label{eq:cuasicom2}\\
:a\left(Tb\right):+:\left(Tb\right)a: &= T^2\left\langle a,b\right\rangle.\label{eq:20}
\end{align}
\end{lemma}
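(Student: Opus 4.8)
The plan is to derive all three identities uniformly from the quasicommutativity axiom \eqref{eq:cuasicon}, applied to a suitably chosen pair of states, by evaluating the resulting $\Lambda$-bracket with the structural data of $\Omega^{\ch}_E$ from Proposition \ref{prop:CDRE}. The essential observation is that, since $\Pi E$ is purely odd, every $a,b\in\Omega^0(\Pi E)$ is \emph{odd}, so $Sa$ is even while $Tb=S^2b$ is again odd; this parity bookkeeping determines the sign $(-1)^{|a||b|}$ in each application of \eqref{eq:cuasicon}. Abbreviating $\jmath$ and $\iota$ as in the statement, item (3) of Proposition \ref{prop:CDRE} furnishes the basic computation
\begin{equation*}
[a_\Lambda b]=[a,b]+2\chi\langle a,b\rangle,
\end{equation*}
in which only the two lowest modes occur and all higher $\lambda$- and $\chi$-modes vanish. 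The sole analytic input is the definition of the integral $\int_{-\nabla}^0 d\Lambda$ (see \cite[Appendix A.2]{AAGa}): the Berezin integration in $\chi$ kills any summand with no factor of $\chi$, while the surviving contributions are governed by $\int_{-\nabla}^0 d\Lambda\,\chi=T$ and $\int_{-\nabla}^0 d\Lambda\,\lambda\chi=\tfrac12 T^2$.

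For \eqref{eq:cuasicom} I would apply \eqref{eq:cuasicon} directly to $(a,b)$. As both are odd, $(-1)^{|a||b|}=-1$ and the axiom reads $:ab:+:ba:=\int_{-\nabla}^0 d\Lambda\,[a_\Lambda b]$. Inserting the expression above, the $\chi$-free summand $[a,b]$ is annihilated by the integral and the term $2\chi\langle a,b\rangle$ contributes exactly $2T\langle a,b\rangle$, giving \eqref{eq:cuasicom}.

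For \eqref{eq:cuasicom2} I would apply \eqref{eq:cuasicon} to $(Sa,b)$; since $Sa$ is even, $(-1)^{|Sa||b|}=1$, so $:(Sa)b:-:b(Sa):=\int_{-\nabla}^0 d\Lambda\,[(Sa)_\Lambda b]$, and the first sesquilinearity relation of \eqref{eq:sesquiLambda} together with $\chi^2=-\lambda$ gives $[(Sa)_\Lambda b]=\chi[a_\Lambda b]=\chi[a,b]-2\lambda\langle a,b\rangle$. Integrating, the $\lambda$-term drops and the $\chi[a,b]$ term yields $T[a,b]$, which is \eqref{eq:cuasicom2}. For \eqref{eq:20} I would apply \eqref{eq:cuasicon} to $(a,Tb)$, both odd, and compute the bracket by invoking the second relation of \eqref{eq:sesquiLambda} twice (with $|a|=1$, so each sign is $+1$); using that $\chi$ anticommutes with $S$, that $S^2=T$, and that $\chi^2=-\lambda$, one obtains
\begin{equation*}
[a_\Lambda Tb]=(S+\chi)^2[a_\Lambda b]=(T-\lambda)\big([a,b]+2\chi\langle a,b\rangle\big).
\end{equation*}
Integrating the expansion term by term, the two $\chi$-free pieces vanish, $2T\chi\langle a,b\rangle$ produces $2T^2\langle a,b\rangle$, and $-2\lambda\chi\langle a,b\rangle$ produces $-T^2\langle a,b\rangle$, so that the total collapses to $T^2\langle a,b\rangle$, establishing \eqref{eq:20}.

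The individual computations are short, so the real difficulty lies entirely in the sign and normalization bookkeeping of the superfield formalism. The delicate points are the parity of $Sa$ and $Tb$, the anticommutation $S\chi=-\chi S$ that makes the cross terms in $(S+\chi)^2$ cancel, the relation $\chi^2=-\lambda$, and above all the precise values of the definite integrals $\int_{-\nabla}^0 d\Lambda\,\chi$ and $\int_{-\nabla}^0 d\Lambda\,\lambda\chi$. In particular the factor $\tfrac12$ in $\int_{-\nabla}^0 d\Lambda\,\lambda\chi=\tfrac12T^2$ is exactly what converts the intermediate $2T^2$ into the required $T^2$ in \eqref{eq:20}; fixing these constants correctly against the conventions of \cite{SUSYVA,AAGa} is the crux of the argument.
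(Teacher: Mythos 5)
Your overall route is exactly the paper's: the paper proves this lemma with the single line ``Immediate, by quasicommutativity,'' and your proposal simply spells that out, computing $[a_\Lambda b]=[a,b]+2\chi\langle a,b\rangle$ from Proposition~\ref{prop:CDRE}(3) and Berezin-integrating. Your verifications of \eqref{eq:cuasicom} and \eqref{eq:cuasicom2} are correct, including the parity bookkeeping, the value $\int_{-\nabla}^0 d\Lambda\,\chi=T$, and the use of $\chi^2=-\lambda$ to discard the $\lambda$-term of $[Sa_\Lambda b]=\chi[a,b]-2\lambda\langle a,b\rangle$ under the integral.

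Your derivation of \eqref{eq:20}, however, rests on two sign errors that happen to cancel — and they are precisely the two conventions you yourself flag as ``the crux.'' First, in the Heluani--Kac formalism $S$ and $\chi$ do \emph{not} strictly anticommute: consistency of the two sesquilinearity relations \eqref{eq:sesquiLambda} (compute $[Sa_\Lambda Sb]$ in the two possible orders) forces $S(\chi y)=-\chi Sy+2\lambda y$, i.e.\ $S\chi+\chi S=2\lambda$, so $(S+\chi)^2=T+\lambda$, not $T-\lambda$. One can bypass the reordering issue entirely: by the first relation in \eqref{eq:sesquiLambda}, $[Tb_\Gamma a]=\eta^2[b_\Gamma a]=-\gamma[b_\Gamma a]$, and two applications of skew-symmetry \eqref{eq:comLambda} (with $a,b$ odd) give $[a_\Lambda Tb]=-[Tb_{-\Lambda-\nabla}a]=(\lambda+T)[a_\Lambda b]$, unambiguously fixing the sign; your intermediate identity $[a_\Lambda Tb]=(T-\lambda)[a_\Lambda b]$ is inconsistent with the axioms. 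Second, $\int_{-\nabla}^0 d\Lambda\,\lambda\chi=\int_{-T}^0\lambda\,d\lambda=-\tfrac12 T^2$, not $+\tfrac12T^2$ (the sign is forced once $\int_{-\nabla}^0 d\Lambda\,\chi=T$ is accepted, since the Berezin step kills $\chi$-free terms and the remaining $\lambda$-integral runs from $-T$ to $0$). With the corrected pair, $(T+\lambda)\bigl([a,b]+2\chi\langle a,b\rangle\bigr)$ integrates to $2T^2\langle a,b\rangle-T^2\langle a,b\rangle=T^2\langle a,b\rangle$, confirming \eqref{eq:20}; your pair reaches the same total only by compensation. A useful cross-check pinning the coefficient: applying the even derivation $T$ to \eqref{eq:cuasicom} yields $:(Ta)b:+:b(Ta):+:a(Tb):+:(Tb)a:=2T^2\langle a,b\rangle$, consistent with \eqref{eq:20} and its $a\leftrightarrow b$ twin, whereas either of your conventions used against the correct value of the other would produce $3T^2\langle a,b\rangle$ and fail this check. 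In any computation where the two mistakes do not cancel (e.g.\ the iterated-bracket manipulations of Appendix~\ref{app:MRP}), they would propagate wrong signs, so the fix is not cosmetic.
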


\begin{proof}
Immediate, by quasicommutativity.
\end{proof}

\begin{lemma}\label{lem:tech3bis}
For $f \in C^\infty(M,\C)$ and $a,b \in \Omega^0(\Pi E)$, the following identities hold:
\begin{align}
::ab:f: &= :f:ab::\label{eq:abffab},\\
:a:fb:: &= ::fb:a:+2T\left(:\!f\left\langle a,b\right\rangle\!:\right),\label{eq:bfafab}\\
::ab:f: &= :a:bf::\label{eq:abfabf},\\
::af:b: &= :a:fb::+2:\left(Tf\right)\left\langle a,b\right\rangle:\label{eq:afbafb},\\
::af:\left(Sb\right): & = :a:f\left(Sb\right)::-:\left(Ta\right)\left\langle\mathcal{D}f,b\right\rangle:-:\left(Tf\right)\left(\mathcal{D}\left\langle a,b\right\rangle-\left[a,b\right]\right):\label{eq:13},\\
::a\left(Tf\right):b: &= :a:\left(Tf\right)b::+2:\left(T^2f\right)\left\langle a,b\right\rangle:\label{eq:17},\\
::\left(Ta\right)f:b: &= :\left(Ta\right):fb::-:\left(T^2f\right)\left\langle a,b\right\rangle:\label{eq:21},\\
:\left(T\left(:ab:\right)\right)f: &= :f\left(T\left(:ab:\right)\right):\label{eq:TabffTab}.
\end{align}
\end{lemma}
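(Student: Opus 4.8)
The plan is to deduce all eight identities directly from the three structural axioms of a SUSY vertex algebra---quasicommutativity~\eqref{eq:cuasicon}, quasiassociativity~\eqref{eq:cuasiaso} and the non-commutative Wick formula~\eqref{eq:Wick}---combined with the explicit $\Lambda$-brackets on $\Omega^{\ch}_E$ provided by Proposition~\ref{prop:CDRE}. The feature that makes every computation terminate is that $f$ is a function, hence even, whereas $a,b\in\Omega^0(\Pi E)$ are odd: by~\eqref{eq:immedi} the bracket $[f_\Lambda a]=\langle\mathcal{D}f,a\rangle$ is \emph{independent of $\Lambda$}, and by property~(3) of Proposition~\ref{prop:CDRE} one has $[a_\Lambda b]=[a,b]+2\chi\langle a,b\rangle$, which is at most linear in $\chi$ and carries no power of $\lambda$. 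Consequently every $\Lambda$-bracket appearing below is a polynomial of very low degree in $\Lambda$, and the various SUSY integrals collapse onto a single nontrivial Fourier mode.

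First I would treat the pure commutation and reassociation identities~\eqref{eq:abffab} and~\eqref{eq:abfabf}. Setting $u=:ab:$, which is even, quasicommutativity~\eqref{eq:cuasicon} applied to $u$ and $f$ gives $::ab:f:-:f:ab::=\int_{-\nabla}^0 d\Lambda\,[u_\Lambda f]$, and the Wick formula shows that $[u_\Lambda f]$ is $\Lambda$-independent (all its constituents are $f$-brackets), hence annihilated by $\int_{-\nabla}^0 d\Lambda$; this yields~\eqref{eq:abffab}, and~\eqref{eq:abfabf} follows from quasiassociativity~\eqref{eq:cuasiaso} in the same way. Identity~\eqref{eq:TabffTab} is then immediate by applying the even derivation $T$ to~\eqref{eq:abffab}. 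The remaining identities~\eqref{eq:bfafab},~\eqref{eq:afbafb},~\eqref{eq:17} and~\eqref{eq:21} carry a genuine correction term: I would obtain~\eqref{eq:bfafab} from quasicommutativity~\eqref{eq:cuasicon} and~\eqref{eq:afbafb},~\eqref{eq:17},~\eqref{eq:21} from quasiassociativity~\eqref{eq:cuasiaso} applied to the ordered triples $(a,f,b)$, $(a,Tf,b)$ and $(Ta,f,b)$. In each case the only surviving contribution comes from the pairing part $2\chi\langle a,b\rangle$ of $[a_\Lambda b]$; after the substitution $\Lambda\mapsto\nabla$ prescribed by the integrals this produces exactly the factors $Tf$ and $T^2f$ displayed, the precise coefficients and signs being fixed by the parities and by the sesquilinearity relations~\eqref{eq:sesquiLambda} (used, for instance, to evaluate $[Ta_\Lambda b]=\chi^2[a_\Lambda b]=-\lambda[a_\Lambda b]$).

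The one substantially harder identity is~\eqref{eq:13}, the only one in which a differentiated section $Sb$ occurs. Here I would again start from quasiassociativity for the triple $(a,f,Sb)$, but now evaluate the brackets $[f_\Lambda Sb]$ and $[a_\Lambda Sb]$ through the sesquilinearity rule $[x_\Lambda Sb]=-(-1)^{|x|}(S+\chi)[x_\Lambda b]$ of~\eqref{eq:sesquiLambda}. The operator $S+\chi$, acting on the $\Lambda$-independent bracket $\langle\mathcal{D}f,b\rangle$ and on $[a,b]+2\chi\langle a,b\rangle$, is what generates the three separate terms on the right-hand side; in particular the combination $\mathcal{D}\langle a,b\rangle-[a,b]$ should be recognized using the Courant axiom~(5) of Definition~\ref{defn:CA} (equivalently the already established~\eqref{eq:cuasicom2}), while the derivation property of $S$ from~\eqref{eq:claveAi} is used to distribute $S$ across the normally ordered products.

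I expect the main obstacle to be entirely a matter of bookkeeping rather than of ideas: tracking the Koszul signs forced by the odd parities of $a$, $b$ and of the operator $S$, and evaluating the three kinds of SUSY integrals $\int_{-\nabla}^0 d\Lambda$, $\int_0^\nabla d^r\Lambda$ and $\int_0^\Lambda d\Gamma$ with the conventions of~\cite[Appendix A.2]{AAGa}. Because the $\Lambda$-independence of all $f$-brackets truncates every formal series after its first one or two modes, no convergence or higher-order phenomenon intervenes, and each identity reduces to matching a small handful of explicit coefficients, with the skew-symmetry relation~\eqref{eq:comLambda} available whenever a bracket must be reoriented.
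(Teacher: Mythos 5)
Your proposal is correct and takes essentially the same route as the paper, whose proof consists precisely of the attributions you make: quasicommutativity for \eqref{eq:abffab}, \eqref{eq:bfafab} and \eqref{eq:TabffTab}, and quasiassociativity, supplemented where needed by the non-commutative Wick formula and sesquilinearity, for the remaining identities — your key observation that every bracket involving $f$ is $\Lambda$-independent, so the SUSY integrals collapse onto the pairing term $2\chi\left\langle a,b\right\rangle$, is exactly the mechanism that makes the paper's one-line proof work. The only cosmetic deviation is obtaining \eqref{eq:TabffTab} by applying the derivation $T$ to \eqref{eq:abffab} rather than from quasicommutativity directly; this is equally valid, since the residual commutation of $:ab:$ with $Tf$ follows from the same $\chi$-free-bracket argument.
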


\begin{proof}
The first, second and last identities follow directly from quasicommutativity, while the other ones follow from the quasiassociativity axiom~\eqref{eq:cuasiaso}. The proof of some of these identities requires the non-commutative Wick formula~\eqref{eq:Wick} as well.
\end{proof}

\begin{lemma}
\label{lem:tech3bisbis}
For $f,g \in C^\infty(M,\C)$ and $a,b \in \Omega^0(\Pi E)$, the following identities hold:
\begin{align}
::fg::ab:: & = :f:g:ab:::\label{eq:fgabagab},\\
::fa::gb:: & = :f:a:gb:::+2:\left(Tf\right):g\left\langle a,b\right\rangle::\label{eq:fagbfagb}.
\end{align}
\end{lemma}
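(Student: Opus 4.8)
The plan is to derive both identities directly from the quasiassociativity axiom~\eqref{eq:cuasiaso}, reducing each to an evaluation of the correction terms by means of the non-commutative Wick formula~\eqref{eq:Wick} together with the compatibility properties of $\Omega^{\ch}_E$ in Proposition~\ref{prop:CDRE}. The two structural inputs that make the computation tractable are that functions generate a commutative subalgebra, so that $[f_\Lambda g]=0$ for $f,g\in C^\infty(M,\C)$, and the two elementary brackets $[f_\Lambda a]=\langle\mathcal{D}f,a\rangle$ from~\eqref{eq:immedi} and $[a_\Lambda b]=[a,b]+2\chi\langle a,b\rangle$ from part~(3) of Proposition~\ref{prop:CDRE}. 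The key bookkeeping device is the $\chi$-parity (the value of $J$) of each Fourier supermode: I will use that the right-integral $\int_0^\nabla d^r\Lambda$ annihilates every mode of even $\chi$-degree (the $\lambda^j\chi^0$ terms) and converts the $\lambda^0\chi^1$ term into a factor of $T$ acting on the integrated state. This last rule can be calibrated against the already-established identity~\eqref{eq:afbafb}, whose correction $2:(Tf)\langle a,b\rangle:$ arises precisely from the $2\chi\langle a,b\rangle$ summand of $[a_\Lambda b]$.

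For~\eqref{eq:fgabagab} I would apply~\eqref{eq:cuasiaso} to the triple $(x,y,z)=(f,g,:ab:)$, so that the difference $::fg::ab::-:f:g:ab:::$ equals $:(\int_0^\nabla d^r\Lambda\,f)[g_\Lambda :ab:]:+:(\int_0^\nabla d^r\Lambda\,g)[f_\Lambda :ab:]:$. Expanding $[g_\Lambda :ab:]$ with the Wick formula and $[g_\Lambda a]=\langle\mathcal{D}g,a\rangle$, $[g_\Lambda b]=\langle\mathcal{D}g,b\rangle$ produces two $\chi^0$ terms, while the double-contraction term $\int_0^\Lambda d\Gamma\,[[g_\Lambda a]_\Gamma b]$ lands in $\lambda$-order, hence also in even $\chi$-degree. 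Since every mode of $[g_\Lambda :ab:]$ (and symmetrically of $[f_\Lambda :ab:]$) has $J=0$, the right-integrals annihilate them and both correction terms vanish, giving~\eqref{eq:fgabagab}.

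For~\eqref{eq:fagbfagb} I would instead take $(x,y,z)=(f,a,:gb:)$. The correction coming from $[f_\Lambda :gb:]$ vanishes because $[f_\Lambda g]=0$ forces $[f_\Lambda :gb:]=:g\langle\mathcal{D}f,b\rangle:$, a pure $\chi^0$ mode killed by $\int_0^\nabla d^r\Lambda$. The surviving correction comes from $[a_\Lambda :gb:]$: its Wick expansion contains the $\chi^0$ pieces $:\langle\mathcal{D}g,a\rangle b:+:g[a,b]:$ and the double-contraction $\lambda$-term, all annihilated, plus the single $\chi^1$ contribution $2\chi:g\langle a,b\rangle:$ coming from the $2\chi\langle a,b\rangle$ part of $[a_\Lambda b]$. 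Applying $\int_0^\nabla d^r\Lambda\,f$ converts this last mode into $2:(Tf):g\langle a,b\rangle::$, which is exactly the claimed correction term.

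The main obstacle is the precise handling of the nested formal integrals $\int_0^\nabla d^r\Lambda$ and $\int_0^\Lambda d\Gamma$ in the super setting, and in particular confirming that the double-contraction Wick term genuinely carries even $\chi$-degree so that it is annihilated rather than contributing a spurious correction; this is what guarantees that~\eqref{eq:fgabagab} has no correction while~\eqref{eq:fagbfagb} has exactly one. Once the $\chi$-parity of each mode is tracked correctly — with $[f_\Lambda g]=0$ eliminating the only term that could spoil the parity count — the remaining manipulations are routine applications of the identities in Lemmas~\ref{lem:tech2bis} and~\ref{lem:tech3bis}.
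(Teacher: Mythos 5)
Your proof is correct and follows essentially the same route as the paper, whose proof of Lemma~\ref{lem:tech3bisbis} is precisely that both identities follow from quasiassociativity~\eqref{eq:cuasiaso} (with the Wick formula~\eqref{eq:Wick} used to evaluate the correction terms, exactly as you do). Your parity bookkeeping for the right-integral, calibrated against~\eqref{eq:afbafb}, together with $[f_\Lambda g]=0$ and the elementary brackets from Proposition~\ref{prop:CDRE}, correctly accounts for the vanishing of all corrections in~\eqref{eq:fgabagab} and for the single surviving term $2:\left(Tf\right):g\left\langle a,b\right\rangle::$ in~\eqref{eq:fagbfagb}.
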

\begin{proof}
Both identities follow from quasiassociativity.
\end{proof}

\begin{lemma}
For all $a,b,c \in \Omega^0(\Pi E)$, the following identities hold:
\begin{align}
:a:bc:: & = ::bc:a: + 2T\left(\left\langle a,b\right\rangle c-\left\langle a,c\right\rangle b\right),\label{eq:cuasicom3}\\
::ab:c: & = :a:bc::+2\left(:\left(Ta\right)\left\langle b,c\right\rangle:-:\left(Tb\right)\left\langle a,c\right\rangle:\right),\label{eq:cuasiaso2}\\
::a\left(Sb\right):c: &= :a:\left(Sb\right)c::+:\left(Ta\right)\left[b,c\right]:+2:\left(TSb\right)\left\langle a,c\right\rangle:,\label{eq:cuasiaso3}\\
:a:bc:: & = :b:ca::+2\left(:c\left(T\left\langle a,b\right\rangle\right):-:b\left(T\left\langle a,c\right\rangle\right):\right)
,\label{eq:cuasicomaso}\\
:a:bc:: & = :c:ab::+2\left(:a\left(T\left\langle b,c\right\rangle\right):-:b\left(T\left\langle a,c\right\rangle\right):+:c\left(T\left\langle a,b\right\rangle\right):\right).\label{eq:cuasiasocom}
\end{align}
\end{lemma}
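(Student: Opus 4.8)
The plan is to derive all five identities by specializing the three defining axioms of a SUSY vertex algebra---quasicommutativity \eqref{eq:cuasicon}, quasiassociativity \eqref{eq:cuasiaso}, and the non-commutative Wick formula \eqref{eq:Wick}---to the chiral de Rham complex $\Omega^{\ch}_E$. The only structural input beyond the axioms is the explicit $\Lambda$-bracket of sections of $\Pi E$ furnished by Proposition~\ref{prop:CDRE}(3), namely $[a_\Lambda b]=[a,b]+2\chi\langle a,b\rangle$, together with the bracket \eqref{eq:immedi} of functions with sections. Throughout, $a,b,c\in\Omega^0(\Pi E)$ are odd, so $|a|=|b|=|c|=1$, the element $:bc:$ is even, $(-1)^{|a||b|}=-1$, and $Sb$ is even since $S$ is odd. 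The decisive computational fact, read off from the conventions of \cite[Appendix A.2]{AAGa}, is that both formal integrals $\int_{-\nabla}^0 d\Lambda$ and $\int_0^\nabla d^r\Lambda$ annihilate every $\chi$-free monomial and, to leading order, send the coefficient of $\chi$ to $T$ acting on the appropriate factor; this is exactly what makes the already proven identity \eqref{eq:cuasicom} hold, and I would first re-derive \eqref{eq:cuasicom} from \eqref{eq:cuasicon} to fix all signs and integral conventions.

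First I would treat the two associativity identities directly. Identity \eqref{eq:cuasiaso2} is the plain specialization of \eqref{eq:cuasiaso}: substituting $[b_\Lambda c]=[b,c]+2\chi\langle b,c\rangle$ and $[a_\Lambda c]=[a,c]+2\chi\langle a,c\rangle$, the $\chi$-free Dorfman terms $[b,c]$ and $[a,c]$ die, while the coefficients of $\chi$ produce $2:(Ta)\langle b,c\rangle:$ and $-2:(Tb)\langle a,c\rangle:$ (the minus sign coming from $(-1)^{|a||b|}=-1$ in front of the second integral). For \eqref{eq:cuasiaso3} I would again use \eqref{eq:cuasiaso}, now with the even middle factor $Sb$: sesquilinearity \eqref{eq:sesquiLambda} gives $[(Sb)_\Lambda c]=\chi[b_\Lambda c]=\chi[b,c]-2\lambda\langle b,c\rangle$, so now the $\chi$ multiplying the Dorfman bracket $[b,c]$ makes that term survive the integral, yielding the extra summand $:(Ta)[b,c]:$, while the second integral contributes $2:(TSb)\langle a,c\rangle:$. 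This is precisely what accounts for the asymmetry between \eqref{eq:cuasiaso2} and \eqref{eq:cuasiaso3}.

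Next I would prove the commutativity identity \eqref{eq:cuasicom3} by applying \eqref{eq:cuasicon} to $a$ and the even element $:bc:$, expanding $[a_\Lambda\!:\!bc\!:]$ with the Wick formula \eqref{eq:Wick}. Substituting the explicit brackets and carefully carrying the Koszul sign incurred when the odd variable $\chi$ is moved past the odd section $b$ (which flips $:b(2\chi\langle a,c\rangle):$ into $-2\chi\langle a,c\rangle b$), one finds the integrand to equal $:[a,b]c:+2\chi\langle a,b\rangle c+:b[a,c]:-2\chi\langle a,c\rangle b+2\lambda\langle[a,b],c\rangle$, where the last $\lambda$-term comes from $\int_0^\Lambda d\Gamma[[a_\Lambda b]_\Gamma c]$. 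Every $\chi$-free term (the two Dorfman normally ordered products and the $\lambda$-term) is then annihilated by $\int_{-\nabla}^0 d\Lambda$, leaving exactly $2T(\langle a,b\rangle c-\langle a,c\rangle b)$, as claimed.

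Finally, \eqref{eq:cuasicomaso} and \eqref{eq:cuasiasocom} do not require returning to the axioms: they follow by combining \eqref{eq:cuasicom3} with \eqref{eq:cuasiaso2} applied after a cyclic relabelling of $a,b,c$, using the symmetry $\langle b,a\rangle=\langle a,b\rangle$, the commutation of function and section factors under the normally ordered product \eqref{eq:affa}, and the Leibniz rule $T:fb:=:(Tf)b:+:f(Tb):$ to reorganize the $T$-derivatives; indeed, after substitution the two reorganizations $2:(Tb)\langle a,c\rangle:-2T(\langle a,c\rangle b)=-2:b(T\langle a,c\rangle):$ and $-2:(Tc)\langle a,b\rangle:+2T(\langle a,b\rangle c)=2:c(T\langle a,b\rangle):$ collapse the expression to \eqref{eq:cuasicomaso}, and \eqref{eq:cuasiasocom} is obtained by one further application of the same manipulations. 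I expect the main obstacle to be purely bookkeeping: controlling the parity signs produced whenever $\chi$ or $S$ crosses an odd factor, and verifying term by term which monomials in $(\lambda,\chi)$ survive each formal integral---a single misplaced sign would, for instance, convert the required $-\langle a,c\rangle b$ in \eqref{eq:cuasicom3} into a $+$. No genuinely new idea is needed beyond this careful sign and integral accounting.
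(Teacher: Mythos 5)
Your strategy is exactly the paper's: its proof of this lemma likewise obtains \eqref{eq:cuasicom3} from quasicommutativity (expanding $[a_\Lambda :bc:]$ with the Wick formula), \eqref{eq:cuasiaso2} and \eqref{eq:cuasiaso3} from quasiassociativity, and the last two identities by composing the first ones. Your execution of the first four identities is correct, including the delicate points: the Koszul sign when $\chi$ crosses $b$ in $:b[a_\Lambda c]:$, the relation $\chi^2=-\lambda$ behind $[(Sb)_\Lambda c]=\chi[b,c]-2\lambda\langle b,c\rangle$ (which is what makes $:(Ta)[b,c]:$ survive in \eqref{eq:cuasiaso3}), and the two reorganizations $2:(Tb)\langle a,c\rangle:-2T(\langle a,c\rangle b)=-2:b(T\langle a,c\rangle):$ and $-2:(Tc)\langle a,b\rangle:+2T(\langle a,b\rangle c)=2:c(T\langle a,b\rangle):$, which do yield \eqref{eq:cuasicomaso}.

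The final step, however, does not check out as written. ``One further application of the same manipulations'' does not produce \eqref{eq:cuasiasocom} as printed: applying \eqref{eq:cuasiaso2} and then \eqref{eq:cuasicom3} with $(a,b,c)$ relabelled $(c,a,b)$, and simplifying with \eqref{eq:affa} and the Leibniz rule for $T$ exactly as in your treatment of \eqref{eq:cuasicomaso}, one lands on
\[
:a:bc::\;=\;:c:ab::\;+\;2\left(:a\left(T\left\langle b,c\right\rangle\right):-:b\left(T\left\langle a,c\right\rangle\right):\right),
\]
i.e.\ \emph{without} the summand $2:c(T\langle a,b\rangle):$. The same two-term identity also follows from your own chain: applying \eqref{eq:cuasicomaso} first to $(a,b,c)$ and then to $(b,c,a)$, the two terms $\pm 2:c(T\langle a,b\rangle):$ cancel. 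Hence \eqref{eq:cuasicomaso} and \eqref{eq:cuasiasocom} as printed jointly force $:c(T\langle a,b\rangle):=0$ for all odd sections, which is false when $\langle a,b\rangle$ is non-constant (e.g.\ in the standard Courant algebroid over $\R$, take $a=\Pi\partial_x$, $b=\Pi(x\,dx)$, $c=\Pi dx$, so $\langle a,b\rangle=x/2$). The extra term in the printed \eqref{eq:cuasiasocom} is therefore spurious; note that in all of the paper's subsequent applications the identity is invoked on frame elements $e^j,e_k$ whose pairwise pairings are the constants $\delta_{jk}$ or $0$, so $T\langle a,b\rangle=0$ there and the discrepancy is inert downstream. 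To complete your proposal you must either derive the two-term version displayed above and flag the superfluous term, or exhibit a source for an additional $2:c(T\langle a,b\rangle):$ --- your assertion that the same manipulations collapse to \eqref{eq:cuasiasocom} as stated is the one step of your argument that fails.
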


\begin{proof}
The identity \eqref{eq:cuasicom3} follows directly from quasicommutativity, while the identities \eqref{eq:cuasiaso2} and \eqref{eq:cuasiaso3} follow from quasiassociativity. The identity~\eqref{eq:cuasicomaso} follows applying \eqref{eq:cuasicom3} and \eqref{eq:cuasiaso2}, while~\eqref{eq:cuasiasocom} follows applying \eqref{eq:cuasiaso2} and \eqref{eq:cuasicom3} in that order.
\end{proof}

\begin{lemma}\label{lem:tech4bis}
For $f \in C^\infty(M,\C)$ and $a,b,c \in \Omega^0(\Pi E)$, the following identities hold:
\begin{align}
::fa::bc:: &= :a:b:fc:::+2\left(:\left(Tf\right)\left(:\left\langle a,b\right\rangle c:-:b\left\langle a,c\right\rangle:\right):\right),\label{eq:fabcabfc}\\
:a:b:fc::: &= -:b:a:fc:::+2:\left(T\left\langle a,b\right\rangle\right):fc::,\label{eq:15}\\
:a:b:fc::: &= :a::bf:c::-2:\left(Tf\right):\left\langle b,c\right\rangle a::\label{eq:16}.
\end{align}
\end{lemma}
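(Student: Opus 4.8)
The three identities of this lemma are all statements in the SUSY vertex algebra $\Omega^{\ch}_E$, and the plan is to reduce each of them to the defining axioms --- quasicommutativity \eqref{eq:cuasicon}, quasiassociativity \eqref{eq:cuasiaso} and the non-commutative Wick formula \eqref{eq:Wick} --- combined with the structural relations of Proposition~\ref{prop:CDRE} and the identities already established in this appendix. The key simplification throughout is that $f$ and every pairing $\langle\cdot,\cdot\rangle$ are (images of) functions, so that $[f_\Lambda a]=\langle\mathcal{D}f,a\rangle$ and $[a_\Lambda f]=\pi(a)(f)$ are independent of $\Lambda$, brackets of two functions vanish, and functions commute past any element inside a normally ordered product by \eqref{eq:affa}. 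This kills most of the quantum corrections and leaves only $T$-derivative terms to be tracked. Throughout, $f$ is even and $a,b,c$ are odd, so the Koszul sign $(-1)^{|a||b|}=-1$ will play a decisive role.

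For \eqref{eq:15}, set $w=:fc:$ and apply quasiassociativity \eqref{eq:cuasiaso} twice, once to the triple $(a,b,w)$ and once to $(b,a,w)$, writing $:a:bw::=::ab:w:-c_1$ and $:b:aw::=::ba:w:-c_2$ with $c_1,c_2$ the respective correction terms. Adding the two relations, the outer products combine by \eqref{eq:cuasicom} into $::ab:w:+::ba:w:=:(:ab:+:ba:)w:=2:(T\langle a,b\rangle)w:$, while the sign $(-1)^{|a||b|}=-1$ makes the two corrections cancel, $c_1+c_2=0$. This yields \eqref{eq:15} at once.

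For \eqref{eq:16}, I would first rewrite the innermost product using \eqref{eq:afbafb} in the form $:b:fc::=::bf:c:-2:(Tf)\langle b,c\rangle:$, substitute it into $:a:b:fc:::$, and then move the even element $Tf$ and the function $\langle b,c\rangle$ into the required position in the residual term $-2:a:(Tf)\langle b,c\rangle::$, using that $[(Tf)_\Lambda g]=0$ for a function $g$ (so $Tf$ commutes past functions inside the product) together with the rearrangement identities of Lemma~\ref{lem:tech3bis}. Identity \eqref{eq:fabcabfc} is the most involved: I would expand $::fa::bc::$ by quasiassociativity \eqref{eq:cuasiaso} to unnest $:bc:$, use $:fa:=:af:$ and \eqref{eq:afbafb} to rewrite the resulting outer factor, and collect the quantum corrections; here the brackets $[b_\Lambda c]=[b,c]+2\chi\langle b,c\rangle$ and $[(:fa:)_\Lambda c]$ are precisely the source of the correction term $2:(Tf)(:\langle a,b\rangle c:-:b\langle a,c\rangle:):$.

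The main obstacle is bookkeeping rather than conceptual: one must correctly evaluate the superintegrals $\int_0^{\nabla}$ and $\int_{-\nabla}^0$ appearing in \eqref{eq:cuasicon} and \eqref{eq:cuasiaso}, keep track of the $T$-derivative corrections and of the Koszul signs coming from the $\ZZ/2\ZZ$-grading, and, in the proof of \eqref{eq:fabcabfc}, expand the mixed bracket $[(:fa:)_\Lambda c]$ via the non-commutative Wick formula \eqref{eq:Wick}, whose $\int_0^{\Lambda}$ term contributes a further $T$-derivative. Once these corrections are assembled and the purely function-valued brackets are discarded, each identity follows by direct comparison of both sides.
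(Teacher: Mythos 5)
Your treatment of \eqref{eq:15} is correct and is essentially the paper's own argument (``quasiassociativity, \eqref{eq:cuasicom}, quasiassociativity'') in a cleaner packaging: adding the quasiassociativity relations for the triples $(a,b,w)$ and $(b,a,w)$ does make the two correction integrals cancel, precisely because the quasiassociativity correction is symmetric up to the factor $(-1)^{|a||b|}=-1$, and what survives is $:(:ab:+:ba:)w:=2:(T\langle a,b\rangle)w:$ by \eqref{eq:cuasicom}. Your route to \eqref{eq:16} also coincides with the intended one (the paper's ``follows from quasiassociativity'' is exactly your substitution of \eqref{eq:afbafb} inside the outer product), with one caveat: the residual step $:a:(Tf)\langle b,c\rangle::=:(Tf):\langle b,c\rangle a::$ is not literally covered by Lemma~\ref{lem:tech3bis}, whose identities are stated for a function $f$ rather than for $Tf$. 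It does hold, but you should justify it directly: every quasicommutativity/quasiassociativity move in the shuffle is correction-free because the relevant brackets $[(Tf)_\Lambda g]=0$ (two functions), $[(Tf)_\Lambda a]=-\lambda\langle\mathcal{D}f,a\rangle$ and $[g_\Lambda a]=\langle\mathcal{D}g,a\rangle$ contain no $\chi$-component, so all the superintegrals vanish. Relatedly, your blanket claim that functions ``commute past any element inside a normally ordered product'' is an overstatement — \eqref{eq:bfafab} and \eqref{eq:afbafb} show such moves can produce $T$-corrections — though the specific moves you actually invoke are fine.

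For \eqref{eq:fabcabfc} you take a genuinely different route from the paper, and it costs you. The paper applies quasiassociativity once to the triple $(f,a,:bc:)$, so the entire quantum correction is produced by a single Wick expansion of $[a_\Lambda:bc:]$, whose $\chi$-component is $2\chi(:\langle a,b\rangle c:-:b\langle a,c\rangle:)$, while $[f_\Lambda:bc:]$ has no $\chi$-component and dies under the Berezin integral; afterwards $f$ is pushed inward by the correction-free shuffles \eqref{eq:abffab}, \eqref{eq:abfabf}, \eqref{eq:affa}, giving $:f:a:bc:::=:a:b:fc:::$. You instead unnest $:bc:$ against the composite odd element $:fa:$, i.e.\ apply \eqref{eq:cuasiaso} to $(:fa:,b,c)$. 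That is a valid start, but your assertion that $[b_\Lambda c]$ and $[(:fa:)_\Lambda c]$ are ``precisely the source'' of the final correction is not accurate on your route: that first application contributes $2:(T(:fa:))\langle b,c\rangle:-2:(Tb):f\langle a,c\rangle::$ (the $\chi$-part of $[(:fa:)_\Lambda c]$, computed via skew-symmetry and Wick, is $2:f\langle a,c\rangle:$), and neither piece is yet of the stated form. You must still expand $T(:fa:)=:(Tf)a:+:f(Ta):$ by the Leibniz rule, collect the $2:(Tf)\langle a,b\rangle:$-type terms generated by the \eqref{eq:afbafb} rewriting of $::fa:b:$, and verify that the $:f(Ta):$- and $:(Tb):f\langle a,c\rangle::$-type terms cancel against the corrections of the second unnesting. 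This recombination does close up — each step is an identity — but the paper's placement of quasiassociativity is chosen exactly to avoid it, which is what its shorter proof buys.
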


\begin{proof}
The first identity follows from quasiassociativity, and applying identities \eqref{eq:abffab}, \eqref{eq:abfabf} and \eqref{eq:affa} in that order. The second one follows from quasiassociativity,  \eqref{eq:cuasicom}, and quasiassociativity, in that order. The last one follows from quasiassociativity.
\end{proof}

\begin{lemma}\label{lem:tech5}
For $a,b,c,d \in \Omega^0(\Pi E)$, the following identities hold:
\begin{align}
::a:bc::d: & = :a::bc:d::+2\Big(\!:(Ta)\left(\left\langle d,c\right\rangle b-\left\langle d,b\right\rangle c\right):+:T\left(:bc:\right)\left\langle a,d\right\rangle:\!\Big),\label{eq:18}\\
:a::bc:d:: & = -:::bc:d:a:\nonumber\\
&\hspace*{-4ex}
+2\Big(\!:\left\langle a,b\right\rangle \left(T\left(:cd:\right)\right):-:\left\langle a,c\right\rangle \left(T\left(:bd:\right)\right):+:\left(T\left(:bc:\right)\right)\left\langle a,d\right\rangle:\!\Big).
\label{eq:19}
\end{align}
\end{lemma}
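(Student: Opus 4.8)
The plan is to derive both identities directly from the three structural axioms of a SUSY vertex algebra --- quasicommutativity \eqref{eq:cuasicon}, quasiassociativity \eqref{eq:cuasiaso} and the non-commutative Wick formula \eqref{eq:Wick} --- together with skew-symmetry \eqref{eq:comLambda} and the explicit $\Lambda$-brackets of sections of $\Pi E$ furnished by Proposition \ref{prop:CDRE}. The key simplifying input is that, for $x,y\in\Omega^0(\Pi E)$, one has $[x_\Lambda y]=[x,y]+2\chi\langle x,y\rangle$, which is of degree $0$ in $\lambda$ and at most linear in $\chi$. Consequently every iterated bracket appearing below terminates after one or two steps, and the integrals $\int_0^\nabla d^r\Lambda$ and $\int_{-\nabla}^0 d\Lambda$ produce at most a couple of applications of $T=S^2$ and $S$. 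Throughout I would use the elementary rearrangements \eqref{eq:cuasicom}, \eqref{eq:cuasicom3} and \eqref{eq:cuasiaso2} to bring intermediate expressions into the normal form displayed on the right-hand sides.

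For \eqref{eq:18}, I would apply quasiassociativity \eqref{eq:cuasiaso} to the triple $x=a$, $y=:bc:$, $z=d$, noting that $|a|=1$ and $|:bc:|=|b|+|c|=0$, so that the sign $(-1)^{|a||:bc:|}$ equals $+1$. The summand built from $[a_\Lambda d]=[a,d]+2\chi\langle a,d\rangle$ is immediate: the Dorfman part is constant in $\Lambda$ and is annihilated by $\int_0^\nabla d^r\Lambda$ acting on $:bc:$ (Berezin integration in $\chi$), while the $\chi$-linear part yields $2:T(:bc:)\langle a,d\rangle:$, the last term of \eqref{eq:18}. The remaining summand involves $[(:bc:)_\Lambda d]$, which I would compute by first passing to $[d_{-\Lambda-\nabla}:bc:]$ via skew-symmetry \eqref{eq:comLambda} and then expanding $[d_\Gamma:bc:]$ with the Wick formula \eqref{eq:Wick}. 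After substituting $\Gamma=-\Lambda-\nabla$ and applying $\int_0^\nabla d^r\Lambda$ to $a$, the pairing contractions $\langle d,b\rangle$ and $\langle d,c\rangle$ survive with a single $T$ acting on $a$ and assemble into $2:(Ta)(\langle d,c\rangle b-\langle d,b\rangle c):$.

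For \eqref{eq:19}, I would instead use quasicommutativity \eqref{eq:cuasicon} for the pair $a$ and $::bc:d:$, both of odd parity, so that $-(-1)^{|a||::bc:d:|}=+1$ and one obtains $:a::bc:d::+:::bc:d:a:=\int_{-\nabla}^0 d\Lambda\,[a_\Lambda(::bc:d:)]$. The bracket on the right is evaluated by iterating \eqref{eq:Wick}: once on the outer product $(:bc:)\cdot d$, and once more on the inner product $b\cdot c$ inside $[a_\Lambda:bc:]$. The three elementary contractions $\langle a,b\rangle$, $\langle a,c\rangle$, $\langle a,d\rangle$ produce, after the $\int_{-\nabla}^0 d\Lambda$ integration, the three normally ordered terms $2(:\langle a,b\rangle T(:cd:):-:\langle a,c\rangle T(:bd:):+:(T(:bc:))\langle a,d\rangle:)$ of \eqref{eq:19}, the relative signs being dictated by the position from which the contracted factor is removed.

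The main obstacle in both cases is not conceptual but the careful bookkeeping of the $\Lambda$-integral conventions and of the Grassmann signs. In particular, under the skew-symmetry substitution $\Gamma=-\Lambda-\nabla$ the Dorfman-bracket pieces (those of degree zero in $\chi$) of the iterated brackets acquire $\nabla=(T,S)$ insertions through sesquilinearity \eqref{eq:sesquiLambda}; one must check that these insertions either cancel in pairs or recombine into total derivatives, so that only the pairing contractions contribute to the final normally ordered expressions. Verifying this cancellation, while keeping all the odd signs consistent, is the delicate step that makes the two identities nontrivial despite their routine-looking origin.
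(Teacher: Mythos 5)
Your proposal follows essentially the same route as the paper's proof: identity \eqref{eq:18} is obtained from quasiassociativity \eqref{eq:cuasiaso} and identity \eqref{eq:19} from quasicommutativity \eqref{eq:cuasicon}, with the non-commutative Wick formula \eqref{eq:Wick} (together with skew-symmetry and the degree-one truncation of $[x_\Lambda y]$ for $x,y\in\Omega^0(\Pi E)$) used to evaluate the correction terms in both cases. The only cosmetic difference is which elementary rearrangement identities you invoke for the final bookkeeping (you cite \eqref{eq:cuasicom}, \eqref{eq:cuasicom3}, \eqref{eq:cuasiaso2}, whereas the paper uses \eqref{eq:abffab} and \eqref{eq:TabffTab} to commute functions past normally ordered products), which does not change the substance of the argument.
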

\begin{proof}
The first identity follows from quasiassociativity, while the second one follows from quasicommutativity. In both cases, we use the non-commutative Wick formula, combined with \eqref{eq:abffab} and \eqref{eq:TabffTab}.
\end{proof}

We will need a variant of Jacobi's classical formula for the sheaf $M_{n\times n}(\Omega^{\ch}_E)$ of matrix-valued sections of the chiral de Rham complex $\Omega^{\ch}_E$. 
We extend $S,T\colon\Omega^{\ch}_E\to\Omega^{\ch}_E$ to 
\[
S,T\colon M_{n\times n}(\Omega^{\ch}_E)\lto M_{n\times n}(\Omega^{\ch}_E),
\]
given by their action on the matrix entries, that is, $SA=(SA_{jk}), TA=(TA_{jk})$, for all local sections $A=\(A_{jk}\)_{1\leq j,k\leq n}$ of $M_{n\times n}(\Omega^{\ch}_E)$, 
and the normally ordered product on $\Omega^{\ch}_E$ to
\[
M_{n\times n}(\Omega^{\ch}_E)\times M_{n\times n}(\Omega^{\ch}_E)\lto M_{n\times n}(\Omega^{\ch}_E), \quad (A,B)\longmapsto :AB:,
\]
given by combining the matrix multiplication and the normally ordered product of the matrix entries.
Furthermore, we extend the embeddings $\iota$ and $\jmath$ of Proposition~\ref{prop:CDRE} to
\[
\iota\colon M_{n\times n}(C^\infty(M,\C))\lhra M_{n\times n}(\Omega^{\ch}_E),
\quad
\jmath\colon M_{n\times n}(\Pi E)\lhra M_{n\times n}(\Omega^{\ch}_E).
\]
given by applying $\iota$ and $\jmath$ to the matrix entries, though the symbols $\iota$, $\jmath$ will be omitted.


\begin{lemma}\label{lem:matrixid}
For all $A \in M_{n\times n}(C^\infty(M,\C))$ such that $\det A>0$ everywhere on $M$,
\[
T(\log\det A)=\tr(:A^{-1}(TA):).
\]
\end{lemma}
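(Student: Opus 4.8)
The plan is to lift the classical Jacobi identity $d\log\det A=\tr(A^{-1}\,dA)$ to the chiral setting by applying the odd generator $S$ twice and controlling the resulting non-(co)associativity corrections. First I would record two structural facts. Since the state-superfield correspondence makes $S$ an odd derivation of the normally ordered product, its square $T=S^2$ is an even derivation. Moreover, because $\iota$ is an isomorphism of unital commutative algebras onto its image (Proposition~\ref{prop:CDRE}(1)) and normally ordered products of functions commute, the matrix $A$, its determinant and its inverse all live in $M_{n\times n}(\iota(C^\infty(M,\C)))$ and satisfy the ordinary relations of matrix algebra; in particular $:A^{-1}A:=\mathrm{Id}$ and $\det A=\iota(\det_{\mathrm{cl}}A)$, and since $\det_{\mathrm{cl}}A>0$ the state $\log\det A:=\iota(\log\det_{\mathrm{cl}}A)$ is well defined.

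Next I would compute $S\log\det A$. Using $2S\iota(f)=\jmath(\mathcal{D}f)$ (Proposition~\ref{prop:CDRE}(2)), the chain rule $\mathcal{D}\log\det A=(\det A)^{-1}\mathcal{D}\det A$, and the classical cofactor expansion $\mathcal{D}\det A=\sum_{j,k}\mathrm{cof}(A)_{jk}\,\mathcal{D}A_{jk}$ together with $(\det A)^{-1}\mathrm{cof}(A)_{jk}=(A^{-1})_{kj}$ and $\jmath(fa)=:\iota(f)\jmath(a):$, I obtain
\[
S\log\det A=\tr\big(:A^{-1}(SA):\big).
\]
Applying $S$ once more and using that $T$ is an even derivation gives
\[
T\log\det A=\tr\big(:(SA^{-1})(SA):\big)+\tr\big(:A^{-1}(TA):\big),
\]
so the whole statement reduces to showing that the first trace vanishes.

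For that vanishing I would differentiate the constant-function matrix identity $:A^{-1}A:=\mathrm{Id}$ to get $:(SA^{-1})A:=-:A^{-1}(SA):$, hence $SA^{-1}=-:A^{-1}(SA)A^{-1}:$, so the term in question equals $-\tr\!\big(:A^{-1}(SA)A^{-1}(SA):\big)$. The crucial input is that the odd sections $SA_{jk}=\tfrac12\jmath(\mathcal{D}A_{jk})$ anticommute pairwise under the normally ordered product with no $T$-correction: indeed $\langle SA_{jk},SA_{lm}\rangle=\tfrac14\langle\mathcal{D}A_{jk},\mathcal{D}A_{lm}\rangle=0$, since $\pi\circ\mathcal{D}=0$ in any Courant algebroid, and for the same reason the function entries $(A^{-1})_{ij}$ slide freely past them in the function-shuffling identities of Appendix~\ref{app:1} (such as~\eqref{eq:afbafb},~\eqref{eq:fagbfagb} and~\eqref{eq:fabcabfc}). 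Writing $C=A^{-1}$ and $\beta=SA$ and pulling the commuting functions out, the trace becomes $\sum:C_{ab}C_{cd}\,\beta_{bc}\beta_{da}:$; relabelling $(a,b,c,d)\mapsto(c,d,a,b)$, using that the $C$'s commute and that the $\beta$'s anticommute sends this expression to its own negative, so it is zero. This yields $T\log\det A=\tr(:A^{-1}(TA):)$, as claimed. I expect the main obstacle to be precisely the bookkeeping of the quasi-commutativity and quasi-associativity corrections generated at each step, which is exactly what the vanishing $\langle\mathcal{D}f,\mathcal{D}g\rangle=0$ is there to eliminate.
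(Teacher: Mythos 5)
Your proof is correct, and it reaches the identity by a genuinely different route than the paper's. The paper first upgrades the chain rule for $S$ to a chain rule for $T=S^2$ valid for an arbitrary composite $f=h\circ y$ (its equation \eqref{eq:proof:lem:matrixid.2}), by showing that the second-derivative term $::(\partial_k\partial_jh\circ y)(Sy^k):(Sy^j):$ vanishes because the symmetric Hessian pairs against the factors $Sy^j$, which anticommute \emph{strictly} since $\langle Sy^k,Sy^j\rangle=\tfrac14\langle\mathcal{D}y^k,\mathcal{D}y^j\rangle=0$; it then simply plugs in Jacobi's classical formula $\partial\log\det A/\partial A_{jk}=(A^{-1})^{\mathrm{T}}_{jk}$. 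You instead apply $S$ to the $S$-level Jacobi identity $S\log\det A=\tr(:A^{-1}(SA):)$ and kill the cross term $\tr(:(SA^{-1})(SA):)=-\tr(:A^{-1}(SA)A^{-1}(SA):)$ by an index relabeling, using the same two vanishings ($\langle\mathcal{D}f,\mathcal{D}g\rangle=0$, forced by $\pi\circ\mathcal{D}=0$ from the complex \eqref{eq:Couseqaux}, and $[\jmath(\mathcal{D}f)_\Lambda\iota(g)]=0$) to guarantee that all quasi-commutativity and quasi-associativity corrections in \eqref{eq:cuasicom}, \eqref{eq:affa}, \eqref{eq:afbafb}, \eqref{eq:fagbfagb} disappear. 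The two arguments are in fact dual: since $\partial^2\log\det A/\partial A_{jk}\,\partial A_{lm}=-(A^{-1})_{kl}(A^{-1})_{mj}$, your vanishing trace is exactly the Hessian term the paper discards, with the symmetric--antisymmetric cancellation implemented by your relabeling $(a,b,c,d)\mapsto(c,d,a,b)$ rather than by symmetry of second partials. What the paper's route buys is a reusable chain rule for $T$ on arbitrary smooth composites; what yours buys is a shorter, self-contained computation tailored to the determinant, at the cost of somewhat heavier explicit bookkeeping (the step $SA^{-1}=-:A^{-1}(SA)A^{-1}:$ and the function-sliding moves), which you correctly identify and justify.
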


\begin{proof}
Observe that $S\colon\Omega^{\ch}_E\to\Omega^{\ch}_E$ satisfies the chain rule
\begin{equation}\label{eq:proof:lem:matrixid.1}
S{f}=\(\partial_jh\circ y\)\,Sy^j=:\(\partial_jh\circ y\)\,(Sy^j):,
\end{equation}
for all smooth $y=(y^1,\ldots,y^p)\colon M\to\C^p$, $h\colon \C^p\to\C$, where $f=h\circ y\in C^\infty(M,\C)$ and $\partial_jh=\partial h/\partial y^j$ (recall we are using the Einstein summation convention for repeated indices). 
The first identity in~\eqref{eq:proof:lem:matrixid.1} follows because $S$ restricts to $\frac{1}{2}\mathcal{D}\colon C^\infty(M,\C)\to\Omega^0(\Pi E)$, by part (2) of Proposition~\ref{prop:CDRE}, where the operator $\mathcal{D}$ in~\eqref{eq:parity-reversed-D.1} is the composite of the $C^\infty(M,\C)$-linear map $\Omega^1(\CC)\to\Omega^0(\Pi E)$ induced by $\pi^*$, $\left\langle\cdot,\cdot\right\rangle$ and $\Pi$, and the exterior differential $\du{}\colon C^\infty(M,\C)\to\Omega^1(\CC)$, that certainly satisfies the same chain rule.
The second identity in~\eqref{eq:proof:lem:matrixid.1} follows now from part (2) of Proposition~\ref{prop:CDRE} (recall we are omitting the symbols $\iota$ and $\jmath$).
Next we show that $T=S^2$ also satisfies the chain rule
\begin{equation}\label{eq:proof:lem:matrixid.2}
Tf=:\(\partial_jh\circ y\)\,(Ty^j):.
\end{equation}
Applying $S$ to~\eqref{eq:proof:lem:matrixid.1}, using the Leibniz rule for the derivation $S$ with respect to the normally ordered product, and the chain rule~\eqref{eq:proof:lem:matrixid.1} with $h$ replaced by $\partial_jh$, we obtain 
\begin{align*}
Tf&=S(Sf)=S\(:\!(\partial_jh\circ y)\,(Sy^j)\!:\)
=:\!\(\partial_jh\circ y\)\,S(Sy^j)\!:+:\!S\(\partial_jh\circ y\)\,(Sy^j)\!:
\\&
=:\(\partial_jh\circ y\)\,(Ty^j)\!:+::\!\(\partial_k\partial_jh\circ y\)(Sy^k)\!:\!(Sy^j)\!:=:\(\partial_jh\circ y\)\,(Ty^j):,
\end{align*}
as required, because~\eqref{eq:afbafb}, the identity $\langle Sy^k,Sy^j\rangle=\frac{1}{4}\langle\mathcal{D}y^k,\mathcal{D}y^j\rangle=0$ (by the Courant algebroid axioms), the symmetry $\partial_k\partial_jh=\partial_j\partial_kh$ and~\eqref{eq:cuasicom} imply
\begin{align*}
::\!(\partial_k&\partial_jh\circ y)(Sy^k)\!:\!(Sy^j)\!:
=:(Sy^k):\!(\partial_k\partial_jh\circ y)(Sy^j)\!::+2:\!(T(\partial_k\partial_jh\circ y))\langle Sy^k,Sy^j\rangle:
\\&
=:(Sy^k):\!(\partial_k\partial_j h\circ y)(Sy^j)\!::
=-::(\partial_k\partial_jh\circ y)(Sy^j)\!:Sy^k\!:+2\langle Sy^k,(\partial_k\partial_j h\circ y)(Sy^j)\rangle
\\&
=-::(\partial_j\partial_kh\circ y)(Sy^j)\!:(Sy^k)\!:.
\end{align*}

%
%
Finally we recall that $\det A$ is a polynomial in the entries of the matrix $A$, and if $\det A>0$, then 
Jacobi's classical formula can be formulated as
\[
\frac{\partial\log\det A}{\partial{A_{jk}}}
=(A^{-1})^{\rm{T}}_{jk},
\]
so the required identity follows by applying the chain rule~\eqref{eq:proof:lem:matrixid.2}:
\[
T(\log\det A)=:\frac{\partial\log\det A}{\partial{A_{jk}}}\,(T{A_{jk}}):
=:(A^{-1})^{\rm{T}}_{jk}(T{A_{jk}}):=\tr\left(:\!A^{-1}(T{A})\!:\right).
\qedhere
\]
\end{proof}

\subsection{Frame identities}

Consider a decomposition $E = \ell\oplus\overline{\ell}\oplus C_-$ as in \eqref{eq:abstractdecomp}. We fix an isotropic frame $\left\lbrace e^j,e_j\right\rbrace_{j=1}^n \subset \Pi \ell\oplus \Pi \overline{\ell}$. We use the notation of Section \ref{sec:localgen}.

\begin{lemma}
The following identity holds:
\begin{equation}\label{eq:clave}
\begin{split}
:\left[e^j,e_k\right]_-\left[e^k,e_j\right]_-: & = T\left\langle\left[e^j,e_k\right]_-,\left[e^k,e_j\right]_-\right\rangle.
\end{split}
\end{equation}
\end{lemma}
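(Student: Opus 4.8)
The plan is to deduce \eqref{eq:clave} purely from quasicommutativity, exploiting the symmetry of the left-hand side under the relabelling $j\leftrightarrow k$ (recall the Einstein convention, so both indices are summed). Writing $a_{jk}:=\left[e^j,e_k\right]_-\in\Omega^0(\Pi C_-)$, the assertion reads $\sum_{j,k}:a_{jk}a_{kj}:=T\sum_{j,k}\left\langle a_{jk},a_{kj}\right\rangle$, so the whole point is that the normally ordered product is symmetric \emph{on average} over the double sum, up to the universal correction term supplied by quasicommutativity.

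First I would record that each $a_{jk}$ is an \emph{odd} section of $\Pi E$: the frame elements $e^j,e_j$ are odd, the Dorfman bracket on $\Pi E$ sends a pair of odd sections to an odd section, and passing to the $C_-$-component (the subscript $-$) preserves parity. Hence quasicommutativity \eqref{eq:cuasicom} applies with a plus sign and gives, for each fixed pair $(j,k)$,
\[
:a_{jk}a_{kj}:+:a_{kj}a_{jk}:=2T\left\langle a_{jk},a_{kj}\right\rangle.
\]
Summing over $j$ and $k$ and then relabelling $j\leftrightarrow k$ in the second normally ordered product (which leaves the double sum invariant), the two contributions on the left coincide, so the left-hand side equals $2\sum_{j,k}:a_{jk}a_{kj}:$. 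Dividing by $2$ yields exactly \eqref{eq:clave}.

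I expect essentially no obstacle here; the only point requiring care is the sign in quasicommutativity. Because $a_{jk}$ and $a_{kj}$ are both odd, the factor $(-1)^{|a||b|}$ in \eqref{eq:cuasicon} equals $-1$, so the $\Lambda$-integral on the right of \eqref{eq:cuasicon} collapses precisely to $2T\left\langle a_{jk},a_{kj}\right\rangle$, as recorded in \eqref{eq:cuasicom}. The potentially dangerous antisymmetric (Wick) correction is exactly what is annihilated by the $j\leftrightarrow k$ symmetrisation of the double sum, and no further identities from Appendix \ref{app:1} are needed.
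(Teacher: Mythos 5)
Your proof is correct and is essentially the paper's own argument: the paper proves this lemma in one line by citing quasicommutativity \eqref{eq:cuasicom} (together with the Courant algebroid axioms for the bookkeeping on the sections $[e^j,e_k]_-$), and your symmetrization of the double sum under $j\leftrightarrow k$ is exactly the step that turns $:ab:+:ba:=2T\langle a,b\rangle$ into the stated identity. The only imprecision is your closing remark that the symmetrisation ``annihilates'' a Wick correction --- in fact the correction $2T\langle a,b\rangle$ survives and \emph{is} the right-hand side; the relabelling is used only to identify the two normally ordered products in the summed expression.
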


\begin{proof}
This follows from \eqref{eq:cuasicom}, using the Courant algebroid axioms (Definition~\ref{defn:CA}).
\end{proof}

\begin{lemma}
For all $a \in \Omega^0(\Pi E)$, the following identities hold:
\begin{align}
2T\left\langle\left[a,e^j\right],e_j\right\rangle & = :\left[a,e^j\right]_{\ell}e_j:+:e^j\left[a,e_j\right]_{\overline{\ell}}:,\label{eq:identech1}\\
:\left[I_+a_+,e^j\right]_+e_j:+:e^j\left[I_+a_+,e_j\right]_+: & =:\left\langle\left[a_{\overline{\ell}},e^j\right],e^k\right\rangle:e_ke_j::+:\left\langle\left[a_{\ell},e_j\right],e_k\right\rangle:e^ke^j::\nonumber\\
& + 2T\left\langle\left[I_+a_+,e^j\right],e_j\right\rangle.\label{eq:identech3}
\end{align}
\end{lemma}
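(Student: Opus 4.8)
The plan is to prove both identities by expanding the $\ell$- and $\overline{\ell}$-components of the relevant Dorfman brackets in the isotropic frame $\{e^j,e_j\}$ and then reorganising the resulting normally ordered products by means of the basic chiral identities of Appendix~\ref{app:1}. Throughout, the decisive simplification is that the frame pairings $\langle e^j,e_k\rangle=\delta_{jk}$ and $\langle e^j,e^k\rangle=\langle e_j,e_k\rangle=0$ are constant: Courant algebroid axiom (4) in Definition~\ref{defn:CA}, applied to these constant pairings (as in~\eqref{eq:local-frame-identitities.1.b}), yields $\langle[a,e^j],e_k\rangle=-\langle e^j,[a,e_k]\rangle$, and the spurious $T$-correction terms produced by reordering normally ordered products, each of which carries a factor of a frame pairing, either vanish (being $T$ of a constant) or telescope into the single $T$-exact term on the right-hand side. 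No conceptual input beyond the chiral identities of Appendix~\ref{app:1}, the Courant algebroid axioms, and the frame relations~\eqref{eq:local-frame-identitities.1} is required.

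For~\eqref{eq:identech1} I would first use isotropy of the decomposition to note that $\langle[a,e^j],e_j\rangle=\langle[a,e^j]_\ell,e_j\rangle$, since the $\overline{\ell}$- and $C_-$-parts of $[a,e^j]$ pair trivially with $e_j\in\overline{\ell}$. Quasicommutativity~\eqref{eq:cuasicom} then gives $:[a,e^j]_\ell e_j:+:e_j[a,e^j]_\ell:=2T\langle[a,e^j]_\ell,e_j\rangle$, which already accounts for the left-hand side, so it remains to identify $:e_j[a,e^j]_\ell:$ with $:e^j[a,e_j]_{\overline{\ell}}:$. For this I would expand $[a,e^j]_\ell=\langle[a,e^j],e_m\rangle e^m$ and $[a,e_j]_{\overline{\ell}}=\langle[a,e_j],e^m\rangle e_m$ using the duality of the frame, interchange the two by axiom (4) and a relabelling of indices, and push the scalar coefficients through the products with~\eqref{eq:bfafab} and~\eqref{eq:afbafb}; the resulting corrections are proportional to $\langle e^j,e_m\rangle=\delta_{jm}$, hence $T$ of a constant, and drop out.

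For~\eqref{eq:identech3} the same mechanism applies, the extra ingredient being the F-term condition~\eqref{eq:Ftermabs}. Writing $I_+a_+=a_{\overline{\ell}}-a_\ell$, the F-term condition annihilates the off-diagonal brackets, namely $[a_\ell,e^j]_{\overline{\ell}}=0$ and $[a_{\overline{\ell}},e_j]_\ell=0$, so that only $[a_{\overline{\ell}},e^j]_{\overline{\ell}}$ and $[a_\ell,e_j]_\ell$ contribute to the $\overline{\ell}$- and $\ell$-projections, respectively. Expanding these in the frame, $[a_{\overline{\ell}},e^j]_{\overline{\ell}}=\langle[a_{\overline{\ell}},e^j],e^k\rangle e_k$ and $[a_\ell,e_j]_\ell=\langle[a_\ell,e_j],e_k\rangle e^k$, and inserting them into $:[I_+a_+,e^j]_+e_j:$ and $:e^j[I_+a_+,e_j]_+:$ produces exactly the two trace-type terms $:\langle[a_{\overline{\ell}},e^j],e^k\rangle:e_ke_j::$ and $:\langle[a_\ell,e_j],e_k\rangle:e^ke^j::$ after reordering with~\eqref{eq:abfabf},~\eqref{eq:fgabagab} and~\eqref{eq:fagbfagb}; the leftover $\ell$- and $\overline{\ell}$-pieces of $[I_+a_+,e^j]$ and $[I_+a_+,e_j]$ recombine, once more by quasicommutativity~\eqref{eq:cuasicom}, into $2T\langle[I_+a_+,e^j],e_j\rangle$.

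The main obstacle is purely one of careful bookkeeping rather than of new ideas: one must keep precise track of the super-signs attached to the parity-reversed bracket and pairing, and verify that every $T$-correction generated when scalar coefficients and frame vectors are commuted past one another through~\eqref{eq:bfafab}--\eqref{eq:fagbfagb} indeed carries a constant frame pairing, so that all such terms either cancel in pairs or assemble into the stated $T$-exact remainders. It is precisely the constancy of the frame pairings, guaranteed by Lemma~\ref{lem:frameprop}(1) in the geometric applications and built into the abstract frame hypotheses here, that makes this collapse possible.
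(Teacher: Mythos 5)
Your overall route --- expanding the brackets in the isotropic frame, converting pairings with the Courant algebroid axioms, and pushing the scalar coefficients through normally ordered products via the identities of Appendix~\ref{app:1} --- is the same kind of argument the paper runs (its proof cites \eqref{eq:afbafb} and \eqref{eq:affa} for the first identity, and additionally \eqref{eq:abffab}, \eqref{eq:bfafab}, \eqref{eq:abfabf} for the second). But there are two genuine gaps. First, your proof of \eqref{eq:identech3} invokes the F-term condition \eqref{eq:Ftermabs}, which is \emph{not} a hypothesis of this lemma: the statement is for an arbitrary decomposition $E=\ell\oplus\overline{\ell}\oplus C_-$ with an isotropic frame, and the paper deliberately postpones the F-term assumption to the two subsequent lemmas (Lemma~\ref{lem:applem} and Lemma~\ref{lem:importprop}); the fact that every later application of \eqref{eq:identech3} (Lemma~\ref{lem:uHu}, Proposition~\ref{teo:mainresult}) happens to occur under \eqref{eq:Ftermabs} does not license strengthening the hypotheses here. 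Without \eqref{eq:Ftermabs} the off-diagonal components you annihilate do not vanish: one has
$[a_\ell,e^j]_{\overline{\ell}}=\langle a_\ell,[e^j,e^k]\rangle\,e_k$ and $[a_{\overline{\ell}},e_j]_{\ell}=\langle a_{\overline{\ell}},[e_j,e_k]\rangle\,e^k$,
and in the general statement these contributions are disposed of not by involutivity but by the antisymmetry of the coefficients in $j,k$ (a consequence of axiom (4) together with $[e^j,e^k]=-[e^k,e^j]$) played against the symmetry properties of the nested products $::fe_k:e_j:$ after the moves \eqref{eq:bfafab}, \eqref{eq:afbafb} --- a cancellation your argument never performs because you assumed the terms away. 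So as written you prove at best a special case of the lemma.

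Second, your bookkeeping principle --- that every correction generated when scalars are commuted through is ``proportional to $\langle e^j,e_m\rangle=\delta_{jm}$, hence $T$ of a constant, and drops out'' --- is wrong as stated. The corrections produced by \eqref{eq:afbafb} and \eqref{eq:bfafab} have the form $2:(Tf)\langle a,b\rangle:$ and $2T(:f\langle a,b\rangle:)$, where $f=\langle[a,e^j],e_m\rangle$ is a nonconstant function; on the diagonal $j=m$ these are nonzero terms of the shape $2T\langle[a,e^j],e_j\rangle$, i.e.\ precisely the $T$-exact summands that appear in \eqref{eq:identech1} and \eqref{eq:identech3} themselves, so they can hardly be discarded as ``$T$ of a constant''. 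What actually happens is that such corrections cancel in pairs between the inward and outward moves, or assemble into the $2T\langle[I_+a_+,e^j],e_j\rangle$ remainder; the only corrections that die for the reason you give are the quasicommutativity terms $2T\langle e_j,e^m\rangle$ from \eqref{eq:cuasicom}, in which \emph{both} slots are frame vectors and the pairing is genuinely constant. In your step identifying $:e_j[a,e^j]_{\ell}:$ with $:e^j[a,e_j]_{\overline{\ell}}:$ the discarded corrections are nonzero termwise and must be shown to cancel; since you neither track them nor verify the cancellation, the proof of \eqref{eq:identech1} is incomplete as well, even though the intended conclusion is correct and the route is repairable.
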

\begin{proof}
The first identity follows from \eqref{eq:afbafb} and \eqref{eq:affa}, while the second one also needs \eqref{eq:abffab}, \eqref{eq:bfafab} and \eqref{eq:abfabf}. In both cases, we use the Courant algebroid axioms.
\end{proof}

As in Section \ref{sec:localgen}, we set $w := I_+\left[e^j,e_j\right]_+ \in \Omega^0\left(\Pi C_+\right)$.

\begin{lemma}
The following identity holds:
\begin{equation}\label{eq:extraprop}
\begin{split}
:e^j\left[e_j,w\right]_{\overline{\ell}}:+:\left[e^j,w\right]_{\ell}e_j: & = :e^k\left(\mathcal{D}\left\langle w,e_k\right\rangle\right)_{\overline{\ell}}:+:\left(\mathcal{D}\left\langle w,e^k\right\rangle\right)_{\ell}e_k:\\
&-2T\left\langle\left[w,e^j\right],e_j\right\rangle.
\end{split}
\end{equation}
\end{lemma}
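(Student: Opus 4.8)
The plan is to reduce the identity \eqref{eq:extraprop} to the previously established frame identity \eqref{eq:identech1}, by trading each bracket $[e_j,w]$, $[e^j,w]$ for $[w,e_j]$, $[w,e^j]$ via the Courant algebroid axioms. First I would observe that $w\in\Omega^0(\Pi C_+)$, $e^j\in\Pi\ell$ and $e_j\in\Pi\overline{\ell}$ are all odd sections of $\Pi E$, so axiom (5) of Definition~\ref{defn:CA}, transported to $\Pi E$, takes the form $[a,b]+[b,a]=\mathcal{D}\langle a,b\rangle$ on odd sections; this is consistent with \eqref{eq:local-frame-identitities.1.a}, which is exactly its specialization when $\langle a,b\rangle$ is constant and hence $\mathcal{D}\langle a,b\rangle=0$. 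Applying this to the pairs $(e_j,w)$ and $(e^j,w)$ and projecting onto $\overline{\ell}$ and $\ell$ respectively gives
\begin{align*}
[e_j,w]_{\overline{\ell}} &= \left(\mathcal{D}\langle e_j,w\rangle\right)_{\overline{\ell}} - [w,e_j]_{\overline{\ell}},\\
[e^j,w]_{\ell} &= \left(\mathcal{D}\langle e^j,w\rangle\right)_{\ell} - [w,e^j]_{\ell}.
\end{align*}

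Next I would substitute these into the left-hand side of \eqref{eq:extraprop}, obtaining four terms. Since the pairing on $\Pi E$ is super-skew-symmetric and therefore symmetric on pairs of odd elements, $\langle e_j,w\rangle=\langle w,e_j\rangle$ and $\langle e^j,w\rangle=\langle w,e^j\rangle$; relabelling the summation index $j\to k$, the two $\mathcal{D}$-terms become precisely $:e^k\left(\mathcal{D}\langle w,e_k\rangle\right)_{\overline{\ell}}:$ and $:\left(\mathcal{D}\langle w,e^k\rangle\right)_{\ell}e_k:$, matching the first two terms on the right-hand side of \eqref{eq:extraprop}.

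It then remains to show that the two leftover terms $-:e^j[w,e_j]_{\overline{\ell}}:-:[w,e^j]_{\ell}e_j:$ equal $-2T\langle[w,e^j],e_j\rangle$, which is exactly identity \eqref{eq:identech1} evaluated at $a=w$; no new computation is needed. The only genuinely delicate point is the parity and sign bookkeeping: one must confirm that axiom (5) passes to $\Pi E$ with no extra sign on odd sections, and that $\langle\cdot,\cdot\rangle$ is symmetric (not skew) on odd elements. Once these conventions are pinned down, the identity follows by assembling the three ingredients above. A more computational route, bypassing \eqref{eq:identech1}, would instead expand the normally ordered products directly via \eqref{eq:afbafb} and \eqref{eq:affa} together with the Courant algebroid axioms, but factoring through \eqref{eq:identech1} is far more economical.
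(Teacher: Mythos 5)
Your proof is correct, and the two convention points you flag as delicate do hold in the paper's setup: since the bracket, pairing and $\mathcal{D}$ on $\Pi E$ are the transported ones, axiom (5) of Definition \ref{defn:CA} passes to $\Pi E$ verbatim as $[a,b]+[b,a]=\mathcal{D}\left\langle a,b\right\rangle$ (one can also check this internally: applying skew-symmetry \eqref{eq:comLambda} to $[\jmath(a)_\Lambda\jmath(b)]=\jmath([a,b])+2\chi\,\iota(\langle a,b\rangle)$ for the odd sections $\jmath(a),\jmath(b)$ gives $[a,b]+[b,a]=2S\langle a,b\rangle=\mathcal{D}\langle a,b\rangle$, using part (2) of Proposition \ref{prop:CDRE}), and super-skew-symmetry of $\left\langle\cdot,\cdot\right\rangle$ on the purely odd bundle $\Pi E$ indeed means it is symmetric on sections, so $\langle e_j,w\rangle=\langle w,e_j\rangle$. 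Your reduction is also non-circular: \eqref{eq:identech1} is proved, for arbitrary $a\in\Omega^0(\Pi E)$ and with no F-term hypothesis, before \eqref{eq:extraprop}. The route differs mildly from the paper's. The paper's (one-line) proof invokes the Courant algebroid axioms together with \eqref{eq:affa} and \eqref{eq:afbafb}, i.e.\ a direct computation in which the components $[\cdot,\cdot]_{\ell}$, $[\cdot,\cdot]_{\overline{\ell}}$ are expanded in the dual frames and the quasiassociativity correction \eqref{eq:afbafb} produces the $2T\langle\cdot,\cdot\rangle$ term; you instead confine all normally-ordered-product manipulation to the previously proven \eqref{eq:identech1} and handle the rest by pure Courant-algebroid algebra (axiom (5), projection, bilinearity, relabelling). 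The ultimate ingredients coincide, since \eqref{eq:identech1} is itself proved from \eqref{eq:afbafb} and \eqref{eq:affa}, but your packaging is more economical and makes the content of the lemma transparent: \eqref{eq:extraprop} is exactly \eqref{eq:identech1} at $a=w$, rewritten with each bracket flipped via $[a,b]+[b,a]=\mathcal{D}\langle a,b\rangle$.
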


\begin{proof}
This follows from the Courant algebroid axioms, \eqref{eq:affa} and \eqref{eq:afbafb}.
\end{proof}

In the next two lemmas, we obtain basic frame identities under the assumption that $\ell\oplus\overline{\ell}$ satisfies the F-term condition \eqref{eq:Ftermabs}.

\begin{lemma}\label{lem:applem}
Assume that $\ell\oplus\overline{\ell}$ satisfies the F-term condition \eqref{eq:Ftermabs}. For all $a \in \Omega^0(\Pi E)$ and $j \in \{1,\ldots,n\}$, the following identities hold:
\begin{align}
\left[a,e^j\right]_{\overline{\ell}} & = \left[a_{\overline{\ell}},e^j\right]_{\overline{\ell}} \qquad\text{ or, equivalently,}\hspace*{-3ex}&\left[e^j,a\right]_{\overline{\ell}} &= \left[e^j,a_{\overline{\ell}}\right]_{\overline{\ell}};\label{eq:prop1}\\
\left[a,e_j\right]_{\ell} & = \left[a_{\ell},e_j\right]_{\ell} \;\qquad\text{ or, equivalently,}\hspace*{-3ex}&\left[e_j,a\right]_{\ell} &= \left[e_j,a_{\ell}\right]_{\ell}.\label{eq:prop2}
\end{align}
\end{lemma}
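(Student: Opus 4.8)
The plan is to prove~\eqref{eq:prop1} by decomposing $a=a_\ell+a_{\overline{\ell}}+a_-$ along~\eqref{eq:abstractdecomp} and showing that the two unwanted pieces $[a_\ell,e^j]$ and $[a_-,e^j]$ have vanishing $\overline{\ell}$-component, so that $[a,e^j]_{\overline{\ell}}=[a_\ell,e^j]_{\overline{\ell}}+[a_{\overline{\ell}},e^j]_{\overline{\ell}}+[a_-,e^j]_{\overline{\ell}}$ collapses to the desired term; the identity~\eqref{eq:prop2} then follows by the symmetric argument exchanging the roles of $\ell$ and $\overline{\ell}$ (and of $e^j$ and $e_j$) and using the second inclusion of the F-term condition~\eqref{eq:Ftermabs}. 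First, since $a_\ell$ and $e^j$ are both sections of $\Pi\ell$, the F-term condition $[\ell,\ell]\subset\ell$ gives $[a_\ell,e^j]\in\Omega^0(\Pi\ell)$, so $[a_\ell,e^j]_{\overline{\ell}}=0$ immediately.

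For the term $[a_-,e^j]$, I would extract its $\overline{\ell}$-component by pairing against the frame: since $\ell$ is isotropic and $C_-=(\ell\oplus\overline{\ell})^\perp$, pairing any section with $e^k\in\Pi\ell$ annihilates its $\ell$- and $C_-$-components, while the duality $\langle e_m,e^k\rangle=\delta_{mk}$ detects the $\overline{\ell}$-component; hence $[a_-,e^j]_{\overline{\ell}}=0$ is equivalent to $\langle[a_-,e^j],e^k\rangle=0$ for all $k$. To evaluate this I first invoke skew-symmetry (axiom (5) of Definition~\ref{defn:CA}, in its parity-reversed form): because $a_-\in C_-\perp\ell\ni e^j$ we have $\langle a_-,e^j\rangle=0$, the $\mathcal{D}$-term drops out, and $[a_-,e^j]=\pm[e^j,a_-]$. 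Then I apply the invariance axiom (4) with $e^j$ as the deriving section, $\pi(e^j)\langle a_-,e^k\rangle=\langle[e^j,a_-],e^k\rangle+\langle a_-,[e^j,e^k]\rangle$: the left-hand side vanishes as $\langle a_-,e^k\rangle=0$, and the last term vanishes because $[e^j,e^k]\in\Omega^0(\Pi\ell)$ (again by~\eqref{eq:Ftermabs}) while $a_-\perp\ell$. Thus $\langle[e^j,a_-],e^k\rangle=0$, and therefore $\langle[a_-,e^j],e^k\rangle=0$, giving $[a_-,e^j]_{\overline{\ell}}=0$ and hence~\eqref{eq:prop1}.

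The equivalence of the two forms in~\eqref{eq:prop1} I would deduce from skew-symmetry once more: applying axiom (5) to both $a$ and $a_{\overline{\ell}}$ and subtracting, the difference $[a,e^j]-[a_{\overline{\ell}},e^j]$ equals $\pm\bigl([e^j,a]-[e^j,a_{\overline{\ell}}]\bigr)$ up to the term $\mathcal{D}\langle e^j,a-a_{\overline{\ell}}\rangle=\mathcal{D}\langle e^j,a_\ell+a_-\rangle$, which vanishes since $\langle e^j,a_\ell\rangle=0$ (isotropy) and $\langle e^j,a_-\rangle=0$ (orthogonality); projecting onto $\overline{\ell}$ shows that either identity implies the other.

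The one point demanding care is the bookkeeping of super/parity signs on $\Pi E$: the parity-reversed Dorfman bracket and pairing of Section~\ref{sec:localgen} satisfy the axioms of Definition~\ref{defn:CA} only up to Koszul signs, and here $a$ is a general section rather than a frame element, so the clean relations~\eqref{eq:local-frame-identitities.1} are not directly available and one must use the full axioms as recorded for $\Omega^{\ch}_E$ in Proposition~\ref{prop:CDRE}. I expect no genuine obstacle beyond this, however, precisely because every step above concludes that some pairing is \emph{zero}, so the undetermined signs $\pm$ are immaterial to the argument.
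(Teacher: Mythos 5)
Your proof is correct and follows essentially the same route as the paper's: decompose $a=a_\ell+a_{\overline{\ell}}+a_-$, kill $[a_\ell,e^j]_{\overline{\ell}}$ by the F-term condition, and kill $[a_-,e^j]_{\overline{\ell}}$ by pairing with the frame $\{e^k\}$ and reducing, via skew-symmetry and the invariance axiom, to $\langle a_-,[e^j,e^k]_\ell\rangle=0$ --- which is exactly the paper's compact three-line computation unpacked. The only (harmless) addition is that you also spell out the equivalence of the two stated forms via skew-symmetry, which the paper leaves implicit, and your remark that all undetermined Koszul signs are immaterial because every relevant pairing vanishes is the right way to handle the parity-reversed conventions.
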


\begin{proof}
By the F-term condition and the Courant algebroid axioms
\begin{align*}
\left[a_{\ell},e^j\right]_{\overline{\ell}} & = \left[a_\ell,e_j\right]_{\ell} = 0,\\
\left[a_-,e^j\right]_{\overline{\ell}} & = \left\langle\left[a_-,e^j\right],e^k\right\rangle e_k = \left\langle a_-,\left[e^j,e^k\right]_{\ell}\right\rangle e_k = 0,\\
\left[a_-,e_j\right]_{\ell} & = \left\langle\left[a_-,e_j\right],e_k\right\rangle e^k = \left\langle a_-,\left[e_j,e_k\right]_{\overline{\ell}}\right\rangle = 0.
\qedhere
\end{align*}
\end{proof}

The following identities will be used in Appendix \ref{app:MRP}, specifically to prove Lemma \ref{lem:pasointermedio}.

\begin{lemma}\label{lem:importprop}
Assume that $\ell\oplus\overline{\ell}$ satisfies the F-term condition \eqref{eq:Ftermabs}. For each $i \in \{1,\ldots,n\}$, define
\begin{align}
a_i =& :e_j:\left[e_k,e^j\right]_{\overline{\ell}}\left[e_i,e^k\right]_{\overline{\ell}}::,\nonumber\\
b_i =& :e_j:\left(\mathcal{D}\left\langle\left[e^k,e_i\right],e^j\right\rangle\right)_{\overline{\ell}}e_k::,\nonumber\\
A_i =& :\left[e_i,e_j\right]_{\overline{\ell}}\left(Se^j\right):+:e_j\left(S\left[e_i,e^j\right]_+\right):+:\left[e_i,e^j\right]_+\left(Se_j\right):+:e^j\left(S\left[e_i,e_j\right]_{\overline{\ell}}\right):,\nonumber\\
B_i =& :\left[e_i,e_j\right]_{\overline{\ell}}:e_k\left[e^j,e^k\right]_{\ell}::+:e_j:\left[e_i,e_k\right]_{\overline{\ell}}\left[e^j,e^k\right]_{\ell}::\nonumber\\
& +:e_j:e_k\left[e_i,\left[e^j,e^k\right]\right]_+::+:\left[e_i,e^j\right]_+:e^k\left[e_j,e_k\right]_{\overline{\ell}}::\nonumber\\
& + :e^j:\left[e_i,e^k\right]_+\left[e_j,e_k\right]_{\overline{\ell}}::+:e^j:e^k\left[e_i,\left[e_j,e_k\right]\right]::,\nonumber\\
C_i =& 2:e_j:e_k\left[e^j,\left[e_i,e^k\right]_-\right]_+::+:e_j:e_k\left[e_i,\left[e^j,e^k\right]\right]_-::\nonumber\\
&+2:\left[e_i,e_j\right]_{\overline{\ell}}:e^k\left[e^j,e_k\right]_-::+:\left[e_i,e^j\right]_-:e^k\left[e_j,e_k\right]_{\overline{\ell}}::\nonumber\\
&+2:e_j:\left[e_i,e^k\right]\left[e^j,e_k\right]_-::+:e^j:\left[e_i,e^k\right]_-\left[e_j,e_k\right]_{\overline{\ell}}::\nonumber\\
&+2:e_j:e^k\left[e_i,\left[e^j,e_k\right]_-\right]::\nonumber.
\end{align}
Then the following identities hold:
\begin{align}\label{eq:parte0}
0 &= a_i+b_i,\\
A_i &= 2:\left(T\left\langle e^k,\left[e_i,e_j\right]\right\rangle\right)\left[e_k,e^j\right]:+:\left(T\left\langle\left[e^k,e_i\right],e^j \right\rangle\right)\left[e_k,e_j\right]_{\overline{\ell}}:\nonumber\\
& + T\Big(:e_j\left(\left\langle\mathcal{D}\left\langle\left[e^k,e_i\right],e^j\right\rangle,e_k\right\rangle-\left\langle\mathcal{D}\left\langle e^j,\left[e_i,e_k\right]\right\rangle,e^k\right\rangle\right):\nonumber\\
& +:e^j\left(\left\langle\mathcal{D}\left\langle\left[e^k,e_i\right],e_j\right\rangle,e_k\right\rangle\right):\Big)+\frac{1}{2}b_i\nonumber\\
&+\left.:e^j:e^k\left(\left\langle\mathcal{D}\left\langle\left[e_i,e_j\right],e^r\right\rangle,e_k\right\rangle e_r\right)::\right.\nonumber\\
&+\left.:e_j\left(:e_k:\left(\frac12\left\langle\mathcal{D}\left\langle\left[e^j,e_i\right],e^k\right\rangle,e_m\right\rangle +\left\langle\mathcal{D}\left\langle\left[e_i,e^j\right],e_m\right\rangle,e^k\right\rangle\right)e^m::\right):\right.\nonumber\\
\label{eq:parte1}
&+\frac12:e_j:\left(\mathcal{D}\left\langle\left[e^k,e_i\right],e^j\right\rangle\right)_-e_k::+:e_k:\left(\left\langle\mathcal{D}\left[e_i,e^k\right],e_j\right\rangle\right)_-e^j::,\\
B_i &= a_i+2:e_j:e_k\left[e^j,\left[e_i,e^k\right]_-\right]_+::+4:\left(T\left\langle\left[e^j,e_i\right],e_k\right\rangle\right)\left[e^k,e_j\right]_+:\nonumber\\
&+2:\left(T\left\langle\left[e^j,e_i\right],e^k\right\rangle\right)\left[e_k,e_j\right]_{\overline{\ell}}:-2:e^j:e^k\left(\left\langle\mathcal{D}\left\langle e^m,\left[e_i,e_j\right] \right\rangle,e_k\right\rangle e_m\right)::\nonumber\\
&+\left.:e_j:e_k\left(\left(\left\langle\mathcal{D}\left\langle e^k,\left[e_i,e^j\right]\right\rangle,e_m\right\rangle - 2\left\langle\mathcal{D}\left\langle e_m,\left[e_i,e^j\right]\right\rangle,e^k\right\rangle\right)e^m\right)::\right.\label{eq:parte2},\\
C_i &= 2\bigg(\!\!:e_j:e_k\left[e^j,\left[e_i,e^k\right]_-\right]_-\!\!::+:e_j:e_k\left[e^j,\left[e_i,e^k\right]_-\right]_{\ell}\!\!::+:e_j:e^k\left[e_k,\left[e^j,e_i\right]_-\right]_-\!\!::\nonumber\\
&\qquad+\left.:e_j:\left[e_i,e^k\right]_-\left[e^j,e_k\right]_-::\right.
+:e_j:e^k\left[e_i,\left[e^j,e_k\right]_-\right]_{\overline{\ell}}::\!\!\bigg)\nonumber\\
&+:e_j:\left(\mathcal{D}\left\langle\left[e_i,e^k\right],e^j\right\rangle\right)_-e_k::-2:e_k:\left(\mathcal{D}\left\langle\left[e_i,e^k\right],e_j\right\rangle\right)_-e^j::\nonumber\\
&+4:\left(T\left\langle\left[e^j,e_i\right],e^k \right\rangle\right)\left[e^k,e_j\right]_-:.\label{eq:parte3}
\end{align}
\end{lemma}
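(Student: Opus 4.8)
The plan is to prove \eqref{eq:parte0}--\eqref{eq:parte3} by direct computation, treating each as an algebraic identity in the SUSY vertex algebra $\Omega^{\ch}_E$ that follows from the basic chiral identities of Appendix~\ref{app:1}, the Courant algebroid axioms (Definition~\ref{defn:CA}), and the F-term condition \eqref{eq:Ftermabs}. The general strategy is to bring every nested normally ordered product to a fixed normal form (say, fully right-nested), recording the \emph{quantum corrections} -- the $T$-derivative and pairing terms -- produced by quasiassociativity \eqref{eq:cuasiaso}, quasicommutativity \eqref{eq:cuasicon} and the non-commutative Wick formula \eqref{eq:Wick}. Throughout, the F-term condition enters via Lemma~\ref{lem:applem} to discard the brackets whose relevant component vanishes (e.g.\ $[e^j,e^k]_{\overline\ell}=0$ and $[e_j,e_k]_\ell=0$), so that only the components surviving on each right-hand side remain.

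First I would establish \eqref{eq:parte0}, the base case, which is reused in \eqref{eq:parte1} and \eqref{eq:parte2}. Here one expands the projections into the dual frame, $[e_k,e^j]_{\overline\ell}=\langle[e_k,e^j],e^m\rangle\, e_m$ and $(\mathcal{D}\langle[e^k,e_i],e^j\rangle)_{\overline\ell}=\pi(e^m)(\langle[e^k,e_i],e^j\rangle)\, e_m$, and then uses the frame Courant axioms \eqref{eq:local-frame-identitities.1.a}--\eqref{eq:local-frame-identitities.1.b} together with the defining relation $\langle\mathcal{D}f,a\rangle=\pi(a)(f)$ to rewrite the structure functions. The identity $0=a_i+b_i$ should then follow by relabelling indices and exploiting the (anti)symmetry of the bracket and of the pairing.

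Next I would treat $A_i$, $B_i$ and $C_i$, in that order. Each is a sum of triple normally ordered products, so the core tools are \eqref{eq:18} and \eqref{eq:19} of Lemma~\ref{lem:tech5} for reordering the nesting, supplemented by \eqref{eq:cuasicom3}, \eqref{eq:cuasiaso2} and \eqref{eq:cuasicomaso}. For $A_i$, which contains $S$-derivatives, the additional inputs are the Leibniz-type rule \eqref{eq:claveAi} for $S$ and the identity \eqref{eq:cuasicom2}; the Wick formula \eqref{eq:Wick} is what generates the $T$-derivative terms $:(T\langle\cdots\rangle)[\cdots]:$ displayed on the right-hand side of \eqref{eq:parte1}, while the $\tfrac12 b_i$ summand is fed back from \eqref{eq:parte0}. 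For $B_i$ and $C_i$ the same mechanism applies, with the F-term splitting of each bracket into its $\ell$-, $\overline\ell$- and $C_-$-components serving as the key organizing principle; the frame identities \eqref{eq:identech1}, \eqref{eq:identech3} and \eqref{eq:extraprop} then dispose of the trace-type and $w$-type contributions.

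The main obstacle is bookkeeping rather than conceptual depth: each identity carries a dozen or more terms, and the correction terms produced by \eqref{eq:cuasiaso} and \eqref{eq:Wick} are easy to mis-sign, duplicate or drop, especially where the Grassmann parity of the frame sections introduces signs. A secondary subtlety is tracking, for every bracket that appears, which of $\ell$, $\overline\ell$ or $C_-$ its value lies in, since only specific projections survive on each right-hand side. I would therefore first record a short list of component and vanishing statements (from \eqref{eq:Ftermabs} and Lemma~\ref{lem:applem}) and apply them uniformly; with these preliminaries in place, \eqref{eq:parte0}--\eqref{eq:parte3} reduce to finite, if lengthy, verifications.
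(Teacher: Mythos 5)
Your framework for \eqref{eq:parte1}--\eqref{eq:parte3} (split every bracket into its $\ell$, $\overline{\ell}$, $C_-$ components via the F-term condition and Lemma~\ref{lem:applem}, reorder the nested normally ordered products, and track the quasiassociativity/Wick corrections) is essentially the paper's route, up to minor differences in toolkit: the paper's proof of this lemma runs on \eqref{eq:claveAi}, \eqref{eq:13}, \eqref{eq:14}, \eqref{eq:15}, \eqref{eq:16}, \eqref{eq:fabcabfc}, \eqref{eq:cuasicom} and \eqref{eq:cuasicomaso}, while the identities you single out --- \eqref{eq:18}, \eqref{eq:19} of Lemma~\ref{lem:tech5} and the frame identities \eqref{eq:identech1}, \eqref{eq:identech3}, \eqref{eq:extraprop} --- are actually deployed later, in the proof of Proposition~\ref{lem:lempasofinal}. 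The paper also makes essential, explicit use of the Jacobi identity for the Dorfman bracket (axiom (1) of Definition~\ref{defn:CA}) to break up the double brackets $[e_i,[e^j,e^k]]$ and $[e_i,[e_j,e_k]]$ inside $B_i$ and $C_i$ before any reordering; your plan leaves this implicit.

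The genuine gap is your plan for \eqref{eq:parte0}. You claim $a_i+b_i=0$ follows ``by relabelling indices and exploiting the (anti)symmetry of the bracket and of the pairing''. It cannot: $a_i$ is quadratic in the structure functions $\left\langle\left[e_k,e^j\right],e^m\right\rangle$, whereas $b_i$ is linear in their \emph{derivatives} (since $\left(\mathcal{D}f\right)_{\overline{\ell}}=\pi(e^m)(f)\,e_m$), and no index relabelling or symmetry of $\left\langle\cdot,\cdot\right\rangle$ and $\left[\cdot,\cdot\right]$ converts one type of term into the other; the frame identities \eqref{eq:local-frame-identitities.1.a}--\eqref{eq:local-frame-identitities.1.b} you invoke shuffle structure functions but never differentiate them. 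The only mechanisms that mix the two types are the Dorfman--Jacobi identity and the quantum corrections produced by reordering, and the paper needs both, closed off by a bootstrap rather than a direct cancellation: $a_i$ is first rewritten (via the Courant axioms, \eqref{eq:affa} and \eqref{eq:16}) as $:e_j:e_k\left[e_i,\left[e^j,e^k\right]\right]_{\overline{\ell}}::$, the Jacobi identity and \eqref{eq:prop1} split this into two terms, and each term is then reordered back into a combination of $a_i$ and $b_i$ themselves, yielding $a_i+b_i=-2\left(a_i+b_i\right)$ and hence $a_i+b_i=0$. Without this self-consistency trick (or some equivalent use of Jacobi at the level of structure functions), a term-by-term verification of \eqref{eq:parte0} will simply go in circles. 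Moreover, since the right-hand sides of \eqref{eq:parte1} and \eqref{eq:parte2} contain $\tfrac12 b_i$ and $a_i$ precisely because \eqref{eq:parte0} is fed back into them, this gap propagates to the rest of the lemma.
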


\begin{proof}
To prove \eqref{eq:parte0}, note that by the Courant algebroid axioms, \eqref{eq:affa} and \eqref{eq:16},
\[
a_i= :e_j:e_k\left[e_i,\left[e^j,e^k\right]\right]_{\overline{\ell}}::, 
\]
so by the Jacobi identity for the Dorfman bracket and \eqref{eq:prop1},
\[
a_i = :e_j:e_k\left[\left[e_i,e^j\right]_{\overline{\ell}},e^k\right]_{\overline{\ell}}::+:e_j:e_k\left[e^j,\left[e_i,e^k\right]_{\overline{\ell}}\right]_{\overline{\ell}}::.
\]
Using the Courant algebroid axioms, \eqref{eq:affa}, \eqref{eq:cuasicom}, \eqref{eq:cuasicomaso}, \eqref{eq:cuasiasocom}, \eqref{eq:fabcabfc} and \eqref{eq:16},
\[
:e_j:e_k\left[\left[e_i,e^j\right]_{\overline{\ell}},e^k\right]_{\overline{\ell}}::= -a_i-2b_i.
\]
Using the Courant algebroid axioms, \eqref{eq:affa}, \eqref{eq:cuasicom}, \eqref{eq:fabcabfc} and \eqref{eq:16},
\[
:e_j:e_k\left[e^j,\left[e_i,e^k\right]_{\overline{\ell}}\right]_{\overline{\ell}}::= -a_i-b_i.
\]
Hence we have $a_i+b_i=-2(a_i+b_i)$, which gives \eqref{eq:parte0}. 

To prove \eqref{eq:parte1}, we use the Courant algebroid axioms, \eqref{eq:claveAi} and \eqref{eq:13}, so
\begin{align*}
:e^j\left(S\left[e_i,e_j\right]_{\overline{\ell}}\right): &= :\left(Te^j\right)\left\langle\mathcal{D}\left\langle\left[e^k,e_i\right],e_j\right\rangle,e_k\right\rangle:-:\left(T\left\langle\left[e^k,e_i\right],e_j\right\rangle\right)\left[e^j,e_k\right]:\\
& +\frac12:e^j:\left(\mathcal{D}\left\langle e^k,\left[e_i,e_j\right]\right\rangle\right)e_k:: -:\left[e_i,e^j\right]_{\ell}\left(Se_j\right):,\\
:\left[e_i,e_j\right]_{\overline{\ell}}\left(Se^j\right): & = :\left(T\left\langle e^k,\left[e_i,e_j\right]\right\rangle\right)\left[e_k,e^j\right]:-:\left(Te_j\right)\left\langle\mathcal{D}\left\langle e^j,\left[e_i,e_k\right]\right\rangle,e^k\right\rangle:\\
&-\frac12:e_k:\left(\mathcal{D}\left\langle\left[e^k,e_i\right],e_j\right\rangle\right)e^j::-:e_j\left(S\left[e_i,e^j\right]_{\ell}\right):,\\
:e_j\left(S\left[e_i,e^j\right]_{\overline{\ell}}\right): &= :\left(Te_j\right)\left\langle\mathcal{D}\left\langle\left[e^k,e_i\right],e^j\right\rangle,e_k\right\rangle:-:\left(T\left\langle\left[e^k,e_i\right],e^j\right\rangle\right)\left[e_j,e_k\right]:\\
&+\frac12:e_j:\left(\mathcal{D}\left\langle e^k,\left[e_i,e^j\right]\right\rangle\right)e_k::-:\left[e_i,e^j\right]_{\overline{\ell}}\left(Se_j\right):.
\end{align*}
Using again the Courant algebroid axioms, \eqref{eq:affa}, \eqref{eq:cuasicom}, \eqref{eq:cuasicomaso} and \eqref{eq:fabcabfc}, we obtain \eqref{eq:parte1}. 

As for \eqref{eq:parte2}, by the Jacobi identity for the Dorfman bracket and \eqref{eq:prop1},
\begin{align*}
\left[e_i,\left[e^j,e^k\right]\right]_+ &= \left[\left[e_i,e^j\right],e^k\right]_{\overline{\ell}}+\left[\left[e_i,e^j\right]_{\overline{\ell}}\right]_{\ell}+\left[\left[e_i,e^j\right]_{\ell},e^k\right]_{\ell}+\left[\left[e_i,e^j\right]_-,e^k\right]_+\\
&+\left[e^k,\left[e_i,e^j\right]\right]_{\overline{\ell}}+\left[e^k,\left[e_i,e^j\right]_{\overline{\ell}}\right]_{\ell}+\left[e^k,\left[e_i,e^j\right]_{\ell}\right]_{\ell}+\left[e^k,\left[e_i,e^j\right]_-\right]_+,\\
\left[e_i,\left[e_j,e_k\right]_{\overline{\ell}}\right]_{\overline{\ell}} &= \left[\left[e_i,e_j\right]_{\overline{\ell}},e_k\right]_{\overline{\ell}}+\left[e_j,\left[e_i,e_k\right]_{\overline{\ell}}\right]_{\overline{\ell}}.
\end{align*}
By the Courant algebroid axioms, \eqref{eq:14}, \eqref{eq:affa}, \eqref{eq:fabcabfc} and \eqref{eq:15},
\begin{align*}
:e^j:e^k\left[\left[e_i,e_j\right]_{\overline{\ell}},e_k\right]_{\overline{\ell}}:: &= -:\left[e_i,e^j\right]_{\ell}:e^k\left[e_j,e_k\right]_{\overline{\ell}}::+2:\left(T\left\langle\left[e^k,e_i\right],e_j\right\rangle\right)\left[e^j,e_k\right]_{\ell}:\\
&+:e^k:e^j\left(\left\langle\mathcal{D}\left\langle e^m,\left[e_i,e_j\right]\right\rangle,e_k\right\rangle e_m\right)::.
\end{align*}
By the Courant algebroid axioms, \eqref{eq:affa} and \eqref{eq:16},
\begin{align*}
:e^j:e^k\left[e_j,\left[e_i,e_k\right]_{\overline{\ell}}\right]_{\overline{\ell}}:: &= -:e^j:\left[e_i,e^k\right]_{\ell}\left[e_j,e_k\right]_{\overline{\ell}}::-2:\left(T\left\langle\left[e^j,e_i\right],e_k\right\rangle\right)\left[e_j,e^k\right]_{\ell}:\\
&+:e^j:e^k\left(\left\langle\mathcal{D}\left\langle e^m,\left[e_i,e_k\right]\right\rangle,e_j\right\rangle e_m\right)::.
\end{align*}
By the Courant algebroid axioms, \eqref{eq:14}, \eqref{eq:affa}, \eqref{eq:cuasicom}, \eqref{eq:afbafb} and \eqref{eq:fabcabfc},
\begin{align*}
:e_j:e_k\left[\left[e_i,e^j\right]_{\overline{\ell}},e^k\right]_{\ell}:: &= -:\left[e_i,e^j\right]_{\overline{\ell}}:e^k\left[e_j,e_k\right]_{\overline{\ell}}::+2:\left(T\left\langle e^k,\left[e_i,e^j\right]\right\rangle\right)\left[e_j,e_k\right]_{\overline{\ell}}:\\
&+:e_j:e_k\left(\left\langle\mathcal{D}\left\langle e^k,\left[e_i,e^j\right]\right\rangle,e_m\right\rangle e^m\right)::.
\end{align*}
By the Courant algebroid axioms, \eqref{eq:14}, \eqref{eq:affa} and \eqref{eq:fabcabfc},
\begin{align*}
:e_j:e_k\left[\left[e_i,e^j\right]_{\ell},e^k\right]_{\ell}:: &= -:\left[e_i,e_j\right]_{\overline{\ell}}:e_k\left[e^j,e^k\right]_{\ell}::+2:\left(T\left\langle e_k,\left[e_i,e^j\right]\right\rangle\right)\left[e_j,e^k\right]_{\overline{\ell}}:\\
&-:e_j:e_k\left(\left\langle\mathcal{D}\left\langle e_m,\left[e_i,e^j\right]\right\rangle,e^k\right\rangle e^m\right)::.
\end{align*}
By the Courant algebroid axioms, \eqref{eq:affa}, \eqref{eq:cuasiasocom}, \eqref{eq:fabcabfc} and \eqref{eq:16},
\begin{align*}
:e_j:e_k\left[e^j,\left[e_i,e^k\right]_{\overline{\ell}}\right]_{\ell}:: &= -:e^j:\left[e_i,e^k\right]_{\overline{\ell}}\left[e_j,e_k\right]_{\overline{\ell}}::.
\end{align*}
By the Courant algebroid axioms, \eqref{eq:affa} and \eqref{eq:16},
\begin{align*}
:e_j:e_k\left[e^j,\left[e_i,e^k\right]_{\ell}\right]_{\ell}:: & = -:e_j:\left[e_i,e_k\right]_{\overline{\ell}}\left[e^j,e^k\right]_{\ell}::-2:\left(T\left\langle e_j,\left[e_i,e^k\right]\right\rangle\right)\left[e^j,e_k\right]_{\overline{\ell}}:\\
& +:e_j:e_k\left(\left\langle\mathcal{D}\left\langle e_m,\left[e_i,e^k\right]\right\rangle,e^j \right\rangle e^m\right)::.
\end{align*}
By the Courant algebroid axioms and \eqref{eq:cuasicomaso},
\begin{align*}
:e_j:e_k\left[\left[e_i,e^j\right]_-,e^k\right]_+::+:e_j:e_k\left[e^j,\left[e_i,e^k\right]_-\right]_+:: & = 2:e_j:e_k\left[e^j,\left[e_i,e^k\right]_-\right]_+::.
\end{align*}
In conclusion, by the Courant algebroid axioms, \eqref{eq:affa} and \eqref{eq:15}, we obtain \eqref{eq:parte2}. 

For the last identity \eqref{eq:parte3}, we apply the Jacobi identity for the Dorfman bracket, so 
\begin{align*}
\left[e_i,\left[e^j,e^k\right]\right]_- &= \left[\left[e_i,e^j\right]_{\overline{\ell}},e^k\right]_-+\left[\left[e_i,e^j\right]_-,e^k\right]_-+\left[e^j,\left[e_i,e^k\right]_{\overline{\ell}}\right]_-+\left[e^j,\left[e_i,e^k\right]_-\right]_-.
\end{align*}
By the Courant algebroid axioms and \eqref{eq:cuasicomaso},
\begin{align*}
:\left[e_i,e^j\right]_-:e^k\left[e_j,e_k\right]_{\overline{\ell}}:: &= :e^j:\left[e_i,e^k\right]_-\left[e_j,e_k\right]_{\overline{\ell}}::,\\
:e_j:e_k\left[\left[e_i,e^j\right]_{\overline{\ell}},e^k\right]_-:: &= :e_j:e_k\left[e^j,\left[e_i,e^k\right]_{\overline{\ell}}\right]_-::+:e_j:e_k\left(\mathcal{D}\left\langle\left[e_i,e^j\right],e^k\right\rangle\right)_-::,\\
:e_j:e_k\left[\left[e_i,e^j\right]_-,e^k\right]_-:: &= :e_j:e_k\left[e^j,\left[e_i,e^k\right]_-\right]_-::,\\
:e_j:e_k\left[e^j,\left[e_i,e^k\right]_-\right]_{\ell}:: &= :e_j:e_k\left[\left[e_i,e^j\right]_-,e^k\right]_{\ell}::.
\end{align*}
By the Courant algebroid axioms, \eqref{eq:affa} and \eqref{eq:16},
\begin{equation*}
\begin{split}
:e_j:\left[e_i,e^k\right]_{\overline{\ell}}\left[e^j,e_k\right]_-:: &= - :e_j:e_k\left[e^j,\left[e_i,e^k\right]_{\overline{\ell}}\right]_-::.
\end{split}
\end{equation*}
Using the Jacobi identity for the Dorfman bracket and the involutivity of $\overline{\ell}$,
\begin{align*}
\left[e^j,\left[e_k,e_i\right]_{\overline{\ell}}\right]_- &= \left[\left[e^j,e_k\right],e_i\right]_-+\left[e_k,\left[e^j,e_i\right]\right]_-,
\end{align*}
and by the Courant algebroid axioms, \eqref{eq:affa} and \eqref{eq:16},
\begin{align*}
:e_j:\left[e_i,e^k\right]_{\ell}\left[e^j,e_k\right]_-:: =& :e_j:e^k\left[\left[e^j,e_k\right]_{\ell},e_i\right]_-::-:e_j:e^k\left[e_i,\left[e^j,e_k\right]_-\right]_-::\\
+& :e_j:e^k\left[e_k,\left[e^j,e_i\right]_{\ell}\right]_-::+:e_j:e^k\left[e_k,\left[e^j,e_i\right]_-\right]_-::.
\end{align*}
Hence we have obtained the identity
\begin{align*}
C_i &= 2\left(:\left[e_i,e^j\right]_-:e^k\left[e_j,e_k\right]_{\overline{\ell}}::+:e_j:e_k\left[e^j,\left[e_i,e^k\right]_-\right]_-::\right.\\
&\left.+:e_j:e_k\left[\left[e_i,e^j\right]_-,e^k\right]_{\ell}::+:\left[e_i,e_j\right]_{\overline{\ell}}:e^k\left[e^j,e_k\right]_-::+:e_j:e^k\left[\left[e^j,e_k\right]_{\ell},e_i\right]_-::\right.\\
&+\left.:e_j:e^k\left[e_k,\left[e^j,e_i\right]_{\ell}\right]_-::+:e_j:e^k\left[e_k,\left[e^j,e_i\right]_-\right]_-::+:e_j:\left[e_i,e^k\right]_-\left[e^j,e_k\right]_-::\right.\\
&+\left.:e_j:e^k\left[e_i,\left[e^j,e_k\right]_-\right]_{\overline{\ell}}::+:e_j:e_k\left(\mathcal{D}\left\langle\left[e_i,e^j\right],e^k\right\rangle\right)_-::\right).
\end{align*}
Applying again the Courant algebroid axioms and \eqref{eq:cuasiasocom}, we have
\[
:\left[e_i,e^j\right]_-:e^k\left[e_j,e_k\right]_{\overline{\ell}}:: = -:\left[e_j,e_k\right]_{\overline{\ell}}:e^k\left[e_i,e^j\right]_-::+2:\left[e_i,e^j\right]_-\left(T\left\langle\left[e_j,e_k\right],e^k\right\rangle\right):.
\]
Now, by the Courant algebroid axioms \eqref{eq:14}, \eqref{eq:affa} and \eqref{eq:fabcabfc},
\[
:\left[e_i,e_j\right]_{\overline{\ell}}:e^k\left[e^j,e_k\right]_-:: = -:e_j:e^k\left[e_k,\left[e^j,e_i\right]_{\ell}\right]_-::+2:\left(T\left\langle e^k,\left[e_i,e_j\right]\right\rangle\right)\left[e^j,e_k\right]_-:.
\]
Finally, by the Courant algebroid axioms, \eqref{eq:14}, \eqref{eq:affa}, \eqref{eq:cuasiasocom} and \eqref{eq:fabcabfc},
\begin{align*}
:e_j:e^k\left[\left[e^j,e_k\right]_{\ell},e_i\right]_-:: &= :\left[e_i,e_k\right]_{\overline{\ell}}:e^k\left[e_i,e^j\right]_-::-:e^k:e_j\left(\mathcal{D}\left\langle e_i,\left[e^j,e_k\right]\right\rangle\right)_-::\\
& - 2:\left[e_i,e^k\right]_-\left(T\left\langle e^j,\left[e_k,e_j\right]\right\rangle\right):.
\end{align*}
In conclusion, by the Courant algebroid axioms, \eqref{eq:cuasicom} and \eqref{eq:cuasicomaso}, we obtain \eqref{eq:parte3}. 
\end{proof}

\begin{remark}\label{rem:dual-lem:importpro}
Since interchanging the roles of $\ell$ and $\overline{\ell}$ preserves the F-term condition \eqref{eq:Ftermabs}, Lemma~\ref{lem:importprop} provides new identities by simply replacing the dual isotropic frames $\{e^j\}_{j=1}^n$ and $\{e_j\}_{j=1}^n$ with each other.
One can therefore consider the following sections of the chiral de Rham complex, for each $i \in \{1,\ldots,n\}$:
\begin{align}
a^i =& :e^j:\left[e^k,e_j\right]_{\ell}\left[e^i,e_k\right]_{\ell}::,\nonumber\\
b^i =& :e^j:\left(\mathcal{D}\left\langle\left[e_k,e^i\right],e_j\right\rangle\right)_{\ell}e^k::,\nonumber\\
A^i =& :\left[e^i,e_j\right]_+\left(Se^j\right):+:e_j\left(S\left[e^i,e^j\right]_{\ell}\right):+:\left[e^i,e^j\right]_{\ell}\left(Se_j\right):+:e^j\left(S\left[e^i,e_j\right]_+\right):,\nonumber\\
B^i =& :\left[e^i,e_j\right]_+:e_k\left[e^j,e^k\right]_{\ell}::+:e_j:\left[e^i,e_k\right]_+\left[e^j,e^k\right]_{\ell}::\nonumber\\
& +:e_j:e_k\left[e^i,\left[e^j,e^k\right]\right]::+:\left[e^i,e^j\right]_{\ell}:e^k\left[e_j,e_k\right]_{\overline{\ell}}::\nonumber\\
& + :e^j:\left[e^i,e^k\right]_{\ell}\left[e_j,e_k\right]_{\overline{\ell}}::+:e^j:e^k\left[e^i,\left[e_j,e_k\right]\right]_+::,\nonumber\\
C^i =& 2:e^j:e^k\left[e_j,\left[e^i,e_k\right]_-\right]_+::+:e^j:e^k\left[e^i,\left[e_j,e_k\right]\right]_-::\nonumber\\
&+2:\left[e^i,e^j\right]_{\ell}:e_k\left[e_j,e^k\right]_-::+:\left[e^i,e_j\right]_-:e_k\left[e^j,e^k\right]_{\ell}::\nonumber\\
&+2:e^j:\left[e^i,e_k\right]\left[e_j,e^k\right]_-::+:e_j:\left[e^i,e_k\right]_-\left[e^j,e^k\right]_{\ell}::\nonumber\\
&+2:e^j:e_k\left[e^i,\left[e_j,e^k\right]_-\right]::\nonumber.
\end{align}
Then Lemma~\ref{lem:importprop} gives the following identities:
\begin{align}
0 &= a^i+b^i\nonumber,\\
A^i &= 2:\left(T\left\langle e_k,\left[e^i,e^j\right]\right\rangle\right)\left[e^k,e_j\right]:+:\left(T\left\langle\left[e_k,e^i\right],e_j \right\rangle\right)\left[e^k,e^j\right]_{\ell}:\nonumber\\
& + T\Big(:e^j\left(\left\langle\mathcal{D}\left\langle\left[e_k,e^i\right],e_j\right\rangle,e^k\right\rangle-\left\langle\mathcal{D}\left\langle e_j,\left[e^i,e^k\right]\right\rangle,e_k\right\rangle\right):\nonumber\\
& +:e_j\left(\left\langle\mathcal{D}\left\langle\left[e_k,e^i\right],e^j\right\rangle,e^k\right\rangle\right):\Big)+\frac{1}{2}b^i\nonumber\\
&+\left.:e_j:e_k\left(\left\langle\mathcal{D}\left\langle\left[e^i,e^j\right],e_m\right\rangle,e^k\right\rangle e^m\right)::\right.\nonumber\\
&+\left.:e^j\left(:e^k:\left(\frac12\left\langle\mathcal{D}\left\langle\left[e_j,e^i\right],e_k\right\rangle,e^m\right\rangle +\left\langle\mathcal{D}\left\langle\left[e^i,e_j\right],e^m\right\rangle,e_k\right\rangle\right)e_m::\right):\right.\nonumber\\
&+\frac12:e^j:\left(\mathcal{D}\left\langle\left[e_k,e^i\right],e_j\right\rangle\right)_-e^k::+:e^k:\left(\left\langle\mathcal{D}\left[e^i,e_k\right],e^j\right\rangle\right)_-e_j::\nonumber,\\
B^i &= a^i+2:e^j:e^k\left[e_j,\left[e^i,e_k\right]_-\right]_+::+4:\left(T\left\langle\left[e_j,e^i\right],e^k\right\rangle\right)\left[e_k,e^j\right]_+:\nonumber\\
&+2:\left(T\left\langle\left[e_j,e^i\right],e_k\right\rangle\right)\left[e^k,e^j\right]_{\ell}:-2:e_j:e_k\left(\left\langle\mathcal{D}\left\langle e_m,\left[e^i,e^j\right] \right\rangle,e^k\right\rangle e^m\right)::\nonumber\\
&+\left.:e^j:e^k\left(\left(\left\langle\mathcal{D}\left\langle e_k,\left[e^i,e_j\right]\right\rangle,e^m\right\rangle - \left\langle\mathcal{D}\left\langle e^m,\left[e^i,e_j\right]\right\rangle,e_k\right\rangle\right)e_m\right)::\right.\nonumber,\\
C^i &= 2\bigg(\!:e^j:e^k\left[e_j,\left[e^i,e_k\right]_-\right]_-::+:e^j:e^k\left[\left[e^i,e_j\right]_-,e_k\right]_{\overline{\ell}}::+:e^j:e_k\left[e^k,\left[e_j,e^i\right]_-\right]_-::\nonumber\\
&\qquad+:e^j:\left[e^i,e_k\right]_-\left[e_j,e^k\right]_-::
+:e^j:e_k\left[e^i,\left[e_j,e^k\right]_-\right]_{\ell}::\!\bigg)\nonumber\\
&+:e^j:\left(\mathcal{D}\left\langle\left[e^i,e_k\right],e_j\right\rangle\right)_-e^k::
-2:e_j:\left(\mathcal{D}\left\langle\left[e^i,e_k\right],e^j\right\rangle\right)_-e^k::\nonumber\\
&+4:\left(T\left\langle\left[e_j,e^i\right],e_k \right\rangle\right)\left[e_k,e^j\right]_-:.\nonumber
\end{align}
\end{remark}

\begin{lemma}\label{lem:rewriting-Fij}
Assume that $\ell\oplus\overline{\ell}$ satisfies the F-term condition \eqref{eq:Ftermabs}. Then the functions $F^{ij}$ and $F_{ij}$ defined in~\eqref{eq:Fijsuper.bis} and~\eqref{eq:Fijsub.bis} are given by
\begin{equation}\label{eq:rewriting-Fij}
\begin{split}
F^{ij} &= \left\langle\left[\left[e_k,e^k\right],e^i\right],e^j\right\rangle+\left\langle\mathcal{D}\left\langle\left[e^i,e^j\right],e_k\right\rangle,e^k\right\rangle,\\
F_{ij} &= \left\langle\left[\left[e^k,e_k\right],e_i\right],e_j\right\rangle+\left\langle\mathcal{D}\left\langle\left[e_i,e_j\right],e^k\right\rangle,e_k\right\rangle.\\
\end{split}
\end{equation}
\end{lemma}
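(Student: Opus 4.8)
The plan is to establish the formula for $F^{ij}$ and then deduce the one for $F_{ij}$ from the duality of Remark~\ref{rem:dual-lem:importpro}, which interchanges $\ell$ and $\overline{\ell}$ (equivalently the frames $\{e^k\}$ and $\{e_k\}$) and preserves the F-term condition~\eqref{eq:Ftermabs}. The first step is to put the two summands of the definition~\eqref{eq:Fijsuper.bis} into a common form. Since $\{e_k\}_{k=1}^n$ is a frame of $\overline{\ell}$ with dual frame $\{e^k\}_{k=1}^n$ (that is, $\langle e^k,e_l\rangle=\delta^k_l$), and since each $e^k\in\ell$ is orthogonal to both $\ell$ and $C_-$ by isotropy, the trace is computed against the dual frame and only the $\overline{\ell}$-component survives, giving
\[
\tr\(\pi_{\overline{\ell}}\circ\ad_{[e^i,e^j]}\big|_{\overline{\ell}}\)=\sum_{k}\langle e^k,[[e^i,e^j],e_k]\rangle=:T^{ij}.
\]
Using $\langle\mathcal{D}f,a\rangle=\pi(a)(f)$, the remaining summand of $F^{ij}$ becomes $G^{ij}:=\pi(e^j)\langle e^i,W\rangle-\pi(e^i)\langle e^j,W\rangle$, where $W:=\sum_k[e_k,e^k]$, while the right-hand side of the target~\eqref{eq:rewriting-Fij} is $L^{ij}+M^{ij}$ with $L^{ij}:=\langle[W,e^i],e^j\rangle$ and $M^{ij}:=\sum_k\langle\mathcal{D}\langle[e^i,e^j],e_k\rangle,e^k\rangle$. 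Thus I must prove $T^{ij}+G^{ij}=L^{ij}+M^{ij}$.

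The core of the argument splits into two independent reductions. First I would expand $M^{ij}=\sum_k\pi(e^k)\langle[e^i,e^j],e_k\rangle$ by the invariance axiom~(4) of Definition~\ref{defn:CA}, obtaining $M^{ij}=\sum_k\langle[e^k,[e^i,e^j]],e_k\rangle+\langle[e^i,e^j],\sum_k[e^k,e_k]\rangle$. The skew-symmetry axiom~(5), together with the constancy of $\langle e^k,e_k\rangle$, turns $\sum_k[e^k,e_k]$ into $-W$. This is the one place where the F-term condition is essential: because $[e^i,e^j]\in\ell$ by~\eqref{eq:Ftermabs} and $\ell$ is isotropic, one has $\langle e^k,[e^i,e^j]\rangle=0$, so axiom~(5) gives $[e^k,[e^i,e^j]]=-[[e^i,e^j],e^k]$; then a second use of axiom~(4) on the constant pairing $\langle e^k,e_k\rangle$ identifies $\sum_k\langle[e^k,[e^i,e^j]],e_k\rangle$ with $T^{ij}$. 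Collecting terms yields $M^{ij}=T^{ij}-\langle[e^i,e^j],W\rangle$.

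Second, I would show $G^{ij}=L^{ij}-\langle[e^i,e^j],W\rangle$ using only axioms~(4) and~(5) and the symmetry of the pairing, with no appeal to the F-term. Indeed, axiom~(5) rewrites $[W,e^i]=\mathcal{D}\langle W,e^i\rangle-[e^i,W]$, so $L^{ij}=\pi(e^j)\langle W,e^i\rangle-\langle[e^i,W],e^j\rangle$, and axiom~(4) applied to $\pi(e^i)\langle W,e^j\rangle$ gives $\langle[e^i,W],e^j\rangle=\pi(e^i)\langle W,e^j\rangle-\langle W,[e^i,e^j]\rangle$. Substituting and using symmetry of $\langle\cdot,\cdot\rangle$ produces $L^{ij}=G^{ij}+\langle[e^i,e^j],W\rangle$. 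Subtracting the two reductions gives $T^{ij}+G^{ij}=L^{ij}+M^{ij}$, which is exactly~\eqref{eq:rewriting-Fij} for $F^{ij}$; the statement for $F_{ij}$ then follows by the $\ell\leftrightarrow\overline{\ell}$ symmetry.

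I do not expect a genuine obstacle here, as the computation is essentially bookkeeping with the Courant algebroid axioms. The points demanding care are the correct sign tracking through axiom~(5) (governed throughout by the constancy of $\langle e^k,e_k\rangle$ and of $\langle e^i,e^j\rangle=0$), and the precise moment at which the F-term condition is invoked, namely to force $\langle e^k,[e^i,e^j]\rangle=0$. One should also verify carefully that the trace is taken against the correct dual frame and that isotropy of $\ell$ together with $\ell\perp C_-$ removes the $\pi_\ell$ and $\pi_-$ contributions, so that $T^{ij}$ genuinely equals the full trace rather than only its $\overline{\ell}$-diagonal part.
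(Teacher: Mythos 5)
Your proof is correct and follows essentially the same route as the paper's: a direct manipulation with the Courant algebroid axioms (4) and (5), with the F-term condition entering at exactly the same single point (to kill $\langle e^k,[e^i,e^j]\rangle$), and the $F_{ij}$ formula obtained by the $\ell\leftrightarrow\overline{\ell}$ symmetry. The only differences are organizational: you split the computation into two reductions whose common term $\langle[e^i,e^j],W\rangle$ cancels, where the paper runs one telescoping chain starting from $\langle\mathcal{D}\langle[e^i,e^j],e_k\rangle,e^k\rangle$, and you spell out the trace identification $\tr\left(\pi_{\overline{\ell}}\circ\ad_{[e^i,e^j]}\big|_{\overline{\ell}}\right)=\sum_k\langle[[e^i,e^j],e_k],e^k\rangle$ that the paper leaves implicit.
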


\begin{proof}
By the F-term condition, $\langle[e^i,e^j],e^k\rangle=0$, so the Courant algebroid axioms imply
\begin{align*}
\langle\mathcal{D}\langle[e^i,e^j],&e_k\rangle,e^k\rangle
=\langle[[e^i,e^j],e_k],e^k\rangle+\langle[e_k,[e^i,e^j]],e^k\rangle
\\&
=\langle[[e^i,e^j],e_k],e^k\rangle+\langle\mathcal{D}\langle[e^i,e^j],e^k\rangle,e_k\rangle-\langle[e^i,e^j],[e_k,e^k]\rangle
\\&
=\langle[[e^i,e^j],e_k],e^k\rangle-\langle\mathcal{D}\langle e^j,[e_k,e^k]\rangle,e^i\rangle+\langle e^j,[e^i,[e_k,e^k]]\rangle
\\&
=\langle[[e^i,e^j],e_k],e^k\rangle-\langle\mathcal{D}\langle e^j,[e_k,e^k]\rangle,e^i\rangle-\langle[[e_k,e^k],e^i],e^j\rangle+\langle\mathcal{D}\langle[e_k,e^k],e^i\rangle,e^j\rangle.
\end{align*}
Hence
\begin{align*}
F^{ij}:=&\langle[[e^i,e^j],e_k],e^k\rangle+\left\langle\mathcal{D}\left\langle e^i,\left[e_k,e^k\right]\right\rangle,e^j\right\rangle-\left\langle\mathcal{D}\left\langle e^j,\left[e_k,e^k\right]\right\rangle,e^i\right\rangle,
\\
=& \left\langle\left[\left[e_k,e^k\right],e^i\right],e^j\right\rangle+\left\langle\mathcal{D}\left\langle\left[e^i,e^j\right],e_k\right\rangle,e^k\right\rangle,
\end{align*}
as required. The proof of the formula for $F_{ij}$ is similar. 
\end{proof}

\section{Proof of Proposition \ref{teo:mainresult2} and Proposition \ref{teo:mainresult}}\label{app:MRP}
 
\subsection{Current generator}

Let $\Omega^{\ch}_E$ be the chiral de Rham complex of a complex Courant algebroid $E$ over a smooth manifold $M$. We keep the notation introduced in Section \ref{sec:localgen}, and use the Einstein summation convention for repeated indices.

Define the following auxiliary local section of the chiral de Rham complex of $E$:
\begin{equation}
\label{eq:JHglocal}
J_0 = \frac{i}{2}:e^je_j:.
\end{equation}

\begin{lemma}
\label{lem:lempp}
Given $a \in \Omega^0(\Pi E)$, the following identities hold:
\begin{subequations}
\begin{align}
\label{eq:aJ}
\left[a_\Lambda{J_0}\right] & = \frac{i}{2}\left(:\left[a,e^j\right]e_j:+:e^j\left[a,e_j\right]:\right)+i\left(\chi I_+a_++\lambda\left\langle\left.\left[a,e^j\right]\right|e_j\right\rangle\right),\\
\label{eq:Ja}
\begin{split}
\left[{J_0}_\Lambda a\right] & = \frac{i}{2}\left(:\left[a,e^j\right]e_j:+:e^j\left[a,e_j\right]:\right)\\
& - \left(\left(\chi+S\right)I_+a_++\left(\lambda+T\right)\left\langle\left[a,e^j\right],e_j\right\rangle\right).
\end{split}
\end{align}
\end{subequations}
\end{lemma}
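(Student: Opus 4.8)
The plan is to prove \eqref{eq:aJ} by a direct application of the non-commutative Wick formula \eqref{eq:Wick} to $J_0=\frac{i}{2}:e^je_j:$, and then to deduce \eqref{eq:Ja} from \eqref{eq:aJ} using skew-symmetry \eqref{eq:comLambda}. Note first that $e^j$ and $e_j$ are odd, so $J_0$ is even, while $a\in\Omega^0(\Pi E)$ is odd; hence all Koszul signs simplify ($|J_0||a|=0$ and $(-1)^{(|a|+1)|e^j|}=1$). Applying \eqref{eq:Wick} with $b=e^j$, $c=e_j$ gives
\[
[a_\Lambda{J_0}]=\frac{i}{2}\Big(:[a_\Lambda e^j]\,e_j:+:e^j\,[a_\Lambda e_j]:\Big)+\frac{i}{2}\int_0^\Lambda d\Gamma\,\big[[a_\Lambda e^j]_\Gamma e_j\big].
\]

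First I would evaluate the elementary brackets using property (3) of Proposition \ref{prop:CDRE}, which yields $[a_\Lambda e^j]=[a,e^j]+2\chi\langle a,e^j\rangle$ and $[a_\Lambda e_j]=[a,e_j]+2\chi\langle a,e_j\rangle$, where $[\cdot,\cdot]$ is the Dorfman bracket and the pairings are functions. Substituting these, the Dorfman parts produce the normally ordered term $\frac{i}{2}(:[a,e^j]e_j:+:e^j[a,e_j]:)$, using that $:\langle a,e^j\rangle e_j:=\langle a,e^j\rangle e_j$ and its analogue in the second slot (properties (1)--(2) of Proposition \ref{prop:CDRE} and \eqref{eq:affa}). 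The key point is then that the two $\chi$-linear contributions assemble into $i\chi\,I_+a_+$: expanding $a=a_\ell+a_{\overline{\ell}}+a_-$ against the dual isotropic frame (so that $\langle e^j,e_k\rangle=\delta^j_k$ and $\langle e^j,e^k\rangle=\langle e_j,e_k\rangle=0$), one finds $\sum_j\langle a,e^j\rangle e_j=a_{\overline{\ell}}$ and $\sum_j\langle a,e_j\rangle e^j=a_\ell$, whence $\chi(a_{\overline{\ell}}-a_\ell)=\chi\,I_+a_+$ by the definition of $I_+$.

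Next I would compute the Wick integral. Using property (3) once more, $[[a_\Lambda e^j]_\Gamma e_j]$ splits into the Dorfman part $[[a,e^j],e_j]+2\eta\langle[a,e^j],e_j\rangle$ together with the contribution of the function $2\chi\langle a,e^j\rangle$, the latter handled by property (4) of Proposition \ref{prop:CDRE} and \eqref{eq:immedi}. Performing $\int_0^\Lambda d\Gamma$, which converts the $\eta$-linear scalar term into a $\lambda$-linear one, leaves precisely the term $i\lambda\langle[a,e^j],e_j\rangle$, the remaining pieces being already accounted for above. Collecting the three contributions yields \eqref{eq:aJ}. Finally, \eqref{eq:Ja} follows from skew-symmetry \eqref{eq:comLambda}: since $J_0$ is even, $[{J_0}_\Lambda a]=[a_{-\Lambda-\nabla}J_0]$, i.e.\ one substitutes $\chi\mapsto-\chi-S$ and $\lambda\mapsto-\lambda-T$ in \eqref{eq:aJ}, letting $T,S$ act on the coefficients of $\chi$ and $\lambda$, while the $\Lambda$-independent normally ordered term is unchanged.

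I expect the main obstacle to be the super-sign bookkeeping in the Wick expansion together with the precise evaluation of the $\cL$-valued integral $\int_0^\Lambda d\Gamma$ in the conventions of \cite[Appendix A.2]{AAGa}: these are exactly what pin down the numerical coefficients $\tfrac{i}{2}$, $i\chi$, $i\lambda$ and guarantee that the spurious terms arising from the pairing contributions cancel, leaving the clean form \eqref{eq:aJ}.
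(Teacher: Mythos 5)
Your proposal is correct and is essentially the paper's own proof: the non-commutative Wick formula applied to $J_0=\frac{i}{2}:e^je_j:$, with the two $\chi$-contributions combining (via the sign from anticommuting the odd variable $\chi$ past $e^j$) into $2\chi I_+a_+$, the Berezin integral retaining only the $\eta$-linear term $2\lambda\left\langle\left[a,e^j\right],e_j\right\rangle$, and then skew-symmetry for \eqref{eq:Ja}. One remark: your skew-symmetry step correctly yields an overall factor $i$ multiplying $\left(\chi+S\right)I_+a_+ + \left(\lambda+T\right)\left\langle\left[a,e^j\right],e_j\right\rangle$, which is absent from the displayed statement of \eqref{eq:Ja}; this appears to be a typo in the paper, as confirmed by the way \eqref{eq:Ja} is used (with the factor $i$ present) in the proof of Proposition \ref{lem:primerpaso}.
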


\begin{proof}
The identity~\eqref{eq:aJ} follows directly from the non-commutative Wick formula, since
\begin{align*}
\left[a_\Lambda:e^je_j:\right] & = :\left[a_\Lambda e^j\right]e_j:+:e^j\left[a_\Lambda e_j\right]:+\int_0^\Lambda d\Gamma\left[\left[a_\Lambda e^j\right]_\Gamma e_j\right]\\
& = :\left[a,e^j\right]e_j:+:e^j\left[a,e_j\right]:+2\left(\chi\left(\left\langle a,e^j\right\rangle e_j-\left\langle a,e_j\right\rangle e^j\right)+\lambda\left\langle\left[a,e^j\right],e_j\right\rangle\right)\\
& = :\left[a,e^j\right]e_j:+:e^j\left[a,e_j\right]:+2\chi I_+a_++2\lambda\left\langle\left[a,e^j\right],e_j\right\rangle.
\end{align*}
The identity~\eqref{eq:Ja} is obtained applying skew-symmetry of the $\Lambda$-bracket to~\eqref{eq:aJ}.
\end{proof}

Next, we define an auxiliary local section $H'$ of $\Omega^{\ch}_E$ (using~\eqref{eq:def.w}) and a scalar $c_0$ by
\begin{align}
\begin{split}\label{eq:def.H-prime}
H'&= \frac{1}{2}\left(:e_j\left(Se^j\right):+:e^j\left(Se_j\right):\right)\\
& + \frac{1}{4}\left(:e_j:e^k\left[e^j,e_k\right]::+:e^j:e_k\left[e_j,e^k\right]::\right.\\
&-\left.:e_j:e_k\left[e^j,e^k\right]::-:e^j:e^k\left[e_j,e_k\right]::\right)-\frac{T}{2}w,
\end{split}
\\
\label{eq:cc1}
c_0 &= 3\dim \ell = 3n \in \C.
\end{align}

\begin{proposition}
\label{lem:primerpaso}
One has
\begin{equation*}
\begin{split}
\left[{J_0}_\Lambda{J_0}\right] & = -\left(H'+\frac{\lambda\chi}{3}c_0\right).
\end{split}
\end{equation*}
\end{proposition}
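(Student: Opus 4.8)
The plan is to compute $[{J_0}_\Lambda J_0]$ directly from the definition $J_0=\tfrac{i}{2}:e^je_j:$ by applying the identity~\eqref{eq:Ja} from Lemma~\ref{lem:lempp} with $a=J_0$ itself, and then to massage the resulting expression into the shape $-(H'+\tfrac{\lambda\chi}{3}c_0)$. The key observation is that $[{J_0}_\Lambda J_0]$ can be obtained by first computing $[{J_0}_\Lambda e^j]$ and $[{J_0}_\Lambda e_j]$ via~\eqref{eq:Ja} (taking $a=e^j$ and $a=e_j$), and then feeding these into the non-commutative Wick formula~\eqref{eq:Wick} applied to $J_0=\tfrac{i}{2}:e^je_j:$. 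This is the natural route because $J_0$ is a normally ordered product of frame sections, so the Wick formula reduces everything to brackets and pairings of the $e^j,e_j$, which are governed by the Courant algebroid axioms and the frame identities~\eqref{eq:local-frame-identitities.1.a}--\eqref{eq:local-frame-identitities.1.b}.

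\textbf{Main steps.} First I would record the building blocks $[{J_0}_\Lambda e^j]$ and $[{J_0}_\Lambda e_j]$ by specializing~\eqref{eq:Ja}; here the $I_+$ operator and the decomposition $a=a_\ell+a_{\overline\ell}+a_-$ must be tracked carefully, since $e^j\in\Pi\ell$ and $e_j\in\Pi\overline\ell$ project differently. Second, I would apply the non-commutative Wick formula~\eqref{eq:Wick} to expand
\[
[{J_0}_\Lambda J_0]=\frac{i}{2}\bigl[{J_0}_\Lambda:e^je_j:\bigr]
=\frac{i}{2}\Bigl(:[{J_0}_\Lambda e^j]e_j:+:e^j[{J_0}_\Lambda e_j]:+\int_0^\Lambda d\Gamma\,\bigl[[{J_0}_\Lambda e^j]_\Gamma e_j\bigr]\Bigr),
\]
being careful with the sign factor $(-1)^{(|J_0|+1)|e^j|}$ (note $J_0$ is even and the $e^j$ are odd). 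Third, I would collect the terms: the double-bracket contributions organize into the cubic normally ordered terms appearing in $H'$, the $:e_j(Se^j):$ and $:e^j(Se_j):$ quadratic terms come from the $S$-dependent pieces of~\eqref{eq:Ja}, the $-\tfrac{T}{2}w$ term emerges from the $I_+a_+$ contributions, and the pairing/scalar terms assemble into the central charge $\tfrac{\lambda\chi}{3}c_0$ with $c_0=3n$. Throughout I would repeatedly invoke the basic chiral identities of Appendix~\ref{app:1}, especially quasicommutativity~\eqref{eq:cuasicom}, quasiassociativity~\eqref{eq:cuasiaso2}, and the reordering identities~\eqref{eq:cuasicom3},~\eqref{eq:cuasicomaso},~\eqref{eq:cuasiasocom}, to bring everything to the canonical form matching~\eqref{eq:def.H-prime}.

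\textbf{Main obstacle.} The hard part will be the bookkeeping in the cubic terms arising from the $\int_0^\Lambda d\Gamma\,[[{J_0}_\Lambda e^j]_\Gamma e_j]$ integral together with the normally ordered products $:[{J_0}_\Lambda e^j]e_j:$. These generate a proliferation of triple normally ordered expressions such as $:e_j:e^k[e^j,e_k]::$ and their reorderings, and matching them precisely against the four cubic terms in~\eqref{eq:def.H-prime} (with correct signs and the correct index placement distinguishing $\ell$ and $\overline\ell$ components) requires systematic use of the associativity identities and the frame symmetries~\eqref{eq:local-frame-identitities.1.b}. A second delicate point is isolating the constant (central charge) contribution $-\tfrac{\lambda\chi}{3}c_0$: this comes from the $\lambda\chi$-coefficient after all the pairings $\langle e^j,e_k\rangle=\delta_{jk}$ are contracted, and verifying that it equals exactly $3n$ amounts to counting the rank of $\ell$ correctly through the trace of the identity on $\ell$. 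I do not expect any conceptual difficulty, but rather a long and error-prone computation; the detailed verification is precisely the content deferred to Appendix~\ref{app:MRP}.
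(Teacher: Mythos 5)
Your proposal takes essentially the same route as the paper's proof: expand $\left[{J_0}_\Lambda :e^je_j:\right]$ with the non-commutative Wick formula (with the sign $(-1)^{(|J_0|+1)|e^j|}=-1$ on the second term), feed in $\left[{J_0}_\Lambda e^j\right]$ and $\left[{J_0}_\Lambda e_j\right]$ from \eqref{eq:Ja}, evaluate the $\int_0^\Lambda d\Gamma$ term, and reorder the resulting cubic expressions via \eqref{eq:cuasicom}, \eqref{eq:cuasicom2}, \eqref{eq:cuasicom3} into the canonical form of $H'$, with the $\lambda\chi\dim\ell$ contribution from the integral giving $c_0=3n$. Your opening phrase about applying \eqref{eq:Ja} ``with $a=J_0$ itself'' is type-incorrect, since \eqref{eq:Ja} only applies to $a\in\Omega^0(\Pi E)$ and $J_0$ is not such a section, but the plan you actually state immediately afterwards (taking $a=e^j$ and $a=e_j$) is the correct one and coincides with the paper's.
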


\begin{proof}
Applying the non-commutative Wick formula and~\eqref{eq:Ja}, we obtain
\begin{align*}
\big[{J_0}_\Lambda&:e^je_j:\big] = :\left[{J_0}_\Lambda e^j\right]e_j:-:e^j\left[{J_0}_\Lambda e_j\right]:+\int_0^\Lambda d\Gamma\left[\left[{J_0}_\Lambda e^j\right]_\Gamma e_j\right]\\
= & \frac{i}{2}\Big(::\left[e^j,e^k\right]e_k:e_j:+::e^k\left[e^j,e_k\right]:e_j:-:e^j:\left[e_j,e^k\right]e_k::-:e^j:e^k\left[e_j,e_k\right]::\Big)\\
& +i\lambda\Big(\left\langle e_j,\left[e^k,e_k\right]\right\rangle e^j-\left\langle e^j,\left[e^k,e_k\right] \right\rangle e_j\Big)\\
& +i\Big(:e^j\left(T\left\langle e_j,\left[e^k,e_k\right]\right\rangle\right):-:\left(T\left\langle e^j,\left[e^k,e_k\right]\right\rangle\right)e_j:+:e^j\left(Se_j\right):+:\left(Se^j\right)e_j:\Big)\\
& +\frac{i}{2}\int_0^\Lambda d\Gamma I_1+i\int_0^\Lambda d\Gamma I_2,
\end{align*}
where the two integrals are defined and calculated as follows, using sesquilinearity, skew-symmetry and the non-commutative Wick formula:
\begin{align*}
I_1^1 :&= \left[{e_j}_\Gamma:\left[e^j,e^k\right]e_k:\right] = :\left[{e_j}_\Gamma\left[e^j,e^k\right]\right]e_k:+:\left[e^j,e^k\right]\left[{e_j}_\Gamma e_k\right]:\\
& +\int_0^\Gamma d\Omega\left[\left[{e_j}_\Gamma\left[e^j,e^k\right]\right]_\Omega e_k\right] = :\left[e_j,\left[e^j,e^k\right]\right]e_k:+:\left[e^j,e^k\right]\left[e_j,e_k\right]:\\
&+2\eta\left\langle e_j,\left[e^j,e^k\right]\right\rangle e_k+2\gamma\left\langle\left[{e_j},\left[e^j,e^k\right]\right],e_k\right\rangle,\\
I_1^2 :&= \left[{e_j}_\Gamma:e^k\left[e^j,e_k\right]:\right] = :\left[{e_j}_\Gamma e^k\right]\left[e^j,e_k\right]:+:e^k\left[{e_j}_\Gamma\left[e^j,e_k\right]\right]:\\
&+\int_0^\Gamma d\Omega\left[\left[{e_j}_\Gamma e^k\right]_\Omega\left[e^j,e_k\right]\right] = :\left[e_j,e^k\right]\left[e^j,e_k\right]:+:e^k\left[e_j,\left[e^j,e_k\right]\right]:\\
&+2\eta\left(\left[e^j,e_j\right]-\left\langle e_j,\left[e^j,e_k\right]\right\rangle e^k\right)+2\gamma\left\langle\left[{e_j}, e^k\right],\left[e^j,e_k\right]\right\rangle,\\
\int_0^\Lambda d\Gamma I_1 :&= \int_0^\Lambda d\Gamma\left[\({:\left[e^j,e^k\right]e_k:+:e^k\left[e^j,e_k\right]:}\)_\Gamma e_j\right] = -\int_0^\Lambda d\Gamma I_1^1-\int_0^\Lambda d\Gamma I_1^2\\
& = 2\lambda\left(\left[e^j,e_j\right]_{\overline{\ell}}-\left[e^j,e_j\right]-\left[e^j,e_j\right]_\ell\right) = -4\lambda\left[e^j,e_j\right]_\ell,\\ 
\end{align*}
and
\begin{align*}
I_2^1 :&= \left[{\left(\chi+S\right)e^j}_\Gamma e_j\right] = \left(\eta-\chi\right)\left(\left[e^j,e_j\right]+2\eta\left\langle e^j,e_j\right\rangle\right)\\
& = \eta\left(\left[e^j,e_j\right]+2\chi\dim\ell\right)-2\gamma\dim\ell-\chi\left[e^j,e_j\right],\\
I_2^2 :&= \left[{T\left\langle e^j,\left[e^k,e_k\right]\right\rangle}_\Gamma e_j\right] = -\gamma\left[{\left\langle e^j,\left[e^k,e_k\right]\right\rangle}_\Gamma e_j\right]=-\gamma\left\langle\mathcal{D}{\left\langle e^j,\left[e^k,e_k\right]\right\rangle},e_j\right\rangle,\\
\int_0^\Lambda d\Gamma I_2 :&= \int_0^\Lambda d\Gamma\left(I_2^1+I_2^2\right) = \lambda\left[e^j,e_j\right]+2\lambda\chi\dim\ell.
\end{align*}
Hence 
\begin{align*}
\left[{J_0}_\Lambda{J_0}\right] & = \frac{i}{2}\left[{J_0}_\Lambda:e^je_j:\right]\\
& = -\frac{1}{4}\left(::\left[e^j,e^k\right]e_k:e_j:+::e^k\left[e^j,e_k\right]:e_j:-:e^j:\left[e_j,e^k\right]e_k::\right.\\
& -\left.:e^j:e^k\left[e_j,e_k\right]::\right)-\frac{1}{2}\lambda\left(\left[e^j,e_j\right]_\ell-\left[e^j,e_j\right]_{\overline{\ell}}\right)\\
& -\frac{1}{2}\left(:e^j\left(T\left\langle e_j,\left[e^k,e_k\right]\right\rangle\right):-:\left(T\left\langle e^j,\left[e^k,e_k\right]\right\rangle\right)e_j:\right.\\
& +\left.:e^j\left(Se_j\right):+:\left(Se^j\right)e_j:\right)+\lambda\left[e^j,e_j\right]_\ell-\frac{1}{2}\lambda\left[e^j,e_j\right]-\lambda\chi\dim\ell\\
& = -\frac{1}{4}\left(::\left[e^j,e^k\right]e_k:e_j:+::e^k\left[e^j,e_k\right]:e_j:-:e^j:\left[e_j,e^k\right]e_k::\right.\\
& -\left.:e^j:e^k\left[e_j,e_k\right]::\right)-\frac{1}{2}\left(:e^j\left(Se_j\right):+:\left(Se^j\right)e_j:\right)\\
& -\frac{1}{2}\left(:e^j\left(T\left\langle e_j,\left[e^k,e_k\right]\right\rangle\right):-:\left(T\left\langle e^j,\left[e^k,e_k\right]\right\rangle\right)e_j:\right)-\lambda\chi\dim\ell.
\end{align*}
Applying \eqref{eq:cuasicom}, \eqref{eq:cuasicom2}, \eqref{eq:cuasicom3}, we rewrite the following terms, that we have just obtained:
\begin{align*}
:\left(Se^j\right)e_j: & = :e_j\left(Se^j\right):+T\left[e^j,e_j\right],\\
-:e^j:\left[e_j,e^k\right]e_k:: & = :e^j:e_k\left[e_j,e^k\right]::-2:e^j\left(T\left\langle e_j,\left[e^k,e_k\right]\right\rangle\right):,\\
::e^k\left[e^j,e_k\right]:e_j: &= :e_j:e^k\left[e^j,e_k\right]::-2T\left(\left[e^j,e_j\right]+\left[e^j,e_j\right]_\ell\right),\\
::\left[e^j,e^k\right]e_k:e_j: &=-:e_j:e_k\left[e^j,e^k\right]::+2T\left[e^j,e_j\right]_{\overline{\ell}}+2:\left(T\left\langle e^j,\left[e^k,e_k\right]\right\rangle\right)e_j:.
\end{align*}
The required formula follows now by substituting these expressions in the above calculation:
\begin{align*}
\left[{J_0}_\Lambda{J_0}\right] & = -\frac{1}{4}\left(::\left[e^j,e^k\right]e_k:e_j:+::e^k\left[e^j,e_k\right]:e_j:-:e^j:\left[e_j,e^k\right]e_k::\right.\\
& -\left.:e^j:e^k\left[e_j,e_k\right]::\right)-\frac{1}{2}\left(:e^j\left(Se_j\right):+:\left(Se^j\right)e_j:\right)-\frac{1}{2}Tw-\lambda\chi\dim\ell,\\
& = -\left(H'+\frac{\lambda\chi}{3}c_0\right).
\qedhere
\end{align*}
\end{proof}

\subsection{Neveu--Schwarz generator from the F-term condition}

In this section we give the proofs of Propositions \ref{teo:mainresult2} and \ref{teo:mainresult}.
We need to calculate the $\Lambda$-brackets~\eqref{eq:JJcorregido} and~\eqref{eq:teo:mainresult}, under the assumption that $\ell\oplus\overline{\ell}$ satisfies the F-term condition \eqref{eq:Ftermabs}. 

Consider the following auxiliary local section of the chiral de Rham complex of $E$:
\begin{equation}\label{eq:def.H_0}
H_0:= H'+\frac{T}{2}w,
\end{equation}

\begin{lemma}\label{lem:lempi}
Assume that $\ell \oplus \overline{\ell}$ satisfies the F-term condition \eqref{eq:Ftermabs}. Then
\begin{equation*}
\begin{split}
H_0 &= \frac{1}{2}\Big(:e_j\left(Se^j\right):+:e^j\left(Se_j\right):\Big)\\
&\, + \frac{1}{4}\Big(2:e_j:e^k\left[e^j,e_k\right]_-::+:e_j:e_k\left[e^j,e^k\right]_\ell::+:e^j:e^k\left[e_j,e_k\right]_{\overline{\ell}}::\Big).\\
\end{split}
\end{equation*}
\end{lemma}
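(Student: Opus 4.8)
The plan is to observe first that, by the definition \eqref{eq:def.H_0} of $H_0$ together with \eqref{eq:def.H-prime}, adding $\tfrac{T}{2}w$ cancels the term $-\tfrac{T}{2}w$ in $H'$, so that
\begin{equation*}
H_0 = \frac{1}{2}\Big(:e_j(Se^j):+:e^j(Se_j):\Big) + \frac{1}{4}\big(T_1+T_2+T_3+T_4\big),
\end{equation*}
where $T_1 = :e_j:e^k[e^j,e_k]::$, $T_2 = :e^j:e_k[e_j,e^k]::$, $T_3 = -:e_j:e_k[e^j,e^k]::$ and $T_4 = -:e^j:e^k[e_j,e_k]::$. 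Since the linear part already coincides with the first line of the claimed formula, everything reduces to showing that $T_1+T_2+T_3+T_4$ equals $2:e_j:e^k[e^j,e_k]_-:: + :e_j:e_k[e^j,e^k]_\ell:: + :e^j:e^k[e_j,e_k]_{\overline{\ell}}::$.

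Next I would exploit the F-term condition \eqref{eq:Ftermabs}, which gives $[e^j,e^k]=[e^j,e^k]_\ell$ and $[e_j,e_k]=[e_j,e_k]_{\overline{\ell}}$, so that $T_3$ and $T_4$ already carry brackets lying purely in $\ell$ and $\overline{\ell}$. The sign discrepancy (the claimed formula has these terms with a plus sign, whereas $T_3,T_4$ carry a minus) signals that $T_3$ and $T_4$ must be reordered inside their nested normal-ordered products, the missing contributions being supplied by $T_1$ and $T_2$. Accordingly, I would decompose the mixed brackets in $T_1$ and $T_2$ as $[e^j,e_k] = [e^j,e_k]_\ell + [e^j,e_k]_{\overline{\ell}} + [e^j,e_k]_-$, and similarly for $[e_j,e^k]$, using \eqref{eq:abstractdecomp.1}.

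The heart of the argument is then a rearrangement of normal-ordered products. For the $C_-$-components, relabelling indices and using antisymmetry \eqref{eq:local-frame-identitities.1.a} together with a commutation of the outer factors via quasicommutativity \eqref{eq:cuasicom} and the reordering identities \eqref{eq:cuasicomaso}, \eqref{eq:cuasiasocom}, I expect the contribution $:e_j:e^k[e^j,e_k]_-::$ from $T_1$ and the contribution $:e^j:e_k[e_j,e^k]_-::$ from $T_2$ to coincide, producing the factor $2$ in $2:e_j:e^k[e^j,e_k]_-::$. For the $\ell$- and $\overline{\ell}$-components, I would rewrite $[e^j,e_k]_\ell = \langle[e^j,e_k],e_m\rangle e^m$ and $[e^j,e_k]_{\overline{\ell}} = \langle[e^j,e_k],e^m\rangle e_m$ (using the isotropy relations $\langle e^j,e^k\rangle=\langle e_j,e_k\rangle=0$, $\langle e^j,e_k\rangle=\delta^j_k$), translate the resulting pairings through the invariance identity \eqref{eq:local-frame-identitities.1.b}, and apply the associativity identities \eqref{eq:15}, \eqref{eq:16} and \eqref{eq:affa} to match these against the reordered $T_3,T_4$; the $\overline{\ell}$-part of $T_1$ and the $\ell$-part of $T_2$ should then cancel.

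The main obstacle is the bookkeeping rather than any single conceptual step: every reordering through the quasicommutativity and quasiassociativity axioms produces correction terms of the form $T\langle\cdots\rangle$, and, through the non-commutative Wick formula, further bracket terms. The crux is to verify that all of these either vanish by the isotropy of the frame and the Courant algebroid axioms, or telescope against one another, so that only the three claimed summands survive. The difficulty lies entirely in organising the repeated application of the identities collected in Appendix \ref{app:1} so that the spurious derivative terms cancel.
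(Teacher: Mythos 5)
Your proposal is correct and takes essentially the same approach as the paper's proof: after cancelling the $\tfrac{T}{2}w$ terms, the paper likewise uses the F-term condition on $T_3,T_4$, establishes the identity $:e_j:e^k\left[e^j,e_k\right]:: \,=\, :e^j:e_k\left[e_j,e^k\right]::$ via the reordering identities (with the $T\langle\cdot,\cdot\rangle$ corrections telescoping, exactly as you anticipate), and then matches the $\ell$- and $\overline{\ell}$-components against $T_3,T_4$ by expanding the projections in the frame and applying invariance together with the Appendix~\ref{app:1} identities. The only difference is cosmetic bookkeeping: the paper proves the equality of $T_1$ and $T_2$ for the full brackets first and then treats the two component identities, whereas you propose splitting into $C_-$, $\ell$ and $\overline{\ell}$ pieces from the outset.
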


\begin{proof}
Applying \eqref{eq:cuasicomaso}, we obtain
\begin{equation*}
\begin{split}
:e_j:e^k\left[e^j,e_k\right]:: &= -:e^k:e_j\left[e^j,e_k\right]:: =:e^k:e_j\left[e_k,e^j\right]::.
\end{split}
\end{equation*}
Hence, using the F-term condition, we have
\begin{equation*}
\begin{split}
H_0 &= \frac{1}{2}\left(:e_j\left(Se^j\right):+:e^j\left(Se_j\right):\right)\\
& + \frac{1}{4}\left(2:e_j:e^k\left[e^j,e_k\right]::-:e_j:e_k\left[e^j,e^k\right]_\ell::-:e^j:e^k\left[e_j,e_k\right]_{\overline{\ell}}::\right).\\
\end{split}
\end{equation*}
Moreover, using \eqref{eq:cuasiasocom} and \eqref{eq:fabcabfc}, we obtain
\begin{equation*}
\begin{split}
:e^j:e_k\left[e_j,e^k\right]_{\overline{\ell}}:: & = -:\left[e_j,e^k\right]_{\overline{\ell}}:e_ke^j:: -2:e_k\left(T\left\langle e^j,\left[e_j,e^k\right]\right\rangle\right):\\
& = -:\left(\left\langle e^m,\left[e_j,e^k\right]\right\rangle e_m\right):e_ke^j::-2:e_k\left(T\left\langle \left[e^j,e_j\right],e^k\right\rangle\right):\\
& = :e_m:e_k\left(\left\langle e_j,\left[e^m,e^k\right]\right\rangle e^j\right)::\\
&+2\left(:\left(T\left\langle e^j,\left[e_j,e^k\right] \right\rangle\right)e_k:-:e_k\left(T\left\langle \left[e^j,e_j\right],e^k\right\rangle\right):\right)\\
&=:e_j:e_k\left[e^j,e^k\right]_\ell::.
\end{split}
\end{equation*}
Analogously, using \eqref{eq:cuasicom}, \eqref{eq:cuasiasocom} and \eqref{eq:fabcabfc}, we obtain
\begin{equation*}
\begin{split}
:e^j:e_k\left[e_j,e^k\right]_\ell:: & = -:\left[e_j,e^k\right]_\ell:e_ke^j:: +2:e^j\left(T\left\langle e_k,\left[e_j,e^k\right]\right\rangle\right):\\
& = :\left(\left\langle e_m,\left[e_j,e^k\right]\right\rangle e^m\right):e^je_k::+2:e^j\left(T\left\langle e_j,\left[e^k,e_k\right]\right\rangle\right):\\
& = :e^m:e^j\left(\left\langle \left[e_m,e_j\right],e^k\right\rangle e_k\right)::\\
&-2\left(:\left(T\left\langle e_k,\left[e_j,e^k\right] \right\rangle\right)e^j:-:e^j\left(T\left\langle e_j,\left[e^k,e_k\right]\right\rangle\right):\right)\\
& = :e^j:e^k\left[e_j,e_k\right]_{\overline{\ell}}::,
\end{split}
\end{equation*}
and the proof follows.
\end{proof}

\begin{remark}\label{obs:app.MSobs}
Interchanging $\ell$ and $\overline{\ell}$, and replacing the corresponding dual isotropic frames $\{\epsilon_j\}_{j=1}^n$ and $\{\overline{\epsilon}_j\}_{j=1}^n$ with each other, the sections $w, J_0, H', H_0$ and the scalar $c_0$ change into $\overline{w}=w$, $\overline{J}=-J$, $\overline{H'}=H', \overline{H}_0=H_0$ and $\overline{c}_0=c_0$, respectively. This follows directly from their definitions~\eqref{eq:def.w},~\eqref{eq:JHglocal},~\eqref{eq:def.H-prime},~\eqref{eq:cc1},~\eqref{eq:def.H_0}. 
In particular, since $\overline{H_0}=H_0$ and $\overline{H'}=H'$, a formula for the $\Lambda$-bracket $\left[{H_0}_\Lambda e_i \right]$ can be deduced from one for $\left[{H_0}_\Lambda e^i \right]$ for $i \in \{1,\ldots,n\}$, and vice versa, and the expressions $\left[{H'}_\Lambda e_i \right]$ and $\left[{H'}_\Lambda e^i \right]$ have a similar relation. The aim of the next result is precisely to calculate these $\Lambda$-brackets. 
\end{remark}

\begin{proposition}\label{lem:pasointermedio}
Assume that $\ell\oplus\overline{\ell}$ satisfies the F-term condition \eqref{eq:Ftermabs}. For each $i \in \{1,\ldots,n\}$, the following identities hold:
\begin{subequations}
\begin{align}
\left[{H_0}_\Lambda e_i \right]& = \frac{\lambda}{2}\bigg(\Big(\left\langle\mathcal{D}\left\langle\left[e^k,e_i\right],e^j\right\rangle,e_k\right\rangle-\left\langle\mathcal{D}\left\langle e^j,\left[e_i,e_k\right]\right\rangle,e^k\right\rangle\Big)e_j
-\left\langle\mathcal{D}\left\langle\left[e^k,e_i\right],e_j\right\rangle,e_k\right\rangle e^j\bigg)
\nonumber
\\
&- \frac{1}{2}\left(:e_j:e_k\left[\left[e_i,e^j\right]_-,e^k\right]_\ell::
+:e_j:e_k\left[\left[e_i,e^j\right]_-,e^k\right]_-::\right.\nonumber \\
&+:\left[e_i,e^j\right]_-:e_k\left[e_j,e^k\right]_-::+ :e_j:e^k\left[e_i,\left[e^j,e_k\right]_-\right]_{\overline{\ell}}::+:e^j:e_k\left[\left[e^k,e_i\right]_-,e_j\right]_-::\!\!\bigg)\nonumber\\
& + \frac{1}{2}\bigg(\chi:e_j\left[e_i,e^j\right]_-:-2:e_j\left(S\left[e_i,e^j\right]_-\right): + 2T\left[e_j,\left[e_i,e^j\right]_-\right]_{\overline{\ell}}\nonumber\\
& + \lambda\Big(\left[e_j,\left[e_i,e^j\right]_-\right]_{\overline{\ell}}+\left[\left[e_i,e^j\right]_-,e_j\right]_-\Big)\bigg)+\left(\lambda+2T+\chi S\right)e_i,
\label{eq:NSbasis10}\\
\left[{H_0}_\Lambda e^i\right] & = \frac{\lambda}{2}\bigg(\Big(\left\langle\mathcal{D}\left\langle\left[e_k,e^i\right],e_j\right\rangle,e^k\right\rangle-\left\langle\mathcal{D}\left\langle e_j,\left[e^i,e^k\right]\right\rangle,e_k\right\rangle\Big)e^j-\left\langle\mathcal{D}\left\langle\left[e_k,e^i\right],e^j\right\rangle,e^k\right\rangle e_j\bigg)\nonumber\\
&-\frac{1}{2}\left(:e^j:e^k\left[\left[e^i,e_j\right]_-,e_k\right]_{\overline{\ell}}::+:e^j:e^k\left[\left[e^i,e_j\right]_-,e_k\right]_-::\right.\nonumber\\
&\left.+:\left[e^i,e_j\right]_-:e^k\left[e^j,e_k\right]_-::+:e^j:e_k\left[e^i,\left[e_j,e^k\right]_-\right]_\ell::+:e_j:e^k\left[\left[e_k,e^i\right]_-,e^j\right]_-::\right)\nonumber\\
&+\frac{1}{2}\left(\chi:e^j\left[e^i,e_j\right]_-:-2:e^j\left(S\left[e^i,e_j\right]_-\right):+2T\left[e^j,\left[e^i,e_j\right]_-\right]_\ell \right.\nonumber\\
&+\left.\lambda\left(\left[e^j,\left[e^i,e_j\right]_-\right]_\ell+\left[\left[e^i,e_j\right]_-,e^j\right]_-\right)\right)+\left(\lambda+2T+\chi S\right)e^i,\nonumber
\\
\left[{H'}_\Lambda e_i\right] & = \left[{H_0}_\Lambda e_i\right] - \frac{\lambda}{2}\Big(\left\langle\left[e^j,e_j\right],e^k\right\rangle\left[e_k,e_i\right]_{\overline{\ell}}-\left\langle\left[e^j,e_j\right],e_k\right\rangle\left[e^k,e_i\right]\Big)\nonumber\\
& +\frac{\lambda}{2}\Big(\left\langle\mathcal{D}\left\langle e^j,\left[e^k,e_k\right]\right\rangle,e_i\right\rangle e_j - \left\langle\mathcal{D}\left\langle e_j,\left[e^k,e_k\right]\right\rangle,e_i\right\rangle e^j
+\mathcal{D}\left\langle e_i,\left[e^j,e_j\right]\right\rangle\Big)
\nonumber\\
&+\lambda\chi\left\langle\left[e^j,e_j\right],e_i\right\rangle,
\label{eq:NSbasis1}\\
\left[{H'}_\Lambda e^i\right] & = \left[{H_0}_\Lambda e^i\right] + \frac{\lambda}{2}\Big(\left\langle\left[e^j,e_j\right],e_k\right\rangle\left[e^k,e^i\right]_\ell-\left\langle\left[e^j,e_j\right],e^k\right\rangle\left[e_k,e^i\right]\Big)\nonumber\\
& -\frac{\lambda}{2}\Big( \left\langle\mathcal{D}\left\langle e_j,\left[e^k,e_k\right]\right\rangle,e^i\right\rangle e^j- \left\langle\mathcal{D}\left\langle e^j,\left[e^k,e_k\right]\right\rangle,e^i\right\rangle e_j+\mathcal{D}\left\langle e^i,\left[e^j,e_j\right]\right\rangle\Big)\nonumber\\
&-\lambda\chi\left\langle\left[e^j,e_j\right],e^i\right\rangle.\nonumber
\end{align}
\end{subequations}
\end{proposition}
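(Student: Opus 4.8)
The plan is to compute the four $\Lambda$-brackets directly from the non-commutative Wick formula~\eqref{eq:Wick}, exploiting two structural reductions that cut the work roughly in half. First, by Remark~\ref{obs:app.MSobs} we have $\overline{H_0}=H_0$ and $\overline{H'}=H'$, so the identities for $[{H_0}_\Lambda e^i]$ and $[{H'}_\Lambda e^i]$ are obtained from those for $[{H_0}_\Lambda e_i]$ and $[{H'}_\Lambda e_i]$ by interchanging the roles of $\ell$ and $\overline{\ell}$ and swapping the dual frames $\{e^j\}$ and $\{e_j\}$ (this also exchanges the identities of Lemma~\ref{lem:importprop} with their duals in Remark~\ref{rem:dual-lem:importpro}). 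Second, since $H'=H_0-\tfrac{T}{2}w$ by~\eqref{eq:def.H_0}, and $[{(Tw)}_\Lambda e_i]=-\lambda[{w}_\Lambda e_i]$ by sesquilinearity~\eqref{eq:sesquiLambda} together with $\chi^2=-\lambda$, the $H'$-brackets reduce to $[{H'}_\Lambda e_i]=[{H_0}_\Lambda e_i]+\tfrac{\lambda}{2}[{w}_\Lambda e_i]$, where $[{w}_\Lambda e_i]=[w,e_i]+2\chi\langle w,e_i\rangle$ is elementary by part~(3) of Proposition~\ref{prop:CDRE} and $w=I_+[e^j,e_j]_+$. The whole proposition therefore rests on the single computation of $[{H_0}_\Lambda e_i]$.

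For that computation I would start from the simplified form of $H_0$ in Lemma~\ref{lem:lempi} and apply the Wick formula~\eqref{eq:Wick} to each of its (at most triply nested) normally ordered summands, computing $[{e_i}_\Lambda H_0]$ and then converting to $[{H_0}_\Lambda e_i]$ by skew-symmetry~\eqref{eq:comLambda}. The elementary inputs are $[{e_i}_\Lambda e^j]=[e_i,e^j]+2\chi\delta_i^j$ and $[{e_i}_\Lambda e_j]=[e_i,e_j]$ from parts~(3)--(4) of Proposition~\ref{prop:CDRE} with the isotropy relations, and the sesquilinearity rule~\eqref{eq:sesquiLambda} is used to push the bracket through the factors $Se^j$ and $Se_j$. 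At every stage the F-term condition~\eqref{eq:Ftermabs} controls the $\ell$-, $\overline{\ell}$- and $C_-$-components of the Dorfman brackets that appear (so that $[e_i,e_j]\in\overline{\ell}$ and the projection identities of Lemma~\ref{lem:applem} apply), while the Jacobi identity for the Dorfman bracket resolves the nested brackets $[e_i,[e^j,e_k]]$.

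The terms produced are precisely the combinations $a_i,b_i,A_i,B_i,C_i$ assembled in Lemma~\ref{lem:importprop}: I would collect them and substitute the identities~\eqref{eq:parte0}--\eqref{eq:parte3} to collapse the nested normally ordered products into the compact right-hand side of~\eqref{eq:NSbasis10}. The many reassociations and recommutations of normally ordered products needed along the way are supplied by the chiral identities of Appendix~\ref{app:1}, principally quasicommutativity~\eqref{eq:cuasicom} and quasiassociativity~\eqref{eq:cuasiaso2} together with the derived rules~\eqref{eq:afbafb},~\eqref{eq:fabcabfc},~\eqref{eq:15} and~\eqref{eq:16}, and the scalar contractions $\langle e^j,e_k\rangle=\delta^j_k$ produce the purely classical summands of the final formula.

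The hard part will be organizational rather than conceptual. The Wick formula applied to the triply nested products spawns a large number of terms, and the real difficulty is to group them so that the cancellations encoded in Lemma~\ref{lem:importprop} become visible; keeping the signs and the $\lambda$-, $\chi$-, $T$-dependence correct through the integral term $\int_0^\Lambda d\Gamma\,[[a_\Lambda b]_\Gamma c]$ of the Wick formula, and tracking which component ($\ell$, $\overline{\ell}$ or $C_-$) each Dorfman bracket lands in, is where essentially all of the effort lies.
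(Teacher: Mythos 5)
Your proposal follows the paper's own proof essentially step for step: the same reduction of the $e^i$-identities to the $e_i$-identities via Remark~\ref{obs:app.MSobs}, the same reduction of the $H'$-brackets to the $H_0$-brackets through $H'=H_0-\tfrac{T}{2}w$ and sesquilinearity (the paper additionally invokes \eqref{eq:14} to expand $[w,e_i]$, since $w$ carries function coefficients, but this is the elementary computation you describe), and the same core calculation of $[{e_i}_\Lambda H_0]$ via the non-commutative Wick formula applied to the form of $H_0$ in Lemma~\ref{lem:lempi}, converted by skew-symmetry and collapsed using the identities \eqref{eq:parte0}--\eqref{eq:parte3} of Lemma~\ref{lem:importprop}. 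This is precisely the paper's route, so the proposal is correct and nothing needs to be added.
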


\begin{proof}
We will need some identities collected in Lemma \ref{lem:importprop}.
To prove~\eqref{eq:NSbasis10}, note that 
\[
\left[{H_0}_\Lambda e_i\right] = -\left[{e_i}_{-\Lambda-\nabla}{H_0}\right]
\]
by skew-symmetry of the $\Lambda$-bracket, so we need to compute
\[
\left[{e_i}_\Lambda{H_0}\right] = \frac{1}{2}\Upsilon_i^1+\frac{1}{4}\Upsilon_i^2,
\]
where $\Upsilon_i^1$ and $\Upsilon_i^2$ are defined and calculated as follows, using the formula for $H_0$ in Lemma \ref{lem:lempi}. First, we compute
\[
\Upsilon_i^1 := \left[{e_i}_\Lambda\left(:e_j\left(Se^j\right):+:e^j\left(Se_j\right):\right)\right] = \Upsilon_i^{1,1}+\Upsilon_i^{1,2}.
\]
Applying the non-commutative Wick formula once on each summand, by sesquilinearity,
\begin{align*}
\Upsilon_i^{1,1} :&=\left[{e_i}_\Lambda:e_j\left(Se^j\right):\right]\\
&=:\left[e_i,e_j\right]_{\overline{\ell}}\left(Se^j\right):+:e_j\left(S\left[e_i,e^j\right]\right):-\chi:e_j\left[e_i,e^j\right]:\\
&+\lambda\left(2e_i+\left[\left[e_i,e_j\right]_{\overline{\ell}},e^j\right]+\mathcal{D}\left\langle e_i,\left[e^j,e_j\right]\right\rangle\right),\\
\Upsilon_i^{1,2} :&= \left[{e_i}_\Lambda:e^j\left(Se_j\right):\right]\\
& = :\left[e_i,e^j\right]\left(Se_j\right):+:e^j\left(S\left[e_i,e_j\right]_{\overline{\ell}}\right):-\chi:e^j\left[e_i,e_j\right]_{\overline{\ell}}:+2\chi Se_i\\
&+\lambda\left(\left[\left[e_i,e^j\right],e_j\right]+\mathcal{D}\left\langle e_i,\left[e_j,e^j\right]\right\rangle\right),
\end{align*}
where we have used the involutivity of $\overline{\ell}$. Therefore,
\begin{align*}
\Upsilon_i^1
& = :\left[e_i,e^j\right]_-\left(Se_j\right):+:e_j\left(S\left[e_i,e^j\right]_-\right):+\left.:e^j:e^k\left(\left\langle\mathcal{D}\left\langle\left[e_i,e_j\right],e^m\right\rangle,e_k\right\rangle e_m\right)::\right.\\
& + 2:\left(T\left\langle e^k,\left[e_i,e_j\right]\right\rangle\right)\left[e_k,e^j\right]:+:\left(T\left\langle\left[e^k,e_i\right],e^j \right\rangle\right)\left[e_k,e_j\right]_{\overline{\ell}}:\nonumber\\
& + T\Big(:e_j\left(\left\langle\mathcal{D}\left\langle\left[e^k,e_i\right],e^j\right\rangle,e_k\right\rangle-\left\langle\mathcal{D}\left\langle e^j,\left[e_i,e_k\right]\right\rangle,e^k\right\rangle\right):\nonumber\\
& +:e^j\left(\left\langle\mathcal{D}\left\langle\left[e^k,e_i\right],e_j\right\rangle,e_k\right\rangle\right):\Big)+\frac{1}{2}:e_j:\left(\mathcal{D}\left\langle\left[e^k,e_i\right],e^j\right\rangle\right)_{\overline{\ell}}e_k::\nonumber\\
&+\left.:e_j\left(:e_k:\left(\frac12\left\langle\mathcal{D}\left\langle\left[e^j,e_i\right],e^k\right\rangle,e_m\right\rangle +\left\langle\mathcal{D}\left\langle\left[e_i,e^j\right],e_m\right\rangle,e^k\right\rangle\right)e^m::\right):\right.\nonumber\\
&+\frac12:e_j:\left(\mathcal{D}\left\langle\left[e^k,e_i\right],e^j\right\rangle\right)_-e_k::+:e_k:\left(\left\langle\mathcal{D}\left[e_i,e^k\right],e_j\right\rangle\right)_-e^j::\\
& +\chi\left(2Se_i-:e_j\left[e_i,e^j\right]:-:e^j\left[e_i,e_j\right]:\right)+\lambda\left(2e_i+\left[\left[e_i,e_j\right],e^j\right]+\left[\left[e_i,e^j\right],e_j\right]\right).
\end{align*}
Here, we have used the identity \eqref{eq:parte1}. Secondly, we compute
\begin{equation*}
\begin{split}
\Upsilon_i^2 :&= \left[{e_i}_\Lambda\left(:e_j:e_k\left[e^j,e^k\right]_\ell::+:e^j:e^k\left[e_j,e_k\right]_{\overline{\ell}}::+2:e_j:e^k\left[e^j,e_k\right]_-::\right)\right]\\
& = \Upsilon_i^{2,1}+\Upsilon_i^{2,2}+\Upsilon_i^{2,3}.
\end{split}
\end{equation*}
Applying the non-commutative Wick formula twice on each summand, we obtain
\begin{align*}
\Upsilon_i^{2,1} :&= \left[{e_i}_\Lambda:e_j:e_k\left[e^j,e^k\right]_\ell::\right]\\
& = :\left[e_i,e_j\right]_{\overline{\ell}}:e_k\left[e^j,e^k\right]_\ell::+:e_j:\left[e_i,e_k\right]_{\overline{\ell}}\left[e^j,e^k\right]_\ell::+:e_j:e_k\left[e_i,\left[e^j,e^k\right]_\ell\right]::\\
& + 2\chi:e_j\left[e_i,e^j\right]_{\overline{\ell}}:+2\lambda\Big(2\left[e^j,\left[e_i,e_j\right]_{\overline{\ell}}\right]_{\overline{\ell}}+2\left\langle\mathcal{D}\left\langle e^j,\left[e_k,e_i\right]\right\rangle,e^k\right\rangle e_j\Big),\\
\Upsilon_i^{2,2} :&= \left[{e_i}_\Lambda:e^j:e^k\left[e_j,e_k\right]_{\overline{\ell}}::\right]\\
& = :\left[e_i,e^j\right]:e^k\left[e_j,e_k\right]_{\overline{\ell}}::+:e^j:\left[e_i,e^k\right]\left[e_j,e_k\right]_{\overline{\ell}}::+:e^j:e^k\left[e_i,\left[e_j,e_k\right]_{\overline{\ell}}\right]_{\overline{\ell}}::\\
&+4\chi:e^j\left[e_i,e_j\right]_{\overline{\ell}}:+2\lambda\Big(2\left[e_j,\left[e_i,e^j\right]_\ell\right]_\ell+\left[e_k,\left[e_i,e^k\right]_{\overline{\ell}}\right]_{\overline{\ell}}\\
&+2\left\langle\mathcal{D}\left\langle\left[e^k,e_i\right],e_j\right\rangle,e_k\right\rangle e^j + \left\langle\mathcal{D}\left\langle\left[e^k,e_i\right],e^j\right\rangle,e_k\right\rangle e_j\Big),\\
\Upsilon_i^{2,3} :&= \left[{e_i}_\Lambda:e_j:e^k\left[e^j,e_k\right]_-::\right]\\
& = :\left[e_i,e_j\right]_{\overline{\ell}}:e^k\left[e^j,e_k\right]_-::+:e_j:\left[e_i,e^k\right]\left[e^j,e_k\right]_-::+:e_j:e^k\left[e_i,\left[e^j,e_k\right]_-\right]::\\
& + 2\chi:e_j\left[e_i,e^j\right]_-:+2\lambda\left(\left[e_j,\left[e_i,e^j\right]_-\right]_{\overline{\ell}}+\left[e^j,\left[e_i,e_j\right]_{\overline{\ell}}\right]_-\right),
\end{align*}
where we have used the involutivity of $\ell$ and $\overline{\ell}$, and the Courant algebroid axioms. Then
\begin{align*}
\Upsilon_i^2 & = :e_j:\left[e_k,e^j\right]_{\overline{\ell}}\left[e_i,e^k\right]_{\overline{\ell}}::+4:\left(T\left\langle\left[e^j,e_i\right],e_k\right\rangle\right)\left[e^k,e_j\right]:\nonumber\\
&+2:\left(T\left\langle\left[e^j,e_i\right],e^k\right\rangle\right)\left[e_k,e_j\right]_{\overline{\ell}}:-2:e^j:e^k\left(\left\langle\mathcal{D}\left\langle e^m,\left[e_i,e_j\right] \right\rangle,e_k\right\rangle e_m\right)::\nonumber\\
&+\left.:e_j:e_k\left(\left(\left\langle\mathcal{D}\left\langle e^k,\left[e_i,e^j\right]\right\rangle,e_m\right\rangle - 2\left\langle\mathcal{D}\left\langle e_m,\left[e_i,e^j\right]\right\rangle,e^k\right\rangle\right)e^m\right)::\right.\\
&+\left.:e_j:\left(\mathcal{D}\left\langle\left[e_i,e^k\right],e^j\right\rangle\right)_-e_k::-2:e_k:\left(\mathcal{D}\left\langle\left[e_i,e^k\right],e_j\right\rangle\right)_-e^j::\right.\\
&+2\Big(:e_j:e_k\left[e^j,\left[e_i,e^k\right]_-\right]::+:e_j:e^k\left[e_k,\left[e^j,e_i\right]_-\right]_-::\nonumber\\
&+:e_j:\left[e_i,e^k\right]_-\left[e^j,e_k\right]_-::+:e_j:e^k\left[e_i,\left[e^j,e_k\right]_-\right]_{\overline{\ell}}::\Big)\\
& + 2\chi\,\Big(:e_j\left[e_i,e^j\right]_{\overline{\ell}}:+2:e^j\left[e_i,e_j\right]_{\overline{\ell}}:+:e_j\left[e_i,e^j\right]_-:\Big)\\
&+2\lambda\,\Big(2\left[e^j,\left[e_i,e_j\right]_{\overline{\ell}}\right]_{\overline{\ell}}+2\left\langle\mathcal{D}\left\langle e^j,\left[e_k,e_i\right]\right\rangle,e^k\right\rangle e_j+2\left[e_j,\left[e_i,e^j\right]_\ell\right]_\ell\\
&+\left.\left[e_k,\left[e_i,e^k\right]_{\overline{\ell}}\right]_{\overline{\ell}}+2\left\langle\mathcal{D}\left\langle\left[e^k,e_i\right],e_j\right\rangle,e_k\right\rangle e^j + \left\langle\mathcal{D}\left\langle\left[e^k,e_i\right],e^j\right\rangle,e_k\right\rangle e_j\right.\\
&+\left[e_j,\left[e_i,e^j\right]_-\right]_{\overline{\ell}}+\left[e^j,\left[e_i,e_j\right]_{\overline{\ell}}\right]_-\Big).
\end{align*}
Here, we have used the identities~\eqref{eq:parte2}, \eqref{eq:parte3}. Now, using \eqref{eq:afbafb}, \eqref{eq:affa}, we obtain 
\[
2\partial_\chi\left(\frac{1}{2}\Upsilon_i^1+\frac{1}{4}\Upsilon_i^2\right) = 2Se_i+:e_j\left[e_i,e^j\right]_-:.
\]
Analogously, using the Courant algebroid axioms, we have
\begin{align*}
2\partial_\lambda\left(\frac{1}{2}\Upsilon_i^1+\frac{1}{4}\Upsilon_i^2\right) &= 2e_i+\left[e_j,\left[e_i,e^j\right]_-\right]_{\overline{\ell}}-\left[e_j,\left[e_i,e^j\right]_-\right]_-\\
&+\Big(\left\langle\mathcal{D}\left\langle e^j,\left[e_k,e_i\right]\right\rangle,e^k\right\rangle + \left\langle\mathcal{D}\left\langle\left[e^k,e_i\right],e^j\right\rangle,e_k\right\rangle\Big)e_j\\
&+\left\langle\mathcal{D}\left\langle\left[e^k,e_i\right],e_j\right\rangle,e_k\right\rangle e^j.
\end{align*}
In conclusion, using \eqref{eq:15}, \eqref{eq:parte0} and the Courant algebroid axioms, we obtain
\begin{align*}
\left[{e_i}_\Lambda{H_0}\right] &= \frac{1}{2}\Upsilon_i^1+\frac{1}{4}\Upsilon_i^2\\
& = \left(\chi S+\lambda\right)e_i+\frac{\chi}{2}:e_j\left[e_i,e^j\right]_-:+\frac{\lambda}{2}\left(\left[e_j,\left[e_i,e^j\right]_-\right]_{\overline{\ell}}-\left[e_j\left[e_i,e^j\right]_-\right]_-\right)\\
&+\frac{\left(\lambda+T\right)}{2}\bigg(\Big(\left\langle\mathcal{D}\left\langle\left[e^k,e_i\right],e^j\right\rangle,e_k\right\rangle -\left\langle\mathcal{D}\left\langle e^j,\left[e_i,e_k\right]\right\rangle,e^k\right\rangle\Big)e_j\\
& +\left\langle\mathcal{D}\left\langle\left[e^k,e_i\right],e_j\right\rangle,e_k\right\rangle e^j\bigg)+\frac{1}{2}\bigg(:\left[e_i,e^j\right]_-\left(Se_j\right):+:e_j\left(S\left[e_i,e^j\right]_-\right):\bigg)\\
&+\frac{1}{2}\bigg(:e_j:e_k\left[e^j,\left[e_i,e^k\right]_-\right]_-::+:e_j:e_k\left[\left[e_i,e^j\right]_-,e^k\right]_\ell::\\
&+:e_j:e^k\left[e_k,\left[e^j,e_i\right]_-\right]_-::+:e_j:\left[e_i,e^k\right]_-\left[e^j,e_k\right]::
+:e_j:e^k\left[e_i,\left[e^j,e_k\right]_-\right]_{\overline{\ell}}::\bigg).
\end{align*}
Before continuing, note that \eqref{eq:cuasicom} and \eqref{eq:cuasicomaso} imply
\begin{align*}
:e_j:e_k\left[e^j,\left[e_i,e^k\right]_-\right]:: &= :e_j:e_k\left[\left[e_i,e^j\right]_-,e^k\right]::,\\
:e_j:e^k\left[e_k,\left[e^j,e_i\right]_-\right]_-:: &= :e^k:e_j\left[\left[e^k,e_i\right]_-,e_j\right]_-::,\\
:e_j:\left[e_i,e^k\right]_-\left[e^j,e_k\right]_-:: &= :\left[e_i,e^j\right]_-:e_k\left[e_j,e^k\right]_-::.
\end{align*}
Formula~\eqref{eq:NSbasis10} follows from the above identities, using the Courant algebroid axioms, that $S$ is an odd derivation for the normally ordered product, and \eqref{eq:cuasicom2}.

Formula~\eqref{eq:NSbasis1} is now a consequence of~\eqref{eq:NSbasis10} and the following calculation, where we use sesquilinearity and \eqref{eq:14}:
\begin{align*}
\left[Tw_\Lambda e_i\right] &= -\lambda\left[w_\Lambda e_i\right]\\
&=-\lambda\left(\left\langle\left[e^j,e_j\right],e^k\right\rangle\left[e_k,e_i\right]_{\overline{\ell}}-\left\langle\left[e^j,e_j\right],e_k\right\rangle\left[e^k,e_i\right]\right)\nonumber\\
& +\lambda\Big(\left\langle\mathcal{D}\left\langle e^j,\left[e^k,e_k\right]\right\rangle,e_i\right\rangle e_j - \left\langle\mathcal{D}\left\langle e_j,\left[e^k,e_k\right]\right\rangle,e_i\right\rangle e^j+\mathcal{D}\left\langle e_i,\left[e^j,e_j\right]\right\rangle\Big)\nonumber\\
&+\lambda\chi\left\langle\left[e^j,e_j\right],e_i\right\rangle.
\end{align*}

Finally, the formulae for $\left[{H_0}_\Lambda e^i \right]$ and $\left[{H'}_\Lambda e^i \right]$ are obtainted from~\eqref{eq:NSbasis10} and~\eqref{eq:NSbasis1} by interchanging the roles of $\ell$ and $\overline{\ell}$, as explained in Remark~\ref{obs:app.MSobs}.
\end{proof}




Before proving Proposition \ref{teo:mainresult2}, we need the following technical result.

\begin{proposition}
\label{lem:lempasofinal}
Assume that $\ell \oplus \overline{\ell}$ satisfies the F-term condition \eqref{eq:Ftermabs}. Then
\begin{equation*}
\label{eq:desiredfor}
\begin{split}
\left[{H'}_\Lambda{J_0}\right] & = \left(2\lambda+2T+\chi S\right)\left(J_0-\frac{i}{2}S\left[e^j,e_j\right]_-\right)\\
& + \frac{i}{4}TS\mathcal{D}R+\frac{i}{4}\lambda\bigg(:F^{ij}:e_je_i::-:F_{ij}:e^je^i::\bigg)\\
& -\frac{i}{2}\left(T+\frac{3}{2}\lambda\right)\left(:e^i\left[\left[e^j,e_j\right]_-,e_i\right]_-:+:e_i\left[\left[e^j,e_j\right]_-,e^i\right]_-:\right).
\end{split}
\end{equation*}
\end{proposition}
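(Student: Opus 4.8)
The plan is to compute $[H'_\Lambda J_0]$ directly from the explicit $\Lambda$-brackets $[H'_\Lambda e^j]$ and $[H'_\Lambda e_j]$ already established in Proposition~\ref{lem:pasointermedio}. Since $J_0=\frac{i}{2}:e^je_j:$, I would expand the bracket with the non-commutative Wick formula~\eqref{eq:Wick}. As $H'$ and each $e^j$ are odd, the relevant sign $(-1)^{(|H'|+1)|e^j|}$ equals $1$, so
\begin{equation*}
[H'_\Lambda J_0]=\frac{i}{2}\Big(:[H'_\Lambda e^j]\,e_j:+:e^j\,[H'_\Lambda e_j]:+\int_0^\Lambda d\Gamma\,\big[[H'_\Lambda e^j]_\Gamma e_j\big]\Big),
\end{equation*}
with the usual Einstein convention in $j$. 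This reduces the statement to substituting the two (lengthy) formulae of Proposition~\ref{lem:pasointermedio} and simplifying.

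First I would treat the two normally ordered terms $:[H'_\Lambda e^j]e_j:$ and $:e^j[H'_\Lambda e_j]:$. After reordering with the quasiassociativity and quasicommutativity identities of Appendix~\ref{app:1}, their $\lambda$-, $T$- and $\chi$-independent parts should reassemble into $J_0-\frac{i}{2}S[e^j,e_j]_-$, while the $\lambda$-linear pieces produce, via the Courant algebroid axioms, the quadruple products that I would recognize as $:F^{ij}:e_je_i::$ and $:F_{ij}:e^je^i::$, for $F^{ij}$ and $F_{ij}$ defined in~\eqref{eq:Fijsuper.bis} and~\eqref{eq:Fijsub.bis}. The $\ell\leftrightarrow\overline{\ell}$ interchange of Remark~\ref{obs:app.MSobs} relates these two terms (just as it relates the formulae for $[H'_\Lambda e^i]$ and $[H'_\Lambda e_i]$), and can be used to deduce the second contribution from the first, roughly halving the bookkeeping.

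Next comes the integral term $\int_0^\Lambda d\Gamma[[H'_\Lambda e^j]_\Gamma e_j]$, which I expect to be the main obstacle. Evaluating it requires taking a further $\Gamma$-bracket of the already complicated expression $[H'_\Lambda e^j]$ against $e_j$, generating many nested normally ordered products and double-$\mathcal{D}$ contractions; these must be reorganized using the frame identities of Lemma~\ref{lem:importprop} (together with its dual form) and the reordering identities~\eqref{eq:15}--\eqref{eq:19}. This term supplies, among others, the $R$-dependent contribution and the $-\frac{i}{2}(T+\frac{3}{2}\lambda)$ correction in the statement. Throughout, the F-term condition~\eqref{eq:Ftermabs} is invoked repeatedly, via Lemma~\ref{lem:applem}, to discard brackets whose projections onto $\ell$ or $\overline{\ell}$ vanish.

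Finally I would collect all contributions, using the definition of $R$ in~\eqref{eq:R.bis} and the identity $TS\mathcal{D}R=2T^2R$ (part~(2) of Proposition~\ref{prop:CDRE}) to present the $R$-dependent piece as $\frac{i}{4}TS\mathcal{D}R$, thereby arriving at the claimed formula. The conceptual content is light; the difficulty lies entirely in controlling the large number of normally ordered monomials produced by the Wick expansion, and in particular in organizing the cancellations inside the integral term so that the residual terms group into the four structured summands of the statement.
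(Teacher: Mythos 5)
Your overall strategy coincides with the paper's own proof: apply the non-commutative Wick formula to $\left[{H'}_\Lambda:e^je_j:\right]$, substitute the two formulae of Proposition~\ref{lem:pasointermedio}, and reorganize using the Appendix identities; your sign analysis in the Wick expansion is also correct. However, your roadmap of where the structured terms arise contains a concrete structural error. The integral term $\int_0^\Lambda d\Gamma\left[\left[{H'}_\Lambda e^j\right]_\Gamma e_j\right]$ is, by the very definition of $\int_0^\Lambda d\Gamma$, a polynomial in $\Lambda=(\lambda,\chi)$ with no constant term: every contribution it produces carries at least one positive power of $\lambda$ or $\chi$. Consequently it cannot supply the $\Lambda$-independent summands of the target formula, namely $\frac{i}{4}TS\mathcal{D}R=\frac{i}{2}T^2R$ and the correction $-\frac{i}{2}T\left(:e^i\left[\left[e^j,e_j\right]_-,e_i\right]_-:+:e_i\left[\left[e^j,e_j\right]_-,e^i\right]_-:\right)$, both of which you attribute to the integral. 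In the paper's computation these arise from the normally ordered products (the piece called $P_1$ there, built from the quadratic-in-frames part of $\left[{H'}_\Lambda e^j\right]$ and $\left[{H'}_\Lambda e_j\right]$); the integral $I$ produces only $\lambda$-, $\lambda\chi$-, $\lambda T$- and $\lambda S$-type terms, among them $2\lambda S\left[e_j,e^j\right]_-$ (which feeds the $2\lambda$-part of the coefficient of $S\left[e^j,e_j\right]_-$, a term you instead attribute to the normally ordered products) and bracket terms that feed into the $-\frac{3i}{4}\lambda(\cdots)$ piece.

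Relatedly, your claim that the $\lambda$-linear pieces of the two normally ordered terms alone produce $:F^{ij}:e_je_i::$ and $:F_{ij}:e^je^i::$ is too optimistic: in the paper these emerge only after pooling the $\lambda$-linear contributions of the integral, of $P_1$, and of the scalar-coefficient part $P_3$, and then invoking Lemma~\ref{lem:rewriting-Fij} to recast the resulting double brackets in the form \eqref{eq:Fijsuper.bis}--\eqref{eq:Fijsub.bis}. None of this makes the strategy unsound --- if every term is computed and summed, the answer is forced --- but the organization of cancellations you propose is the only substantive content of the plan beyond ``apply Wick and grind,'' and it would not materialize as described: you would be looking for the $R$-dependent and $T$-linear terms in a part of the expansion that provably cannot contain them, and the correct bookkeeping (the decomposition $\left[{H'}_\Lambda:e^je_j:\right]=P_1+P_2+P_3+I$ with $P_1$ carrying $T^2R$) would have to be re-derived from scratch, which is precisely what the paper's proof does.
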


\begin{proof}
Applying the non-commutative Wick formula, we have
\begin{equation*}
\begin{split}
\left[{H'}_\Lambda:e^ie_i:\right] & = :\left[{H'}_\Lambda{e^i}\right]e_i:+:e^i\left[{H'}_\Lambda{e_i}\right]:+\int_0^\Lambda d\Gamma\left[\left[{H'}_\Lambda{e^i}\right]_\Gamma{e_i}\right]\\
& = P + I,
\end{split}
\end{equation*}
where $I$ is the third summand of the right-hand side in the first line, that splits into a sum
\begin{equation*}
I:=\int_0^\lambda d\lambda\left(I_1+I_2+I_3+I_4\right),
\end{equation*}
where
\begin{alignat*}{2}
I_1 &:= \partial_\eta\left[{A^i}_\Gamma{e_i}\right],&\qquad
I_2 &:= \partial_\eta\left[{B^i}_\Gamma{e_i}\right],\\
I_3 &:= \partial_\eta\left[{C^i}_\Gamma{e_i}\right],&
I_4 &:= \partial_\eta\left[{D^i}_\Gamma{e_i}\right],
\end{alignat*}
and
\begin{align*}
A^i :&= \frac{\chi}{2}:e^j\left[e^i,e_j\right]_-:-:e^j\left(S\left[e^i,e_j\right]_-\right):\\
& -\frac{1}{2}\,\bigg(:e^j:e^k\left[\left[e^i,e_j\right]_-,e_k\right]::+:\left[e^i,e_j\right]_-:e^k\left[e^j,e_k\right]_-::\nonumber\\
&+:e^j:e_k\left[e^i,\left[e_j,e^k\right]_-\right]_\ell::+:e_j:e^k\left[\left[e_k,e^i\right]_-,e^j\right]_-::\bigg)\nonumber\\
B^i :&= \frac{\lambda}{2}\left(\left[e^j,\left[e^i,e_j\right]_-\right]_\ell+\left[\left[e^i,e_j\right]_-,e^j\right]_-\right)+T\left[e^j,\left[e^i,e_j\right]_-\right]_\ell,\\
C^i :&= \left(\lambda+2T+\chi S\right)e^i,\\
D^i :&= \lambda\chi\left\langle\left[e_j,e^j\right],e^i\right\rangle\\
&+\frac{\lambda}{2}\bigg(\Big(\left\langle e^m,\left[e^k,e_k\right]\right\rangle\left\langle\left[e^i,e_m\right],e^j\right\rangle +\left\langle\mathcal{D}\left\langle e^j,\left[e^k,e_k\right]\right\rangle,e^i\right\rangle\\
&+\left\langle\mathcal{D}\left\langle e_k,\left[e^i,e^j\right]\right\rangle,e^k\right\rangle\Big)e_j+\Big(\left\langle e^m,\left[e^k,e_k\right]\right\rangle\left\langle\left[e^i,e_m\right],e_j\right\rangle\\
&+\left.\left\langle\mathcal{D}\left\langle e_k,\left[e^i,e_j\right]\right\rangle,e^k\right\rangle - \left\langle\mathcal{D}\left\langle e_j,\left[e^i,e^k\right]\right\rangle,e_k\right\rangle\right.\\
&-\left\langle e_m,\left[e^k,e_k\right]\right\rangle\left\langle\left[e^i,e^m\right],e_j\right\rangle - \left\langle\mathcal{D}\left\langle e_j,\left[e^k,e_k\right]\right\rangle,e^i\right\rangle\Big)e^j\\
&+\left\langle e^k,\left[e^j,e_j\right]\right\rangle\left[e^i,e_k\right]_--\mathcal{D}\left\langle e^i,\left[e^j,e_j\right]\right\rangle\bigg).
\end{align*}
To calculate $I_1$, we proceed as follows. By skew-symmetry of the $\Lambda$-bracket, we can write
\begin{align*}
I_1&=-\partial_\eta\left[{e_i}_{-\Gamma-\nabla}A^i\right] = \partial_\eta\left[{e_i}_\Gamma A^i\right]\\
& = \frac{1}{2}\left(I_1^1-2I_1^2\right)-\frac{1}{2}\left(I_1^3+I_1^4+I_1^5+I_1^6\right),
\end{align*}
where $I_1^1, I_1^2, I_1^3, I_1^4, I_1^5$ and $I_1^6$ are defined and calculated as follows, using the non-commutative Wick formula, the Courant algebroid axioms, \eqref{eq:cuasicom} and \eqref{eq:clave}:
\begin{align*}
I_1^1 :&= \partial_\eta\left[{e_i}_\Gamma\left(\chi:e^j\left[e^i,e_j\right]_-:\right)\right]\\
& = 2\chi\left[e_j,e^j\right]_-,\\
I_1^2 :&= \partial_\eta\left[{e_i}_\Gamma:e^j\left(S\left[e^i,e_j\right]_-\right):\right]\\
& = 2\left[e^j,e_j\right]_--:e^j\left[e_i,\left[e^i,e_j\right]_-\right]:,\\
I_1^3 :&= \partial_\eta\left[{e_i}_\Gamma:e^j:e^k\left[\left[e^i,e_j\right]_-,e_k\right]::\right]\\
& = 2\left(:e^k\left[\left[e^j,e_j\right]_-,e_k\right]:-:e^j\left[\left[e^k,e_j\right]_-,e_k\right]:\right),\\
I_1^4 :&= \partial_\eta\left[{e_i}_\Gamma:\left[e^i,e_j\right]_-:e^k\left[e^j,e_k\right]_-::\right]\\
& = -2T\left\langle\left[e^k,e_j\right]_-,\left[e^j,e_k\right]_-\right\rangle,\\
I_1^5 :&= \partial_\eta\left[{e_i}_\Gamma:e^j:e_k\left[e^i,\left[e_j,e^k\right]_-\right]_\ell::\right]\\
& = 2\left(:e_k\left[e^j,\left[e_j,e^k\right]_-\right]_{\ell}:+:e^k\left[e_k,\left[e^j,e_j\right]_-\right]_{\overline{\ell}}:\right),\\
I_1^6 :&= \partial_\eta\left[{e_i}_\Gamma:e_j:e^k\left[\left[e_k,e^i\right]_-,e^j\right]_-::\right]\\
& = 2:e_j\left[\left[e^k,e_k\right]_-,e^j\right]_-:.
\end{align*}
Combining the previous expressions and using the F-term condition \eqref{eq:Ftermabs}, we have
\begin{align*}
I_1 & = T\left\langle\left[e^k,e_j\right]_-,\left[e^j,e_k\right]_-\right\rangle + \left(\chi+2S\right)\left[e_i,e^j\right]_-\\
& +:e_k\left[\left[e_j,e^j\right]_-,e^k\right]_-:+:e^k\left[\left[e_j,e^j\right]_-,e_k\right]_-:-:e_k\left[e^j,\left[e_j,e^k\right]_-\right]_\ell:.
\end{align*}
Furthermore, by sesquilinearity, the non-commutative Wick formula and the Courant algebroid axioms,
\begin{align*}
I_2 & = \left(\lambda-2\gamma\right)\left\langle\left[e^j,\left[e^i,e_j\right]_-\right],e_i\right\rangle,\\
I_3 & = 2\left(\lambda-2\gamma\right)\left\langle e^j,e_j\right\rangle + \chi\left[e^j,e_j\right],\\
I_4 & = 0.
\end{align*}
In conclusion, we obtain
\begin{align*}
I & = \lambda T \left\langle\left[e^k,e_j\right]_-,\left[e^j,e_k\right]\right\rangle + \lambda\chi\left[e^j,e_j\right]_+-\lambda:e_k\left[e^j,\left[e_j,e^k\right]_-\right]_\ell:\\
& +2\lambda S\left[e_j,e^j\right]_-+\lambda:e_k\left[\left[e_j,e^j\right]_-,e^k\right]_-:+\lambda:e^k\left[\left[e_j,e^j\right]_-,e_k\right]_-:.
\end{align*}
Next, we split 
\[
P:= P_1+P_2+P_3
\]
into three summands
\begin{align*}
P_1 :&= :a^je_j:+:e^ja_j:,\\
P_2 :&= :b^je_j:+:e^jb_j:,\\
P_3 :&= :c^je_j:+:e^jc_j:,
\end{align*}
where
\begin{alignat*}{2}
c^j &:=\lambda\partial_\lambda\left[{H'}_\Lambda e^j\right]-\lambda e^j,&\qquad c_j &:=\lambda\partial_\lambda\left[{H'}_\Lambda e_j\right]-\lambda e_j,\\
b^j &:=\left(\lambda+2T+\chi S\right)e^j,&b_j &:=\left(\lambda+2T+\chi S\right)e_j,\\
a^j &:=\left[{H'}_\Lambda{e^j}\right]-b^j-c^j,&a_j &:=\left[{H'}_\Lambda{e_j}\right]-b_j-c_j.
\end{alignat*}
To calculate $P_1$, we proceed as follows. First, by a direct calculation, we have
\begin{equation*}
\begin{split}
P_1 & = -\frac{1}{2}Q+\frac{\chi}{2}W+\frac{\lambda}{2}X+Y-Z,
\end{split}
\end{equation*}
where
\begin{align*}
Q :&= Q_1'+Q_2'+Q_3'+Q_4'+Q_5',\\
W :&= ::e^j\left[e^i,e_j\right]_-:e_i:-:e^i:e_j\left[e_i,e^j\right]_-::,\\
X :&= :e^i\left[e_j,\left[e_i,e^j\right]_-\right]_{\overline{\ell}}:+:e^i\left[\left[e_i,e^j\right]_-,e_j\right]_-:\\
&+:\left[e^j,\left[e^i,e_j\right]_-\right]_\ell e_i:+:\left[\left[e^i,e_j\right]_-,e^j\right]_-e_i:,\\
Y :&= :e^i\left(T\left[e_j,\left[e_i,e^j\right]_-\right]_{\overline{\ell}}\right):+:\left(T\left[e^j,\left[e^i,e_j\right]_-\right]_\ell\right)e_i:,\\
Z :&= :e^i:e_j\left(S\left[e_i,e^j\right]_-\right)::+::e^j\left(S\left[e^i,e_j\right]_-\right):e_i:,
\end{align*}
and
\begin{align*}
Q_1' :&= ::e^j:e^k\left[\left[e^i,e_j\right]_-,e_k\right]_{\overline{\ell}}::e_i:+::e^j:e_k\left[e^i,\left[e_j,e^k\right]_-\right]_\ell::e_i:,\\
Q_2' :&= :e^i:e_j:e_k\left[\left[e_i,e^j\right]_-,e^k\right]_\ell:::+:e^i:e_j:e^k\left[e_i,\left[e^j,e_k\right]_-\right]_{\overline{\ell}}:::,\\
Q_3' :&= :e^i:\left[e_i,e^j\right]_-:e_k\left[e_j,e^k\right]_-:::+::\left[e^i,e_j\right]_-:e^k\left[e^j,e_k\right]_-::e_i:,\\
Q_4' :&= :e^i:e^j:e_k\left[\left[e^k,e_i\right]_-,e_j\right]_-:::+::e^j:e^k\left[\left[e^i,e_j\right]_-,e_k\right]_-::e_i:,\\
Q_5' :&= :e^i:e_j:e_k\left[\left[e_i,e^j\right]_-,e^k\right]_-:::+::e_j:e^k\left[\left[e_k,e^i\right]_-,e^j\right]_-::e_i:.
\end{align*}
Secondly, using the Courant algebroid axioms, \eqref{eq:affa}, \eqref{eq:cuasicom}, \eqref{eq:afbafb} and \eqref{eq:cuasiaso3}, we define $Q_1, Q_2, Q_3, Q_4$ and $Q_5$ in terms of  $Q_1', Q_2', Q_3', Q_4'$ and $Q_5'$ by the following formulae:
\begin{align*}
Q_1' :&= ::e_j:e^k\left[e^i,\left[e^j,e_k\right]_-\right]_\ell::e_i:+::e^k:\left[e^i,\left[e^j,e_k\right]_-\right]_\ell e_i::e_j:\\
&+2::e^j\left(T\left\langle\left[e_j,e^i\right]_-,\left[e_k,e^k\right]_-\right\rangle\right):e_i:\\
&=:Q_1+2::e^j\left(T\left\langle\left[e_j,e^i\right]_-,\left[e_k,e^k\right]_-\right\rangle\right):e_i:,\\
Q_2' :&= :e^i:e_j:e^k\left[e_i,\left[e^j,e_k\right]_-\right]_{\overline{\ell}}:::+:e^k:\left[e_i,\left[e^j,e_k\right]_-\right]_{\overline{\ell}}:e_je^i:::\\
&+2:e^j:\left(T\left\langle\left[e_k,e^k\right]_-,\left[e_j,e^i\right]_-\right\rangle\right)e_i::\\
&=:Q_2+2:e^j:\left(T\left\langle\left[e_k,e^k\right]_-,\left[e_j,e^i\right]_-\right\rangle\right)e_i::,\\
Q_3' :&= ::\left[e^i,e_j\right]_-:e^k\left[e^j,e_k\right]_-::e_i:+:e^k:\left[e^j,e_k\right]_-:e_i\left[e^i,e_j\right]_-:::\\
& =: Q_3,\\
Q_4' :&= ::e^j:e^k\left[\left[e^i,e_j\right]_-,e_k\right]_-::e_i:+:e^j:e^k:e_i\left[\left[e^i,e_j\right]_-,e_k\right]_-:::\\
&=:Q_4,\\
Q_5' :&= ::e^j:e_k\left[\left[e^i,e_j\right]_-,e^k\right]_-::e_i:+:e^j:e_i:e_k\left[\left[e_j,e^i\right]_-,e^k\right]_-:::\\
& =: Q_5.
\end{align*}
Applying the Courant algebroid axioms and \eqref{eq:17}, we obtain 
\begin{equation*}
\begin{split}
Q &= Q_1+Q_2+Q_3+Q_4+Q_5\\
&+4\left(::e^j\left(T\left\langle\left[e_j,e^i\right]_-,\left[e_k,e^k\right]_-\right\rangle\right):e_i:-T^2\left\langle\left[e_j,e^j\right]_-,\left[e_k,e^k\right]_-\right\rangle\right).
\end{split}
\end{equation*}
We will compute $Q_1,Q_2,Q_3,Q_4,Q_5$ separately. Regarding $Q_1$, we use the Courant algebroid axioms, \eqref{eq:20}, \eqref{eq:cuasiaso2}, \eqref{eq:18}, \eqref{eq:19} and the fact that $T$ is an even derivation for the normally ordered product, obtaining
\begin{equation*}
\begin{split}
Q_1 &=2\left(::\left(T\left\langle e_k,\left[e^j,\left[e^i,e_i\right]_-\right]\right\rangle\right)e^k:e_j:\right.\\
&+\left.T^2\left(\left\langle\left[e^k,e_j\right]_-,\left[e^j,e_k\right]_-\right\rangle - \left\langle\left[e^j,e_j\right]_-,\left[e^k,e_k\right]_-\right\rangle\right)\right).
\end{split}
\end{equation*}
By the Courant algebroid axioms, \eqref{eq:cuasicom}, \eqref{eq:cuasicom2}, \eqref{eq:20}, \eqref{eq:afbafb}, \eqref{eq:cuasiaso3}, \eqref{eq:18}, \eqref{eq:19}, and the fact that $T$ is an even derivation for the normally ordered product, we obtain 
\begin{equation*}
\begin{split}
Q_2&= 2::\left(T\left\langle e_k,\left[e^j,\left[e^i,e_i\right]_-\right]\right\rangle\right)e^k:e_j:.
\end{split}
\end{equation*}
Applying the Courant algebroid axioms, \eqref{eq:cuasicom}, \eqref{eq:cuasicom3}, \eqref{eq:cuasiaso2} \eqref{eq:18} and the fact that $T$ is an even derivation for the normally ordered product, we obtain
\begin{equation*}
\begin{split}
Q_3 &= 2\left(T^2\left\langle\left[e^j,e_k\right]_-,\left[e^k,e_j\right]_-\right\rangle + ::\left(T\left\langle\left[e^j,\left[e^i,e_j\right]_-\right],e_k \right\rangle\right)e^k:e_i:\right).
\end{split}
\end{equation*}
By the Courant algebroid axioms, \eqref{eq:cuasicom}, \eqref{eq:cuasiaso2} and \eqref{eq:18}, we obtain
\begin{equation*}
\begin{split}
Q_4 &= 2T\left(:e^k\left[\left[e^i,e_i\right]_-,e_k\right]_-:-:e^j\left[\left[e^k,e_j\right]_-,e_k\right]_-:\right).
\end{split}
\end{equation*}
Finally, by the Courant algebroid axioms, \eqref{eq:cuasiaso2}, \eqref{eq:cuasiasocom} and \eqref{eq:18}, we obtain 
\begin{equation*}
\begin{split}
Q_5 & = 2T\left(:e_k\left[\left[e^i,e_i\right]_-,e^k\right]_-:\right).
\end{split}
\end{equation*}
Putting together these computations, and using the Courant algebroid axioms and \eqref{eq:affa}, we obtain 
\begin{equation*}
\begin{split}
Q &= 4T^2\left\langle\left[e^j,e_k\right]_-,\left[e^k,e_j\right]_-\right\rangle-6T^2\left\langle\left[e^j,e_j\right]_-,\left[e^k,e_k\right]_-\right\rangle\\
&+2::\left(T\left\langle\left[e^j,\left[e^i,e_j\right]_-\right],e_k\right\rangle\right)e^k:e_i:\\
&+2T\left(:e^k\left[\left[e^i,e_i\right]_-,e_k\right]_-:+:e_k\left[\left[e^i,e_i\right]_-,e^k\right]_-:-:e^j\left[\left[e^k,e_j\right]_-,e_k\right]_-:\right).
\end{split}
\end{equation*}
To compute $P_1$, it remains to calculate $W,X,Y$ and $Z$. By the Courant algebroid axioms, \eqref{eq:cuasicom} and \eqref{eq:cuasiaso2},
\begin{equation*}
\begin{split}
W &= -2T\left[e^j,e_j\right]_-.
\end{split}
\end{equation*}
By the Courant algebroid axioms, \eqref{eq:cuasicom} and \eqref{eq:afbafb},
\begin{equation*}
\begin{split}
X &= 2:e_j\left[e^k,\left[e_k,e^j\right]_-\right]_\ell:+:e^i\left[\left[e_i,e^j\right]_-,e_j\right]_-:+:\left[\left[e^i,e_j\right]_-,e^j\right]_-e_i:\\
&+2T\left\langle e_j,\left[e^k,\left[e^j,e_k\right]_-\right]\right\rangle.
\end{split}
\end{equation*}
Applying Courant algebroid axioms, \eqref{eq:21} and using the fact that $T$ is an even derivation for the normally ordered product,
\begin{equation*}
\begin{split}
Y &= T\left(:e^i\left[e_j,\left[e_i,e^j\right]_-\right]_{\overline{\ell}}:\right)+::\left(T\left\langle e_k,\left[e^j,\left[e^i,e_j\right]_-\right]\right\rangle\right)e^k:e_i:\\
&+T^2\left\langle\left[e^j,e_k\right]_-,\left[e^k,e_j\right]_-\right\rangle.
\end{split}
\end{equation*}
Applying the Courant algebroid axioms, \eqref{eq:cuasicom2} and using that $T$ an even derivation for the normally ordered product,
\begin{equation*}
\begin{split}
Z &= T\left(:e^i\left[\left[e^j,e_i\right]_-,e_j\right]:\right) +2TS\left[e^j,e_j\right]_-.
\end{split}
\end{equation*}
Putting together the above calculations for $Q,W,X,Y$ and $Z$, and using the Courant algebroid axioms, we obtain
\begin{equation*}
\begin{split}
P_1 & = T\left(2S+\chi\right)\left[e_j,e^j\right]_-+T^2R-\lambda T\left\langle\left[e^k,e_j\right]_-,\left[e^j,e_k\right]_-\right\rangle\\
&-T\left(:e^k\left[\left[e^j,e_j\right]_-,e_k\right]_-:+:e_k\left[\left[e^j,e_j\right]_-,e^k\right]_-:\right)\\
&+\frac{\lambda}{2}\left(2:e_k\left[e^j,\left[e_j,e^k\right]_-\right]_\ell:+:e^k\left[\left[e_k,e^j\right]_-,e_j\right]_-:+:\left[\left[e^k,e_j\right]_-,e^j\right]_-e_k:\right).
\end{split}
\end{equation*}
The calculation of $P_2$ is straighforward, because $S$ and $T=S^2$ are an odd and an even derivation for the normally ordered product, respectively, so 
\[
P_2=\left(2\lambda+2T+\chi S\right):e^je_j:.
\]
To compute $P_3$, we expand this term as a polynomial in the variables $\Lambda=(\lambda,\chi)$, that is, 
\[
P_3= \lambda\chi A+\frac{\lambda}{2}B
\]
so the coefficients are
\begin{equation*}
\begin{split}
A :&= -:e^i\left(\left\langle\left[e^j,e_j\right],e_i\right\rangle\right):-:\left(\left\langle\left[e^j,e_j\right],e^i\right\rangle\right)e_i:,\\
B :&= B_1+B_1'+B_2+B_2'+B_3+B_3'+B_4+B_4'-B_5-B_5',
\end{split}
\end{equation*}
where
\begin{align*}
B_1 :&= :\left(\left\langle\mathcal{D}\left\langle e_k,\left[e^i,e_j\right]\right\rangle,e^k \right\rangle e^j\right)e_i:-:e^i\left(\left\langle\mathcal{D}\left\langle e^j,\left[e_i,e_k\right]\right\rangle,e^k\right\rangle e_j\right):,\\
B_1' :&= :e^i\left(\left\langle\mathcal{D}\left\langle\left[e^k,e_i\right],e^j \right\rangle,e_k\right\rangle e_j\right):-:\left(\left\langle\mathcal{D}\left\langle e_j,\left[e^i,e^k\right]\right\rangle,e_k\right\rangle e^j\right)e_i:,\\
B_2 :&= :e^i\left(\mathcal{D}\left\langle e_i,\left[e^j,e_j\right]\right\rangle\right):\\
& -:\left(\left\langle\mathcal{D}\left\langle e_j,\left[e^k,e_k\right]\right\rangle,e^i \right\rangle e^j\right)e_i:- :e^i\left(\left\langle\mathcal{D}\left\langle e_j,\left[e^k,e_k\right]\right\rangle,e_i\right\rangle e^j\right):,\\
B_2' :&= :e^i\left(\left\langle\mathcal{D}\left\langle e^j,\left[e^k,e_k\right]\right\rangle,e_i\right\rangle e_j\right):+:\left(\left\langle\mathcal{D}\left\langle e^j,\left[e^k,e_k\right]\right\rangle,e^i\right\rangle e_j\right)e_i:\\
&-:\left(\mathcal{D}\left\langle e^i,\left[e^j,e_j\right]\right\rangle\right)e_i:,\\
B_3 :&= :e^i\left(\left\langle\mathcal{D}\left\langle\left[e^k,e_i\right],e_j\right\rangle,e_k\right\rangle e^j\right):,\\
B_3' :&= :\left(\left\langle\mathcal{D}\left\langle\left[e_k,e^i\right],e^j\right\rangle,e^k \right\rangle e_j\right)e_i:,\\
B_4 :&= :e^i\left(\left\langle e_m,\left[e^k,e_k\right]\right\rangle \left\langle e^j,\left[e^m,e_i\right]\right\rangle e_j\right):+:e^i\left(\left\langle e_m,\left[e^k,e_k\right]\right\rangle \left\langle e_j,\left[e^m,e_i\right]\right\rangle e^j\right):\\
&+:e^i\left(\left\langle e_j,\left[e^k,e_k\right]\right\rangle\left[e^j,e_i\right]_-\right):,\\
B_4' :&= :\left(\left\langle e^m,\left[e^k,e_k\right]\right\rangle \left\langle\left[e^i,e_m\right],e^j\right\rangle e_j\right)e_i:+:\left(\left\langle e^m,\left[e^k,e_k\right]\right\rangle \left\langle\left[e^i,e_m\right],e_j\right\rangle e^j\right)e_i:\\
&+:\left(\left\langle e^j,\left[e^k,e_k\right]\right\rangle \left[e^i,e_j\right]_-\right)e_i:,\\
B_5 :&= :e^i\left(\left\langle e^m,\left[e^k,e_k\right]\right\rangle \left\langle e^j,\left[e_m,e_i\right]\right\rangle e_j\right):,\\
B_5' :&= :\left(\left\langle e_m,\left[e^k,e_k\right]\right\rangle \left\langle\left[e^i,e^m\right],e_j\right\rangle e^j\right)e_i:.
\end{align*}
Clearly,
\begin{equation*}
\begin{split}
A&=-\left[e^j,e_j\right]_+.
\end{split}
\end{equation*}
By Courant algebroid axioms, \eqref{eq:affa} and \eqref{eq:afbafb},
\begin{equation*}
\begin{split}
B_1 &= 2T\left\langle\mathcal{D}\left\langle e_k,\left[e^j,e_j\right]\right\rangle,e^k \right\rangle.
\\
B_1' &= 2T\left\langle\mathcal{D}\left\langle e^k,\left[e_j,e^j\right]\right\rangle,e_k\right\rangle.
\end{split}
\end{equation*}
By the Courant algebroid axioms, \eqref{eq:affa}, \eqref{eq:cuasicom} and \eqref{eq:afbafb},
\begin{equation*}
\begin{split}
B_2&= 2:e^i\left(\mathcal{D}\left\langle e_i,\left[e^j,e_j\right]\right\rangle\right)_\ell:+:e^i\left(\mathcal{D}\left\langle e_i,\left[e^j,e_j\right]\right\rangle\right)_-:-2T\left\langle\mathcal{D}\left\langle e_k,\left[e^j,e_j\right]\right\rangle,e^k\right\rangle,
\\
B_2' &= -2:\left(\mathcal{D}\left\langle e^i,\left[e^j,e_j\right]\right\rangle\right)_{\overline{\ell}}e_i:-:\left(\mathcal{D}\left\langle e^i,\left[e^j,e_j\right]\right\rangle\right)_-e_i:+2T\left\langle\mathcal{D}\left\langle e^k,\left[e^j,e_j\right]\right\rangle,e_k\right\rangle.
\end{split}
\end{equation*}
By the Courant algebroid axioms (including~\eqref{eq:14}), as well as \eqref{eq:affa}, \eqref{eq:cuasicom} and \eqref{eq:afbafb},
\begin{equation*}
\begin{split}
B_4 &= :\left(\mathcal{D}\left\langle e_i,\left[e^j,e_j\right]\right\rangle\right)_\ell e^i:-:e^i\left[e_i,\left[e^k,e_k\right]_\ell\right]:,
\\
B_4' &= :\left(\mathcal{D}\left\langle e^i,\left[e^j,e_j\right]\right\rangle\right)_{\overline{\ell}}e_i:+:\left[e^i,\left[e^k,e_k\right]_{\overline{\ell}}\right]e_i:.
\end{split}
\end{equation*}
By the Courant algebroid axioms, \eqref{eq:affa} and \eqref{eq:afbafb},
\begin{equation*}
\begin{split}
B_5 &= 2T\left\langle\mathcal{D}\left\langle e^i,\left[e^k,e_k\right]\right\rangle,e_i\right\rangle+:e^i\left[e_i,\left[e^k,e_k\right]_{\overline{\ell}}\right]:-:\left(\mathcal{D}\left\langle e^i,\left[e^k,e_k\right]\right\rangle\right)_\ell e_i:,
\\
B_5' &= 2T\left\langle\mathcal{D}\left\langle e_i,\left[e^k,e_k\right]\right\rangle,e^i\right\rangle-:\left[e^i,\left[e^k,e_k\right]_\ell\right]e_i:+:e^i\left(\mathcal{D}\left\langle e_i,\left[e^k,e_k\right]\right\rangle\right)_{\overline{\ell}}:.
\end{split}
\end{equation*}
Consequently, applying \eqref{eq:extraprop}, we obtain
\begin{align*}
P_3 & = -\lambda\chi\left[e^j,e_j\right]_+-\lambda T\left\langle\left[w,e^j\right],e_j\right\rangle\\
&+\frac{\lambda}{2}\Big(:e^k\left[e_k,w\right]_\ell:+:\left[e^k,w\right]_{\overline{\ell}}e_k:+:e^k\left[e_k,w\right]_-:+:\left[e^k,w\right]_-e_k:\\
&\left.+2T\left(\left\langle\mathcal{D}\left\langle e^i,\left[e^k,e_k\right]\right\rangle,e_i\right\rangle + \left\langle\mathcal{D}\left\langle e_i,\left[e^k,e_k\right]\right\rangle,e^i\right\rangle\right)\right.\\
&+\left.:e^i\left(\left(\mathcal{D}\left\langle e_i,\left[e^k,e_k\right]\right\rangle\right)_\ell+\left(\mathcal{D}\left\langle e_i,\left[e^k,e_k\right]\right\rangle\right)_-\right):\right.\\
&-:\left(\left(\mathcal{D}\left\langle e^i,\left[e^k,e_k\right]\right\rangle\right)_{\overline{\ell}}+\left(\mathcal{D}\left\langle e^i,\left[e^k,e_k\right]\right\rangle\right)_-\right)e_i:\\
&+:e^i\left(\left\langle\mathcal{D}\left\langle\left[e^k,e_i\right],e_j\right\rangle,e_k\right\rangle e^j\right):+:\left(\left\langle\mathcal{D}\left\langle\left[e_k,e^i\right],e^j\right\rangle,e^k \right\rangle e_j\right)e_i:\Big).
\end{align*}
Adding together and using the identity $T=S^2$, we are now ready to compute
\begin{align*}
\left[{H'}_\Lambda:e^ie_i:\right] &= I+P_1+P_2+P_3\\
& = \left(2\lambda+2T+\chi S\right)\left(:e^je_j:-S\left[e^j,e_j\right]_-\right)\\
&-T\left(:e^k\left[\left[e^j,e_j\right]_-,e_k\right]_-:+:e_k\left[\left[e^j,e_j\right]_-,e^k\right]_-:\right)\\
&+T^2R+\frac{\lambda}{2}V,
\end{align*}
where
\begin{align*}
V :&= :\left[e^k,\left[e^j,e_j\right]_{\overline{\ell}}\right]_{\overline{\ell}}e_k:-:e^k\left[e_k,\left[e^j,e_j\right]_\ell\right]_\ell:\\
&+3\left(:e^k\left[e_k,\left[e^j,e_j\right]_-\right]_-:-:\left[e^k,\left[e^j,e_j\right]_-\right]_-e_k:\right)\\
&\left.+:\left[e^k,\left[e^j,e_j\right]\right]_-e_k:-:e^k\left[e_k,\left[e^j,e_j\right]\right]_-:\right.\\
&\left.+:e^k\left[\left[e_k,e^j\right]_-,e_j\right]_-:+:\left[\left[e^k,e_j\right]_-,e^j\right]_-e_k:\right.\\
&+2T\left(\left\langle\mathcal{D}\left\langle e^i,\left[e^k,e_k\right]\right\rangle,e_i\right\rangle +\left\langle\mathcal{D}\left\langle e_i,\left[e^k,e_k\right] \right\rangle,e^i\right\rangle -\left\langle\left[w,e^j\right],e_j\right\rangle\right)\\
&\left.+:e^i\left(\left(\mathcal{D}\left\langle e_i,\left[e^k,e_k\right]\right\rangle\right)_\ell+\left(\mathcal{D}\left\langle e_i,\left[e^k,e_k\right]\right\rangle\right)_-\right):\right.\\
&+\left.:\left(\left(\mathcal{D}\left\langle e^i,\left[e^k,e_k\right]\right\rangle\right)_{\overline{\ell}}+\left(\mathcal{D}\left\langle e^i,\left[e^k,e_k\right]\right\rangle\right)_-\right)e_i:\right.\\
&+\left.:e^i\left(\left\langle\mathcal{D}\left\langle \left[e^k,e_i\right],e_j\right\rangle,e_k\right\rangle e^j\right):+:\left(\left\langle\mathcal{D}\left\langle\left[e_k,e^i\right],e^j\right\rangle,e^k \right\rangle\right)e_i:\right..
\end{align*}
Using the involutivity of $\ell$ and $\overline{\ell}$, and the Courant algebroid axioms (in particular, the Jacobi identity in Definition~\ref{defn:CA}), we see that the Dorfman brackets in the third line of the definition of $V$ are 
\begin{align*}
\left[e^k,\left[e^j,e_j\right]\right]_- &= -\left[\left[e^k,e_j\right]_-,e^j\right]_-+\left(\mathcal{D}\left\langle e^k,\left[e^j,e_j\right]\right\rangle\right)_-,\\
\left[e_k,\left[e^j,e_j\right]\right]_- &= \left[\left[e_k,e^j\right]_-,e_j\right]_-+\left(\mathcal{D}\left\langle e_k,\left[e^j,e_j\right]\right\rangle\right)_-.
\end{align*}
Hence, using the Courant algebroid axioms, \eqref{eq:affa}, \eqref{eq:cuasicom}, \eqref{eq:abfabf}, \eqref{eq:afbafb} and Lemma \ref{lem:applem},
\begin{equation*}
\begin{split}
V & = -3\left(:e^k\left[\left[e^j,e_j\right]_-,e_k\right]_-:+:e_k\left[\left[e^j,e_j\right]_-,e^k\right]_-:\right)\\
&+:\left(\left\langle\left[\left[e_k,e^k\right],e^i\right],e^j\right\rangle +\left\langle\left[e_k,\left[e^i,e^j\right]_\ell\right],e^k\right\rangle +\left\langle\left[\left[e^i,e^j\right]_\ell,e_k\right],e^k\right\rangle\right):e_je_i::\\
&-:\left(\left\langle\left[\left[e^k,e_k\right],e_i\right],e_j\right\rangle + \left\langle\left[e^k,\left[e_i,e_j\right]_{\overline{\ell}}\right],e_k\right\rangle +\left\langle\left[\left[e_i,e_j\right]_{\overline{\ell}},e^k\right],e_k\right\rangle\right):e^je^i::.
\\&
= -3\left(:e^k\left[\left[e^j,e_j\right]_-,e_k\right]_-:+:e_k\left[\left[e^j,e_j\right]_-,e^k\right]_-:\right)+:F^{ij}:e_je_i::-:F_{ij}:e^je^i::.
\end{split}
\end{equation*}
where we have used Lemma~\ref{lem:rewriting-Fij} to obtain the second equality. 
The required formula for $\left[{H'}_\Lambda{J_0}\right]$ follows from the above expression for $V$.
\end{proof}


\begin{proof}[Proof of Proposition \ref{teo:mainresult2}]
This follows from Proposition \ref{lem:lempasofinal}. In more detail, we start applying sesquilinearity, so
\begin{align*}
\left[J_\Lambda J\right] & = \left[{J_0}_\Lambda{J_0}\right]- i\left[{J_0}_\Lambda Su\right] - i\left[Su_\Lambda{J_0}\right] - \left[Su_\Lambda Su\right]\\
& = \left[{J_0}_\Lambda{J_0}\right] + i\left(\chi+S\right)\left[{J_0}_\Lambda u\right] - i\chi\left[u_\Lambda{J_0}\right] + \left(\lambda-\chi S\right)\left[u_\Lambda u\right].
\end{align*}
The first summand of the right-hand side is given by Proposition~\ref{lem:primerpaso}, while Lemma \ref{lem:lempp} for $a=u$ calculates the second and third summands. As for the last one, we have
\begin{equation*}
\begin{split}
\left[u_\Lambda u\right] & = \left[u,u\right]+2\chi\left\langle u,u\right\rangle  = (S+2\chi)\left\langle u,u\right\rangle,
\end{split}
\end{equation*}
since $\mathcal{D}\langle u,u \rangle = 2[u,u]$. Therefore we obtain the required identity, because
\begin{align*}
\left[J_\Lambda J\right] 
& =-\left(H'- TI_+u_++\frac{S}{2}\left(:\left[u,e^j\right]e_j:+:e^j\left[u,e_j\right]:-2T\left\langle\left.\left[u,e^j\right]\right|e_j\right\rangle\right)\right.\\
& +\left.\frac{\lambda\chi}{3}c+\left(\chi S+\lambda \right)S\left(\left\langle\left.\left[u,e^j\right]\right|e_j\right\rangle-\left\langle u,u\right\rangle\right)\right).
\qedhere
\end{align*}
\end{proof}

We need the following technical result to prove Proposition \ref{teo:mainresult}.

\begin{lemma}\label{lem:uHu}
Assume that $\ell\oplus\overline{\ell}$ satisfies the F-term condition \eqref{eq:Ftermabs} and that the section $u \in \Omega^0(\Pi E)$ satisfies that $\left[u,\cdot\right]=0$. Then
\begin{subequations}\label{eq:lem:uHu}
\begin{align}
\begin{split}\label{eq:lem:uHu.a}
\left[u_\Lambda H'\right] & = -\chi \left(:\left[u_+,e^j\right]_-e_j:-:e^j\left[u_+,e_j\right]_-:+\left(\lambda+T\right)\left\langle\left[I_+u_+,e^j\right],e_j\right\rangle\right)\\
& +\chi\left(:\left\langle\left[u_+,e^j\right],e^k\right\rangle:e_je_k::+:\left\langle\left[u_+,e_j\right],e_k\right\rangle:e^je^k::\right)+\left(\lambda+\chi S\right)u_+,
\end{split}
\\\label{eq:lem:uHu.b}
\begin{split}
\left[{H'}_\Lambda u\right] & = -\left(\chi+S\right)\left(:\left[u_+,e^j\right]_-e_j:-:e^j\left[u_+,e_j\right]_-:-\lambda\left\langle\left[I_+u_+,e^j\right],e_j\right\rangle\right)\\
& +\left(\chi+S\right)\left(:\left\langle\left[u_+,e^j\right],e^k\right\rangle:e_je_k::+:\left\langle\left[u_+,e_j\right],e_k\right\rangle:e^je^k::\right)\\
&+\left(\lambda+\chi S+2T\right)u_+.
\end{split}
\end{align}
\end{subequations}
\end{lemma}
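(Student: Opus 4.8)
The plan is to establish the first identity~\eqref{eq:lem:uHu.a} by a direct expansion and then to deduce the second one~\eqref{eq:lem:uHu.b} almost for free. Since $u=\Pi\varepsilon$ and the auxiliary section $H'$ of~\eqref{eq:def.H-prime} are both odd, skew-symmetry~\eqref{eq:comLambda} of the $\Lambda$-bracket gives
\[
[H'_\Lambda u]=-[u_{-\Lambda-\nabla}H'],
\]
so~\eqref{eq:lem:uHu.b} is recovered from~\eqref{eq:lem:uHu.a} by the substitution $\Lambda\mapsto-\Lambda-\nabla$ (that is, $\lambda\mapsto-\lambda-T$ and $\chi\mapsto-\chi-S$) followed by an overall sign change. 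A quick check confirms the consistency of the two sides: this substitution turns the coefficient $(\lambda+\chi S)u_+$ into $(\lambda+2T+\chi S)u_+$ (using $S^2=T$) and the prefactor $-\chi$ into $-(\chi+S)$, while the inner factor $(\lambda+T)$ becomes $-\lambda$, exactly matching~\eqref{eq:lem:uHu.b} term by term. It therefore suffices to compute $[u_\Lambda H']$.

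First I would expand $[u_\Lambda H']$ by applying the non-commutative Wick formula~\eqref{eq:Wick} to each summand of $H'$ in~\eqref{eq:def.H-prime}, iterating it over the nested normally ordered products such as $:e_j:e^k[e^j,e_k]::$. The decisive simplification comes from the hypothesis $[\varepsilon,\cdot]=0$: by part~(3) of Proposition~\ref{prop:CDRE} it gives $[u_\Lambda a]=2\chi\langle u,a\rangle$ for every $a\in\Omega^0(\Pi E)$, so $u$ brackets with each frame element $e^j,e_j$ and with each Dorfman bracket occurring in $H'$ only through a pairing. Sesquilinearity~\eqref{eq:sesquiLambda} then reduces $[u_\Lambda Se^j]$ and $[u_\Lambda Se_j]$ to $(S+\chi)$ applied to these pairings, and the double-contraction integrals $\int_0^\Lambda d\Gamma\,[[u_\Lambda b]_\Gamma c]$ collapse, via~\eqref{eq:immedi}, to $\Lambda$-linear terms built from $\langle\mathcal D\langle u,b\rangle,c\rangle$. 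Consequently no term in which $u$ genuinely enters a Dorfman bracket survives, and one is left only with pairing terms $\langle u,\cdot\rangle$ and derivative-of-pairing terms $\mathcal D\langle u,\cdot\rangle$ multiplied by normally ordered products of the frame.

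The remaining task is to reassemble these contractions into the closed form~\eqref{eq:lem:uHu.a}. Using the basic chiral identities of Appendix~\ref{app:1} (quasicommutativity~\eqref{eq:cuasicom} together with~\eqref{eq:affa} and~\eqref{eq:afbafb}, and the nested-product rules) I would normalize the orderings, and the decomposition~\eqref{eq:abstractdecomp.1} combined with the isotropy relations of the frame identifies $\langle u,e^j\rangle e_j$ and $\langle u,e_j\rangle e^j$ with the components $u_{\overline\ell}$ and $u_\ell$, hence with $u_+$ and $I_+u_+$ as in~\eqref{eq:def.w}. The Courant algebroid axioms, in particular axiom~(4) rewriting $\pi(c)\langle u,\cdot\rangle$ as a sum of pairings and axiom~(5) rewriting $\mathcal D\langle u,\cdot\rangle$ as a bracket, are what convert the derivative-of-pairing terms into the Dorfman brackets $[u_+,e^j]_-$, $[u_+,e_j]_-$ visible on the right-hand side (recall $[u_+,\cdot]=-[u_-,\cdot]$ because $[u,\cdot]=0$); the F-term condition~\eqref{eq:Ftermabs} is invoked repeatedly to discard the vanishing projections $[e^j,e^k]_{\overline\ell}$ and $[e_j,e_k]_\ell$. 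I expect the only real difficulty to be organizational: the quartic summands of $H'$ generate a large number of double-contraction terms with delicate parity signs and $\ell/\overline\ell/C_-$ projections, and matching them against the compact right-hand side requires the same systematic use of the frame identities~\eqref{eq:identech1},~\eqref{eq:identech3} and~\eqref{eq:extraprop} as in the proof of Proposition~\ref{lem:pasointermedio}. No new conceptual ingredient beyond $[\varepsilon,\cdot]=0$ and the F-term condition is needed; the content is entirely in the bookkeeping.
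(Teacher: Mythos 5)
Your reduction of \eqref{eq:lem:uHu.b} to \eqref{eq:lem:uHu.a} via skew-symmetry \eqref{eq:comLambda} is exactly what the paper does, and your consistency check of the substitution $\lambda\mapsto-\lambda-T$, $\chi\mapsto-\chi-S$ (with the overall sign flip) is correct. For \eqref{eq:lem:uHu.a}, however, you take a genuinely different route: you propose to expand $[u_\Lambda H']$ directly, applying the non-commutative Wick formula \eqref{eq:Wick} to each of the summands of \eqref{eq:def.H-prime}, using $[u_\Lambda a]=2\chi\langle u,a\rangle$ (valid by part (3) of Proposition \ref{prop:CDRE} together with $[\varepsilon,\cdot]=0$) to kill every genuine bracket, and then reassembling the surviving contractions by means of the Courant algebroid axioms and the frame identities. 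The paper never touches $H'$ at all: since Proposition \ref{lem:primerpaso} gives $[{J_0}_\Gamma J_0]=-(H'+\tfrac{\gamma\eta}{3}c_0)$, it computes $[u_\Lambda H']=-[u_\Lambda[{J_0}_\Gamma J_0]]$ by the Jacobi identity \eqref{eq:JacobiLambda}; the hypothesis $[u,\cdot]=0$ collapses \eqref{eq:aJ} to $[u_\Lambda J_0]=i\chi I_+u_+$, a second application of Lemma \ref{lem:lempp} with $a=I_+u_+$ evaluates both iterated brackets, and the frame identity \eqref{eq:identech3} puts the outcome in the stated form. The paper's route buys brevity and reuse — it is a few lines once Proposition \ref{lem:primerpaso} and Lemma \ref{lem:lempp} are in place — and it explains structurally why the answer is built from $[I_+u_+,e^j]$ and its contractions. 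Your route is more self-contained (it does not need Proposition \ref{lem:primerpaso}) and is certainly feasible, being the same kind of computation as the paper's Proposition \ref{lem:pasointermedio} but far simpler because $u$ only ever contracts; the caveat is that essentially all of its content sits in the expansion you defer as ``bookkeeping'', so as written the matching of the many double-contraction terms against the right-hand side of \eqref{eq:lem:uHu.a} is asserted rather than performed, and a complete proof along your lines would still require carrying out that computation in full.
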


\begin{proof}
Applying the Jacobi identity for the $\Lambda$-bracket and Lemma \ref{lem:primerpaso}, we obtain
\begin{align*}
\left[u_\Lambda{H'}\right] & = \left[u_\Lambda\left(H'+\frac{\gamma\eta}{3}c_0\right)\right]=-\left[u_\Lambda\left[{J_0}_\Gamma{J_0}\right]\right]\\
& = -\left[\left[u_\Lambda{J_0}\right]_{\Lambda+\Gamma}{J_0}\right]-\left[{J_0}_\Gamma\left[u_\Lambda{J_0}\right]\right]\\
& = -\left(\left[\left[u_\Lambda{J_0}\right]_{\Lambda+\Gamma}{J_0}\right]+\left[\left[u_\Lambda{J_0}\right]_{-\nabla-\Gamma}{J_0}\right]\right),
\end{align*}
where for the last identity we have applied skew-symmetry of the $\Lambda$-bracket.
Let $\Omega=(\omega,\xi)$ be another copy of the formal variables $\Lambda=(\lambda,\chi)$ and $\Gamma=(\gamma,\eta)$ (see Section~\ref{ssec:background}). 
Using \eqref{eq:aJ}, since $\left[u,\cdot\right]=0$, we obtain
\begin{align*}
\left[\left[u_\Lambda{J_0}\right]_\Omega{J_0}\right] & = -i\chi\left[{I_+u_+}_\Omega{J_0}\right]\\
& = \frac{\chi}{2}\left(:\left[I_+u_+,e^j\right]e_j:+:e^j\left[I_+u_+,e_j\right]:\right)+\chi\left(\xi I_+u_++\omega\left\langle\left[I_+u_+,e^j\right],e_j\right\rangle\right).
\end{align*}
Hence
\begin{align*}
\left[u_\Lambda{H'}\right] & = -\chi\left(:\left[u_\ell-u_{\overline{\ell}},e^j\right]e_j:-:e^j\left[u_\ell-u_{\overline{\ell}},e_j\right]:+\left(\lambda-T\right)\left\langle\left[I_+u_+,e^j\right],e_j\right\rangle\right)\\
& +\left(\lambda+\chi S\right)u_+\\
& = -\chi\left(:\left[u_+,e^j\right]_-e_j:-:e^j\left[u_+,e_j\right]_-:+\left(\lambda+T\right)\left\langle\left[I_+u_+,e^j\right],e_j\right\rangle\right)\\
& +\chi\left(:\left\langle\left[u_+,e^j\right],e^k\right\rangle:e_je_k::+:\left\langle\left[u_+,e_j\right],e_k\right\rangle:e^je^k::\right)+\left(\lambda+\chi S\right)u_+,
\end{align*}
where last identity follows from \eqref{eq:identech3} for $a = u$. This concludes the proof of the identity~\eqref{eq:lem:uHu.a}. The identity~\eqref{eq:lem:uHu.b} follows now from~\eqref{eq:lem:uHu.a} and skew-symmetry of the $\Lambda$-bracket.
\end{proof}


\begin{proof}[Proof of Proposition \ref{teo:mainresult}] 
The identities \eqref{eq:newiden} and the formula for $c$ follow trivially from $\left[u,\cdot\right]=0$. The fact that $c\in\C$ follows by Remark~\ref{rem:constant-central-charge.1}. The formula for the $\Lambda$-bracket $[J_\Lambda J]$ follows from Proposition~\ref{teo:mainresult2}, since the derivation $S$ vanishes when applied to $c-3n\in\C$. To calculate the $\Lambda$-bracket $[H_\Lambda J]$, we proceed as follows. Applying sesquilinearity, we have
\begin{align*}
\left[H_\Lambda J\right] & = \left[{H'}_\Lambda{J_0}\right]-i\left[{H'}_\Lambda Su\right]-\left[{TI_+u_+}_\Lambda{J_0}\right]+i\left[{TI_+u_+}_\Lambda Su\right]\\
& = \left[{H'}_\Lambda{J_0}\right]-i\left(\chi+S\right)\left[{H'}_\Lambda u\right]+\lambda\left[{I_+u_+}_\Lambda{J_0}\right]-i\lambda\left(\chi+S\right)\left[{I_+u_+}_\Lambda u\right].
\end{align*}
The first summand of the right-hand side is given by Proposition~\ref{lem:lempasofinal}. The second summand is calculated applying~\eqref{eq:lem:uHu.b}. The third summand is calculated applying first Lemma \ref{lem:lempp} with $a=I_+u_+$, and then \eqref{eq:identech3} with $a=u$:
\begin{align*}
\left[{I_+u_+}_\Lambda{J_0}\right] & = \frac{i}{2}\left(:\left[I_+u_+,e^j\right]e_j:+:e^j\left[I_+u_+,e_j\right]:\right)\\
& +i\left(\chi u_++\lambda\left\langle\left[e_+,e^j\right],e_j\right\rangle\right)\\
& = \frac{i}{2}\left(:\left[u_+,e^j\right]_-e_j:-:e^j\left[u_+,e_j\right]_-:\right)\\
& + i(\lambda+T)\left(\left\langle\left[Iu_+,e^j\right],e_j\right\rangle-\chi u_+\right)\\
& +\frac{i}{2}\left(:\left\langle\left[u_+,e^j\right],e^k\right\rangle:e_ke_j::+:\left\langle\left[u_+,e_j\right],e_k\right\rangle:e^ke^j::\right).
\end{align*}
To calculate the last summand, we apply the definition of the $\Lambda$-bracket (part (3) of Proposition~\ref{prop:CDRE}) and then the fact that $\left[u,\cdot\right]=0$:
\begin{align*}
\left[{I_+u_+}_\Lambda u\right] & = \left[I_+u_+,u\right]+2\chi\left\langle I_+u_+,u\right\rangle = -\left[u,u_{\overline{\ell}}\right]+\left[u,u_{\ell}\right]+\mathcal{D}\left(\left\langle u,u_{\overline{\ell}}\right\rangle-\left\langle u,u_{\ell}\right\rangle\right) = 0.
\end{align*}
Using the above identities, \eqref{eq:prop1} and \eqref{eq:prop2} for $a=u$, and applying Lemma \ref{lem:pasointermedio}, we can now obtain the required identity:
\begin{align*}
\left[H_\Lambda J\right] & = \left[{H'}_\Lambda{J_0}\right]-i\left(\chi+S\right)\left[{H'}_\Lambda u\right]+\lambda\left[{I_+u_+}_\Lambda{J_0}\right]\\
& = \left[{H'}_\Lambda{J_0}\right]-i\left(T+\frac{3}{2}\lambda\right)\left(:e_j\left[u_+,e^j\right]_-:+:e^j\left[u_+,e_j\right]_-:\right)\\
& -\left(2\lambda+2T+\chi S\right)Su_+\\
& = \left(2\lambda+2T+\chi S\right)\left(J_0-Si\left(u_++\frac12\left[e^j,e_j\right]_-\right)\right)\\
&+\frac{i}{4}TS\mathcal{D}R+\frac{i}{4}\lambda\left(:F^{ij}:e_je_i::-:F_{ij}:e^je^i::\right)\\
&-i\left(T+\frac32\lambda\right)\left(:e_k\left[u_++\frac12\left[e^j,e_j\right]_-,e^k\right]:+:e^k\left[u_++\frac12\left[e^j,e_j\right]_-,e_k\right]_-:\right).
\mbox{\qedhere}
\end{align*}
\end{proof}

\end{document}